\theoremstyle{plain}
\newtheorem{thm}{Theorem}[section]
\newtheorem*{thm*}{Theorem}
\newtheorem{prop}{Proposition}[section] 
\newtheorem{lem}{Lemma}[section] 
\newtheorem{cor}{Corollary}[section]
\newtheorem*{cor*}{Corollary}
\newtheorem{defi}{Definition}[section]
\newtheorem{rem}{Remark}
\newcommand {\R} {\mathbb{R}} \newcommand {\Z} {\mathbb{Z}}
\newcommand {\T} {\mathbb{T}} \newcommand {\N} {\mathbb{N}}
\newcommand {\C} {\mathbb{C}}
\newcommand {\p} {\partial}
\newcommand {\dt} {\partial_t}
\begin{document}
 \thanks{This article is based on the author's PhD thesis written under the supervision of Prof. Dr. Herbert Koch, to whom I owe great gratitude for his constant advice, helpful direction and insightful comments.}
\thanks{I also thank the CRC 1060 ``On the Mathematics of Emergent Effects'' of the DFG, the Bonn International Graduate School (BIGS) and the Hausdorff Center of Mathematics for their support.}
\address{Mathematisches Institut, Universität Bonn, 53115 Bonn, Germany}

\date{\today}
\title[Linear inviscid damping and boundary effects]{Linear inviscid damping for monotone shear flows in a finite periodic channel, boundary effects, blow-up and critical Sobolev regularity}
\author{Christian Zillinger}

\maketitle

\begin{abstract}
In a previous article, \cite{Zill3}, we have established linear inviscid damping for a large class of monotone shear flows in a finite periodic channel and have further shown that boundary effects asymptotically lead to the formation of singularities of derivatives of the solution.
As the main results of this article, we provide a detailed description of the singularity formation and establish stability in all sub-critical fractional Sobolev spaces and blow-up in all super-critical spaces.
Furthermore, we discuss the implications of the blow-up to the problem of nonlinear inviscid damping in a finite periodic channel, where high regularity would be essential to control nonlinear effects.
\end{abstract}


\setcounter{tocdepth}{3}
\setcounter{secnumdepth}{3}
\tableofcontents

\section{Introduction}
\label{sec:introduction}

In this article, we are interested in the singularity formation of solutions to the linearized 2D Euler equations around monotone shear flows, $(U(y),0)$, in a finite periodic channel of period $L$, $\T_{L}\times [a,b]$,
\begin{align}
  \begin{split}
  \dt \omega + U(y) \p_{x} \omega &= U''(y) \p_{x}\phi=U''(y)v_{2}, \\
  \Delta \phi &= \omega , \\
  \p_{x}\phi|_{y=a,b}&=0, \\
  (t,x,y) &\in \R \times \T_{L} \times [a,b],
  \end{split}
\end{align}
as well as sharp stability results in fractional Sobolev spaces.

Here, we consider the transport by $(U(y),0)$ and the resulting shearing as the main underlying dynamics and hence introduce coordinates moving with the flow,
\begin{align*}
  W(t,x,y)&:=\omega(t,x-tU(y),y), \\
  \Phi(t,x,y)&:=\phi(t,x-tU(y),y).
\end{align*}
In these coordinates, the linearized Euler equations are given by 
\begin{align}
  \begin{split}
    \label{eq:LEusualcoord}
  \dt W &= U''(y)\p_{x}\Phi, \\
  (\p_{x}^{2}+(\p_{y}-tU'(y)\p_{x})^{2})\Phi &= W, \\
  \p_{x}\Phi|_{y=a,b}&=0, \\
  (t,x,y) &\in \R \times \T_{L} \times [a,b].
  \end{split}
\end{align}
In analogy to the conventions in dispersive equations, we call this a \emph{scattering formulation} of the linearized Euler equations with respect to the underlying transport problem 
\begin{align*}
  \dt f + U(y)\p_{x}f =0.
\end{align*}
As shown in \cite{Zill3}, under suitable assumptions on $U$ and $L$, one obtains linear inviscid damping and scattering:
\begin{thm}[Damping using regularity,{\cite[Theorem 2.1]{Zill3}}]
  \label{thm:lin-zeng}
  Let $U$ be such that $\frac{1}{U'}, U'' \in W^{2,\infty}(\T_{L}\times[a,b])$, let $W$ be the solution of 
  \eqref{eq:LEusualcoord} and let $v=\nabla^{\bot}\phi$ be the associated velocity field.
  Then the following statements hold:
  \begin{itemize}
  \item If $W(t)-\langle W\rangle_{x} \in H^{-1}_{x}H^{1}_{y}(\T_{L}\times [a,b])$, then $v$ satisfies
    \begin{align*}
      \|v(t)- \langle v\rangle_{x}\|_{L^{2}} = \mathcal{O}(t^{-1})\|W(t)- \langle W\rangle_{x}\|_{H^{-1}_{x}H^{1}_{y}},
    \end{align*}
    as $t \rightarrow \infty$.
  \item If $W(t)-\langle W\rangle_{x} \in H^{-1}_{x}H^{2}_{y}(\T_{L}\times [a,b])$, then $v_{2}$ satisfies
    \begin{align*}
      \|v_{2}(t)\|_{L^{2}} = \mathcal{O}(t^{-2})\|W(t)- \langle W\rangle_{x}\|_{H^{-1}_{x}H^{1}_{y}},
    \end{align*}
    as $t \rightarrow \infty$.
  \item If $W(t)-\langle W\rangle_{x} \in H^{-1}_{x}H^{s}_{y}(\T_{L}\times [a,b])$, for some $1<s<2$, then $v_{2}$ satisfies
    \begin{align*}
      \|v_{2}(t)\|_{L^{2}} = \mathcal{O}(t^{-s})\|W(t)- \langle W\rangle_{x}\|_{H^{-1}_{x}H^{s}_{y}},
    \end{align*}
    as $t \rightarrow \infty$.
  \end{itemize}
\end{thm}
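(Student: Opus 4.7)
Fourier decomposition in the periodic variable $x$ reduces the problem to a family of one-dimensional elliptic problems, one per nonzero mode (the mean-zero hypothesis eliminates $k=0$). For $k\neq 0$ the elliptic equation in \eqref{eq:LEusualcoord} reads
\begin{align*}
  \bigl((\p_y - iktU'(y))^2 - k^2\bigr)\Phi_k = W_k,\qquad \Phi_k(a)=\Phi_k(b)=0,
\end{align*}
and in moving coordinates $v_{2,k}=-ik\Phi_k$, $v_{1,k}=(\p_y - iktU'(y))\Phi_k$. The gauge change $\Phi_k = e^{iktU(y)}\widetilde\Phi_k$ conjugates the $t$-dependent operator into the standard Helmholtz operator and concentrates the $t$-dependence on the source:
\begin{align*}
  (\p_y^2 - k^2)\widetilde\Phi_k = e^{-iktU(y)}W_k,\qquad \widetilde\Phi_k(a)=\widetilde\Phi_k(b)=0,
\end{align*}
with $v_{2,k} = -ik e^{iktU}\widetilde\Phi_k$ and $v_{1,k} = e^{iktU}\p_y \widetilde\Phi_k$, so that $|v_{2,k}|=|k|\,|\widetilde\Phi_k|$ and $|v_{1,k}|=|\p_y\widetilde\Phi_k|$.

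I would then invert via the Dirichlet Green's function $G_k(y,y')$ of $-\p_y^2 + k^2$, which satisfies $|G_k|\lesssim |k|^{-1}e^{-|k||y-y'|}$ and whose $y$-derivative satisfies $|\p_y G_k|\lesssim e^{-|k||y-y'|}$, giving $L^2\to L^2$ operator norms of order $|k|^{-2}$ and $|k|^{-1}$ respectively. Time decay is extracted from the oscillatory factor by the identity
\begin{align*}
  e^{-iktU(y')} = \frac{1}{-iktU'(y')}\,\p_{y'}e^{-iktU(y')}
\end{align*}
and integration by parts in $y'$. The hypothesis $1/U'\in W^{2,\infty}$ keeps the prefactor smooth, and the Dirichlet condition on $G_k(y,\cdot)$ eliminates boundary terms. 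Each integration by parts gains a factor $1/(kt)$ at the cost of one $y'$-derivative falling on $W_k$, on $G_k$, or on $1/U'$. A single application, combined with the kernel estimates, yields statement (i); a second, legitimate under the $H^2_y$-hypothesis, delivers the $t^{-2}$ decay of $v_2$ in statement (ii). Statement (iii) for $1<s<2$ then follows by complex interpolation of the family of time-dependent linear operators $W_k\mapsto v_{2,k}$ between $H^1_y$ and $H^2_y$, producing the sharp rate $t^{-s}$.

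The main technical obstacle is the bookkeeping after integration by parts, uniformly in $k$: one must control in $L^2$ each term produced by the product rule against the appropriate (fractional) Sobolev norm of $W_k$, and sum the resulting mode-by-mode estimates. The low-frequency regime $|k|\lesssim 1$ requires separate attention because $G_k$ degenerates toward the Green's function of $-\p_y^2$, though the one-dimensionality keeps the analysis explicit. A secondary subtlety is the anisotropy between $v_1$ and $v_2$: the Green's function provides stronger smoothing for $\widetilde\Phi_k$ itself than for $\p_y\widetilde\Phi_k$, which limits the rate attainable for the horizontal velocity and is the structural reason why statement (ii) is formulated only for the vertical component.
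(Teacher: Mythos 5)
Your route — Fourier decomposition in $x$, the gauge change back to $\phi$, the Dirichlet Green's function $G_k$ for $-\p_y^2+k^2$ on $[a,b]$, integration by parts against the oscillatory factor $e^{-iktU(y')}$, and interpolation for $1<s<2$ — is the standard mechanism behind this estimate and is consistent with the paper's own discussion of the constant-coefficient model in Section~\ref{sec:const-coeff-model}, which is your computation made explicit for Couette flow (see \eqref{eq:Couetteboundaryexample}). Note, though, that the paper does not prove this theorem here; it is cited as Theorem 2.1 of \cite{Zill3}.

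There is one genuine gap, and it is precisely the phenomenon the rest of the paper is about. You assert that ``the Dirichlet condition on $G_k(y,\cdot)$ eliminates boundary terms.'' That is true only for the \emph{first} integration by parts, where $G_k(y,a)=G_k(y,b)=0$ kills the boundary contribution. The second integration by parts — which you invoke to get the $t^{-2}$ rate for $v_2$ — produces a boundary term of the schematic form $(kt)^{-2}\,\p_{y'}G_k(y,y')\,W_k(y')/U'(y')^2\,|_{y'=a}^{b}$, and $\p_{y'}G_k(y,\cdot)$ does \emph{not} vanish at the endpoints. This term is nonzero whenever $W_k(a)$ or $W_k(b)$ is, and it is exactly the quantity the paper analyzes in \eqref{eq:Couetteboundaryexample}, Lemma~\ref{lem:boundary32}, and Section~\ref{sec:boundary-layers}. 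For the $L^2$ decay of $v_2$ it happens to be harmless — $\|\p_{y'}G_k(\cdot,b)\|_{L^2_y}\lesssim |k|^{-1/2}$ plus the trace theorem gives a contribution of order $t^{-2}|k|^{-3/2}\|W_k\|_{H^1_y}$, compatible with statement (ii) — but this has to be checked, not dismissed: the very same boundary term is what destroys $H^2$ stability for general data (Theorem~\ref{thm:citedH1H2result}) and drives the logarithmic blow-up of $\p_yW$ at the boundary that this paper quantifies. A complete proof must state and estimate it explicitly. (A side remark: the low-frequency worry you raise is moot, since $k\in L(\Z\setminus\{0\})$ so $|k|\geq L>0$ and $G_k$ never degenerates.)
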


\begin{thm}[Stability in $H^{1}$ and $H^{2}$, {\cite[Theorems 4.1, 4.3 and B.1]{Zill3}}]
\label{thm:citedH1H2result}  
  Let $U'(U^{-1}(\cdot), U''(U^{-1})(\cdot)) \in W^{3,\infty}([a,b])$ and suppose that 
  \begin{align*}
    0<c<U'(y)<c^{-1}<\infty,
  \end{align*}
  and that 
  \begin{align*}
    \|U''(U^{-1}(\cdot))\|_{W^{3,\infty}} L 
  \end{align*}
  is sufficiently small.
  Then, for any $\omega_{0} \in L^{2}_{x}H^{1}_{y}(\T_{L} \times [a,b])$, the solution of the linearized Euler equations, \eqref{eq:LEusualcoord}, with initial datum $\omega_{0}$ satisfies 
  \begin{align*}
    \|W(t)\|_{L^{2}_{x}H^{1}_{y}} \lesssim \|\omega_{0}\|_{L^{2}_{x}H^{1}_{y}}.
  \end{align*}
  
  Suppose that $U''|_{y=a,b} \neq 0$, then, for any $\omega_{0} \in L^{2}_{x}H^{2}_{y}(\T_{L} \times [a,b])$ with non-vanishing Dirichlet data, $\omega_{0}|_{y=a,b} \neq 0$, the solution $W$ satisfies 
  \begin{align*}
    \sup_{t\geq 0}\|W(t)\|_{L^{2}_{x}H^{2}_{y}(\T_{L}\times [a,b])} =\infty.
  \end{align*}
  
  Conversely, restricting to perturbations $\omega_{0} \in L^{2}_{x}H^{2}_{y}(\T_{L} \times [a,b])$ with vanishing Dirichlet data, $\omega_{0}|_{y=a,b}=0$, we obtain stability:
  \begin{align*}
    \|W(t)\|_{L^{2}_{x}H^{2}_{y}} \lesssim \|\omega_{0}\|_{L^{2}_{x}H^{2}_{y}}.
  \end{align*}
\end{thm}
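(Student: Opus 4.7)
The plan is to Fourier decompose $W=\sum_k W_k(t,y)e^{ikx}$ (the $k=0$ mode being stationary) and work mode-by-mode with the coupled system
\[(-k^2+D_t^2)\Phi_k = W_k, \quad \Phi_k|_{y=a,b}=0, \quad \partial_t W_k = ikU''(y)\Phi_k,\]
where $D_t:=\partial_y-iktU'(y)$ is the natural scattering-adapted derivative, essentially commuting with the evolution since $[\partial_t,D_t]=-ikU'$ is a multiplier with no $y$-derivative. As preparation I would collect the standard elliptic bounds on $\Phi_k$, $\partial_y\Phi_k$ and $D_t\Phi_k$ in terms of $W_k$ by testing $(-k^2+D_t^2)\Phi_k=W_k$ against $\overline{\Phi_k}$ and related multipliers; these are exactly the bounds underlying the decay statements of Theorem~\ref{thm:lin-zeng}.

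For the $H^1$ stability I would differentiate in $y$ to obtain
\[\partial_t\partial_y W_k = ikU'''(y)\Phi_k + ikU''(y)\partial_y\Phi_k,\]
and then build a ghost-weighted energy
\[E_1(t) = \|W_k\|_{L^2}^2 + \|\partial_y W_k\|_{L^2}^2 + (\text{small cross-term correction involving } \Phi_k),\]
whose time derivative is controlled via the elliptic bounds by an integrable-in-$t$ multiple of $E_1$. The role of the smallness hypothesis on $L\|U''\|$ is to dominate the multiplicative constants in the Gronwall step so that $E_1$ stays uniformly bounded, yielding the desired $L^2_xH^1_y$ stability.

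For the $H^2$ stability under vanishing Dirichlet data I would run the analogous second-order scheme with an energy $E_2$ involving $\partial_y^2 W_k$ (or, equivalently, $D_t^2 W_k$). The crucial observation is that at the boundary one has $\partial_t W_k|_{y=a,b}=ikU''(a,b)\Phi_k|_{y=a,b}=0$, since $\Phi_k$ satisfies homogeneous Dirichlet conditions; thus the trace $W_k|_{y=a,b}$ is conserved along the flow, and starting from $\omega_0|_{y=a,b}=0$ it remains zero for all $t$. This is exactly what is needed to kill the boundary contributions in the integration-by-parts identities and close the estimate for $E_2$.

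The main obstacle is the $H^2$ blow-up when $\omega_0|_{y=a,b}\neq 0$. The same trace-conservation now works against us: $W_k|_{y=a,b}$ remains equal to the nonzero initial trace for all time, while $\Phi_k|_{y=a,b}=0$ is enforced by the elliptic problem, so $\Phi_k$ must transition from its bulk asymptotic $\approx -W_k/(ktU'(y))^2$ to zero across a boundary layer of thickness $\sim(ktU'(a))^{-1}$; in this layer $\partial_y^2\Phi_k$ is of order $W_k|_{y=a}$ pointwise. Inserting into
\[\partial_t\partial_y^2 W_k = ik\bigl(U''''\Phi_k + 2U'''\partial_y\Phi_k + U''\partial_y^2\Phi_k\bigr)\]
and using $U''(a,b)\neq 0$, the last term produces an $L^2$-source whose time integral drives $\|\partial_y^2 W_k\|_{L^2}$ to infinity. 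The delicate point — and what I expect to be the main obstacle — is to turn this boundary-layer heuristic into a rigorous lower bound: one must verify that the dominant contribution of the $U''\partial_y^2\Phi_k$ source is not canceled by the subleading $U''''\Phi_k$ and $U'''\partial_y\Phi_k$ terms in the Duhamel formula. A natural strategy is to reduce to a single Fourier mode with a simple boundary-concentrated profile for $\omega_0$, expand $\Phi_k$ explicitly into bulk plus boundary-layer contributions, and compute the asymptotics of $\|\partial_y^2 W_k\|_{L^2}$ in the $t\to\infty$ limit.
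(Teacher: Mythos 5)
This theorem is quoted from \cite{Zill3} and is not re-proved in the present paper, but the machinery developed in Section~\ref{sec:stability-h32-} (Lemmas~\ref{lem:boundary32}, \ref{lem:pWcontrol}, Corollaries~\ref{cor:not32}, \ref{cor:phiboundaryh2}) makes the intended argument visible, so your plan can be checked against it.

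Your route for the two stability claims is essentially the right one. For $H^1$ one differentiates in $y$, uses the elliptic bounds on $\Phi_k$, $D_t\Phi_k$ obtained by testing the shifted elliptic operator, and closes via a weighted energy with a decreasing Fourier multiplier (the paper's $A(t)=\mathcal{F}^{-1}\exp(C\arctan(\eta/k-t))\mathcal{F}$), with $L\|U''\|$ small exactly to absorb the multiplicative constants. For $H^2$ with vanishing Dirichlet data, your observation that $\partial_t W_k|_{y=a,b}=ikU''\Phi_k|_{y=a,b}=0$, so that $W_k|_{y=a,b}$ is conserved and remains zero, is indeed the decisive structural fact; it is precisely what lets one close the second-order estimate without boundary contributions.

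For the $H^2$ blow-up, however, your proposed route diverges from the one the paper uses and contains a genuine flaw. You suggest a lower bound on $\|\partial_y^2 W_k\|_{L^2}$ via a boundary-layer picture in which $\Phi_k$ relaxes from its bulk value to zero across a layer of thickness $\sim(ktU'(a))^{-1}$. That picture is not correct: the homogeneous corrections enforcing $\Phi_k|_{y=a,b}=0$ are of the form $e^{iktU(y)}$ times a profile decaying on the $k^{-1}$ scale (cf.\ the functions $u_1,u_2$ in Lemma~\ref{lem:boundary32}); they oscillate with frequency $kt$ rather than localize in a layer of width $(kt)^{-1}$. Moreover, the direct lower-bound route would have to rule out cancellation between $U''''\Phi_k$, $U'''\partial_y\Phi_k$ and $U''\partial_y^2\Phi_k$, precisely the obstacle you flag. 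The paper sidesteps this entirely by restricting the equation for $\partial_y W_k$ to the boundary, where $\Phi_k$ itself vanishes and only a single term survives,
\begin{align*}
\partial_t \partial_y W_k\big|_{y=a,b} = \frac{if}{k}\,\partial_y\Phi_k\big|_{y=a,b},
\end{align*}
then computes $\partial_y\Phi_k|_{y=a,b}$ exactly by pairing the elliptic equation with homogeneous solutions (Lemma~\ref{lem:boundary32}), obtaining the leading term $\frac{1}{ikt}\,\omega_0|_{y=a,b}/g^2 + O(t^{-2})$ under an $H^2$ a priori bound. Integrating in time gives $|\partial_y W_k|_{y=a,b}|\gtrsim\log t$, and the contradiction with $\sup_t\|W\|_{H^2}<\infty$ is immediate from the one-dimensional embedding $H^2\hookrightarrow W^{1,\infty}$. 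This trace-plus-embedding argument requires no lower bound on a bulk $L^2$ norm and avoids your cancellation worry, because every potentially competing term carries a factor of $\Phi_k$ that dies on the boundary.
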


Combining both theorems and restricting to perturbations with vanishing Dirichlet data, we thus obtain linear inviscid damping with the optimal rates,
\begin{align*}
  \|v(t)- \langle v \rangle_{x}\|_{L^{2}} &=\mathcal{O}(t^{-1})\|\omega_{0}- \langle \omega_{0} \rangle_{x}\|_{H^{-1}_{x}H^{1}_{y}}, \\
  \|v_{2}(t)\|_{L^{2}} &=\mathcal{O}(t^{-2})\|\omega_{0}- \langle \omega_{0} \rangle_{x}\|_{H^{-1}_{x}H^{2}_{y}},
\end{align*}
 as well as scattering, i.e. there exists $W^{\infty}$ such that
\begin{align*}
  W(t) \xrightarrow{L^{2}} W^{\infty},
\end{align*}
as $t \rightarrow \infty$.
\\

\subsection{Motivation}
\label{sec:motivation}

The necessity of requiring vanishing Dirichlet boundary values of the initial perturbation, $\omega_{0}$, in Theorem \ref{thm:citedH1H2result} is in sharp contrast to the common setting of an infinite periodic channel, $\T \times \R$, where stability holds in arbitrary Sobolev spaces (see \cite[Theorem 3.1]{Zill3}). 
In particular, it naturally raises the question whether damping with integrable rates and quantitative scattering results can be obtained for perturbations with non-vanishing Dirichlet data and what the sharp decay rates in this case are.

Furthermore, while $H^{2}\cap H^{1}_{0}$ stability is sufficient to establish linear inviscid damping with optimal rates, higher regularity is needed in order to prove consistency with the nonlinear equation, since the Sobolev embedding only yields an estimate of the form
\begin{align*}
  \|\nabla^{\bot}\Phi \cdot \nabla W\|_{L^{2}_{xy}} \lesssim \|\nabla^{\bot}\Phi\|_{L^{2}_{xy}}\|W\|_{H^{s}_{xy}}
\end{align*}
for (fractional) Sobolev spaces $H^{s},s>2$.

Additionally, as seen in the results of Bedrossian and Masmoudi, \cite{bedrossian2013asymptotic}, on nonlinear inviscid damping for Couette flow in the infinite periodic channel, very high regularity is needed to control nonlinear effects.
Stability and instability results for the linear dynamics and the associated blow-up in supercritical spaces are thus of great importance also for the problem of nonlinear inviscid damping in a finite periodic channel.

In this article, we hence study the effects of boundary conditions and the associated singularity formation in detail and aim at deriving optimal stability and blow-up results in fractional Sobolev spaces.

\subsection{Results for general perturbations}
\label{sec:general-results}

As the first main result of this article, we show that for general perturbations the fractional Sobolev space $H^{\frac{3}{2}}_{y}$ is critical in the sense that stability holds in all sub-critical fractional Sobolev spaces (Theorem \ref{thm:H32}) and (infinite-time) blow-up occurs in all super-critical fractional Sobolev spaces (Corollaries \ref{cor:not32} and \ref{cor:phiboundaryh2}) due to the formation of logarithmic singularities at the boundary.

\begin{cor}
Let $W$ be a solution of the linearized Euler equations, \eqref{eq:Eulernochmal}, and suppose that $U'(U^{-1}(\cdot)), U''(U^{-1}(\cdot)) \in W^{2,\infty}([0,1])$ and that $U'$ satisfies $(U')^{2}>c>0$. 
  Let $s>1$ and suppose that 
  \begin{align*}
    \|\p_{y} W\|_{H^{s}([0,1])}<C<\infty .
  \end{align*}
  Suppose further that $U''(U^{-1}(\cdot)) \omega_{0}|_{y=0,1}$ is non-trivial. Then  
  \begin{align*}
    \|\p_{y}W(t)\|_{L^{\infty}([0,1])} \gtrsim \log |t|
  \end{align*}
  as $t \rightarrow \pm \infty$.

As a consequence, for perturbations such that $U''(U^{-1}(\cdot)) \omega_{0}|_{y=0,1}$ is non-trivial, for any $2\geq s>\frac{3}{2}$, necessarily
  \begin{align*}
    \sup_{t>0} \|W(t)\|_{H^{s}([0,1])} = \infty .
  \end{align*}
\end{cor}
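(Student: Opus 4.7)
The second assertion of the corollary (blow-up of the $H^s$ norm for $s\in(\tfrac{3}{2},2]$) follows from the first via the one-dimensional Sobolev embedding $H^{s-1}([0,1])\hookrightarrow L^\infty([0,1])$, which is valid because $s-1>\tfrac{1}{2}$. Indeed,
\[
\|\partial_y W(t)\|_{L^\infty_y} \lesssim \|\partial_y W(t)\|_{H^{s-1}_y} \lesssim \|W(t)\|_{H^s_y},
\]
so a uniform-in-$t$ bound on $\|W(t)\|_{H^s_y}$ would contradict the logarithmic growth of $\|\partial_y W(t)\|_{L^\infty_y}$ provided by the first statement. It therefore suffices to establish the first statement.

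For the main part, the plan is to isolate the $s^{-1}$-contribution to $\partial_t\partial_y W$ responsible for the logarithm. Differentiating the scattering equation $\partial_t W = U''(y)\partial_x\Phi$ in $y$ yields
\[
\partial_t\partial_y W = U'''(y)\partial_x\Phi + U''(y)\partial_x\partial_y\Phi.
\]
Passing to the Fourier series in $x$ and using the gauge change $\Phi_k = e^{iktU(y)}\phi_k$, the elliptic problem reduces to $(\partial_y^2-k^2)\phi_k = W_k\,e^{iktU(y)}$ with vanishing Dirichlet data. The leading large-time part of $U''(y)\partial_x\partial_y\Phi_k$ comes from differentiating the gauge factor, giving $-k^2 t\,U'(y)U''(y)\,e^{iktU(y)}\phi_k$; combined with the bulk asymptotic $\phi_k\sim -W_k\,e^{iktU(y)}/(ktU'(y))^2$ valid for $|ktU'(y)|\gg 1$, this produces
\[
\partial_t\partial_y W_k(t,y) = \frac{U''(y)\,W_k(t,y)\,e^{2iktU(y)}}{t\,U'(y)} + R_k(t,y),
\]
where $R_k$ is absolutely integrable in time.

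Integrating the leading term from $s_0=1$ to $t$ and substituting $u=2ksU(y)$ turns the time integral into an incomplete cosine/sine integral whose modulus is comparable to $\min\{\log|t|,\ \log(1/|kU(y)|)\}$. For $y$ approaching a boundary point $y_\partial\in\{0,1\}$ with $|U(y)-U(y_\partial)|\lesssim 1/t$, this attains size $\gtrsim\log|t|$. The non-triviality hypothesis that $U''(U^{-1}(\cdot))\omega_0|_{y=0,1}$ is non-trivial forces at least one Fourier coefficient $W_k$ to be non-zero at a boundary point where $U''$ does not vanish, so the amplitude of the leading term is non-degenerate; combined with the scattering result $W(t)\to W^\infty$ in $L^2$ from Theorem~\ref{thm:lin-zeng}, one may replace $W_k(t,y_\partial)$ by its limit with controlled error. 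Taking the supremum in $y$ then yields $\|\partial_y W(t)\|_{L^\infty_y}\gtrsim\log|t|$ as $|t|\to\infty$.

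The hardest step will be obtaining the asymptotic expansion of $\phi_k$ and $\partial_y\phi_k$ uniformly in $k$ and $t$ near the boundary, where the naive bulk asymptotic breaks down and one has to resolve the Dirichlet boundary layer of thickness $1/k$, show that its contribution is of lower order than the bulk one, and justify that the resulting remainders absorb into an absolutely convergent series using the $H^s$-hypothesis on $\partial_y W$ to sum over $k$.
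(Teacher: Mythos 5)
Your reduction of the $H^s$ blow-up ($\frac{3}{2}<s\le 2$) to the logarithmic lower bound on $\|\p_y W\|_{L^\infty}$ via the embedding $H^{s-1}([0,1])\hookrightarrow L^\infty$ is correct and is precisely the argument the paper uses.

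Your route to the logarithmic lower bound, however, has a sign error that breaks the argument. The scattering gauge is $\hat\Phi=e^{-iktU}\hat\phi$, not $e^{+iktU}\hat\phi$. With the correct sign the gauge-derivative contribution $-iktU'\,e^{-iktU}\hat\phi$, combined with your bulk ansatz $\hat\phi\sim-\hat W e^{iktU}/(ktU')^2$, yields a leading coefficient of size $\sim \hat W/(tU')$ with \emph{no} oscillatory factor; the $e^{2iktU(y)}$ you obtain is spurious. This is not cosmetic: with such a factor present, at a boundary point $y_\partial$ where $U(y_\partial)\ne 0$ (at least one of the two endpoints in the normalization $[U(a),U(b)]=[0,1]$, or both after a Galilean shift) the integral $\int_1^T e^{2ikU(y_\partial)s}\,ds/s$ stays bounded as $T\to\infty$, so no logarithm is produced there — and the conclusion plainly cannot depend on the Galilean frame chosen for $U$. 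Relatedly, your criterion ``$|U(y)-U(y_\partial)|\lesssim 1/t$'' for slow oscillation is not what $e^{2iksU(y)}$ would require; with the cancellation taken into account one works at $y=y_\partial$ exactly and no such condition is needed. You also try to use $L^2$ scattering to replace $W_k(t,y_\partial)$ by a limit, but $L^2$ convergence does not give convergence of traces. What is actually true, and what the paper uses, is that $W(t)|_{y=0,1}=\omega_0|_{y=0,1}$ is \emph{exactly} conserved in time, because $\p_t W|_{y=0,1}=\frac{if}{k}\Phi|_{y=0,1}=0$.

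For comparison, the paper's proof (Corollary~\ref{cor:not32} via Lemma~\ref{lem:boundary32}) avoids pointwise asymptotics of the stream function entirely. It restricts $\p_t\p_y W=\frac{if}{k}\p_y\Phi+\frac{if'}{k}\Phi$ to the boundary, so only $\p_y\Phi|_{y=0,1}$ remains; expresses this as $\frac{k}{g^2}\langle W,e^{ikt\cdot}u_j\rangle$ by testing the shifted elliptic equation against the time-dependent homogeneous solutions $e^{ikty}u_1(0,y)$, $e^{ikt(y-1)}u_2(0,y)$; and then extracts $\frac{1}{ikt}\omega_0|_{y=0,1}$ by one further integration by parts, estimating the remainder by duality as $\lesssim\|e^{ikt\cdot}u_j\|_{H^{-s}}\|\p_y W\|_{H^s}=\mathcal{O}(t^{-1-s})$. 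This bypasses exactly the step you identify as hardest in your own plan — resolving the boundary layer of the elliptic solve uniformly in $k$ and $t$ — and is worth adopting.
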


In our stability result we additionally assume periodicity in $y$. As we discuss in Remark \ref{rem:H32assumptionsperiod}, this is largely a technical assumption and the requirements can be relaxed.
\begin{thm}
  Let $0<s<1/2$, $\omega_{0} \in H^{1}([0,1])$ and $\omega_{0},\p_{y}\omega_{0} \in H^{s}(\T)$. Suppose further that $U'(U^{-1}(\cdot)), U''(U^{-1}(\cdot)) \in W^{2,\infty}(\T)$, that there exists $c>0$ such that 
  \begin{align*}
    0<c<U'<c^{-1}<\infty
  \end{align*}
  and that 
  \begin{align*}
    \|U''(U^{-1}(\cdot))\|_{W^{2,\infty}(\T)} L 
  \end{align*}
  is sufficiently small.
  Then the solution, $W$, of the linearized Euler equations, \eqref{eq:Eulernochmal}, satisfies 
  \begin{align*}
    \|\p_{y}W(t)\|_{H^{s}(\T)} \lesssim \|\omega_{0}\|_{H^{s}(\T)}+\|\p_{y}\omega_{0}\|_{H^{s}(\T)} ,
  \end{align*}
 uniformly in time.
\end{thm}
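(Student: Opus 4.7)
The plan is to derive a time-uniform bound on $\|W(t)\|_{H^s_y(\T)}^2 + \|\p_y W(t)\|_{H^s_y(\T)}^2$ by setting up a weighted energy estimate in the Fourier variables dual to both $x$ and $y$, using the smallness hypothesis on $\|U''(U^{-1})\|_{W^{2,\infty}(\T)}\,L$ together with the crucial fact that $H^s$-regularity with $s<1/2$ is insensitive to boundary values and, in particular, accommodates the logarithmic boundary singularities that obstruct higher-order stability in the preceding corollary.

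I would first straighten the flow via the change of variables $z := U(y)$, which is legitimate by the monotonicity $U'\ge c>0$ and under which the transport-adjusted derivative $\p_y - tU'(y)\p_x$ becomes $U'(U^{-1}(z))(\p_z - t\p_x)$. Decomposing in Fourier series $W(t,x,y) = \sum_{k,\eta} \hat W(t,k,\eta)\,e^{i(kx+\eta z)}$, the elliptic equation for $\Phi_k$ becomes a perturbation (of size $\|U''(U^{-1})\|_{W^{2,\infty}(\T)}\,L$) of the constant-coefficient operator with symbol $k^{2}+(\eta-kt)^{2}$. Under the smallness hypothesis the solution map $W\mapsto\Phi$, together with the weighted derivatives relevant to the $H^s$-norm, can be controlled by a Neumann-series argument uniformly in $t$ and $k$.

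The core of the argument is then a weighted energy estimate on
\[
  E_s(t) := \sum_{k,\eta}\bigl(1+|\eta|^{2s}\bigr)|\hat W(t,k,\eta)|^{2},
\]
together with the corresponding quantity carrying an extra $|\eta|^{2}$ weight that captures $\p_y W$. Differentiating $E_s$ in time, using the evolution $\p_t W = U''(y)\p_x\Phi$ and the Neumann representation of $\Phi$, the leading contribution to $\p_t E_s$ comes from interactions localized near the resonance $\eta\approx kt$. The decisive estimate is that the $t$-integral of the weighted resonance multiplier $|\eta|^{2s}\,(k^{2}+(\eta-kt)^{2})^{-1}$ remains bounded uniformly in $(k,\eta)$ precisely when $s<1/2$ (a substitution $u=(\eta-k\tau)/|k|$ reduces it to $|\eta|^{2s}/|k|$ against an integrable kernel, combined with the $|k|$-factor produced by $\p_x$); at $s=1/2$ it diverges logarithmically, matching exactly the logarithmic boundary singularity documented in the preceding corollary. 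A Gr\"onwall/absorption argument, in which the smallness of $\|U''(U^{-1})\|_{W^{2,\infty}(\T)}\,L$ absorbs the principal coupling term into the left-hand side, then yields the claimed time-uniform bound.

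The main obstacle I expect is making the Neumann-series treatment of the perturbed elliptic solver quantitatively sharp in the $|\eta|^{2s}$-weighted norm, i.e.\ controlling the commutator of the fractional derivative $|\p_y|^{s}$ with the variable coefficients $U'(U^{-1})^{2}$ (and their lower-order companions arising from the change of variables) well enough not to destroy the cancellation in the resonance region $|\eta-kt|\lesssim|k|$. The periodicity-in-$y$ hypothesis enters precisely so that the Fourier-series representation is exact and no boundary-correction terms appear when commuting $|\p_y|^{s}$ with the elliptic solver; as indicated in Remark~\ref{rem:H32assumptionsperiod}, relaxing it to a non-periodic interval should introduce only lower-order technical bookkeeping at this regularity level.
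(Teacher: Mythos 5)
Your proposal captures the overall architecture (weighted energy estimate in Fourier, absorption via the smallness of $\|f\|_{W^{2,\infty}}\,L$, recognition that $s<1/2$ is critical for a reason tied to boundary singularities), but it contains two substantive gaps that make it diverge from, and fall short of, the paper's argument.

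First, the claim that ``the periodicity-in-$y$ hypothesis enters precisely so that the Fourier-series representation is exact and no boundary-correction terms appear when commuting $|\p_y|^s$ with the elliptic solver'' is incorrect. The stream function $\Phi$ satisfies \emph{Dirichlet}, not periodic, boundary conditions on $[0,1]$, so $\p_y\Phi$ has non-trivial boundary values and does \emph{not} inherit the zero-Dirichlet structure. The paper's proof of Theorem~\ref{thm:H32} must therefore decompose $\p_y\Phi = \Phi^{(1)} + H^{(1)}$, where $\Phi^{(1)}$ has zero Dirichlet data and $H^{(1)}$ is a homogeneous correction carrying the boundary data $\p_y\Phi|_{y=0,1}$. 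The control of $H^{(1)}$ (Theorem~\ref{thm:H32boundary}) is a separate, essential piece, and it is precisely here that the $L^1_t$-in-time mechanism degenerates as $s\uparrow 1/2$. The periodicity assumption in the theorem is only used so that the Fourier and kernel characterizations of $H^s(\T)$ coincide for the \emph{coefficients} $f$ and $g$ (Propositions~\ref{prop:LipHs} and~\ref{prop:ComHs}); it does not make the boundary corrections disappear.

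Second, your ``decisive estimate'' identifies the wrong quantitative mechanism. You assert that $\int_0^\infty |\eta|^{2s}\bigl(k^2+(\eta-kt)^2\bigr)^{-1}\,dt$ is bounded uniformly in $(k,\eta)$ iff $s<1/2$; in fact the substitution $u=\eta/k-t$ gives a value $\sim |\eta|^{2s}/|k|$ which grows in $\eta$ for any $s>0$, so this is not a uniform bound at all. The actual source of the $s<1/2$ threshold in the paper is the requirement that
\begin{align*}
  \Bigl\|\frac{\langle n\rangle^{s}}{\langle n-kt\rangle}\Bigr\|_{l^2_n} < \infty,
\end{align*}
i.e.\ square-summability of the Fourier coefficients of the homogeneous solutions $e^{ikty}u_j$ weighted by $\langle n\rangle^s$, which holds iff $s<1/2$. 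This enters both in the boundary estimate (Theorem~\ref{thm:H32boundary}, where one uses an interpolated splitting with parameter $\lambda$ and needs $s-\lambda<-1/2$) and in the boundary-term estimates inside the elliptic control (Lemma~\ref{lem:reductionboundary}, where $\|\langle n\rangle^s/\langle n/k-t\rangle\|_{l^2}\lesssim\langle kt\rangle^s$ again needs $s<1/2$). Finally, rather than a Neumann series for the elliptic solver, the paper mirrors the $L^2$-theory of \cite{Zill3} via a decreasing Fourier weight $A(t)$ and uses the time derivative $\dot A(t)$ to absorb the resonance; the smallness of $\|f\|_{W^{2,\infty}}\,L$ (together with $|k|\gg 1$) absorbs lower-order elliptic errors, but the integrability in time is generated by the weight $A(t)$, not by a Neumann iteration. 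Your Neumann-series plan is not obviously wrong in spirit, but it is not worked out, and crucially it does not account for the boundary correction $H^{(1)}$ at all.
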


\subsection{Results for perturbations with vanishing Dirichlet data}
\label{sec:results-pert-with}

 When restricting to perturbations with vanishing Dirichlet data, $\omega_{0}|_{y=0,1}=0$, we similarly show that the critical Sobolev exponent is given by $\frac{5}{2}$ (Corollary \ref{cor:H2boundaryquadratic} and Theorem \ref{thm:H52}).
That is, generally $\p_{y}^{2}W$ asymptotically develops logarithmic singularities at the boundary, resulting in blow-up of all super-critical norms, while stability holds in all sub-critical fractional Sobolev spaces $H^{s},s<\frac{5}{2}$.

\begin{cor}
  Let $\omega_{0}|_{y=0,1}\equiv 0$ and let $W$ be the solution of \eqref{eq:Eulernochmal}. Further suppose that the limits
  \begin{align*}
    \lim_{t\rightarrow \infty} f(y)\p_{y}W|_{y=0,1}
  \end{align*}
exist (e.g. by Lemma \ref{lem:pWcontrol}) and are non-trivial.
Then for any $s>5/2$,
\begin{align*}
  \sup_{t\geq 0} \|W\|_{H^{s}} = \infty .
\end{align*}
\end{cor}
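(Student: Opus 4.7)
The plan is to bootstrap the mechanism of the preceding corollary one derivative higher, exploiting the fact that the vanishing Dirichlet assumption $\omega_{0}|_{y=0,1}\equiv 0$ suppresses the first-order boundary singularity and pushes the logarithmic divergence into the second $y$-derivative. Concretely, $\partial_{y}W$ will play the role that $W$ played in the previous corollary: differentiating the scattering equation
\[
\partial_{t}W = U''(y)\partial_{x}\Phi
\]
in $y$, the source driving the boundary dynamics is governed by $f(y)\partial_{y}W|_{y=0,1}$, whose non-trivial limit as $t\to\infty$ is exactly what we have hypothesized (and, per the corollary, is supplied by Lemma \ref{lem:pWcontrol}).

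First I would replay the asymptotic/resonance analysis that produced $\|\partial_{y}W\|_{L^{\infty}}\gtrsim \log|t|$ in the previous corollary, but now with $\partial_{y}W$ in place of $W$ and with the boundary source $\partial_{y}\big(U''(y)\partial_{x}\Phi\big)|_{y=0,1}$ identified, modulo terms that vanish thanks to $\omega_{0}|_{y=0,1}=0$, with a non-trivial multiple of $\lim_{t\to\infty} f(y)\partial_{y}W|_{y=0,1}$. The same argument then yields
\[
\|\partial_{y}^{2}W(t)\|_{L^{\infty}([0,1])} \gtrsim \log|t|
\]
as $t\to\pm\infty$.

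Second, I would convert this logarithmic lower bound into a supercritical Sobolev blow-up using the one-dimensional Sobolev embedding: for any $s>\tfrac{5}{2}$ one has $H^{s}([0,1])\hookrightarrow C^{2,\alpha}([0,1])$ with $\alpha = s-\tfrac{5}{2}>0$. A uniform bound $\sup_{t}\|W(t)\|_{H^{s}}<\infty$ would therefore force uniform Hölder regularity of $\partial_{y}^{2}W(t)$ up to the boundary, which is incompatible with a logarithmically unbounded family. Picking a non-trivial $x$-Fourier mode and applying this slice-wise delivers the blow-up of the full $H^{s}$ norm.

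The main obstacle is the first step: one has to verify that the commutators produced by differentiating the stream-function solve and the coefficient $U''(y)$ in $y$ are genuinely of lower order than the boundary resonance, so that the logarithmic divergence of $\partial_{y}^{2}W$ really is driven by the non-trivial boundary limit of $f(y)\partial_{y}W|_{y=0,1}$ and is not cancelled by a subprincipal contribution. Once this isolation is carried out, the reduction to the previously established logarithmic singularity mechanism is essentially mechanical, and the Sobolev embedding step is standard.
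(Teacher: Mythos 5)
Your proposal is correct and follows essentially the same route as the paper: assume (for contradiction) a uniform $H^{s}$ bound with $s>5/2$, show that the boundary trace $\p_{y}^{2}\Phi|_{y=0,1}$ behaves asymptotically like $\frac{1}{ikt}\p_{y}W|_{y=0,1}$ plus integrable errors (with the first-order boundary term $\p_{y}\Phi|_{y=0,1}$ being $\mathcal{O}(t^{-2})$ because $\omega_{0}|_{y=0,1}=0$), integrate to obtain a logarithmic lower bound on $\p_{y}^{2}W$ at the boundary, and contradict the Sobolev embedding $H^{s}\hookrightarrow C^{2}$. The only minor point is that your intermediate claim $\|\p_{y}^{2}W\|_{L^{\infty}}\gtrsim\log|t|$ is itself conditional on the a priori $H^{s}$ bound (exactly as in the preceding corollary), which your second step's contradiction framing already implicitly accounts for.
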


In our stability theorem we additionally require periodicity in $y$.
As discussed in Remark \ref{rem:H52assumptionsperiod}, this is largely a technical assumption and the requirements can be relaxed.
\begin{thm}
Let $0<s<1/2$ and let $\omega_{0} \in H^{2}([0,1])$, with vanishing Dirichlet data, $\omega_{0}|_{y=0,1}=0$, and $\omega_{0},\p_{y}\omega_{0},\p_{y}^{2}\omega_{0} \in H^{s}(\T)$. 
Suppose further that $U'(U^{-1}(\cdot)),$ $U''(U^{-1}(\cdot)) \in W^{3,\infty}(\T)$, that there exists $c>0$ such that
\begin{align*}
  0<c<U'<c^{-1}<\infty,
\end{align*}
and that 
  \begin{align*}
     \|U''(U^{-1}(\cdot))\|_{W^{3,\infty}(\T)} L 
  \end{align*}
is sufficiently small.
Then the solution, $W$, of the linearized Euler equations, \eqref{eq:Eulernochmal}, satisfies 
  \begin{align*}
    \|\p_{y}^{2}W(t)\|_{H^{s}(\T)} \lesssim \|\omega_{0}\|_{H^{s}}+ \|\p_{y}\omega_{0}\|_{H^{s}}+\|\p_{y}^{2}\omega_{0}\|_{H^{s}},
  \end{align*}
 uniformly in time.
\end{thm}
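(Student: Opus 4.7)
The plan is to upgrade the $L^{2}_{x}H^{2}_{y}$ stability of Theorem~\ref{thm:citedH1H2result} to the fractional bound by propagating the extra $y$-regularity through the equation. The natural setting is the flattened coordinate $z=U(y)$, in which the transport term disappears and $\p_{z}$ commutes with the evolution; under the periodicity assumption, $z$ is again a periodic variable. Writing $\tilde{W}(t,x,z)=W(t,x,U^{-1}(z))$ and $\tilde{\Phi}$ analogously, the scattering form becomes
\begin{align*}
\dt\tilde{W}=f(z)\,\p_{x}\tilde{\Phi}, \qquad L_{t}\tilde{\Phi}=\tfrac{1}{(U'\circ U^{-1})^{2}}\tilde{W},
\end{align*}
with $f=U''\!\circ U^{-1}$ and $L_{t}$ of the form $\p_{x}^{2}+(U'\circ U^{-1})^{2}(\p_{z}-t\p_{x})^{2}$, up to lower-order $U''$-corrections. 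The vanishing Dirichlet data is preserved by the transport and passes to $\tilde{W}$ at the images of the boundary; this is the structural feature that drove the integer-level $H^{2}$ stability in Theorem~\ref{thm:citedH1H2result}, and it will again be crucial for differentiating the elliptic equation twice.

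\textbf{Main steps.}
Applying $\p_{z}^{2}$ to the vorticity equation yields
\begin{align*}
\dt\p_{z}^{2}\tilde{W}=f\,\p_{x}\p_{z}^{2}\tilde{\Phi}+2f'\,\p_{x}\p_{z}\tilde{\Phi}+f''\,\p_{x}\tilde{\Phi},
\end{align*}
so the task reduces to estimating $\p_{x}\p_{z}^{j}\tilde{\Phi}$ for $j=0,1,2$ in the Gagliardo seminorm
\begin{align*}
[u]_{H^{s}(\T)}^{2}=\iint_{\T\times\T}\frac{|u(z)-u(z')|^{2}}{|z-z'|^{1+2s}}\,dz\,dz'.
\end{align*}
I would test the evolution of $\p_{z}^{2}\tilde{W}$ in this seminorm, invoke the standard fact that for $0<s<1/2$ pointwise multiplication by $W^{3,\infty}(\T)$ functions is a bounded operator on $H^{s}(\T)$ with norm controlled by $\|f\|_{W^{3,\infty}}$, and use the sub-trace range $s<1/2$ to avoid any boundary contributions from integration by parts. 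The heart of the estimate is then a fractional elliptic inequality of the schematic form
\begin{align*}
\|\p_{x}\p_{z}^{2}\tilde{\Phi}(t)\|_{H^{s}}\lesssim\tfrac{1}{\langle t\rangle^{2}}\bigl(\|\tilde{W}\|_{H^{s}}+\|\p_{z}\tilde{W}\|_{H^{s}}+\|\p_{z}^{2}\tilde{W}\|_{H^{s}}\bigr),
\end{align*}
which I would derive by differentiating $L_{t}\tilde{\Phi}=(U')^{-2}\tilde{W}$ twice in $z$ and appealing to the $L^{2}_{x}H^{1}_{y}$ and $L^{2}_{x}H^{2}_{y}$ stability of Theorem~\ref{thm:citedH1H2result} for the lower-order pieces. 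Inserting this into the energy identity produces a Gronwall inequality with time-integrable right-hand side and overall prefactor proportional to $\|U''\circ U^{-1}\|_{W^{3,\infty}(\T)}L$, so the smallness hypothesis closes the estimate uniformly in $t$.

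\textbf{Main obstacle.}
The principal difficulty is the fractional commutator estimate for $[\p_{z}^{2},L_{t}^{-1}]$. Differentiating $L_{t}\tilde{\Phi}$ twice in $z$ generates terms of the form $t^{2}\bigl((U'\circ U^{-1})^{2}\bigr)''\p_{x}^{2}\tilde{\Phi}$, which a priori grow in $t$; controlling them requires the identity $(U'\circ U^{-1})^{2}(\p_{z}-t\p_{x})^{2}\tilde{\Phi}=\tilde{W}-\p_{x}^{2}\tilde{\Phi}$ to trade $t$-growth for regularity, together with Leibniz-type estimates for the Gagliardo seminorm paired against the $W^{3,\infty}$ coefficients. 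As in \cite{Zill3}, the smallness of $\|U''\circ U^{-1}\|_{W^{3,\infty}(\T)}L$ is precisely what lets these error terms be absorbed into the energy; the new technical content here is carrying this absorption through while preserving the anisotropic $\langle t\rangle^{-2}$ decay in the fractional Gagliardo norm uniformly in $s\in(0,1/2)$.
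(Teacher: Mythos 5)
Your plan has a genuine gap centered on the boundary corrections.

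You write that the sub-trace range $s<1/2$ lets you ``avoid any boundary contributions from integration by parts.'' That is not what happens in the paper, and in fact the boundary contributions are the heart of the difficulty. When you differentiate the elliptic equation $(-1+(g(\p_{y}/k-it))^{2})\Phi=W$ once or twice in $y$, the resulting function $\p_{y}^{j}\Phi$ no longer satisfies zero Dirichlet conditions. The paper decomposes $\p_{y}\Phi=\Phi^{(1)}+H^{(1)}$ and $\p_{y}^{2}\Phi=\Phi^{(2)}+H^{(2)}$, where $\Phi^{(j)}$ solves an inhomogeneous problem with zero Dirichlet data and $H^{(j)}$ is a homogeneous solution carrying the nontrivial boundary values $\p_{y}^{j}\Phi|_{y=0,1}$. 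Those boundary values decay only like $1/t$ (Lemma~\ref{lem:boundary32} and the analogous lemma for $\p_{y}^{2}\Phi$), and controlling the resulting terms $\langle\frac{if}{k}H^{(j)},A\,\p_{y}^{2}W\rangle_{H^{s}}$ requires Theorems~\ref{thm:h52bd1} and~\ref{thm:h52bd2}, as well as Lemma~\ref{lem:pWcontrol} and Corollary~\ref{cor:H2boundaryquadratic}, which use the vanishing Dirichlet data $\omega_{0}|_{y=0,1}=0$ to upgrade the decay of $\p_{y}\Phi|_{y=0,1}$ to $\mathcal{O}(t^{-2})$. The restriction $s<1/2$ is used for the Lipschitz-multiplication and commutator estimates (Propositions~\ref{prop:LipHs} and~\ref{prop:ComHs}), not to evade the elliptic boundary terms.

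As a consequence, your conjectured fractional elliptic inequality
\begin{align*}
\|\p_{x}\p_{z}^{2}\tilde{\Phi}(t)\|_{H^{s}}\lesssim\tfrac{1}{\langle t\rangle^{2}}\bigl(\|\tilde{W}\|_{H^{s}}+\|\p_{z}\tilde{W}\|_{H^{s}}+\|\p_{z}^{2}\tilde{W}\|_{H^{s}}\bigr)
\end{align*}
is too strong: $\p_{y}^{2}\Phi$ contains the homogeneous piece $H^{(2)}$, whose boundary value behaves like $\frac{1}{ikt}\p_{y}W|_{y=0,1}$, and $\|e^{ikty}u_{1}\|_{H^{s}}\lesssim t^{s}$, so the best one can hope for is a $\langle t\rangle^{s-2}$ bound for the homogeneous part, and the full argument is run not as a decay estimate but as an energy argument: one introduces a decreasing Fourier weight $A(t)$, considers $I(t)=\langle W,AW\rangle_{H^{s}}+\langle\p_{y}W,A\p_{y}W\rangle_{H^{s}}+\langle\p_{y}^{2}W,A\p_{y}^{2}W\rangle_{H^{s}}$, and shows the boundary and elliptic contributions can be absorbed by $\langle\,\cdot\,,\dot{A}\,\cdot\,\rangle_{H^{s}}$ via summable-in-time coefficients $c_{n}(t)\in L^{1}_{t}$. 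Without identifying and separately estimating $H^{(1)}$ and $H^{(2)}$, the proof does not close.
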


As consequences of these theorems, in Section \ref{cha:anoth-sect-cons}, we obtain linear inviscid damping (with integrable but not quadratic decay for $v_{2}$) as well as a quantitative scattering result for perturbations without zero Dirichlet data.

\subsection{Consistency and implications for the nonlinear problem}
\label{sec:cons-impl-nonl}

Using the stability results in $H^{s},s>2,$ we show that the linear evolution is consistent with the nonlinear equations (Theorems \ref{thm:EConsistency} and \ref{thm:EApproximation}).
Conversely, the nonlinear equations are shown to similarly asymptotically develop singularities on the boundary and hence exhibit blow-up in relatively low Sobolev regularity (Theorem \ref{thm:EInstability}):
\begin{thm}
  Let $(\omega,v)$ be a solution of the 2D Euler equations and define 
  \begin{align*}
    U(t,y)&:= \langle v_{1} \rangle_{x}(t,y), \\
    W(t,x,y)&:=\omega(t,x-\int_{0}^{t}U(t',y)dt',y), \\
    \Phi(t,x,y)&:=\phi(t,x-\int_{0}^{t}U(t',y)dt',y).
  \end{align*}
  Suppose further that 
  \begin{align*}
    \p_{y}^{2}U(t,y)|_{y=0} > c> 0,
  \end{align*}
  and that for some $k \in \Z$, 
  \begin{align}
    \label{eq:impliedbyinvisciddamping}
    \begin{split}
    \Re \mathcal{F}_{x}W(t,k,y)|_{y=0} &>c>0, \\
    |\mathcal{F}_{x}(\p_{y}(\nabla^{\bot}\Phi \cdot \nabla W))(t,k,y)|_{y=0}| &= \mathcal{O}(t^{-1-\epsilon}).
    \end{split}
  \end{align}
  Then, 
  \begin{align*}
    |(\mathcal{F}_{x}\p_{y}W)(t,k,y)|_{y=0}| \gtrsim \log(t),
  \end{align*}
  and as a consequence, for any $s>2$, 
  \begin{align*}
    \sup_{t>0}\|W(t)\|_{H^{s}} = \infty .
  \end{align*}
\end{thm}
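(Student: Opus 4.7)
My plan is to mirror the $H^{2}$ blow-up mechanism for the linearised problem (Theorem \ref{thm:citedH1H2result}): track the evolution of $\p_{y}W$ on the boundary in a fixed Fourier mode $k$, show that up to an integrable error from the nonlinearity it receives a non-integrable $1/t$ contribution of definite sign, and finally convert the resulting $\log t$ growth into $H^{s}$ blow-up via Sobolev embedding.

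First I would write the 2D Euler equation in the given scattering coordinates as
\begin{align*}
  \p_{t}W = \p_{y}^{2}U(t,y)\, \p_{x}\Phi - \nabla^{\bot}\Phi\cdot \nabla W,
\end{align*}
with $\Phi$ determined from $W$ by the time-dependent elliptic problem
\begin{align*}
  \bigl(\p_{x}^{2}+(\p_{y}-\Theta_{t}(y)\p_{x})^{2}\bigr)\Phi = W, \qquad \Phi|_{y=0,1}=0,
\end{align*}
where $\Theta_{t}(y):=\int_{0}^{t}\p_{y}U(t',y)\,dt'$ now plays the role of the shear, and $\p_{x}\Phi|_{y=0,1}=0$ has been integrated to $\Phi|_{y=0,1}=0$ on non-zero modes. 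Applying $\p_{y}$ to the $W$-equation, passing to the $k$-th Fourier coefficient in $x$ (denoted $\hat{\cdot}=\mathcal{F}_{x}$), and evaluating at $y=0$, the boundary condition $\p_{x}\Phi|_{y=0}=0$ kills the term $\p_{y}^{3}U\cdot\p_{x}\Phi$, so that
\begin{align*}
  \p_{t}(\p_{y}\hat W)(t,k,0) = \p_{y}^{2}U(t,0)\, (ik)\, \p_{y}\hat\Phi(t,k,0) + \mathcal{O}(t^{-1-\epsilon}),
\end{align*}
where the error term is exactly the one controlled by hypothesis \eqref{eq:impliedbyinvisciddamping}.

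The heart of the proof is then the asymptotic analysis of $\p_{y}\hat\Phi(t,k,0)$. Following the Green's function / WKB-type expansion used for the linearised problem in \cite{Zill3}, but with the time-integrated shear $\Theta_{t}(y)$ in place of $tU'(y)$, I expect an expansion of the form
\begin{align*}
  (ik)\, \p_{y}\hat\Phi(t,k,0) = -\frac{\p_{y}^{2}U(t,0)}{(\p_{y}U(t,0))^{2}}\cdot \frac{\hat W(t,k,0)}{|k|\,t}\bigl(1+o(1)\bigr) + R(t),
\end{align*}
with a remainder $R\in L^{1}_{t}$. Combined with the lower bounds $\p_{y}^{2}U(t,0)>c$ and $\Re \hat W(t,k,0)>c$, the driving term then has a definite real part of size $\gtrsim 1/t$, and integration from $1$ to $t$ yields $|(\p_{y}\hat W)(t,k,0)|\gtrsim \log t$. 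Finally, since $H^{s}(\T_{L}\times[0,1])\hookrightarrow C^{1}$ for every $s>2$, a uniform bound on $\|W(t)\|_{H^{s}}$ would force $\p_{y}W$ to be uniformly bounded up to the boundary, contradicting this logarithmic divergence.

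The main obstacle is the elliptic asymptotic: the background shear $\Theta_{t}(y)$ is no longer of the clean form $tU'(y)$ but only a time-averaged quantity with limited a priori control, so one must show that the linear Green's function analysis of \cite{Zill3} still produces the correct leading coefficient and, in particular, that the $o(1)$ and $L^{1}_{t}$ error structure survives. This is precisely why the strong boundary hypotheses \eqref{eq:impliedbyinvisciddamping} are imposed — they force $W$ and its nonlinear flux at the boundary to behave in the way predicted by the linear theory, so that the argument reduces, modulo integrable errors, to the linear mechanism producing the $\log t$ growth of $\p_{y}W|_{y=0}$.
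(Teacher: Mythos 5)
Your proposal follows essentially the same route as the paper: differentiate the scattering-form equation in $y$, restrict to the boundary and a fixed $k$-mode (so the nonlinear flux is absorbed into the $\mathcal{O}(t^{-1-\epsilon})$ error by hypothesis \eqref{eq:impliedbyinvisciddamping}), argue that $ik\,\p_y\hat\Phi(t,k,0)$ decays only like $\hat W(t,k,0)/t$ with definite real part, integrate in time to obtain $\log t$, and conclude via the Sobolev embedding $H^s\hookrightarrow C^1$ for $s>2$. You also correctly identify that the sign hypotheses $\p_y^2U|_{y=0}>c$ and $\Re\hat W|_{y=0}>c$ are precisely what fix the sign of the driving term.

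The step you flag but do not carry out --- the asymptotic for $\p_y\hat\Phi(t,k,0)$ with the time-averaged shear $\int_0^t\p_y U(\tau,\cdot)\,d\tau$ in place of $tU'$ --- is exactly what the paper's proof settles, and it does so not by a WKB expansion but by the explicit test-function device of Lemma~\ref{lem:boundaryestimates}: the homogeneous solution of the shifted elliptic operator now has the closed form $u(t,y)=\exp\left(\int_0^t(U(\tau,y)-U(\tau,0))\,d\tau\right)u(0,y)$; pairing the elliptic equation against $u$ expresses $\p_y\hat\Phi|_{y=0}$ as $\langle\hat W,u\rangle_{L^2}$, and integrating by parts via the identity
$u=u(0,y)\left(\int_0^t\p_y U\right)^{-1}\p_y\exp\left(\int_0^t(U(\tau,y)-U(\tau,0))\,d\tau\right)$
pulls out the boundary term $\hat W|_{y=0}\big/\int_0^t\p_y U(\tau,0)\,d\tau$, of modulus $\gtrsim c/t$. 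This is how the $1/t$ coefficient becomes concrete and the remainder integrable, and it is the content your proposal is missing. Two minor points: your displayed asymptotic for $\p_y\hat\Phi$ has a spurious $\p_y^2U(t,0)$ in the numerator --- that factor enters only as the separate prefactor multiplying $ik\,\p_y\hat\Phi$ in the evolution of $\p_y\hat W$, not inside the elliptic asymptotic itself; it is harmless for the sign and rate, but conflates two different places where $\p_y^2U$ appears. Also note that once the elliptic asymptotic is obtained by this pairing/integration-by-parts argument, no further smallness or WKB error hierarchy is needed, which is why the paper's hypotheses on $U$ are only boundedness-type rather than perturbative.
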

As we discuss in Section \ref{cha:anoth-sect-cons}, the assumptions \eqref{eq:impliedbyinvisciddamping} can be shown to be satisfied, if the asymptotic shear flow is strictly monotone and $W(t)$ is sufficiently regular, i.e.
\begin{align*}
  0<c<\p_{y}U(t,y)<c^{-1}&< \infty,\\
  \|W(t)\|_{H^{6}}\leq C &<\infty,   
\end{align*}
as $t \rightarrow \infty$ and one further assumes that $\Re \mathcal{F}_{x}W(0,k,y)|_{y=0}$ is sufficiently large.
Regularity results for nonlinear inviscid damping in a finite periodic channel can thus in general not hold in high Sobolev regularity, which is in sharp contrast to the results of Bedrossian and Masmoudi, \cite{bedrossian2013asymptotic}, on nonlinear inviscid damping for Couette flow in an infinite periodic channel, where very high regularity is used to control nonlinear effects.
\\

\subsection{Outline of the article}
\label{sec:outline-article}

We conclude this introduction with a short overview of the structure of the article:
\begin{itemize}
\item In Section \ref{sec:linearized-2d-euler}, we introduce the linearized 2D Euler equations around monotone shear flows as well as some useful changes of coordinates. Furthermore, we briefly discuss the dynamics and the damping mechanism for the explicitly solvable case of Couette flow and a slightly more general constant coefficient model.
\item In Section \ref{sec:fract-sobol-space}, we introduce fractional Sobolev spaces, discuss some of their properties and introduce several estimates, which are used in the following sections.
\item In Section \ref{sec:stability-h32-}, we show that, for general perturbations, the critical Sobolev exponent is given by $\frac{3}{2}$, in the sense that stability holds for all sub-critical Sobolev spaces and blow-up occurs in all super-critical spaces. As a consequence we establish linear inviscid damping with damping rates integrable in time for initial data $\omega_{0}$ without vanishing Dirichlet data, $\omega_{0}|_{y=0,1}=0$.
\item In Section \ref{sec:stability-h52-}, we show that, for perturbations with vanishing Dirichlet data, the critical Sobolev exponent is improved to $\frac{5}{2}$.
  As we discuss in Section \ref{cha:anoth-sect-cons}, this regularity result can be used to prove consistency with the nonlinear Euler equations. Here, as a consequence of the super-critical blow-up, we also establish an instability result for the nonlinear dynamics, which, in particular, shows that results in Gevrey regularity such as in \cite{bedrossian2013asymptotic} can not be obtained in the setting of a finite channel.
\item In Section \ref{sec:boundary-layers}, we further study the singularity formation in a slightly simplified form and discuss critical stability and blow-up results in Sobolev spaces, $W^{s,p}$. 
\end{itemize}

\section{Preliminaries}
\label{sec:preliminaries}

\subsection{The linearized 2D Euler equations in scattering formulation}
\label{sec:linearized-2d-euler}

In this section, we introduce the linearized 2D Euler equations around monotone shear flows in a finite periodic channel. Subsequently, we employ multiple changes of variables and a Fourier transform in $x$ to simplify the equations.

The full 2D Euler equations in vorticity formulation in a finite periodic channel, $\T_{L}\times [a,b]$, with impermeable walls are given by 
\begin{align*}
  \dt \omega + v\cdot \nabla \omega &=0 , \\
  \Delta \phi &=\omega, \\
  v &= \nabla^{\bot}\phi, \\
  v_{2}|_{y=a,b}&=0, \\
(t,x,y)& \in \R \times \T_{L}\times [a,b].
\end{align*}
Considering solutions close to a shear flow, $(U(y),0)$, i.e. 
\begin{align*}
  v&= (U(y),0)+ v', \\
  \omega&= -U''(y))+ \omega',
\end{align*}
in the linearization we neglect the nonlinearity, 
\begin{align}
  v' \cdot \nabla \omega',
\end{align}
and thus obtain the \emph{linearized 2D Euler equations}: 
\begin{align}
  \label{eq:LE1}
  \begin{split}
    \dt \omega' + U(y)\p_{x}\omega' &= U''(y) v_{2}' , \\
    \Delta \phi'&=\omega', \\
    v_{2}' &= \p_{x}\phi', \\
    v_{2}'|_{y=a,b}&=0.
  \end{split}
\end{align}
Here, it is advantageous to introduce two changes of variables and a Fourier transform in $x$, in order obtain a more tractable formulation:
\begin{itemize}
\item We note that none of the coefficient functions, $U(y),U''(y)$, depend on $x$.
Hence, after a Fourier transform in $x$, the system \emph{decouples in frequency}:
\begin{align}
  \label{eq:LE2}
  \begin{split}
    \dt \hat{\omega}'(t,k,y) + U(y)ik \hat{\omega}'(t,k,y)&= U''(y)ik \hat{\phi}'(t,k,y),\\
    (-k^{2}+\p_{y}^{2})\hat{\phi}'(t,k,y)&=\hat{\omega}'(t,k,y), \\
    ik \hat{\phi}'(t,k,y)|_{y=0,1}&=0, \\
(t,k,y)& \in \R \times L\Z \times [0,1].
  \end{split}
\end{align}
In particular, the $x$ average, i.e. the mode $k=0$, is preserved in time.
Thus, using the linearity and modifying the initial data 
\begin{align*}
  \omega_{0} \mapsto \omega_{0}- \langle \omega_{0} \rangle_{x},
\end{align*}
we may without loss of generality restrict to modes $k\neq 0$.
\item Considering the case of Couette flow, i.e. $U(y)=y$, the linearized Euler equations reduce to a transport problem. In this case, one can hence compute the solution explicitly:
  \begin{align*}
    \omega'(t,x,y)&= \omega'(0,x+ty,y), \\
    \hat{\omega'}(t,k,y)&=\hat{\omega'}(0,k,y)e^{ikty}. 
  \end{align*}
  From this explicit form, we observe that it can not be expected that $\omega'(t,x,y)$ is regular with respect to $y$ uniformly in time, but that regularity can only be expected of the \emph{vorticity moving with the flow}
  \begin{align*}
    W(t,x,y)&=\omega'(t,x-tU(y),y), \\
    \hat{W}(t,k,y)&=\hat{\omega'}(t,k,y)e^{-iktU(y)}.
  \end{align*}
\item Using the Fourier transform and considering coordinates moving with the flow, the equation for the stream function 
  \begin{align*}
    \hat{\Phi}(t,k,y):= \hat{\phi'}(t,k,y)e^{-iktU(y)},
  \end{align*}
  is given by
  \begin{align*}
    (-k^{2}+(\p_{y}-iktU'(y))^{2})\Phi(t,k,y)=W(t,k,y).
  \end{align*}
  As $U'(y)$ is non-constant, an analysis of the behaviour of $(\p_{y}-iktU'(y))^{2}$ on frequency-localised functions would have to invest much technical effort to control error terms.
  It is thus advantageous to instead use that monotone shear flows are invertible and hence consider a change of variables 
  \begin{align*}
    (x,y) \mapsto (x,z)=(x,U^{-1}(y)).
  \end{align*}
\end{itemize}

Combing these three steps and introducing the notation 
\begin{align*}
  f(z):=U''(U^{-1}(z)), \\
  g(z):=U'(U^{-1}(z)),
\end{align*}
 we obtain the \emph{linearized 2D Euler equations in scattering formulation}:
\begin{align}
  \begin{split}
  \dt \hat W &= f(z) ik \hat\Phi, \\
  (-k^{2}+(g(z)(\p_{z}-ikt))^{2})\hat\Phi &= \hat W, \\
  \hat \Phi|_{z=U(a),U(b)}&=0, \\
  (t,k,z)&\in \R \times L\Z \times [U(a),U(b)].
  \end{split}
\end{align}
Here, the term scattering is used as in dispersive equations, i.e. the linearized Euler equations scatter with respect to the underlying transport equation 
\begin{align*}
  \dt f + U(y)\p_{x}f =0,
\end{align*}
which in the current coordinates means that $W(t,x,y)$ converges to an asymptotic profile $W^{\infty}(x,y)$ as $t \rightarrow \infty$. 

As additional simplifications for the notation, we relabel $z$ as $y$ and use the Galilean and scaling symmetries of the equations to reduce to the setting $[U(a),U(b)]=[0,1]$ (with $L$ rescaled by a factor as well).
Furthermore, since the system decouples in $k$, we consider $k$ as a given parameter, rescale $\hat{\Phi}$ by $k^{-2}$ and drop the hats, $\hat{\cdot}$, from our notation:
\begin{defi}[Linearized 2D Euler equations in scattering formulation]
Let $f,g: [0,1]\rightarrow \R$ be given. Then the \emph{linearized Euler equations in scattering formulation} are given by
\begin{align}
  \label{eq:LE}
  \begin{split}
\dt W &= \frac{if}{k}\Phi, \\
(-1+(g(\frac{\p_{y}}{k}-it))^{2})\Phi &= W, \\
   \Phi|_{y=0,1}&=0, \\
  (t,k,y)&\in \R \times L(\Z \setminus \{0\}) \times [0,1].
  \end{split}
\end{align}
\end{defi}

In the following section, we briefly discuss the damping mechanism for a simplified model in the setting of an infinite channel. There, an explicit solution allows us to clearly present the damping mechanism and discuss the challenges of deducing regularity results and the additional technical difficulties and boundary effects arising in the setting of a finite periodic channel.

\subsection{A constant coefficient model for the damping mechanism}
\label{sec:const-coeff-model}

In order to obtain some insights into the damping mechanism and the associated challenges, in the following we recall the \emph{constant coefficient model} from \cite{Zill3}.

In this model, we consider the linearized Euler equations in scattering formulation, \eqref{eq:LE}, in the case of an infinite periodic channel, $\T_{L} \times \R$ , and formally replace $f$ and $g$ by constants $c \in \C, d \in \R$:
\begin{align*}
  \dt \Lambda &= \frac{ic}{k} \Psi, \\
  (-1+d^{2}(\frac{\p_{y}}{k}-it)^{2})\Phi &= W, \\
(t,k,y)&\in \R \times L(\Z \setminus \{0\}) \times \R.
\end{align*}
We note that the case $c=0,d=1$ corresponds to Couette flow, $U(y)=y$.

As the coefficient functions are constant, after a Fourier transform in $y$, this system further decouples with respect to the frequency $\eta$ and is explicitly solvable.
In contrast to Couette flow, the dynamics of $\Lambda$ are however not trivial.
More precisely, we compute
\begin{align*}
  \dt \tilde \Lambda &= -\frac{ic}{k} \frac{1}{1+d^{2}(\frac{\eta}{k}-t)^{2}} \tilde \Lambda, \\
  \Rightarrow \tilde \Lambda (t,k,\eta)&= \exp\left(-\frac{ic}{k}\int_{0}^{t} \frac{1}{1+d^{2}(\frac{\eta}{k}-\tau)^{2}} d\tau \right) \tilde \Lambda (0,k,\eta).
\end{align*}
From this explicit calculation we observe multiple facts:
\begin{itemize}
\item In order to obtain decay of the multiplier in time, we need to require that $d^{2}>0$. In the case of the linearized Euler equations this corresponds to requiring $(U')^{2}\geq d^{2}>0$, i.e. strict monotonicity.
The solution operator is then uniformly bounded by 
\begin{align*}
  \exp \left(\frac{|c|}{|k|}\frac{\pi}{|d|}\right).
\end{align*}
\item The operator norm of $\Lambda \mapsto \Psi$ as mapping from $H^{s}$ to $H^{s}$ \emph{does not improve in time}, since 
  \begin{align*}
    \sup_{\eta} \frac{1}{1+d^{2}(\frac{\eta}{k}-t)^{2})} =1
  \end{align*}
is independent of time. This can also be seen more generally by noting that the change of variables $(x,y)\mapsto (x-tU(y),y)$ is a unitary operator on $L^{2}$ and hence conjugation with it does not change the operator norm.
\item One can additionally use that $|e^{i\Re(c)}|=1$. However, in the case of the linearized Euler equations, using this property corresponds to using anti-symmetry of operators and cancellations. As these are very fragile properties, we restrict ourselves to only using the more robust damping mechanism.
\end{itemize}
Since the linearized Euler equations do not admit explicit solutions, in our proof of stability in \cite{Zill3} we use a slightly more indirect argument. That is, we construct a decreasing Fourier weight 
\begin{align*}
  A(t) W := \mathcal{F}^{-1} \exp(C \arctan(\frac{\eta}{k}-t))\mathcal{F} W,
\end{align*}
where $C>0$, and show that, under suitable assumptions on $f,g$ and $L$,
\begin{align*}
  \frac{d}{dt} \langle W, A W \rangle \leq | 2\Re \langle W, A \frac{if}{k}\Phi \rangle | + \langle W, \dot A W \rangle \leq 0.
\end{align*}
Here, by our construction of $A$, the last inequality corresponds to an elliptic regularity result for $\Phi$.
Using that $A(t)$ is ``comparable to the identity'', i.e. 
\begin{align*}
  1 \lesssim \exp(C \arctan(\frac{\eta}{k}-t)) \lesssim 1,
\end{align*}
we thus obtain
\begin{align*}
  \|W(t)\|_{L^{2}}^{2} \lesssim \langle W,A(t)W \rangle \leq \langle \omega_{0}, A(0)\omega_{0} \rangle \lesssim \|\omega_{0}\|_{L^{2}}^{2}.
\end{align*}
The associated $L^{2}$ stability result for both the infinite and finite periodic channel is summarised in the following theorem:
\begin{thm}[{\cite[Theorems 3.4 and 4.2]{Zill3}}]
  \label{thm:citedL2result}
  Let $W$ be a solution of the linearized Euler equations, \eqref{eq:LE}, in either the infinite periodic channel, $\T_{L}\times \R$, or the finite periodic channel, $\T_{L}\times [0,1]$.
  Further suppose that there exists $c>0$ such that 
  \begin{align*}
    0<c<|g|<c^{-1}<\infty,
  \end{align*}
  and that 
  \begin{align*}
    \|f\|_{W^{1,\infty}}L
  \end{align*}
  is sufficiently small.
  Then, for any initial datum, $\omega_{0} \in L^{2}$, the solution $W$ satisfies 
  \begin{align*}
    \|W(t)\|_{L^{2}} \lesssim \|\omega_{0}\|_{L^{2}}.
  \end{align*}
\end{thm}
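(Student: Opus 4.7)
The plan is to follow the strategy outlined in the constant coefficient model: construct a time-dependent Fourier weight $A(t)$ that is non-increasing in time and equivalent to the identity on $L^{2}$, and run an energy estimate on $\langle W, A(t)W\rangle$. In the infinite channel, after a Fourier transform in $y$ the equations decouple frequency-by-frequency, and I would take
\begin{align*}
  \widehat{A(t)W}(k,\eta) = \exp\bigl(C \arctan\bigl(\tfrac{\eta}{k}-t\bigr)\bigr)\, \widehat{W}(k,\eta),
\end{align*}
with $C>0$ to be fixed. Since $\arctan$ is bounded, $e^{-C\pi/2}\le A(t)\le e^{C\pi/2}$, so $\|W\|_{L^{2}}^{2} \sim \langle W,A(t)W\rangle$ uniformly in $t$. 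By construction, $\dot A/A = -C/(1+(\eta/k-t)^{2})$, which up to the factor $C$ is exactly the modulus of the symbol of the multiplier sending $\Phi$ to $W$ in the model problem. This is precisely why one can hope for $\dot A$ to absorb the coupling term produced by $\dt W = \tfrac{if}{k}\Phi$.

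The main step is the energy estimate. Differentiating and using the equation gives
\begin{align*}
  \frac{d}{dt}\langle W, AW \rangle = 2\Re\bigl\langle \tfrac{if}{k}\Phi, AW\bigr\rangle + \langle W, \dot A W\rangle,
\end{align*}
and the goal is to show that the first term is absorbed by $\langle W, \dot A W\rangle$, which is non-positive. Introducing $\sqrt{-\dot A/A}$ as a ``good factor'' and using the elliptic equation $(-1+(g(\tfrac{\p_{y}}{k}-it))^{2})\Phi = W$, this reduces to a weighted elliptic estimate of the form $\|\sqrt{-\dot A/A}\,\tfrac{f}{k}\Phi\|_{L^{2}} \lesssim \|f\|_{W^{1,\infty}} L \, \|\sqrt{-\dot A/A}\,W\|_{L^{2}}$. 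I would prove this by testing the elliptic equation against a suitably conjugated version of $\Phi$, integrating by parts in $y$, and using $|g|\ge c$ together with Poincaré in $y$ on an interval of length $L$ to generate a positive main term and errors proportional to $\|f\|_{W^{1,\infty}}L$, which are small by assumption.

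The main obstacle is executing this elliptic estimate in the \emph{finite channel} $[0,1]$ with Dirichlet boundary conditions $\Phi|_{y=0,1}=0$. In the infinite channel the Fourier transform in $y$ simultaneously diagonalises $A$ and the elliptic operator, so the estimate is essentially pointwise in $(k,\eta)$. In the finite channel one must work on the physical side, either representing $A(t)$ via a Fourier series on a suitable periodisation or treating it as a pseudo-differential operator, and controlling the commutators $[A(t),f]$ and $[A(t),g\p_{y}]$; the Dirichlet condition kills the boundary contributions from integration by parts, but the non-locality of $A(t)$ near $y\in\{0,1\}$ requires care. Once the weighted elliptic estimate is established with constant smaller than $1/(2C)$ after taking $C$ sufficiently large and $\|f\|_{W^{1,\infty}}L$ correspondingly small, monotonicity yields $\langle W(t), A(t)W(t)\rangle \le \langle \omega_{0}, A(0)\omega_{0}\rangle$, and the equivalence of $A(t)$ with the identity gives the claimed $L^{2}$ stability.
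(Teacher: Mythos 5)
Your proposal follows the same strategy the paper itself only sketches in Section \ref{sec:const-coeff-model} and defers to \cite{Zill3}: the decreasing weight $A(t)=\mathcal{F}^{-1}\exp\bigl(C\arctan(\tfrac{\eta}{k}-t)\bigr)\mathcal{F}$, its comparability to the identity, and the absorption of the coupling term $2\Re\langle\tfrac{if}{k}\Phi,AW\rangle$ into $\langle W,\dot A W\rangle$ via a weighted elliptic estimate for $\Phi$ with constant $\sim\|f\|_{W^{1,\infty}}L$, which is small by hypothesis. You also correctly identify the finite-channel case (Dirichlet conditions, $A(t)$ no longer a plain Fourier multiplier, commutator and boundary terms) as the actual technical work, which is exactly what the cited proof in \cite{Zill3} carries out and this paper does not reproduce.
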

In the case of finite channel, this method of proof is shown to be very stable and to extend to stability results in arbitrary Sobolev spaces, $H^{s}, s \in \N$.
\\

When considering a finite channel, in addition to technical challenges such as finding a suitable replacement for a Fourier transform and for $A(t)$, one encounters boundary effects.
For simplicity, in the following we consider the example of linearized Couette flow on the channel $\T_{2\pi}\times [0,1]$ and initial datum $\omega_{0}(x,y)=2i \sin(x)$. 
The linearized Euler equations are then given by 
\begin{align*}
  \Lambda(t,1,y)&\equiv 1, \\
  (-1+(\p_{y}-it)^{2})\Psi &=1 \\
  \Psi|_{y=0,1}&=0.
\end{align*}
Taking one derivative in $y$, we observe that 
\begin{align*}
  \p_{y}\Lambda &\equiv 0, \\
  (-1+(\p_{y}-it)^{2})\p_{y}\Psi &= 0.
\end{align*}
The function $\p_{y}\Psi$ is a thus homogeneous solution, which in general has non-zero Dirichlet conditions and is hence non-trivial.
In particular, an estimate of $\p_{y}\Psi$ by $\p_{y}\Lambda$ can thus not hold.
In order to compute $\p_{y}\Psi|_{y=0,1}$ explicitly, one tests the equation for $\Psi$ with homogeneous solutions $e^{\pm y +ity}$:
\begin{align*}
  \langle 1, e^{\pm y +ity} \rangle_{L^{2}} = \langle (-1+(\p_{y}-it)^{2})\Psi, e^{\pm y +ity} \rangle_{L^{2}} = e^{\pm y +ity}\p_{y}\Psi|_{y=0}^{1}, 
\end{align*}
where we used that $e^{\pm y +ity}$ is a homogeneous solution and that $\Psi|_{y=0,1}=0$.
Considering suitable linear combinations,
\begin{align*}
  u_{1}(y)&:=-e^{ity}\frac{\sinh(1-y)}{\sinh(1)}, \\
  u_{2}(y)&:=e^{it(y-1)} \frac{\sinh(y)}{\sinh(1)},
\end{align*}
which have boundary values 
\begin{align*}
  -u_{1}(0)=u_{2}(1)&=1, \\
  u_{1}(1)=u_{2}(0)&=0,
\end{align*}
we thus obtain 
\begin{align}
\label{eq:Couetteboundaryexample}
  \begin{split}
  \p_{y}\Psi|_{y=0}&= \langle 1, u_{1} \rangle= -\frac{1}{it}e^{ity}\frac{\sinh(1-y)}{\sinh(1)}|_{y=0}^{1}- \frac{1}{it}\left\langle 1, e^{ity}\p_{y} \frac{\sinh(1-y)}{\sinh(1)}  \right\rangle = \frac{1}{it}+\mathcal{O}(t^{-2}) , \\
  \p_{y}\Psi|_{y=1}&= \langle 1, u_{2} \rangle= \frac{1}{it}e^{it(y-1)} \frac{\sinh(y)}{\sinh(1)} - \frac{1}{it} \left\langle 1, e^{it(y-1)}\p_{y} \frac{\sinh(y)}{\sinh(1)} \right\rangle = \frac{1}{it}+\mathcal{O}(t^{-2}).
  \end{split}
\end{align}
In particular, we note that, despite $\p_{y}W$ vanishing, $\p_{y}\Psi|_{y=0,1}$ only vanishes with a non-integrable rate.

Recalling the linearized Euler equations, \eqref{eq:LE}, taking a derivative in $y$ and restricting to the boundary, we observe that $\p_{y}W|_{y=0,1}$ satisfies
\begin{align*}
  \dt \p_{y}W|_{y=0,1} = \frac{if}{k} \p_{y}\Phi|_{y=0,1}. 
\end{align*}
Considering flows with $f|_{y=0,1}\neq 0$, the non-integrable decay rate in \eqref{eq:Couetteboundaryexample} thus suggests that $\p_{y}W|_{y=0,1}$ develops a (logarithmic) singularity as $t \rightarrow \infty$. 

In the following sections, we show that this singularity formation indeed occurs and obtain associated blow-up results in the fractional Sobolev spaces $H^{s},s>\frac{3}{2}$. Conversely, we show that stability holds in all sub-critical fractional Sobolev spaces, $H^{s},s<\frac{3}{2}$.
Furthermore, as can already partially be seen in \eqref{eq:Couetteboundaryexample}, the decay behaviour of $\p_{y}\Phi|_{y=0,1}$ improves if one restricts to initial perturbations, $\omega_{0}$, with vanishing Dirichlet data, $\omega_{0}|_{y=0,1}$. For such perturbations we show that the stability and blow-up results can be improved to $H^{s},s<\frac{5}{2},$ and $H^{s},s>\frac{5}{2}$, respectively.

\subsection{Fractional Sobolev spaces}
\label{sec:fract-sobol-space}

As we make extensive use of fractional Sobolev spaces, we provide a short introduction to their various definitions and properties.
Here we follow \cite{hitch} (published as \cite{hitchpublished}).

In the whole space, fractional Sobolev spaces can be equivalently characterized using either a Fourier weight or an appropriate kernel:
\begin{prop}[Fractional Sobolev space on $\R$; {\cite[Section 3]{hitch}}]
  Let $0<s<1$, then there exists $C_{s}$  such that for any $u \in \mathcal{S}(\R)$
  \begin{align*}
    \||\eta|^{s}\mathcal{F}u\|_{L^{2}}^{2} =  C_{s}\iint_{\R\times\R} \frac{|u(x)-u(y)|^{2}}{|x-y|^{1+2s}} dx dy . 
  \end{align*}
  In particular, both expressions define the same quasi-norm.
  The fractional Sobolev space, $H^{s}(\R)$, is then defined as the closure of $\mathcal{S}(\R)$ with respect to
  \begin{align*}
    \|u\|_{L^{2}}^{2}+ \||\eta|^{s}\mathcal{F}u\|_{L^{2}}^{2}.
  \end{align*}
  $H^{s}(\R)$ is a Hilbert space with inner product
  \begin{align*}
    \langle u,v \rangle_{H^{s}}&= \langle u,v \rangle_{L^{2}} + \langle |\eta|^{s/2}\mathcal{F} u, |\eta|^{s/2}\mathcal{F}v \rangle_{L^{2}} \\
&= \langle u,v \rangle_{L^{2}} + C_{s}\iint_{\R \times \R} \frac{(u(x)-u(y))\overline{(v(x)-v(y))}}{|x-y|^{1+2s}} dx dy .
  \end{align*}
\end{prop}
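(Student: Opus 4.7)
The plan is to reduce both sides to the same Fourier-side expression via Plancherel and a change of variables, so that their ratio is a universal constant depending only on $s$. For the right-hand side, I would first use the translation substitution $z = x - y$ together with Fubini (justified since $u \in \mathcal{S}(\R)$ forces absolute convergence) to rewrite
$$\iint_{\R\times\R} \frac{|u(x)-u(y)|^{2}}{|x-y|^{1+2s}}\, dx\, dy = \int_{\R} \frac{1}{|z|^{1+2s}} \int_{\R} |u(x) - u(x-z)|^{2}\, dx\, dz.$$

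Next I would apply Plancherel to the inner integral. Since $u(\cdot) - u(\cdot - z)$ has Fourier transform $(1 - e^{-i\eta z})\mathcal{F}u(\eta)$, one obtains
$$\int_{\R} |u(x) - u(x-z)|^{2}\, dx = \int_{\R} |1 - e^{-i\eta z}|^{2}\, |\mathcal{F}u(\eta)|^{2}\, d\eta.$$
Interchanging the order of integration once more, and making the change of variables $w = \eta z$ for fixed $\eta \neq 0$ (treating $\eta > 0$ and $\eta < 0$ symmetrically, using $|1-e^{-iw}|^{2} = |1-e^{iw}|^{2}$), the inner $z$-integral produces a factor of $|\eta|^{2s}$ times the $\eta$-independent integral
$$C_{s}^{-1} := \int_{\R} \frac{|1 - e^{-iw}|^{2}}{|w|^{1+2s}}\, dw.$$
Combining everything and applying Plancherel once more gives
$$\iint_{\R\times\R} \frac{|u(x)-u(y)|^{2}}{|x-y|^{1+2s}}\, dx\, dy = C_{s}^{-1} \int_{\R} |\eta|^{2s}\, |\mathcal{F}u(\eta)|^{2}\, d\eta = C_{s}^{-1}\, \bigl\| |\eta|^{s}\mathcal{F}u \bigr\|_{L^{2}}^{2},$$
which is the claimed identity after rearranging.

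The main obstacle, and really the only nontrivial point, is verifying that $C_{s}^{-1}$ is finite and strictly positive precisely when $0 < s < 1$. Near $w = 0$, a Taylor expansion yields $|1 - e^{-iw}|^{2} = 2(1 - \cos w) \sim w^{2}$, so the integrand behaves like $|w|^{1-2s}$, which is integrable near the origin iff $s < 1$. For $|w|$ large, $|1-e^{-iw}|^{2} \leq 4$, so the integrand decays like $|w|^{-1-2s}$, integrable iff $s > 0$. Hence the range $0 < s < 1$ is exactly what the argument requires, and positivity of the integrand gives $C_{s} \in (0,\infty)$. The equivalence of quasi-norms and the Hilbert-space structure then follow immediately from the identity and Plancherel.
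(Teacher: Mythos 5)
The paper states this proposition as a cited result from the Hitchhiker's guide (\cite[Section~3]{hitch}) and supplies no proof of its own, so there is no internal argument to compare against. Your proof is correct and is essentially the standard one found in that reference: translation-invariance to pass to $z=x-y$, Plancherel on the inner integral, Tonelli (positivity of the integrand handles the interchange; you don't even need the decay of Schwartz functions for that step), and the homogeneity rescaling $w=\eta z$ to peel off $|\eta|^{2s}$ and isolate the universal constant $C_s^{-1}=\int_\R |1-e^{-iw}|^2|w|^{-1-2s}\,dw$, whose convergence at $0$ and $\infty$ is exactly the condition $0<s<1$. The only cosmetic quibble is that the final step is a notational rewriting, not a second application of Plancherel, and that the precise value of $C_s$ depends on the Fourier normalization; neither affects the statement, which only asserts existence of such a constant. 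The polarization giving the inner product identity and the completeness giving the Hilbert-space structure are indeed immediate consequences, as you say.
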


For $s>1, s \not \in \N$, the fractional Sobolev space is (recursively) defined by requiring that $u \in H^{s-1}$ and $\p_{x}u \in H^{s-1}$.
The definition via a kernel can be adapted to other and higher dimensional domains. We, in particular, are interested in the setting of the interval $[0,1]$.

\begin{prop}[Trace map; {\cite[Section 3]{hitch}}]
  Let $0<s<1$ and define $H^{s}([0,1])$ as the closure of $C^{\infty}([0,1])$ with respect to 
  \begin{align*}
    \iint_{[0,1]^{2}} \frac{|u(x)-u(y)|^{2}}{|x-y|^{1+2s}} dx dy + \|u\|_{L^{2}([0,1])}^{2}.
  \end{align*}
  Then $H^{s}([0,1])$ is a Hilbert space.
  Let further $s>1/2$, then $H^{s}$ embeds into $C^{0}$, in particular there exists a trace map and 
  \begin{align*}
    \left| u_{y=0,1} \right| \lesssim_{s} \|u\|_{H^{s}([0,1])}.
  \end{align*}
\end{prop}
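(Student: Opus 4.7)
The plan is to prove the two assertions separately: first the Hilbert space structure, and then the $C^{0}$ embedding together with the trace estimate, the latter via a dyadic averaging argument of the kind standard for Morrey-Campanato spaces.

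For the Hilbert space structure, I would begin by polarizing the Gagliardo seminorm,
\begin{align*}
  [u,v]_{H^{s}} := C_{s}\iint_{[0,1]^{2}} \frac{(u(x)-u(y))\overline{(v(x)-v(y))}}{|x-y|^{1+2s}}\,dx\,dy,
\end{align*}
which together with the standard $L^{2}$ inner product yields a positive definite Hermitian form on $C^{\infty}([0,1])$ whose induced norm is precisely the defining quasi-norm. Completeness of the closure is then a soft argument: given a Cauchy sequence $(u_{n})$, the $L^{2}$ part provides an $L^{2}$-limit $u$ and, after passing to an almost-everywhere convergent subsequence, Fatou's lemma applied to the non-negative integrand $|u_{n}(x)-u(y)|^{2}/|x-y|^{1+2s}$ gives $[u_{n}-u]_{H^{s}} \le \liminf_{m}[u_{n}-u_{m}]_{H^{s}}$, which tends to zero.

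For the embedding $H^{s}([0,1]) \hookrightarrow C^{0}([0,1])$ with $s>1/2$, I would work with $u \in C^{\infty}([0,1])$ and control averages over shrinking intervals. Fix $x_{0}\in [0,1]$ and set $I_{k} = (x_{0}-2^{-k},x_{0}+2^{-k}) \cap [0,1]$, with averages $u_{I_{k}}$. On $I_{k}\times I_{k}$ one has $|x-y|\le 2^{2-k}$, so the Gagliardo kernel gives a Poincaré-type inequality
\begin{align*}
  \int_{I_{k}} |u - u_{I_{k}}|^{2}\,dx \;\le\; \frac{1}{|I_{k}|}\iint_{I_{k}\times I_{k}} |u(x)-u(y)|^{2}\,dx\,dy \;\lesssim\; 2^{-k(2s-1)}\,[u]_{H^{s}}^{2}.
\end{align*}
Combined with Cauchy-Schwarz, this yields $|u_{I_{k+1}}-u_{I_{k}}| \lesssim 2^{-k(s-1/2)}[u]_{H^{s}}$, and since $s>1/2$ the geometric series converges. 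Hence $u_{I_{k}}$ is Cauchy, its limit defines a continuous representative $u(x_{0})$, and the same telescoping estimate applied between $I_{0}$ and $x_{0}$, combined with $|u_{I_{0}}| \lesssim \|u\|_{L^{2}}$, yields $|u(x_{0})| \lesssim_{s} \|u\|_{H^{s}}$ uniformly in $x_{0}$. In particular, taking $x_{0}=0$ and $x_{0}=1$ gives the trace inequality; the result extends to all of $H^{s}([0,1])$ by density.

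The main technical nuisance is the behaviour at the endpoints: for $x_{0}\in\{0,1\}$ the intervals $I_{k}$ are only half-sided, and one must check that this does not degrade the Poincaré constant, which is straightforward since $|I_{k}| \gtrsim 2^{-k}$ still holds. A secondary subtlety is the density of $C^{\infty}([0,1])$ in the Gagliardo norm, which is needed to transfer the pointwise estimate from smooth functions to general elements of $H^{s}([0,1])$; this can be handled by a standard extension-and-mollification argument, extending $u$ from $[0,1]$ to $\R$ with uniform control of the Gagliardo seminorm and then convolving with a smooth kernel.
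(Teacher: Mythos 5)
The paper does not actually prove this proposition: it simply cites \cite[Section 3]{hitch} (the Hitchhiker's guide of Di Nezza--Palatucci--Valdinoci) and uses the result as a black box. Your proposal therefore fills a gap the paper delegates to a reference, and it does so by a self-contained Morrey--Campanato dyadic-averaging argument rather than by, say, extending to $\R$ and invoking the Fourier characterization $\||\eta|^s\mathcal{F}u\|_{L^2}$ (which is the route implicit in citing Section 3 of that reference). Your argument is sound and has the advantage of staying entirely on the interval with the Gagliardo kernel; the Fourier route is shorter if one already has an extension operator, but needs more machinery.

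One arithmetic slip to fix: the exponent in your Poincar\'e-type inequality is off. On $I_k\times I_k$ one has $|x-y|^{1+2s}\lesssim |I_k|^{1+2s}\approx 2^{-k(1+2s)}$, so
\begin{align*}
\int_{I_k}|u-u_{I_k}|^2\,dx\;\le\;\frac{1}{|I_k|}\iint_{I_k\times I_k}|u(x)-u(y)|^2\,dx\,dy\;\lesssim\;|I_k|^{2s}[u]_{H^s}^2\;\approx\;2^{-2ks}[u]_{H^s}^2,
\end{align*}
not $2^{-k(2s-1)}[u]_{H^s}^2$ as you wrote. With your stated exponent the telescoping bound would read $|u_{I_{k+1}}-u_{I_k}|\lesssim 2^{k(1-s)}[u]_{H^s}$, whose geometric series diverges for $1/2<s<1$. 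Fortunately the very next display in your write-up, $|u_{I_{k+1}}-u_{I_k}|\lesssim 2^{-k(s-1/2)}[u]_{H^s}$, is exactly what the corrected Poincar\'e exponent $2^{-2ks}$ produces (since $2^{k/2}\cdot 2^{-ks}=2^{-k(s-1/2)}$), so the slip does not propagate and the rest of the argument, including the endpoint discussion and the density step, is correct. There is also a typo in the Fatou argument: the integrand should be $|(u_n-u_m)(x)-(u_n-u_m)(y)|^2/|x-y|^{1+2s}$, not $|u_n(x)-u(y)|^2/|x-y|^{1+2s}$, but the intent is clear.
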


A closely related space is given by the periodic fractional Sobolev space $H^{s}(\T)$.

\begin{prop}[{\cite{benyi2013sobolev}}]
   Let $0<s<1/2$, then for any $u \in C^{\infty}(\T)$,
  \begin{align*}
    \||\eta|^{s}\mathcal{F}u\|_{L^{2}}^{2}  \lesssim \iint_{\T \times [-\frac{1}{2},\frac{1}{2}]} \frac{|u(x+y)-u(y)|^{2}}{|x|^{1+2s}} dx dy \lesssim \||\eta|^{s}\mathcal{F}u\|_{L^{2}}^{2}.
  \end{align*}
  In particular, both the kernel and Fourier characterization define the same quasi-norm.
  Furthermore, 
  \begin{align*}
     \iint_{\T \times [-\frac{1}{2},\frac{1}{2}]} \frac{|u(x+y)-u(y)|^{2}}{|x|^{1+2s}} dx dy = \langle \mathcal{F}u, B_{n}|n|^{2s} \mathcal{F} u \rangle_{l^{2}},
  \end{align*}
  where $B_{n}$ satisfies 
  \begin{align*}
    1 \lesssim B_{n}:= |n|^{-2s} \int_{[-\frac{1}{2},\frac{1}{2}]} \frac{\sin^{2}(xn)}{4|x|^{1+2s}} dx \lesssim 1.
  \end{align*}
  The fractional Sobolev space $H^{s}(\T)$ is defined as the closure of $C^{\infty}(\T)$ with respect to
  \begin{align*}
    \|u\|_{L^{2}}^{2}+ \iint_{\T \times [-\frac{1}{2},\frac{1}{2}]} \frac{|u(x+y)-u(y)|^{2}}{|x|^{1+2s}} dx dy.
  \end{align*}
  $H^{s}(\T)$ is a Hilbert space, where the inner product can be chosen as either
  \begin{align*}
    \langle u,v \rangle_{H^{s}(\T)}:&=\langle u,v \rangle_{L^{2}} +  \langle \mathcal{F} u , B_{n}|n|^{2s} \mathcal{F}v \rangle_{l^{2}} \\
&=
 \langle u,v \rangle_{L^{2}} + \iint \frac{\overline{(u(x+y)-u(y))}(v(x+y)-v(y))}{|x|^{1+2s}} dx dy,
  \end{align*}
  or 
  \begin{align*}
    \langle u,v \rangle_{H^{s}(\T)}:=  \langle u,v \rangle_{L^{2}} + \langle \mathcal{F} u , |n|^{2s} \mathcal{F}v \rangle_{l^{2}}.
  \end{align*}
\end{prop}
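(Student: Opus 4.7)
The plan is to reduce everything to a single Plancherel computation on $\T$. I expand $u \in C^{\infty}(\T)$ as a Fourier series $u(y) = \sum_{n} \mathcal{F}u(n)\, e^{2\pi i n y}$, so that the increment takes the form
\begin{equation*}
u(x+y) - u(y) = \sum_{n} \mathcal{F}u(n)\, (e^{2\pi i n x} - 1)\, e^{2\pi i n y}.
\end{equation*}
Applying Plancherel in the $y$-variable together with the identity $|e^{2\pi i n x} - 1|^{2} = 4\sin^{2}(\pi n x)$ and then Fubini, I obtain
\begin{equation*}
\iint_{\T \times [-\frac{1}{2},\frac{1}{2}]} \frac{|u(x+y) - u(y)|^{2}}{|x|^{1+2s}}\, dx\, dy \;=\; \sum_{n} |\mathcal{F}u(n)|^{2} \int_{-\frac{1}{2}}^{\frac{1}{2}} \frac{4\sin^{2}(\pi n x)}{|x|^{1+2s}}\, dx.
\end{equation*}
This is precisely the claimed pairing $\langle \mathcal{F}u,\, B_{n} |n|^{2s}\, \mathcal{F}u \rangle_{l^{2}}$, once the inner integral is identified, up to normalising constants, with $|n|^{2s} B_{n}$.

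The second step is to verify the two-sided bound $1 \lesssim B_{n} \lesssim 1$ uniformly in $n \neq 0$. To extract the $|n|^{2s}$-factor I change variables $\xi = nx$ to obtain
\begin{equation*}
\int_{-\frac{1}{2}}^{\frac{1}{2}} \frac{4\sin^{2}(\pi n x)}{|x|^{1+2s}}\, dx \;=\; 4\, |n|^{2s} \int_{-|n|/2}^{|n|/2} \frac{\sin^{2}(\pi \xi)}{|\xi|^{1+2s}}\, d\xi.
\end{equation*}
The upper bound follows by enlarging the integration domain to all of $\R$: since $\sin^{2}(\pi\xi) \lesssim \min(|\xi|^{2}, 1)$, the restriction $0 < s < 1/2$ guarantees integrability both at the origin (integrand controlled by $|\xi|^{1-2s}$) and at infinity (integrand controlled by $|\xi|^{-(1+2s)}$). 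For the lower bound, for $|n| \geq 1$ the interval $[-|n|/2, |n|/2]$ already contains $[-\tfrac{1}{2},\tfrac{1}{2}]$, on which $\sin^{2}(\pi\xi)/|\xi|^{1+2s}$ integrates to a strictly positive constant independent of $n$. These two estimates yield the equivalence of the kernel quasi-norm with the Fourier quasi-norm $\||\eta|^{s}\mathcal{F}u\|_{L^{2}}^{2}$.

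The two inner-product identities now follow by polarisation applied to the corresponding quadratic forms, which coincide up to the constant $C_{s}$ absorbed into the definition, and the equality between the kernel and Fourier pairings is just the sesquilinear version of the identity established above. The Hilbert-space property of $H^{s}(\T)$ is then obtained by identifying the Fourier transform with an isometry from the pre-Hilbert space $(C^{\infty}(\T), \langle \cdot,\cdot\rangle_{H^{s}(\T)})$ into the weighted space
\begin{equation*}
\Big\{ (a_{n})_{n \in \Z} : \sum_{n} (1 + |n|^{2s}) |a_{n}|^{2} < \infty \Big\},
\end{equation*}
which is manifestly complete; the completion $H^{s}(\T)$ is therefore a Hilbert space with either inner product. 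The only genuinely non-trivial step is the uniform control of $B_{n}$: it is what promotes the naive Plancherel representation into a true equivalence of norms, and it is the only place where the restriction $s < 1/2$ is actually used.
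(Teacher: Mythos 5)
Your proof is correct and follows the standard Plancherel argument (which is exactly what the cited reference does); note that the paper itself gives no proof here, simply citing \cite{benyi2013sobolev}, so there is no in-paper argument to compare against. The chain of steps — Fourier expansion, Plancherel in $y$ to reduce to a multiplier, Fubini, the change of variables $\xi = nx$ to extract the factor $|n|^{2s}$, and the two-sided bound on the resulting dimensionless integral — is exactly right, and your identification of the weighted $\ell^2$ isometry gives the Hilbert-space conclusion cleanly. One small correction to your closing remark, though: the restriction $s<1/2$ is \emph{not} actually used in the $B_n$ estimate. The upper bound $\int_{\R}\sin^2(\pi\xi)/|\xi|^{1+2s}\,d\xi<\infty$ requires only $|\xi|^{1-2s}$ to be integrable near the origin, i.e.\ $s<1$, and $|\xi|^{-1-2s}$ to be integrable at infinity, i.e.\ $s>0$; together with the lower bound, the kernel/Fourier equivalence therefore holds for the full range $0<s<1$. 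The hypothesis $s<1/2$ in the proposition is inherited from the \emph{later} uses in the paper (Propositions~\ref{prop:LipHs} and~\ref{prop:ComHs}, where $1/|x|^{2s}\in L^1$ is genuinely needed), not from this equivalence. Also, your $B_n$ differs from the paper's by a fixed numerical constant and a $\pi$ inside the sine, which reflects a different Fourier normalisation; since both versions are uniformly comparable to $1$, this is immaterial.
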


From the kernel characterization, it can easily be seen that $H^{s}(\T) \subset H^{s}([0,1])$:
\begin{prop}
\label{prop:simpleinclusion}
  Let $0<s<1$, then any $u \in H^{s}(\T)$ is also in $H^{s}([0,1])$ and
  \begin{align*}
    \|u\|_{H^{s}([0,1])} \lesssim \|u\|_{H^{s}(\T)}.
  \end{align*}
\end{prop}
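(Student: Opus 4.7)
The plan is to work directly with the kernel characterizations of both norms. Given $u \in H^{s}(\T)$, we extend it $1$-periodically to all of $\R$ (for $u \in C^{\infty}(\T)$ this is literal; the estimate then extends by density). We may identify $\|u\|_{L^{2}([0,1])} = \|u\|_{L^{2}(\T)}$, so the only thing to check is that the $[0,1]$ Gagliardo seminorm
\begin{align*}
I_{[0,1]} := \iint_{[0,1]^{2}} \frac{|u(x)-u(y)|^{2}}{|x-y|^{1+2s}} \, dx\, dy
\end{align*}
is controlled by the full $H^{s}(\T)$ norm. The natural first move is the change of variables $(x,y) \mapsto (h,y)$ with $h = x-y$, which yields
\begin{align*}
I_{[0,1]} = \int_{-1}^{1} \int_{\max(0,-h)}^{\min(1,1-h)} \frac{|u(y+h)-u(y)|^{2}}{|h|^{1+2s}} \, dy \, dh.
\end{align*}

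I would then split the $h$-integration into the \emph{short range} $|h|\leq 1/2$ and the \emph{long range} $1/2<|h|\leq 1$, which exactly matches the shift range $[-1/2,1/2]$ appearing in the torus characterization. For the short range, the inner $y$-interval $[\max(0,-h),\min(1,1-h)]$ is contained in $[0,1]$, so enlarging it to all of $[0,1]$ and using that $\T$ may be identified with $[0,1]$ gives
\begin{align*}
\int_{|h|\leq 1/2} \int_{\max(0,-h)}^{\min(1,1-h)} \frac{|u(y+h)-u(y)|^{2}}{|h|^{1+2s}} \, dy\, dh
\leq \iint_{\T\times[-\frac{1}{2},\frac{1}{2}]} \frac{|u(y+h)-u(y)|^{2}}{|h|^{1+2s}} \, dh\, dy,
\end{align*}
which is precisely the torus Gagliardo seminorm and hence bounded by $\|u\|_{H^{s}(\T)}^{2}$.

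For the long range, the weight is bounded, $|h|^{-1-2s} \leq 2^{1+2s}$, so the elementary inequality $|a-b|^{2}\leq 2(|a|^{2}+|b|^{2})$ together with the periodicity identity $\int_{0}^{1}|u(y+h)|^{2}\,dy = \|u\|_{L^{2}(\T)}^{2}$ gives
\begin{align*}
\int_{1/2<|h|\leq 1} \int_{\max(0,-h)}^{\min(1,1-h)} \frac{|u(y+h)-u(y)|^{2}}{|h|^{1+2s}} \, dy \, dh \lesssim \|u\|_{L^{2}(\T)}^{2}.
\end{align*}
Adding the two bounds proves the claim. There is no real obstacle here: the only conceptual subtlety is that the torus kernel only accommodates shifts of magnitude at most $1/2$, whereas the $[0,1]$ kernel sees shifts up to $1$, and the decomposition above absorbs the large shifts cleanly into the $L^{2}$ piece via the lower bound on $|h|^{1+2s}$ and $1$-periodicity of $u$.
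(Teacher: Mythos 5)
Your proof is correct and follows essentially the same route as the paper's: the change of variables $h = x - y$, followed by splitting the shift integration into $|h| \le 1/2$ (matching the torus kernel) and $|h| > 1/2$ (where the kernel is bounded and the contribution is absorbed into $\|u\|_{L^{2}}^{2}$). You are slightly more explicit in the long-range step (invoking $|a-b|^{2}\le 2(|a|^{2}+|b|^{2})$ and the periodicity identity), but the decomposition and the key inequality $\sup_{|h|\ge 1/2}|h|^{-1-2s}\le 2^{1+2s}$ are the same.
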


\begin{proof}[Proof of Proposition \ref{prop:simpleinclusion}]
The $L^{2}$ norms are equal, hence we only have to consider the quasi-norm in $H^{s}([0,1])$.
Introducing a change of variables $x \mapsto z+y$, we compute
\begin{align*}
 \|u\|_{\dot H^{s}([0,1])}^{2} &= \iint_{[0,1]^{2}} \frac{|u(x)-u(y)|^{2}}{|x-y|^{1+2s}} dx dy \\
&= \int_{[0,1]} \int_{[0,1]-y} \frac{|u(z+y)-u(y)|^{2}}{|z|^{1+2s}} dz dy \\
& \leq \int_{[0,1]} \int_{[-1,2]} \frac{|u(z+y)-u(y)|^{2}}{|z|^{1+2s}} dz dy \\
& \leq \|u\|_{\dot H^{s}(\T)}^{2} + C \|u\|_{L^{2}}^{2} \lesssim \|u\|_{H^{s}(\T)}^{2},
\end{align*}
where we used that 
\begin{align*}
  \sup_{|z|\geq \frac{1}{2}}\frac{1}{|z|^{1+2s}} \leq 2.
\end{align*}
\end{proof}

As a simplification, for the stability results of Section \ref{sec:stability-h32-} and Section \ref{sec:stability-h52-}, we restrict to fractional Sobolev spaces, $H^{s}(\T)$, in order to be able to use the Fourier characterization.
In this case,  we further require that the coefficient functions, $f,g$, corresponding to the shear flow, $U$, are not only sufficiently regular, e.g. $g \in W^{1,\infty}([0,1])$,
but can be periodically extended in a regular way, e.g. $g \in W^{1,\infty}(\T)$, in order to be able to apply the following Propositions \ref{prop:LipHs} and \ref{prop:ComHs}.

\begin{prop}[Multiplication with Lipschitz functions]
  \label{prop:LipHs}
  Let $g \in W^{1,\infty}(\T)$ be periodic and Lipschitz, then for any $s<1/2$ and any $u \in H^{s}(\T)$,
  also $gu \in H^{s}(\T)$ and 
  \begin{align*}
    \|ug\|_{H^{s}} \leq \|g\|_{W^{1,\infty}}\|u\|_{H^{s}}.
  \end{align*}
\end{prop}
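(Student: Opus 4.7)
The plan is to prove this via the kernel characterization of $H^s(\T)$ stated in the excerpt, since this characterization is well adapted to multiplicative estimates and does not require any Fourier analysis of the product $gu$. The $L^2$ part of the norm is trivial: $\|gu\|_{L^2} \leq \|g\|_{L^\infty}\|u\|_{L^2}$. The main work is to control the Gagliardo semi-norm
\[
[gu]_{H^s}^2 := \iint_{\T\times[-\frac12,\frac12]} \frac{|(gu)(x+y)-(gu)(y)|^2}{|x|^{1+2s}}\, dx\, dy.
\]

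The key step is the familiar splitting
\[
g(x+y)u(x+y)-g(y)u(y) = g(x+y)\bigl(u(x+y)-u(y)\bigr) + \bigl(g(x+y)-g(y)\bigr)u(y),
\]
followed by an application of $(a+b)^2\leq 2a^2+2b^2$. The first resulting piece is bounded pointwise by $2\|g\|_{L^\infty}^2 |u(x+y)-u(y)|^2$; integrating against the kernel $|x|^{-1-2s}$ yields a contribution controlled by $2\|g\|_{L^\infty}^2 [u]_{H^s}^2$. For the second piece, Lipschitz continuity of $g$ gives $|g(x+y)-g(y)|\leq \|g'\|_{L^\infty}|x|$, so after integrating in $x$ first we are reduced to estimating
\[
\|g'\|_{L^\infty}^2\|u\|_{L^2}^2 \int_{-\frac12}^{\frac12} \frac{|x|^2}{|x|^{1+2s}}\,dx = \|g'\|_{L^\infty}^2\|u\|_{L^2}^2 \int_{-\frac12}^{\frac12} |x|^{1-2s}\,dx.
\]
This last integral is finite precisely because $s<1/2$ (in fact $s<1$ would suffice), and this is the only place where the regularity restriction enters. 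Combining both contributions with the trivial $L^2$ bound gives $\|gu\|_{H^s(\T)}\lesssim \|g\|_{W^{1,\infty}(\T)}\|u\|_{H^s(\T)}$, which is the claimed estimate.

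There is no serious obstacle here; the argument is a direct commutator-type computation. The only subtle point to be careful about is the use of periodicity: when writing $g(x+y)-g(y)$ for $y\in\T$ and $x\in[-\frac12,\frac12]$, one tacitly uses the periodic extension of $g$, which is where the hypothesis $g\in W^{1,\infty}(\T)$ (as opposed to merely $W^{1,\infty}([0,1])$) is needed so that $g$ remains Lipschitz across the identification $0\sim 1$. With this point respected, the decomposition above and Fubini yield the estimate cleanly, and the proof is complete.
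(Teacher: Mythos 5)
Your proof is correct and follows the same route as the paper: the kernel (Gagliardo) characterization, the standard splitting $g(x+y)u(x+y)-g(y)u(y) = g(x+y)(u(x+y)-u(y)) + (g(x+y)-g(y))u(y)$, the $L^\infty$ bound on the first piece, and the Lipschitz bound plus integrability of $|x|^{1-2s}$ near the origin on the second. Your observation that $s<1$ already suffices for this particular estimate matches the paper's remark that $1-2s>-1$ for all $0<s<1$.
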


\begin{prop}[Commutator Estimate]
\label{prop:ComHs}
  Let $g \in C^{0,1}(\T)$ with $g^{2}>c>0$ and let $0<s<1/2$. Then for any $u \in H^{s}(\T)$
  \begin{align*}
   \Re \langle u, g^{2}u \rangle_{H^{s}(\T)} \geq c\|u\|_{H^{s}(\T)}^{2} -C_{s}\|g^{2}\|_{\dot{C}^{0,1}} \|u\|_{L^{2}}^{2}.
  \end{align*}
\end{prop}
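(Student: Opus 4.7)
The plan is to work directly with the kernel characterization of $\langle \cdot, \cdot\rangle_{H^s(\T)}$ from the proposition immediately preceding. Writing $h := g^2$, I would expand the bilinear form as
\[
\langle u, hu\rangle_{H^s(\T)} = \langle u, hu\rangle_{L^2} + \iint_{\T \times [-1/2,1/2]} \frac{\overline{(u(x+y)-u(y))}\,(h(x+y)u(x+y) - h(y)u(y))}{|x|^{1+2s}}\,dx\,dy,
\]
and decompose the last numerator via a discrete Leibniz rule,
\[
h(x+y)u(x+y) - h(y)u(y) = h(y)\bigl(u(x+y)-u(y)\bigr) + \bigl(h(x+y)-h(y)\bigr)u(x+y).
\]
The first piece, combined with the $L^2$ part of the inner product, yields $\int h|u|^2 + \iint \frac{h(y)|u(x+y)-u(y)|^2}{|x|^{1+2s}}$, which is automatically real and, using the pointwise bound $h \geq c$, is bounded below by $c \|u\|_{H^s(\T)}^2$.

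It then remains to control the commutator
\[
K := \Re \iint \frac{(h(x+y)-h(y))\,\overline{(u(x+y)-u(y))}\,u(x+y)}{|x|^{1+2s}}\,dx\,dy.
\]
The Lipschitz bound $|h(x+y)-h(y)| \leq \|h\|_{\dot{C}^{0,1}}|x|$ cancels one power of $|x|$, and Cauchy--Schwarz in $(x,y)$ separates the two remaining $u$-factors: one factor reassembles into the kernel seminorm $\|u\|_{\dot H^s(\T)}$, while the other, after integrating in $y$ first (using translation invariance on $\T$), reduces to $\|u\|_{L^2}^2 \int_{-1/2}^{1/2}|x|^{1-2s}\,dx$, an integral finite for all $s<1$. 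A Young-type inequality $ab \leq \tfrac{\varepsilon}{2}a^2 + \tfrac{1}{2\varepsilon}b^2$ with $\varepsilon$ proportional to $c/\|h\|_{\dot{C}^{0,1}}$ absorbs a small fraction of $\|u\|_{\dot H^s}^2$ back into the principal positive contribution, leaving exactly the asserted $L^2$ correction.

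The main obstacle is essentially bookkeeping rather than analytic: the Cauchy--Schwarz plus AM--GM combination naturally produces a coefficient of $\|u\|_{L^2}^2$ that depends quadratically on $\|h\|_{\dot{C}^{0,1}}/c$, so matching the linear dependence stated in the proposition requires that $C_s$ implicitly absorb this extra factor (and a dependence on $c$). The restriction $s < 1/2$ enters only via Proposition \ref{prop:LipHs}, which is needed to guarantee $hu \in H^s(\T)$ so that all expressions are well-defined; the crucial integrability $\int_{-1/2}^{1/2}|x|^{1-2s}\,dx < \infty$ in the kernel estimate itself is valid for the weaker range $s<1$, and therefore the sub-critical threshold is never felt inside the commutator computation.
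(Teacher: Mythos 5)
Your overall structure — kernel characterization, discrete Leibniz rule, positivity of the principal term from $g^2\ge c$, Lipschitz bound on the commutator — matches the paper, but the error-term estimate takes a genuinely different route and this is precisely where you run into the constant-matching trouble you flag at the end.

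The paper never touches Cauchy--Schwarz or Young's inequality on the commutator. After the Lipschitz bound reduces the kernel to $|x|^{-2s}$, it simply observes that $|x|^{-2s}\in L^1([-1/2,1/2])$ \emph{because} $2s<1$, writes $|u(x+y)-u(y)|\le |u(x+y)|+|u(y)|$, and integrates in $y$ by translation invariance to bound the whole error directly by $2\|g^2\|_{\dot C^{0,1}}\bigl\|\,|x|^{-2s}\bigr\|_{L^1}\|u\|_{L^2}^2$. The $\dot H^s$ seminorm never reappears, so there is nothing to absorb, the coefficient in front of $\|u\|_{H^s}^2$ is exactly $c$, and the $L^2$ correction is linear in $\|g^2\|_{\dot C^{0,1}}$ with a constant depending only on $s$ — exactly the stated inequality. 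Your route instead uses Cauchy--Schwarz to split off $\|u\|_{\dot H^s}\cdot\|u\|_{L^2}\bigl(\int|x|^{1-2s}\bigr)^{1/2}$ and then Young to push the $\dot H^s$ factor back into the principal term; this works and is analytically sound, but, as you correctly note, it can only give a fraction of $c$ in front of $\|u\|_{H^s}^2$ and a coefficient quadratic in $\|g^2\|_{\dot C^{0,1}}/c$ on the $L^2$ term. That weaker inequality would still serve the paper's purposes, but it is not literally the stated one. Your closing remark that $s<1/2$ ``is never felt inside the commutator computation'' is therefore a feature of your route only: in the paper's argument the restriction $s<1/2$ is used \emph{exactly} in the commutator estimate, to make $|x|^{-2s}$ integrable and thereby avoid Cauchy--Schwarz altogether.
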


\begin{rem}
\label{rem:periodicityassumptions}
The periodicity assumption on $g$ drastically simplifies calculations, but can probably be relaxed.

It can be shown that the multiplication with the characteristic function of the positive half-line, $1_{[0,\infty)}$, is a bounded operator on $H^{s}(\R), s< \frac{1}{2}$(see \cite[page 208]{runst1996sobolev}).
Thus, one can probably allow for a jump discontinuity of the periodic extension of $g$ in Proposition \ref{prop:LipHs} and only require that $g \in W^{1,\infty}([0,1])$.

In the case of Proposition \ref{prop:ComHs}, we, however, use that the commutator
\begin{align*}
u \mapsto  [(-\Delta)^{\frac{s}{2}},g]u,
\end{align*}
where $(-\Delta)^{\frac{s}{2}}$ is defined as the Fourier multiplier
\begin{align*}
  u \mapsto \mathcal{F}^{-1}|\eta|^{s}\mathcal{F} u,
\end{align*}
is not only a bounded operator from $H^{s}$ to $L^{2}$, but gains regularity in the sense that it also is a bounded operator from $H^{s-\epsilon}$ to $L^{2}$ for some $\epsilon>0$.
As this is not the case for functions with jump discontinuities, the current proof can probably only be extended to functions $g$, for which the size of the jump discontinuity
\begin{align*}
  |g^{2}(1)-g^{2}(0)|
\end{align*}
is sufficiently small compared to $\min(g^{2})>0$, so that the possible loss due to the jump satisfies (by the improved version of Proposition \ref{prop:LipHs})
\begin{align*}
  |g^{2}(1)-g^{2}(0)| \|1_{[\frac{1}{2},1]}u\|_{H^{s}}^{2} \leq \frac{\min(g^{2})}{2}\|u\|_{H^{s}}^{2}
\end{align*}
and can hence be absorbed by
\begin{align*}
  \langle u, \min(g^{2})u \rangle_{H^{s}} = \min(g^{2})\|u\|_{H^{s}}^{2}.
\end{align*}

Removing the restriction on the size of the jump,
\begin{align*}
  |g^{2}(1)-g^{2}(0)|,
\end{align*}
 is probably possible, but would require considerable additional technical effort.
\end{rem}

\begin{proof}[Proof of Proposition \ref{prop:LipHs}]
  We remark that $gu \in L^{2}$ and that $\|gu\|_{L^{2}} \leq \|g\|_{W^{1,\infty}}\|u\|_{L^{2}}$ is well-known.
  For the $H^{s}$ seminorm we follow the standard proof via the kernel characterization (see \cite[page 21]{hitch}). 
  \begin{align*}
    &\quad \iint_{\T\times [-\frac{1}{2}, \frac{1}{2}]} \frac{|u(x+y)g(x+y)- u(y) g(y)|^{2}}{|x|^{1+2s}} dx dy \\
& \lesssim \iint_{\T \times [-\frac{1}{2}, \frac{1}{2}]} |g(x+y)|^{2}\frac{|u(x+y)- u(y)|^{2}}{|x|^{1+2s}} dx dy \\
& \quad +  \iint_{\T \times [-\frac{1}{2}, \frac{1}{2}]} |u(y)|^{2}\frac{|g(x+y)- g(y)|^{2}}{|x|^{1+2s}} dx dy .
  \end{align*}
    The first term can be easily controlled by $\|g\|_{L^{\infty}}^{2} \|u\|_{H^{s}}^{2}$.
  For the second term we use that $g \in W^{1,\infty}(\T)$ is Lipschitz and thus 
  \begin{align*}
    \frac{|g(x)-g(y)|^{2}}{|x-y|^{1+2s}} \leq \frac{1}{|x-y|^{2s-1}} \|g\|_{W^{1,\infty}}^{2}.
  \end{align*}
  Then,
  \begin{align*}
    \sup_{y \in \T} \int_{ [-\frac{1}{2}, \frac{1}{2}]} \frac{1}{|x-y|^{2s-1}} dx \leq \int_{-1}^{2} \frac{1}{|x|^{2s-1}} dx < \infty, 
  \end{align*}
  as $1-2s>-1$ for all $0<s<1$.
  The second term can thus be controlled in terms of $\|u\|_{L^{2}}^{2} \|g\|_{W^{1,\infty}}^{2}$.
\end{proof}

\begin{proof}[Proof of Proposition \ref{prop:ComHs}]
For the $L^{2}$ product there is nothing to show.

By the kernel characterization 
\begin{align*}
\Re \langle u, g^{2}u \rangle_{H^{s}(\T)} &= \Re \iint \frac{\overline{(u(x+y)-u(y))}(g^{2}(x+y)u(x+y)-g^{2}(y)u(y))}{|x|^{1+2s}} dx dy \\
& =  \iint g^{2}(x+y) \frac{|u(x+y)-u(y)|^{2}}{|x|^{1+2s}} dx dy \\
& \quad -  \Re \iint \frac{g^{2}(x+y)-g^{2}(y)}{|x|^{1+2s}} \overline{(u(x+y)-u(y))} u(y) dx dy.
\end{align*}
As $g^{2}$ is Lipschitz, the second term can thus be estimated by 
\begin{align*}
&\quad \|g^{2}\|_{W^{1,\infty}}\iint \frac{1}{|x|^{2s}}|u(x+y)-u(y)| |u(y)| dx dy \\
&\leq 2\|g^{2}\|_{W^{1,\infty}}\left\|\frac{1}{|x|^{2s}}\right\|_{L^{1}_{x}} \|u\|_{L^{2}}^{2} \leq C_{s}\|g^{2}\|_{W^{1,\infty}} \|u\|_{L^{2}}^{2},
\end{align*}
where we used that $2s<1$.
\end{proof}

\section{Boundary effects and sharp stability results}

In a previous article, \cite{Zill3}, we proven that the linearized 2D Euler equations in a finite periodic channel, $\T_{L}\times [0,1]$, are stable in $H^{m}_{x}H^{1}_{y}(\T_{L}\times [0,1])$ for general perturbations, but only stable in $H^{m}_{x}H^{2}_{y}(\T_{L}\times [0,1])$ under perturbations with zero Dirichlet boundary data, $\omega_{0}|_{y=0,1}=0$.
\\

In this section, we study the boundary effects and the associated singularity formation in more detail and show that the critical Sobolev spaces in $y$ are given by $H^{\frac{3}{2}}_{y}$ and $H^{\frac{5}{2}}_y$, respectively.
More precisely, we show that stability in $H^{m}_{x}H^{s}_{y}(\T_{L}\times [0,1]), s>\frac{3}{2}$ can not hold for general perturbations due the development of (logarithmic) singularities at the boundary.
On the other hand, we prove stability in $H^{m}_{x}H^{s}_{y}(\T_{L}\times \T)$ for any $s<\frac{3}{2}$, where for technical reasons we consider periodic fractional Sobolev spaces, $H^{s}(\T)$, instead of $H^{s}([0,1])$.
In particular, stability in $H^{s},s>1$, allows us to prove damping with an integrable rate and thus obtain a quantitative scattering results for initial perturbations without zero Dirichlet data, which has not been possible with the $H^{1}$ stability results of \cite{Zill3}. 
 
Restricting to perturbations with zero Dirichlet boundary data,  $\omega_{0}|_{y=0,1}=0$, we similarly show that the critical space is given by $H^{\frac{5}{2}}$ and prove stability and instability for $H^{m}_{x}H^{s}_{y}(\T_{L}\times \T),s<\frac{5}{2},$ and $H^{m}_{x}H^{s}_{y}(\T_{L}\times[0,1]),s>\frac{5}{2}$, respectively.
As we discuss in Section \ref{cha:anoth-sect-cons}, these improvements allow us to study consistency of the nonlinear problem in the finite periodic channel, where the singularity formation at the boundary and the resulting regularity restrictions have a large effect on possible nonlinear damping results.

\subsection{Stability in $H^{3/2-}$ and boundary perturbations}
\label{sec:stability-h32-}

In \cite{Zill3}, we established stability of the linearized Euler equations, \eqref{eq:LE}, in a finite periodic channel, $\T_{L}\times [0,1]$, in $H^{m}_{x}H^{1}_{y}$, for general initial data.
The damping result, Theorem \ref{thm:lin-zeng}, hence provides decay of the perturbations to the velocity field with rate $t^{-1}$, i.e.
\begin{align}
  \label{eq:6}
  \begin{split}
  \|v- \langle v \rangle_{x}\|_{L^{2}_{x,y}(\T_{L}\times [0,1])}&= \mathcal{O}(t^{-1}),\\
   \|v_{2}\|_{L^{2}_{x,y}(\T_{L}\times [0,1])} &= \mathcal{O}(t^{-1}).
  \end{split}
\end{align}
As this is almost sufficient to establish scattering, a natural question to ask is how far this can be improved,
that is for which values of $s$, with $s>1$, stability in $H^{m}_{x}H^{s}_{y}$ still holds.
\\

As the main result of this section, we show that the critical Sobolev exponent in $y$ is given by $s=\frac{3}{2}$.
More precisely, in the Corollaries \ref{cor:not32} and \ref{cor:phiboundaryh2}, we show that for perturbations $\omega_{0}$ with non-vanishing Dirichlet data, $\omega_{0}|_{y=0,1}$, $\p_{y}W$ asymptotically develops (logarithmic) singularities at the boundary and that hence stability in $H^{m}_{x}H^{s}_{y}(\T_{L}\times [0,1]), s>\frac{3}{2}$, and $H^{m}_{x}H^{2}_{y}(\T_{L}\times [0,1])$ can not hold, unless one restricts to perturbations $\omega_{0}$ such that $\omega_{0}|_{y=0,1}=0$.
This singularity formation is further analyzed in Section \ref{sec:boundary-layers}, where we also study the behavior close to the boundary and the heuristic implications for stability in $L^{p}$ spaces.
As we discuss in Section \ref{cha:anoth-sect-cons}, these instability results have strong implications for the problem of nonlinear inviscid damping in a finite channel. 

As a complementary result to the singularity formation, Theorem \ref{thm:H32} establishes stability in the \emph{periodic} fractional Sobolev spaces $H^{m}_{x}H^{s}_{y}(\T_{L}\times\T), s<3/2$.
In particular, we thus obtain inviscid damping with an integrable (but subquadratic) rate and hence scattering for initial perturbations without zero Dirichlet data, which has not been possible with the $H^{1}$ stability results of \cite{Zill3}, but only with the $H^{2}$ stability results for perturbations with vanishing Dirichlet data.
\\

We recall that the linearized 2D Euler equations in a finite periodic channel, $\T_{L}\times [0,1]$, are given by:
 \begin{align}
   \label{eq:Eulernochmal}
     \begin{split}
   \dt W &= \frac{if(y)}{k}\Phi, \\
   (-1+(g(y)(\frac{\p_{y}}{k} -it))^{2}) \Phi &= W,\\
   \Phi|_{y=0,1}&=0, \\
   (t,k,y) &\in  \R \times L(\Z\setminus \{0\}) \times [0,1].
       \end{split}
 \end{align}
Furthermore, as noted in Section \ref{sec:linearized-2d-euler}, the equations \eqref{eq:Eulernochmal} decouple with respect to $k$.
Hence, for the remainder of this section, we consider $k$ as a given parameter and consider the stability of 
\begin{align*}
W(t)=W(t,k,\cdot)\in H^{s}([0,1]).
\end{align*}
Results for $H^{m}_{x}H^{s}_{y}(\T_{L}\times [0,1]), m \in \N_{0},$ can then be obtained by summing over $k$.
\\

Considering the evolution of $\p_{y}W$:
\begin{align}
\label{eq:pyW}
  \begin{split}
  \dt \p_{y}W &= \frac{if}{k} \p_{y}\Phi + \frac{if'}{k}\Phi, \\
  (-1+(g(\frac{\p_{y}}{k}-it))^{2})\Phi^{(1)} &= \p_{y}W + [(g(\p_{y}-it))^{2}, \p_{y}] \Phi, \\
  \Phi^{(1)}_{y=0,\pi} &= 0 , \\
  H^{(1)}&= \p_{y}\Phi -\Phi^{(1)}, \\
   (t,k,y) &\in  \R \times L(\Z\setminus \{0\}) \times [0,1],
  \end{split}
\end{align}
at the boundary, $y \in \{0,1\}$, we prove that Sobolev stability can not hold for $s>\frac{3}{2}$, unless one restricts to perturbations $\omega_{0}$ with $\omega_{0}|_{y=0,1}\equiv 0$.
In that case, as we show in Section \ref{sec:stability-h52-}, an instability develops for $s>\frac{5}{2}$. 

Using a similar approach as in Section \ref{sec:const-coeff-model}, the following lemma provides a characterization of $\p_{y}\Phi|_{y=0,1}$ and describes the asymptotic behavior.

\begin{lem}
\label{lem:boundary32}
  Let $W$ be a solution of the linearized Euler equations, \eqref{eq:Eulernochmal}, and suppose that $g \in W^{2,\infty}([0,1])$ satisfies $g^{2}>c>0$. Then, 
  \begin{align}
    \label{eq:pyPhi}
    \begin{split}
    \p_{y}\Phi |_{y=0}&= \frac{k}{g^{2}(0)} \langle W, u_{1} \rangle,\\
    \p_{y}\Phi |_{y=1}&= \frac{k}{g^{2}(1)} \langle W, u_{2} \rangle,\\
    \end{split}
  \end{align}
  where 
  \begin{align*}
    u_{1}(t,y)&= e^{ikty} u_{1}(0,y) , \\
    u_{2}(t,y)&= e^{ikt(y-1)}u_{2}(0,y),
  \end{align*}
  and $u_{j}(0,y)$ are solutions of 
  \begin{align*}
    (-k^{2}+(g\p_{y})^{2})u &=0, \\
    y &\in [0,1],
  \end{align*}
  with boundary values 
  \begin{align}
    \label{eq:7}
    \begin{split}
      u_{1}(0,0)=u_{2}(0,1)&=0, \\
      u_{2}(0,1)=u_{2}(0,0)&=0.
    \end{split}
  \end{align}
\\

  Let $s>0$ and suppose that
  \begin{align*}
    \|\p_{y} W(t)\|_{H^{s}}<C<\infty
  \end{align*}
  for all time, then, as $t \rightarrow \infty$, 
  \begin{align*}
    \langle W, u_{1} \rangle  = \frac{1}{ikt} \omega_{0}|_{y=0} + \mathcal{O}(t^{-1-s}), \\
    \langle W, u_{2} \rangle  = \frac{1}{ikt} \omega_{0}|_{y=1} + \mathcal{O}(t^{-1-s}).
  \end{align*}
\end{lem}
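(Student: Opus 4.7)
The plan is to gauge away the time dependence in the elliptic operator, derive \eqref{eq:pyPhi} by integrating the resulting Rayleigh-type equation against a homogeneous solution, and then extract the leading-order asymptotics of $\langle W,u_j\rangle$ via non-stationary phase, exploiting that $W$ is time-invariant at the boundary. Concretely, I would set $\Phi(t,y)=e^{ikty}\tilde\Phi(t,y)$ and $\tilde W=e^{-ikty}W$; the identity $(\p_y/k-it)(e^{ikty}h)=e^{ikty}\p_y h/k$ reduces the elliptic part of \eqref{eq:Eulernochmal} to the Rayleigh-type equation
\[
((g\p_y)^2-k^2)\tilde\Phi=k^2\tilde W,\qquad\tilde\Phi|_{y=0,1}=0.
\]
Under the change of variable $z=\int_0^y dy'/g(y')$, the operator $(g\p_y)^2$ becomes $\p_z^2$, so the homogeneous equation $((g\p_y)^2-k^2)v=0$ has a two-dimensional solution space. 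I would pick $v_1$ with $v_1(1)=0$ and $v_2$ with $v_2(0)=0$, normalised at the other endpoint so as to produce the constants in \eqref{eq:pyPhi}, and set $u_1(t,y)=e^{ikty}v_1(y)$ and $u_2(t,y)=e^{ikt(y-1)}v_2(y)$.

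Next, I would note that $(g\p_y)^2-k^2$ is symmetric on $L^2([0,1],dy/g)$ up to the boundary terms $[g\p_y\tilde\Phi\cdot\bar v-\tilde\Phi\cdot g\p_y\bar v]_0^1$. Pairing the equation with $v_1$ in this inner product and using $\tilde\Phi|_{y=0,1}=0$ together with $v_1(1)=0$ yields
\[
k^2\langle\tilde W,v_1\rangle_{L^2(dy/g)}=-g(0)\,\p_y\tilde\Phi(0)\,\overline{v_1(0)}.
\]
Since $\tilde\Phi(0)=0$ forces $\p_y\Phi|_{y=0}=\p_y\tilde\Phi|_{y=0}$, and $\langle W,u_1\rangle$ reduces to the corresponding pairing of $\tilde W$ against $v_1$, the above identity becomes the first line of \eqref{eq:pyPhi} once the normalisation of $v_1$ is fixed; the case of $v_2$ is analogous.

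For the asymptotic statement, the decisive observation is that the Dirichlet condition on $\Phi$ combined with the evolution $\dt W=(if/k)\Phi$ forces $\dt W|_{y=0,1}=0$, so the boundary traces $W(t,0)=\omega_0|_{y=0}$ and $W(t,1)=\omega_0|_{y=1}$ are time-independent. Writing $\langle W,u_1\rangle=\int_0^1 e^{-ikty}W(t,y)\overline{v_1(y)}\,dy$ and integrating by parts once in $y$, the phase produces a factor $(ikt)^{-1}$; the boundary term at $y=1$ vanishes because $v_1(1)=0$, the term at $y=0$ contributes the claimed leading order $(ikt)^{-1}\omega_0|_{y=0}\overline{v_1(0)}$, and one is left with the oscillatory integral $(ikt)^{-1}\int_0^1 e^{-ikty}\p_y(W\overline{v_1})\,dy$.

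The hard step is bounding this remainder by $t^{-1-s}$. The hypothesis $\|\p_yW\|_{H^s}\le C$ uniform in $t$, together with the smoothness of $v_1$ and a standard fractional product rule, controls $\p_y(W\overline{v_1})$ in $H^s([0,1])$ independently of time, and this function vanishes at $y=1$. Extending by zero past $y=1$ and absorbing any jump at $y=0$ into the already-extracted leading term gives an $H^s(\R)$ function supported in $(-\infty,1]$; a fractional non-stationary-phase bound, essentially $\left|\int e^{-i\lambda y}h(y)\,dy\right|\lesssim\lambda^{-s}\|h\|_{H^s}$ for such $h$, then yields an additional factor $t^{-s}$, and combining with the $(ikt)^{-1}$ prefactor produces the desired $O(t^{-1-s})$ remainder.
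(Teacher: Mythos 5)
Your derivation of the boundary formula is sound and is essentially the same argument as the paper's: you test the shifted elliptic equation against homogeneous solutions and integrate by parts, so only boundary terms survive. The gauging $\Phi=e^{ikty}\tilde\Phi$, $\tilde W=e^{-ikty}W$ and the weighted inner product $L^{2}(dy/g)$ are a pleasant way of making the structure visible (they cleanly avoid the distinction between $(gD)^{2}$ and $(Dg)^{2}$ that the paper's unweighted integration by parts glosses over), but the computation is substantively identical to the paper's. The observation that $\Phi|_{y=0,1}=0$ forces $\dt W|_{y=0,1}=0$, so $W|_{y=0,1}\equiv\omega_{0}|_{y=0,1}$, is exactly what the paper uses implicitly in extracting the leading boundary term.

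The remainder estimate is where there is a genuine gap. You claim that $\p_{y}(W\overline{v_{1}})$ ``vanishes at $y=1$.'' It does not: $v_{1}(1)=0$ makes the first summand $\overline{v_{1}}\p_{y}W$ vanish there, but the second summand gives $\p_{y}(W\overline{v_{1}})|_{y=1}=W(1)\overline{v_{1}'(1)}$, which is generically nonzero. Likewise, ``absorbing any jump at $y=0$ into the already-extracted leading term'' is not a meaningful operation — the leading scalar $\frac{1}{ikt}\omega_{0}|_{y=0}$ has already been pulled out and there is no further cancellation mechanism against the value $\p_{y}(W\overline{v_{1}})|_{y=0^{+}}$. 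So for $s\geq\tfrac12$ the zero extension of $\p_{y}(W\overline{v_{1}})$ from $[0,1]$ to $\R$ is \emph{not} in $H^{s}(\R)$, and your asserted $\lambda^{-s}$ decay does not follow; it only goes through for $s<\tfrac12$, where extension by zero is bounded on $H^{s}$. The lemma as stated claims $s>0$. Also note that your ``fractional non-stationary phase'' inequality $\left|\int e^{-i\lambda y}h(y)\,dy\right|\lesssim\lambda^{-s}\|h\|_{H^{s}}$ is false for general $h\in H^{s}(\R)$; the correct version, valid for $s<1$ and a fixed smooth compactly supported multiplier, is precisely the $H^{s}$--$H^{-s}$ duality bound $\|e^{ikty}u_{1}\|_{H^{-s}}=\mathcal{O}(t^{-s})$, so the ``non-stationary phase'' phrasing obscures rather than simplifies.

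The paper avoids these issues by \emph{splitting} $\p_{y}(Wu_{1})=u_{1}\p_{y}W+W\p_{y}u_{1}$ before estimating. The second piece, $\frac{1}{ikt}\langle e^{ikty}\p_{y}u_{1},W\rangle$, is integrated by parts once more (using $W\in H^{1}$ uniformly), contributing $\mathcal{O}(t^{-2})$ which dominates $\mathcal{O}(t^{-1-s})$ for $s\leq1$. The first piece is bounded directly by duality, $|\langle e^{ikty}u_{1},\p_{y}W\rangle|\leq\|e^{ikty}u_{1}\|_{H^{-s}}\|\p_{y}W\|_{H^{s}}$, with no extension or support considerations needed. If you split the product before estimating, each piece closes cleanly and your argument becomes correct.
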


As a corollary, we see that stability in $s>3/2$ can in general not hold.
\begin{cor}
\label{cor:not32}
Let $W$ be a solution of the linearized Euler equations, \eqref{eq:Eulernochmal}, and suppose that $f,g \in W^{2,\infty}([0,1])$ and that $g$ satisfies $g^{2}>c>0$. 
  Let $s>1$ and suppose that 
  \begin{align*}
    \|\p_{y} W\|_{H^{s}([0,1])}<C<\infty .
  \end{align*}
  Suppose further that $f \omega_{0}|_{y=0,1}$ is non-trivial. Then  
  \begin{align*}
    \|\p_{y}W(t)\|_{L^{\infty}([0,1])} \gtrsim \log |t|
  \end{align*}
  as $t \rightarrow \pm \infty$.

  As a consequence, for perturbations such that $f \omega_{0}|_{y=0,1}$ is non-trivial, for any $s>\frac{3}{2}$, necessarily
  \begin{align*}
    \sup_{t>0} \|W(t)\|_{H^{s}([0,1])} = \infty .
  \end{align*}
\end{cor}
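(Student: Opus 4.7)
The plan is to derive an ODE for $\partial_y W$ restricted to the boundary points $y_\ast \in \{0,1\}$ and integrate it in time, using the asymptotic expansion supplied by Lemma \ref{lem:boundary32} to isolate a leading logarithmic contribution.

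Differentiating the evolution equation \eqref{eq:Eulernochmal} once in $y$ and restricting to $y_\ast \in \{0,1\}$, the Dirichlet condition $\Phi|_{y=0,1}=0$ kills the $\frac{if'}{k}\Phi$ term, leaving the one-point ODE
$$
\partial_t\bigl(\partial_y W\bigr)(t,y_\ast) \;=\; \frac{i f(y_\ast)}{k}\,\partial_y\Phi(t,y_\ast).
$$
By the first identity of Lemma \ref{lem:boundary32}, $\partial_y\Phi|_{y=y_\ast}$ equals $\tfrac{k}{g^2(y_\ast)}\langle W,u_j\rangle$, and the asymptotic part of the same Lemma, whose hypothesis $\|\partial_y W\|_{H^s}\le C$ with $s>0$ is available here (indeed $s>1$), gives
$$
\frac{i f(y_\ast)}{k}\,\partial_y\Phi(t,y_\ast)
\;=\; \frac{f(y_\ast)\,\omega_0|_{y=y_\ast}}{k\,g^2(y_\ast)}\cdot\frac{1}{t} \;+\; \mathcal{O}(t^{-1-s}).
$$

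Next, I would integrate this identity from some fixed $t_0>0$ to $t$. The principal term produces $\tfrac{f(y_\ast)\omega_0|_{y=y_\ast}}{k\,g^2(y_\ast)}\log(t/t_0)$, while the $\mathcal{O}(\tau^{-1-s})$ error integrates to an $\mathcal{O}(1)$ remainder because $s>0$ makes $\tau^{-1-s}$ integrable at infinity. By the non-triviality hypothesis, at least one of $f(0)\omega_0|_{y=0}$ or $f(1)\omega_0|_{y=1}$ is nonzero, so at that boundary point $|\partial_y W(t,y_\ast)|\gtrsim \log|t|$ as $t\to\pm\infty$. The pointwise bound at $y_\ast$ is dominated by the sup, yielding $\|\partial_y W(t)\|_{L^\infty([0,1])}\gtrsim \log|t|$.

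For the consequence, I would argue by contradiction: assume $\sup_{t>0}\|W(t)\|_{H^s([0,1])}<\infty$ for some $s>3/2$. Then $\|\partial_y W(t)\|_{H^{s-1}}\le C$ uniformly in time with $s-1>1/2$, and the one-dimensional Sobolev embedding $H^{s-1}([0,1])\hookrightarrow L^\infty([0,1])$ produces a uniform bound on $\|\partial_y W(t)\|_{L^\infty}$. Since the asymptotic expansion of Lemma \ref{lem:boundary32} only requires $s-1>0$, the integration argument of the previous paragraph applies verbatim and yields a logarithmic blow-up of $\|\partial_y W(t)\|_{L^\infty}$, contradicting the uniform bound. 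Hence $\sup_{t>0}\|W(t)\|_{H^s}=\infty$ for every $s>3/2$.

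The main obstacle is essentially bookkeeping: one must verify carefully that the $\mathcal{O}(t^{-1-s})$ error from Lemma \ref{lem:boundary32} integrates to a bounded remainder so that the leading $\log|t|$ term survives, and keep track of the parameter ranges ($s>0$ for the Lemma versus $s>1/2$ for Sobolev embedding) when converting the $L^\infty$ blow-up at a boundary point into a blow-up of the full $H^s$ norm. The conceptual heart of the proof is that $\Phi|_{y=0,1}=0$ reduces the boundary dynamics of $\partial_y W$ to a single multiplicative forcing that decays exactly like $1/t$, producing an unavoidable logarithmic secular term.
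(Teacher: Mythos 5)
Your proposal is correct and follows essentially the same route as the paper: restrict the $\p_y W$ evolution to the boundary using $\Phi|_{y=0,1}=0$, invoke Lemma~\ref{lem:boundary32} to identify the leading $1/t$ forcing, integrate to obtain the logarithmic secular growth, and then derive the $H^s$ blow-up by contradiction via the trace/Sobolev embedding. Your added remark that the contradiction step really only needs the lemma's $s>0$ hypothesis (rather than the corollary's stated $s>1$) is a correct and slightly more careful bookkeeping than the paper's own formulation, but the argument is the same.
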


\begin{proof}[Proof of Corollary \ref{cor:not32}]
Restricting \eqref{eq:pyW} to the boundary, we obtain 
\begin{align*}
  \dt \p_{y}W|_{y=0,1}= \frac{if}{k} \p_{y}\Phi|_{y=0,1},
\end{align*}
where we used that $\Phi|_{y=0,1}=0$.

By Lemma \ref{lem:boundary32}, under the assumptions of the corollary, thus 
\begin{align*}
  \dt \p_{y}W|_{y=0,1} =\frac{1}{t} \left. \frac{if}{k}\omega_{0} \frac{k}{g^{2}}\right|_{y=0,1} + \mathcal{O}(t^{-1-s}).
\end{align*}
Integrating this equality and using that 
\begin{align*}
  \left. \frac{if}{k}\omega_{0} \frac{k}{g^{2}}\right|_{y=0,1}
\end{align*}
is independent of $t$ and non-trivial, 
\begin{align*}
  |\p_{y}W|_{y=0,1}(t)| \gtrsim \int^{t} \frac{1}{\tau} - \mathcal{O}(\tau^{-1-s}) d\tau  \gtrsim \log |t|,
\end{align*}
which provides the lower bound on $\|\p_{y}W\|_{L^{\infty}}$ and hence the first result.
\\

The second result is proven by contradiction.
Let thus $s>3/2$ be given and suppose to the contrary that 
\begin{align*}
  \|W(t)\|_{H^{s}}< C<\infty,
\end{align*}
uniformly in time.
Then, by the trace map and the first result, 
  \begin{align*}
   \log(t) \lesssim \|\p_{y}W\|_{L^{\infty}} \lesssim_{s} \|W(t)\|_{H^{s}}<C,
  \end{align*}
which is a contradiction as $t \rightarrow \infty$.
\end{proof}

\begin{proof}[Proof of Lemma \ref{lem:boundary32}]
We note that, by construction, $u_{1}(t,y)$ and $u_{2}(t,y)$ are solutions of 
\begin{align*}
 (-1+(g(\frac{\p_{y}}{k}-it))^{2})u_{j}=0 
\end{align*}
with boundary values
  \begin{align}
    \begin{split}
      u_{1}(t,0)=u_{2}(t,1)&=0, \\
      u_{2}(t,1)=u_{2}(t,0)&=0,
    \end{split}
  \end{align}
for all times $t$.
Hence, integrating by parts, we obtain
\begin{align*}
  \langle W, u_{j} \rangle &= \langle (-1+(g(\frac{\p_{y}}{k}-it))^{2})\Phi, u_{j} \rangle \\ &= \overline{u}_{j} \frac{g^{2}}{k}(\frac{\p_{y}}{k}-it)\Phi|_{y=0}^{1} - \Phi \frac{g^{2}}{k}(\frac{\p_{y}}{k}-it)u_{j}|_{y=0}^{1} + \langle \Phi,  (-1+(g(\frac{\p_{y}}{k}-it))^{2})u_{j} \rangle\\
&= \overline{u}_{j} \frac{g^{2}}{k} \p_{y}\Phi|_{y=0}^{1},
\end{align*}
where we used that $\Phi|_{y=0,1}=0$. Using the boundary values of $u_{j}$ then yields \eqref{eq:pyPhi}.

Integrating 
\begin{align*}
  u_{1}(t,y) = e^{ikty}u_{1}(0,y)= u_{1}(0,1)\p_{y}\frac{e^{ikty}}{ikt}
\end{align*}
by parts, we obtain a boundary term 
\begin{align*}
  \frac{1}{ikt} W u_{1}|_{y=0,1} = - \frac{1}{ikt} W|_{y=0} = - \frac{1}{ikt} \omega_{0}|_{y=0},
\end{align*}
as well as a bulk term
\begin{align*}
 \frac{1}{ikt} \langle e^{ikty}, \p_{y}(Wu_{1}(0,y)) \rangle  = \frac{1}{ikt}\langle e^{ikty}u_{1}, \p_{y}W \rangle + \frac{1}{ikt}\langle e^{ikty}\p_{y}u_{1}, W \rangle .
\end{align*}
The boundary term is already of the desired form.

The second term of the bulk contribution can be integrated by parts once more and thus yields a quadratically decaying contribution.
It thus remains to estimate the first term,
\begin{align*}
  \frac{1}{ikt}\langle e^{ikty}u_{1}, \p_{y}W \rangle.
\end{align*}
There, we use duality and estimate 
\begin{align*}
  \langle e^{ikty}u_{1}, \p_{y}W \rangle_{L^{2}} \leq \|e^{ikty}u_{1}\|_{H^{-s}} \|\p_{y}W\|_{H^{s}} = \mathcal{O}(t^{-s})\|\p_{y}W\|_{H^{s}}.
\end{align*}
\end{proof}
As a consequence we show that for stability in $H^{2}$ it is necessary to restrict to perturbations with vanishing Dirichlet boundary data, $\omega_{0}|_{y=0,1}=0$:
\begin{cor}
\label{cor:phiboundaryh2}
  Let $\omega_{0} \in H^{2},f,g$ satisfy the assumptions of Lemma \ref{lem:boundary32} and suppose that $f\omega_{0}|_{y=0,1}$ is non-trivial.
Let further $W(t)$ be the solution of the linearized Euler equations, \eqref{eq:Eulernochmal}.
  Then, 
  \begin{align*}
    \sup_{t}\|W(t)\|_{H^{2}([0,1])} = \infty.
  \end{align*}
\end{cor}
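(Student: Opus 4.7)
The plan is to argue by contradiction, reducing Corollary \ref{cor:phiboundaryh2} to Lemma \ref{lem:boundary32} in much the same way as Corollary \ref{cor:not32}, but being a bit more careful since $H^{2}([0,1])$ is the borderline Sobolev space where $\p_{y}$ of its elements is only in $H^{1}$, not directly in $C^{0}$ via the critical embedding $H^{1/2}\hookrightarrow L^{\infty}$.

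Assume for contradiction that $\sup_{t\geq 0}\|W(t)\|_{H^{2}([0,1])}<\infty$. In one space dimension the embedding $H^{2}([0,1])\hookrightarrow C^{1}([0,1])$ is continuous, so both $\p_{y}W(t)|_{y=0,1}$ are well-defined and uniformly bounded in $t$. Moreover $\|\p_{y}W(t)\|_{H^{1}([0,1])}\leq \|W(t)\|_{H^{2}}$ is uniformly bounded, so Lemma \ref{lem:boundary32} applies with $s=1$ and yields, as $t\to\infty$,
\begin{align*}
  \langle W(t),u_{j}\rangle \;=\; \frac{1}{ikt}\,\omega_{0}|_{y=y_{j}} \;+\; \mathcal{O}(t^{-2}),
\end{align*}
with $y_{1}=0$, $y_{2}=1$. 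Combined with the boundary identity \eqref{eq:pyPhi},
\begin{align*}
  \p_{y}\Phi(t)|_{y=y_{j}} \;=\; \frac{k}{g^{2}(y_{j})}\,\langle W(t),u_{j}\rangle \;=\; \frac{1}{it\,g^{2}(y_{j})}\,\omega_{0}|_{y=y_{j}} \;+\; \mathcal{O}(t^{-2}).
\end{align*}

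Now I restrict the first equation in \eqref{eq:pyW} to $y\in\{0,1\}$, where $\Phi|_{y=0,1}=0$ kills the $f'\Phi$ contribution, to obtain
\begin{align*}
  \dt\,\p_{y}W|_{y=y_{j}} \;=\; \frac{if(y_{j})}{k}\,\p_{y}\Phi|_{y=y_{j}} \;=\; \frac{f(y_{j})\,\omega_{0}|_{y=y_{j}}}{k\,t\,g^{2}(y_{j})} \;+\; \mathcal{O}(t^{-2}).
\end{align*}
Since by hypothesis $f\,\omega_{0}|_{y=y_{j}}$ is non-trivial (at one of the two endpoints), integrating in time from $1$ to $t$ yields
\begin{align*}
  \bigl|\p_{y}W(t)|_{y=y_{j}}\bigr| \;\gtrsim\; |\log t| \;-\; C,
\end{align*}
which contradicts the uniform bound $\|\p_{y}W(t)\|_{L^{\infty}([0,1])}\lesssim \|W(t)\|_{H^{2}}\leq C$ that followed from the embedding $H^{2}\hookrightarrow C^{1}$.

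The main (minor) obstacle is really just to justify that $s=1$ is admissible in Lemma \ref{lem:boundary32} under the assumption $\sup_{t}\|W(t)\|_{H^{2}}<\infty$, and that the trace $\p_{y}W|_{y=0,1}$ is then literally a bounded continuous function of $t$ rather than a distributional object; both follow from standard one-dimensional Sobolev embeddings on $[0,1]$. Everything else is a direct consequence of Lemma \ref{lem:boundary32} and the boundary ODE for $\p_{y}W|_{y=0,1}$, following precisely the same template as Corollary \ref{cor:not32}.
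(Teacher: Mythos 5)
Your proposal is correct and follows essentially the same contradiction argument as the paper: assume $\sup_t\|W(t)\|_{H^2}<\infty$, use the boundary formula \eqref{eq:pyPhi} together with an $\mathcal{O}(t^{-2})$ estimate on the remainder to conclude $\p_y\Phi|_{y=0,1}=\frac{\omega_0|_{y=0,1}}{itg^2}+\mathcal{O}(t^{-2})$, integrate $\dt\p_yW|_{y=0,1}=\frac{if}{k}\p_y\Phi|_{y=0,1}$ to get a logarithmic divergence, and contradict the Sobolev embedding $H^2([0,1])\hookrightarrow C^1([0,1])$. The only (cosmetic) difference is that where you invoke Lemma \ref{lem:boundary32} with $s=1$ as a black box, the paper spells out the two integrations by parts in $\langle W,e^{ikty}u_1\rangle$ explicitly, producing the boundary term $\frac{1}{k^2t^2}\p_y(Wu_1)|_{y=0}^1$ and the bulk term $\frac{1}{k^2t^2}\langle e^{ikty},\p_y^2(Wu_1)\rangle$, both controlled by $\|W\|_{H^2}$; this avoids having to worry about whether the stated duality estimate in Lemma \ref{lem:boundary32} is meant to cover $s\geq 1/2$ (where traces exist) or only $0<s<1/2$. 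Your route relies on taking that lemma's statement "Let $s>0$" at face value, which is consistent with the text, so the proof goes through.
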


\begin{proof}[Proof of Corollary \ref{cor:phiboundaryh2}]
  We follow the same strategy as in the proof of Corollary \ref{cor:not32}.
  Thus, assume to the contrary that $\|W(t)\|_{H^{2}}$ is bounded uniformly in time.
  Then, for example at $y=0$, 
  \begin{align*}
     \langle W, e^{ity} u_{1} \rangle_{L^{2}}&= \frac{1}{ikt} W|_{y=0}- \frac{1}{ikt}\langle e^{ikty}, \p_{y}(Wu_{1}) \rangle_{L^{2}}, \\
    &= \frac{1}{ikt} W|_{y=0} + \frac{1}{k^{2}t^{2}} \p_{y}(Wu_{1})|_{y=0}^{1}- \frac{1}{k^{2}t^{2}} \langle e^{ikty}, \p_{y}^{2}(Wu_{1}) \rangle_{L^{2}}.
  \end{align*}
  Both the last $L^{2}$ product and the trace of $W$ and $\p_{y}W$ can be controlled by $\|W\|_{H^{2}([0,1])}$. Thus,
  \begin{align*}
   \p_{y}\Phi|_{y=0} = \frac{k}{g^{2}(0)}\langle W, e^{ity} u_{1} \rangle_{L^{2}} = \frac{1}{itg^{2}(0)} \omega_{0}|_{y=0} + \mathcal{O}(t^{-2})\|W\|_{H^{2}([0,1])},
  \end{align*}
  where we used Lemma \ref{lem:boundary32}.

  Integrating 
  \begin{align*}
    \dt \p_{y}W|_{y=0,1} = \frac{if}{k}\p_{y}\Phi|_{y=0,1},
  \end{align*}
  in $t$, thus yields that $\p_{y}W|_{y=0,1}$ blows up logarithmically as $t \rightarrow \infty$. On the other hand, the $L^{\infty}$ norm of $\p_{y}W$ is controlled by the $H^{2}$ norm via the Sobolev embedding theorem, which yields a contradiction.
\end{proof}

We have thus seen that, in general, for the purposes of stability results $s$ can not be larger than $3/2$.
The main result of this section is that this condition is sharp in the sense that stability in $H^{s}$ holds for all $s<3/2$.
More precisely, instead of $H^{s}([0,1])$, we consider \emph{periodic} spaces, i.e.
\begin{align*}
  W(t,k,\cdot) \in H^{s-1}(\T), \p_{y}W(t,k,\cdot) \in H^{s-1}(\T),
\end{align*}
where $\T=[0,1]/\sim$ is the torus of unit period.
As discussed in Section \ref{sec:fract-sobol-space}, this allows us to use both a Fourier characterization and a kernel characterization. 

\begin{thm}
\label{thm:H32}
  Let $0<s<1/2$, $\omega_{0} \in H^{1}([0,1])$ and $\omega_{0},\p_{y}\omega_{0} \in H^{s}(\T)$. Suppose further that $f,g \in W^{2,\infty}(\T)$, that there exists $c>0$ such that 
  \begin{align*}
    0 < c< g< c^{-1}< \infty,
  \end{align*}
 and that 
  \begin{align*}
    \|f\|_{W^{2,\infty}(\T)} L 
  \end{align*}
  is sufficiently small.
  Then the solution, $W$, of the linearized Euler equations, \eqref{eq:Eulernochmal}, satisfies 
  \begin{align*}
    \|\p_{y}W(t)\|_{H^{s}(\T)} \lesssim \|\omega_{0}\|_{H^{s}(\T)}+\|\p_{y}\omega_{0}\|_{H^{s}(\T)} ,
  \end{align*}
 uniformly in time.
\end{thm}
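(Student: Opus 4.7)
The plan is to adapt the Fourier-weighted energy method of \cite{Zill3} to the fractional periodic setting. Since $s<1/2$, both the Fourier characterization of $H^{s}(\T)$ and the multiplication/commutator estimates of Propositions~\ref{prop:LipHs} and~\ref{prop:ComHs} are at our disposal. I would propagate the energy
\[
E(t) := \|W(t)\|_{H^{s}(\T)}^{2} + \langle \p_{y} W(t), A(t)\p_{y} W(t)\rangle_{H^{s}(\T)},
\]
where $A(t)$ is a time-dependent Fourier multiplier on $H^{s}(\T)$, comparable to the identity uniformly in $t$, modelled on the constant-coefficient symbol $\exp(C\arctan(\eta/k-t))$ introduced in Section~\ref{sec:const-coeff-model}. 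The stability of the first summand is a direct fractional analogue of Theorem~\ref{thm:citedL2result}, so the real content lies in propagating control of $\p_{y}W$.

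\textbf{Differentiated equation and splitting of $\p_{y}\Phi$.} Differentiating \eqref{eq:Eulernochmal} in $y$ yields \eqref{eq:pyW}, in which $\p_{y}\Phi$ is split as $\p_{y}\Phi=\Phi^{(1)}+H^{(1)}$. Here $\Phi^{(1)}$ solves the elliptic equation with zero Dirichlet data and source $\p_{y}W$ plus a commutator contribution generated by $g$, while $H^{(1)}$ is the homogeneous solution carrying the boundary data $\p_{y}\Phi|_{y=0,1}$. Mimicking the elliptic argument of \cite{Zill3}, Proposition~\ref{prop:ComHs} absorbs the commutator by the coercivity coming from $g^{2}>c>0$ and yields the fractional elliptic gain
\[
\Bigl\|\tfrac{1}{\sqrt{1+(n/k-t)^{2}}}\Phi^{(1)}(t)\Bigr\|_{H^{s}(\T)} \lesssim \|\p_{y}W(t)\|_{H^{s}(\T)}+\|W(t)\|_{H^{s}(\T)}.
\]

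\textbf{Energy inequality on the bulk.} The symbol of $A(t)$ is designed so that $\dot{A}$ contributes a negative-definite quantity dominating $(1+(n/k-t)^{2})^{-1}$ in the $H^{s}$ sense. Using \eqref{eq:pyW},
\[
\dot{E}(t) = 2\Re\,\bigl\langle \p_{y}W,\, A\bigl(\tfrac{if}{k}\Phi^{(1)}+\tfrac{if}{k}H^{(1)}+\tfrac{if'}{k}\Phi\bigr)\bigr\rangle_{H^{s}(\T)} + \langle \p_{y}W,\dot{A}\,\p_{y}W\rangle_{H^{s}(\T)} + \tfrac{d}{dt}\|W\|_{H^{s}(\T)}^{2}.
\]
Multiplication by the coefficients $f,f'$ is legitimate on $H^{s}(\T)$ thanks to Proposition~\ref{prop:LipHs} and the hypothesis $f\in W^{2,\infty}(\T)$. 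The $\Phi^{(1)}$ and $\Phi$ inner products are dominated by the elliptic gain in $\dot{A}$ in the same manner as in \cite{Zill3}, provided $\|f\|_{W^{2,\infty}(\T)}L$ is sufficiently small (the factor $L$ arising since $|k|\geq L^{-1}$). The $\tfrac{d}{dt}\|W\|_{H^{s}(\T)}^{2}$ piece is handled by the fractional analogue of Theorem~\ref{thm:citedL2result}.

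\textbf{Main obstacle: the homogeneous correction.} The critical difficulty is the contribution from $H^{(1)}$, which has no counterpart in the infinite-channel setting. The function $H^{(1)}$ is the homogeneous solution matching the (in general non-zero) boundary data $\p_{y}\Phi|_{y=0,1}$; viewed as a periodic function on $\T$, it typically carries a jump discontinuity of size $\sim |\p_{y}\Phi|_{y=0,1}|$. Here the subcriticality $s<1/2$ is decisive: indicator functions of half-lines lie in $H^{s}(\R)$ for $s<1/2$ (cf.\ Remark~\ref{rem:periodicityassumptions}), so such jumps are admissible in $H^{s}(\T)$ and one has
\[
\|H^{(1)}(t)\|_{H^{s}(\T)} \lesssim_{s,g} \bigl|\p_{y}\Phi(t)|_{y=0}\bigr|+\bigl|\p_{y}\Phi(t)|_{y=1}\bigr|.
\]
By Lemma~\ref{lem:boundary32}, these boundary values decay like $t^{-1}$ with constant proportional to $\|\omega_{0}\|_{H^{s}(\T)}+\|\p_{y}\omega_{0}\|_{H^{s}(\T)}$, producing a time-integrable source in the energy identity and closing a Gr\"onwall estimate on $E(t)$. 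This is precisely the point at which the threshold $s=1/2$ (that is, $H^{3/2}$ on $W$) is sharp, mirroring the logarithmic blow-up of $\p_{y}W|_{y=0,1}$ in the supercritical regime established in Corollary~\ref{cor:not32}.
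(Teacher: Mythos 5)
There is a genuine gap in your treatment of the homogeneous correction $H^{(1)}$, which is the heart of the matter. You assert
$\|H^{(1)}(t)\|_{H^{s}(\T)} \lesssim_{s,g} |\p_{y}\Phi(t)|_{y=0}| + |\p_{y}\Phi(t)|_{y=1}|$,
but this is false: writing $H^{(1)} = \p_y\Phi|_{y=0}\,e^{ikty}u_{1} + \p_y\Phi|_{y=1}\,e^{ikt(y-1)}u_{2}$, a direct computation (used in the paper's Lemma \ref{lem:h52hilfslem2}) gives
\begin{align*}
\|e^{ikty}u_{1}\|_{H^{s}(\T)}^{2} \lesssim \sum_{n}\frac{\langle n\rangle^{2s}}{\langle n-kt\rangle^{2}} \lesssim t^{2s},
\end{align*}
so in fact $\|H^{(1)}(t)\|_{H^{s}(\T)} \lesssim t^{s}\bigl(|\p_y\Phi|_{y=0}|+|\p_y\Phi|_{y=1}|\bigr)$. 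Combined with the $t^{-1}$ decay from Lemma \ref{lem:boundary32} this yields only $\|H^{(1)}\|_{H^{s}}\lesssim t^{s-1}$, and for $0<s<1/2$ one has $s-1\in(-1,-1/2)$, so $t^{s-1}$ is \emph{not} time-integrable. The naive bound $|\langle A\p_y W, ifH^{(1)}\rangle_{H^{s}}|\lesssim\|\p_y W\|_{H^{s}}\,\|H^{(1)}\|_{H^{s}}$ therefore cannot close a Gr\"onwall estimate, contrary to your claim. A related issue: the $\mathcal{O}(t^{-1-s})$ remainder in Lemma \ref{lem:boundary32} has a constant depending on $\sup_{t}\|\p_y W\|_{H^{s}}$, i.e.\ on the quantity you are trying to bound, so invoking that lemma as a finished estimate rather than re-expanding the bilinear form introduces circularity.

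What the paper does instead (Theorem \ref{thm:H32boundary}) is to avoid ever taking $\|H^{(1)}\|_{H^{s}}$ as a whole. One expands the pairing $\langle A\p_y W, ifH^{(1)}\rangle_{H^{s}}$ into Fourier modes, substitutes the explicit formula $\p_y\Phi|_{y=0}=\frac{k}{g^{2}(0)}\bigl(\frac{1}{ikt}\omega_{0}|_{y=0}+\frac{1}{ikt}\langle e^{ikty},\p_yW\,u_1\rangle\bigr)$ (and similarly at $y=1$), and exploits the fact that the Fourier coefficients of $e^{ikty}u_{1}$ concentrate in a shell $|n-kt|\lesssim 1$. By splitting the weight $\langle n-kt\rangle^{-1}=\langle n-kt\rangle^{-(1-\lambda)}\langle n-kt\rangle^{-\lambda}$ with $\lambda$ chosen so that $s-\lambda<-1/2$, and similarly absorbing the $\langle kt\rangle^{s}$ from the $l^{2}$ norm, one arrives at the frequency-localized bound $|\langle A\p_y W, ifH^{(1)}\rangle_{H^{s}}|\lesssim\sum_{n}c_{n}(t)\langle n\rangle^{2s}|(\p_y W)_{n}|^{2}$ with $c_{n}\in L^{1}_{t}$ \emph{uniformly in} $n$. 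That bound is then absorbed into $\langle\p_y W,\dot A\,\p_y W\rangle_{H^{s}}$ rather than integrated as a global source. The point is that at a fixed time only a thin window of frequencies of $\p_y W$ interacts strongly with $H^{(1)}$, and this localization---not the overall size of $\|H^{(1)}\|_{H^{s}}$---is what makes the argument close. Your outline of the bulk and elliptic terms is in the right spirit, but the boundary term needs this sharper, mode-by-mode treatment.
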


\begin{rem}
\label{rem:H32assumptionsperiod}
  The assumptions on $f$ and $g$ are chosen such that we can apply  Proposition \ref{prop:LipHs} to the functions $f$, $g$ and their derivatives $f'$ and $g'$.
Furthermore, we require 
\begin{align*}
  g^{2}=U'(U^{-1}(\cdot))^{2}
\end{align*}
to be such that we can apply Proposition \ref{prop:ComHs}.

As discussed in Remark \ref{rem:periodicityassumptions}, these assumptions can probably be relaxed to requiring that 
\begin{align*}
  f,g \in W^{3,\infty}([0,1]),
\end{align*}
and that 
\begin{align*}
  |g^{2}(1)-g^{2}(0)| = |(U'(b))^{2}-(U'(a))^{2}|
\end{align*}
is sufficiently small compared to 
\begin{align*}
  \min(g^{2})= \min((U')^{2}) >0.
\end{align*}
\end{rem}

\begin{proof}[Proof of Theorem \ref{thm:H32}]

In our proof, we split $\p_{y}\Phi$ into a solution with zero Dirichlet boundary conditions and a correction term in the form of a homogeneous solution:
\begin{align*}
  \dt \p_{y} W &= ikf \Phi^{(1)} + ikf' \Phi + ikf H^{(1)}, \\
  (-k^{2}+ (g(\p_{y}-ikt))^{2})\Phi^{(1)}&= \p_{y}W + [(g(\p_{y}-ikt))^{2}, \p_{y}] \Phi, \\
  \Phi^{(1)}|_{y=0,1} &=  0, 
\end{align*}
where $H^{(1)}$ is given by
\begin{align*}
  (-k^{2}+ (g(\p_{y}-ikt))^{2})H^{(1)}&= 0 , \\ 
  H^{(1)} &= H^{(1)}|_{y=0} e^{ikty}u_{1} + H^{(1)}|_{y=1} e^{ikt(y-1)}u_{2}, \\ 
  H^{(1)}|_{y=0} &= \p_{y}\Phi |_{y=0} = \frac{1}{g^{2}} \langle W, e^{ikty} u_{1} \rangle ,\\
  H^{(1)}|_{y=1} &= \p_{y}\Phi |_{y=1} = \frac{1}{g^{2}} \langle W, e^{ikt(y-1)} u_{2} \rangle .
\end{align*}

Considering a decreasing weight $A$ and computing 
\begin{align*}
  \dt (\langle W, A W \rangle_{H^{s}} + \langle \p_{y}W, A \p_{y} W \rangle_{H^{s}}) =: \dt I(t),
\end{align*}
we thus have to control 
\begin{align*}
\tag{elliptic}
  & \langle \frac{if}{k} \Phi, A W \rangle_{H^{s}} + \langle \frac{if}{k} \Phi^{(1)},A \p_{y}W \rangle_{H^{s}}  + \langle \frac{if'}{k} \Phi,A \p_{y}W \rangle_{H^{s}}  \\
\tag{boundary}
+&  \langle \frac{if}{k} H^{(1)}, A \p_{y}W \rangle_{H^{s}}
\end{align*}
in terms of 
\begin{align*}
 \frac{C}{k} |\langle W, \dot A W \rangle_{H^{s}} + \langle \p_{y}W, \dot A \p_{y} W \rangle_{H^{s}} | 
\end{align*}

Assuming this control and requiring $k$ to be sufficiently large such that $\frac{C}{k} \ll 1$, this then yields that $I(t)$ is non-increasing. In particular, 
\begin{align*}
  \|W\|_{H^{s}}^{2} + \|\p_{y}W\|_{H^{s}}^{2} \lesssim I(t) \leq I(0) \lesssim \|\omega_{0}\|_{H^{s}}^{2} + \|\p_{y} \omega_{0}\|_{H^{s}}^{2}.
\end{align*}
It remains to prove the elliptic and boundary control in the following subsections.
\end{proof}

\subsection{Boundary corrections}
\label{sec:boundary-corrections}
The control of the boundary term in the proof of Theorem \ref{thm:H32} is provided by the following theorem.
\begin{thm}
\label{thm:H32boundary}
  Let $0<s<1/2$ and let $W,f,g$ as in Theorem \ref{thm:H32}.
  Let further $A$ be a diagonal operator comparable to the identity, i.e.
  \begin{align*}
    A: e^{iny} \mapsto A_{n} e^{iny},
  \end{align*}
  with 
  \begin{align*}
    1 \lesssim A_{n} \lesssim 1,
  \end{align*}
  uniformly in $n$.

 Then,
  \begin{align*}
    | \langle A \p_{y}W, if H^{(1)} \rangle_{H^{s}}| \lesssim \sum_{n} c_{n}(t) <n>^{2s} |(\p_{y}W)_{n}|^{2},
  \end{align*}
  for a family $c_{n} \in L^{1}_{t}$, with $\|c_{n}\|_{L^{1}_{t}}$  bounded uniformly in $n$.
\end{thm}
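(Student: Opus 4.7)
The plan is to exploit the fact that $H^{(1)}$ is a rank-two boundary correction whose Fourier support is concentrated near the travelling frequency $n \approx kt$. By the explicit formula preceding the theorem,
\begin{align*}
  H^{(1)}(t,y) = h_{0}(t)\, e^{ikty} u_{1}(0,y) + h_{1}(t)\, e^{ikt(y-1)} u_{2}(0,y),
\end{align*}
where $u_{j}(0,\cdot)$ solve the elliptic ODE $(-k^{2}+(g\p_{y})^{2})u = 0$ with the boundary values \eqref{eq:7} and are therefore analytic, while the scalar coefficients $h_{j}(t)=\p_{y}\Phi|_{y=y_{j}}$ equal the $L^{2}$-pairings $\tfrac{k}{g^{2}(y_{j})}\langle W, v_{j}\rangle$ with the travelling modes $v_{j}(t,y):=e^{ikt(y-y_{j})} u_{j}(0,y)$, as provided by Lemma \ref{lem:boundary32}. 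The entire estimate thus reduces to quantifying oscillatory integrals against $v_{j}$.

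I would carry out the argument in three steps. First, since $f \in W^{2,\infty}(\T)$ and $u_{j}(0,\cdot)$ is smooth, a standard Paley--Wiener-type estimate gives
\begin{align*}
  \bigl|(\widehat{fv_{j}})_{n}\bigr| \lesssim \frac{1}{1+(n-kt)^{2}},
\end{align*}
so that $fH^{(1)}$ is sharply localized around $n=kt$ in Fourier space. Second, I would trade $W$ for $\p_{y}W$ in $h_{j}(t)$ by one extra integration by parts in the oscillatory integral, as in the proof of Corollary \ref{cor:phiboundaryh2}: using $v_{j}=\tfrac{1}{ikt}\p_{y}v_{j}-\tfrac{1}{ikt}e^{ikt(y-y_{j})}u_{j}'(0,y)$ and integrating in $y$, together with Parseval and Step 1, produces
\begin{align*}
  |h_{j}(t)| \lesssim \frac{|\omega_{0}|_{y=y_{j}}|+\|W\|_{L^{2}}}{t} + \frac{1}{t}\Bigl(\sum_{m}\frac{|(\p_{y}W)_{m}|^{2}}{1+(m-kt)^{2}}\Bigr)^{1/2},
\end{align*}
where the first group is already uniformly bounded via the $L^{2}$ stability of Theorem \ref{thm:citedL2result}. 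Third, expanding the $H^{s}(\T)$ pairing in the Fourier characterization, using that $A_{n}$ and the weights $B_{n}$ are comparable to unity, and applying Cauchy--Schwarz on the frequency sum yields
\begin{align*}
  |\langle A\p_{y}W, ifH^{(1)}\rangle_{H^{s}}| \lesssim \sum_{n} c_{n}(t)\,\langle n\rangle^{2s}\,|(\p_{y}W)_{n}|^{2},
\end{align*}
with $c_{n}(t) \lesssim \tfrac{\langle kt\rangle^{2s}}{t^{2}\,(1+(n-kt)^{2})}$. The uniform $L^{1}_{t}$ bound $\sup_{n}\|c_{n}\|_{L^{1}_{t}}<\infty$ then follows from $2s<1$ by splitting $\int c_{n}\,dt$ at $|t-n/k|=1$: for large $|t|$ the prefactor $\langle kt\rangle^{2s}/t^{2}\lesssim |t|^{2s-2}$ is integrable, while on $|t-n/k|\leq 1$ the prefactor is bounded by $\langle n\rangle^{2s-2}$, which is summable against $(1+(n-kt)^{2})^{-1}$ uniformly in $n$.

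The main obstacle is to ensure that the resulting estimate is a genuine quadratic form in $\p_{y}W$ with an $L^{1}_{t}$-uniform weight, rather than producing stray terms of the form $c(t)\|\p_{y}W\|_{H^{s}}^{2}$ with a non-integrable scalar $c(t)$. A naive splitting $|\langle A\p_{y}W, ifH^{(1)}\rangle_{H^{s}}|\leq \|A\p_{y}W\|_{H^{s}}\|fH^{(1)}\|_{H^{s}}$ combined with $\|fH^{(1)}\|_{H^{s}}\lesssim (|h_{0}|+|h_{1}|)\langle kt\rangle^{s}$ would force a scalar prefactor of order $\langle kt\rangle^{s}/t$, which just fails to be integrable in $t$ as $s\uparrow 1/2$. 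The matched frequency-localized Cauchy--Schwarz of Step 3, fed by the extra integration by parts of Step 2, is precisely what rescues $L^{1}_{t}$ integrability and is the key technical delicacy of the argument.
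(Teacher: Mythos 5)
Your overall strategy is the right one: expand $H^{(1)}$ as a linear combination of the travelling homogeneous modes $e^{ikt(y-y_{j})}u_{j}(0,\cdot)$, express the coefficients $h_{j}(t)=\p_{y}\Phi|_{y=y_{j}}$ via the oscillatory pairing from Lemma \ref{lem:boundary32}, trade $W$ for $\p_{y}W$ by one integration by parts, and exploit frequency localization around $n\approx kt$ together with Cauchy--Schwarz so that the endpoint $s<1/2$ can be reached. You also correctly identify the key technical obstacle that a naive Cauchy--Schwarz yields a scalar prefactor $\langle kt\rangle^{s}/t$ that is not $L^{1}_{t}$. All of this is in line with the paper.

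However, Step 1 contains a genuine error that propagates through the rest. You claim a Paley--Wiener estimate $|(\widehat{fv_{j}})_{n}|\lesssim (1+(n-kt)^{2})^{-1}$ ``since $f\in W^{2,\infty}(\T)$ and $u_{j}(0,\cdot)$ is smooth.'' But $u_{j}(0,\cdot)$, while $C^{\infty}$ on the interval $[0,1]$, does \emph{not} extend to a continuous periodic function: by construction $u_{1}(0,0)=1$ and $u_{1}(0,1)=0$ (in the Couette example $u_{1}(0,y)=\sinh(1-y)/\sinh(1)$). Hence $fu_{j}(0,\cdot)$ has a jump at $y=0\equiv 1$ and its periodic Fourier coefficients decay only like $|n|^{-1}$, so the correct localization is $|(\widehat{fv_{j}})_{n}|\lesssim \langle n-kt\rangle^{-1}$. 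With only linear decay your Step 2 bound $|h_{j}(t)|\lesssim \tfrac{1}{t}\bigl(\sum_{m}|(\p_{y}W)_{m}|^{2}\langle m-kt\rangle^{-2}\bigr)^{1/2}$ is no longer obtainable by the stated Parseval argument (it would require $\sum_{m}\langle m-kt\rangle^{-2}$ from a quadratic kernel decay), and your Step 3 weight $c_{n}(t)\lesssim \langle kt\rangle^{2s}t^{-2}\langle n-kt\rangle^{-2}$ overstates both the time and the frequency decay. The paper works with the honest linear decay and rescues the argument by introducing an interpolation parameter $\lambda\in(0,1)$ with $s-\lambda<-1/2$, splitting $\langle n-kt\rangle^{-1}=\langle n-kt\rangle^{-(1-\lambda)}\langle n-kt\rangle^{-\lambda}$, and carefully distributing $\langle n\rangle^{s}$ factors so that the residual scalar sums $\|\langle n\rangle^{\pm s}\langle n-kt\rangle^{-\lambda}\|_{l^{2}}$ remain controllable; the resulting weight $c_{n}(t)\sim t^{-1}\langle n-kt\rangle^{-2(1-\lambda)}$ is $L^{1}_{t}$ uniformly in $n$ only because the factor $1/t$ compensates for $2(1-\lambda)<1$. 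In short, the idea is right but the frequency-decay premise is wrong, and the repair requires the more delicate $\lambda$-splitting that the paper carries out.
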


\begin{proof}[Proof of Theorem \ref{thm:H32boundary}]
$H^{(1)}$ is explicitly given by
\begin{align*}
  H^{(1)}= \p_{y}\Phi|_{y=0} e^{ikty}u_{1} + \p_{y}\Phi|_{y=1}e^{ikty(y-1)}u_{2}.
\end{align*}
We hence have to estimate
\begin{align}
\label{eq:H1fracboundary}
  \langle A \p_{y}W, if H^{(1)} \rangle_{H^{s}} = \p_{y}\Phi|_{y=0} \langle A \p_{y}W, if u_{1} \rangle_{H^{s}}
+ \p_{y}\Phi|_{y=1} \langle A \p_{y}W, if u_{2} \rangle_{H^{s}}.
\end{align}

By Lemma \ref{lem:boundary32}
\begin{align*}
  \p_{y}\Phi|_{y=0}&= \frac{k}{g^{2}(0)} \langle W, e^{ikty}u_{1} \rangle \\
&=  \frac{k}{g^{2}(0)} \left( \frac{1}{ikt} \omega_{0}|_{y=0} +  \frac{1}{ikt} \langle e^{ikty}, \p_{y} W u_{1} \rangle \right) , \\
  \p_{y}\Phi|_{y=1}&= \frac{k}{g^{2}(1)} \langle W, e^{ikt(y-1)}u_{2} \rangle \\
&=\frac{k}{g^{2}(1)} \left( \frac{1}{ikt} \omega_{0}|_{y=1} +  \frac{1}{ikt} \langle e^{ikt(y-1)}, \p_{y} W u_{2} \rangle \right) .
\end{align*}

Let us for the moment concentrate on the terms not involving $\omega_{0}$. 
Using the control of $g$ and $\frac{1}{g}$, in order to estimate \eqref{eq:H1fracboundary}, we hence have to estimate 
  \begin{align}
    | \langle A \p_{y}W, \frac{if}{k} e^{ikty} u_{1} \rangle_{H^{s}} \frac{1}{t} \langle \p_{y}W, e^{ikty}u_{1} \rangle_{L^{2}} |
  \end{align}
Expanding this in a basis, using that $1\lesssim A_{n} \lesssim 1$, $f \in W^{1,\infty}$ and denoting
\begin{align*}
   b_{n}:=|(\p_{y}W)_{n}|,
\end{align*}
it suffices to consider
\begin{align}
\label{eq:thissplit}
  \frac{1}{t} \left(\sum_{n} b_{n} \frac{<n>^{2s}}{<n-kt>} \right) \left(\sum_{n} \frac{b_{n}}{<n-kt>}\right).
\end{align}
Considering the decay of the coefficients in $n$ and taking into account that we only control $b_{n}<n>^{s} \in l^{2}$, we need that
\begin{align*}
  \frac{<n>^{s}}{<n-kt>} \in l^{2},
\end{align*}
which is the case iff $s<1/2$.

As $0<s<1/2$, we may choose $0<\lambda<1$ such that $s-\lambda < -1/2$ and split 
\begin{align*}
  & \quad \sum_{n}b_{n} \frac{<n>^{s}}{<n-kt>^{1-\lambda}} \frac{<n>^{s}}{<n-kt>^{\lambda}} \\
&\leq \left(\sum b_{n}^{2} \frac{<n>^{2s}}{<n-kt>^{2(1-\lambda)}}\right)^{1/2} \left\|\frac{<n>^{s}}{<n-kt>^{\lambda}} \right\|_{l^{2}}.
\end{align*}
Spliting the second factor in \eqref{eq:thissplit} in the same way, it suffices to show that 
\begin{align*}
  c_{n}(t):= \frac{1}{t} \frac{1}{<n-kt>^{2(1-\lambda)}} \left\|\frac{<m>^{s}}{<m-kt>^{\lambda}} \right\|_{l^{2}_{m}} \left\|\frac{1}{<m>^{s}<m-kt>^{\lambda}} \right\|_{l^{2}_{m}}
\end{align*}
is in $L^{1}_{t}$ with $\|c_{n}\|_{L^{1}_{t}}$ bounded uniformly in $n$.
Estimating $<n>^{s} \lesssim <n-kt>^{s} + <kt>^{s}$, it suffices to show that 
\begin{align*}
  <kt>^{s} \left\|\frac{1}{<n>^{s}<n-kt>^{\lambda}} \right\|_{l^{2}} \lesssim 1.
\end{align*}
As $s-\lambda <-1/2$, there exists a $\delta>0$ such that $\lambda = 1/2+\delta +s$. We thus estimate 
\begin{align*}
  \left\|\frac{1}{<n>^{s}<n-kt>^{\lambda}}\right\|_{l^{2}} \leq \|\frac{1}{<n>^{s}<n-kt>^{s}}\|_{l^{\infty}} \|\frac{1}{<n-kt>^{1/2+\delta}}\|_{l^{2}}.
\end{align*}
Hence, 
\begin{align*}
  c_{n}(t) \lesssim  \frac{1}{t} \frac{1}{<n-kt>^{2(1-\lambda)}} \in L^{1}_{t} .
\end{align*}
\\

It remains to discuss
\begin{align*}
  \frac{1}{ikt} \omega_{0}|_{y=0,1}.
\end{align*}
As the trace of $\omega_{0}$ is controlled by its initial $H^{1}$ norm, we consider $\omega_{0}|_{y=0,1}$ as constants of size $1$ in the following.
Hence, we have to estimate
\begin{align*}
  \left|\langle A \p_{y}W, \frac{if}{k}e^{ity}u_{1} \rangle \frac{1}{kt}\right|.
\end{align*}
Splitting 
\begin{align*}
 \left|\frac{1}{kt}\right|= \left|\frac{1}{kt}\right|^{\gamma} \left|\frac{1}{kt}\right|^{1-\gamma},
\end{align*}
with $1/2<\gamma<1/2+\epsilon$ and using Young's inequality, we thus obtain 
\begin{align*}
  |\langle A \p_{y}W, \frac{if}{k}e^{ity}u_{1} \rangle \frac{1}{kt}| \lesssim 
  <kt>^{-2\gamma}+
  \left|\frac{1}{kt}\right|^{2(1-\gamma)} |\langle A \p_{y}W, \frac{if}{k}e^{ity}u_{1} \rangle|^{2}.
\end{align*}
Here, the first term is an integrable contribution.
Following the same strategy as above, the second term can be controlled by 
\begin{align*}
  \sum_{n} b_{n}^{2}\frac{<n>^{2s}}{<n-kt>^{2(1-\lambda)}} \frac{<kt>^{2s}}{<kt>^{2(1-\gamma)}}
\end{align*}
Choosing $\gamma, \lambda$ such that 
\begin{align*}
  s-(1-\lambda)-(1-\gamma) < -1/2
\end{align*}
and modifying $c_{n}(t)$ to also include
\begin{align*}
 \frac{<kt>^{2s}}{<kt>^{2(1-\gamma)}<n-kt>^{2(1-\lambda)}} \in L^{1}_{t},
\end{align*}
then proves the result. Such a choice is possible as $s<1/2$ is given and we can choose $(1-\lambda) < 1/2$ and $(1-\gamma)<1/2$ arbitrarily close to $1/2$.
\end{proof}

\subsection{Elliptic control}
\label{sec:elliptic-control}

In this section, our main goal is to prove the following theorem, which controls the elliptic contributions in the proof of Theorem \ref{thm:H32}.
Here, the main steps of the proof of Theorem \ref{thm:H32ellipticcontrol} are formulated as lemmata and propositions and conclude with Lemma \ref{lem:lastH32lemma}.
\begin{thm}
\label{thm:H32ellipticcontrol}
  Let $0<s<1/2$ and let $A,f,g,W$ as in Theorem \ref{thm:H32boundary}. Then
  \begin{align*}
    | \langle A \p_{y}W, if \Phi^{(1)} + if' \Phi \rangle_{H^{s}}| \lesssim \sum_{n} c_{n}(t) <n>^{2s} (|(\p_{y}W)_{n}|^{2} + |W_{n}|^{2}),
  \end{align*}
  for a family $c_{n} \in L^{1}_{t}$, where $\|c_{n}\|_{L^{1}_{t}}$ is bounded uniformly in $n$.  
\end{thm}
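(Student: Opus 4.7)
The plan is to convert the inner product into mode-by-mode estimates after invoking elliptic regularity for $\Phi$ and $\Phi^{(1)}$, and then to extract an $L^1_t$ kernel $c_n(t)$ by the same Cauchy--Schwarz bookkeeping used in the proof of Theorem~\ref{thm:H32boundary}. The dominant decay in time comes from the elliptic multiplier $(k^2+g^2(n-kt)^2)^{-1}$ already identified in the constant coefficient analysis of Section~\ref{sec:const-coeff-model}.

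First, I would establish an $H^s$ elliptic estimate: for any solution $\Psi$ of
\begin{align*}
(-k^2+(g(\p_y-ikt))^2)\Psi=F,\quad \Psi|_{y=0,1}=0,
\end{align*}
testing against $\Psi$ in $H^s(\T)$ and invoking the coercivity bound of Proposition~\ref{prop:ComHs} together with the multiplication estimate of Proposition~\ref{prop:LipHs} yields, schematically,
\begin{align*}
k^2\|\Psi\|_{H^s}^2+c\|(\p_y-ikt)\Psi\|_{H^s}^2 \lesssim |\langle \Psi,F\rangle_{H^s}|+C_s\|\Psi\|_{L^2}^2.
\end{align*}
Modewise this reproduces the constant coefficient bound $|\Psi_n|\lesssim |F_n|/(k^2+g^2(n-kt)^2)$ up to smooth mode mixing coming from $g$, which is tamed by the smallness of $\|f\|_{W^{2,\infty}}L$ exactly as in the $L^2$ and $H^1$ theory of \cite{Zill3}. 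Applied to $\Phi$ with $F=W$, this directly controls $\langle A\p_y W,if'\Phi\rangle_{H^s}$ once Proposition~\ref{prop:LipHs} is used to absorb the multiplier $f'$.

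For the term involving $\Phi^{(1)}$, I would first rewrite the commutator driving its equation. A direct calculation yields
\begin{align*}
[(g(\p_y-ikt))^2,\p_y] = -(gg')'(\p_y-ikt)-2gg'(\p_y-ikt)^2,
\end{align*}
and the second-order piece is re-expressed through the equation $(-k^2+(g(\p_y-ikt))^2)\Phi=W$ as a combination of $W$, $\Phi$ and $(\p_y-ikt)\Phi$ with $W^{1,\infty}$ coefficients. Every source term is thus already controlled by the elliptic estimate applied to $\Phi$, and a second application of that estimate to $\Phi^{(1)}$ delivers
\begin{align*}
|\Phi^{(1)}_n| \lesssim \frac{|(\p_y W)_n|+|W_n|}{k^2+g^2(n-kt)^2}
\end{align*}
up to controlled off-diagonal corrections.

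It then remains to pair these bounds with $A\p_y W$ in $H^s$. Setting $b_n=|(\p_y W)_n|$ and $a_n=|W_n|$ and exploiting $1\lesssim A_n\lesssim 1$ together with Proposition~\ref{prop:LipHs} for the multipliers $f,f'$, the problem reduces to controlling sums of the form
\begin{align*}
\sum_n \langle n\rangle^{2s}\, b_n\,\frac{b_n+a_n}{k^2+g^2(n-kt)^2}.
\end{align*}
Splitting the denominator as $\langle n-kt\rangle^{-2(1-\lambda)}\langle n-kt\rangle^{-2\lambda}$ for a parameter $0<\lambda<1$ chosen so that $s-\lambda<-1/2$, and applying Cauchy--Schwarz exactly as in the proof of Theorem~\ref{thm:H32boundary}, one extracts a kernel
\begin{align*}
c_n(t) \lesssim \frac{1}{\langle n-kt\rangle^{2(1-\lambda)}}\Big\|\frac{1}{\langle m\rangle^s\langle m-kt\rangle^{\lambda}}\Big\|_{\ell^2_m}^2,
\end{align*}
which lies in $L^1_t$ uniformly in $n$ precisely when $s<1/2$. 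The main technical obstacle is the commutator: the raw quantity $(\p_y-ikt)^2\Phi$ is too rough to estimate directly, so one must trade the two derivatives back onto $W$ through the equation, and at the same time show that the off-diagonal mode mixing introduced by the non-constant coefficients $g,g'$ does not destroy the integrability of $c_n(t)$. This parallels the smallness argument required in Theorems~\ref{thm:citedL2result} and \ref{thm:citedH1H2result}.
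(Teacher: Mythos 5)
Your high-level strategy---establish an $H^s$ elliptic estimate by testing the shifted operator against $\psi$, invoke Propositions~\ref{prop:ComHs} and \ref{prop:LipHs} for coercivity and multiplier bounds, then run the same $\langle n-kt\rangle$-splitting Cauchy--Schwarz argument as in Theorem~\ref{thm:H32boundary}---is the right one and matches the paper's Lemmas~\ref{lem:easy}--\ref{lem:lastH32lemma}. However, two places where you summarize a step as routine are in fact where the real work is, and one of them hides a genuine gap.

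The first concern is the boundary terms. When you write the schematic coercivity bound you are implicitly integrating $(\p_y/k-it)$ by parts inside the $H^s(\T)$ pairing, and because $\psi$ satisfies a Dirichlet condition but $(\p_y/k-it)\psi$ does not, this produces a nonzero boundary term of the form $\sum_n\langle n\rangle^{2s}\psi_n\,\frac{1}{k}\,g^2(\frac{\p_y}{k}-it)\psi\big|_{y=0}^{1}$. This is not a mode-mixing artifact of $g$ being variable and it is not controlled by smallness of $\|f\|_{W^{2,\infty}}L$; it requires Lemma~\ref{lem:reductionboundary} (which trades the sum for a factor $\langle kt\rangle^{s}$) and Lemma~\ref{lem:boundaryestimates} (which expresses $g^2(\frac{\p_y}{k}-it)\psi|_{y=0,1}$ by testing against the homogeneous solutions $e^{ikty}u_1$, $e^{ikt(y-1)}u_2$ and exploits the cancellation between the two endpoints). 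Your sketch skips this entirely, and without it the lower estimate \eqref{eq:lower} does not close.

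The second concern is your treatment of the commutator, which contains an actual error. Your expansion $[(g(\p_y-ikt))^2,\p_y]=-(gg')'(\p_y-ikt)-2gg'(\p_y-ikt)^2$ is fine, and trading the second-order piece back through the equation is a reasonable idea. But the first-order piece carries the coefficient $(gg')'$ (and after your substitution you accumulate $(g')^2-gg''$ in front of $(\p_y-ikt)\Phi$). Under the hypothesis of Theorem~\ref{thm:H32}, $g\in W^{2,\infty}(\T)$, these coefficients are merely $L^\infty$, \emph{not} $W^{1,\infty}$, so Proposition~\ref{prop:LipHs} does not apply to them and the claim that you obtain ``$W^{1,\infty}$ coefficients'' is false without strengthening the hypotheses to $g\in W^{3,\infty}(\T)$. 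The paper circumvents this precisely by \emph{not} expanding: it keeps the commutator factored as $2(\frac{\p_y}{k}-it)\,gg'\,(\frac{\p_y}{k}-it)\Phi$ and integrates the outer $(\frac{\p_y}{k}-it)$ by parts onto $\Phi^{(1)}$, producing the bulk term $\langle(\frac{\p_y}{k}-it)\Phi^{(1)},\,2gg'(\frac{\p_y}{k}-it)\Phi\rangle_{H^s}$, in which the multiplier is only $gg'\in W^{1,\infty}(\T)$ (true for $g\in W^{2,\infty}$), plus a boundary contribution handled as above. You should adopt this integration-by-parts route (Lemma~\ref{lem:lastH32lemma}) rather than substituting through the equation, which costs a derivative you don't have.

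Finally, a smaller point of phrasing: the bound you write for $\Phi^{(1)}_n$ is not really a modewise inequality---mode mixing from the variable coefficient $g$ precludes it---but a weighted $\ell^2$ duality estimate of the form in Proposition~\ref{prop:A}; the way the $L^1_t$ kernel $c_n(t)$ is extracted via Lemma~\ref{lem:another} makes this explicit.
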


When working with non-fractional Sobolev spaces, in \cite{Zill3}, this estimate reduced to an elliptic regularity theorem of the form 
\begin{align*}
\|\Phi\|_{\tilde{H}^{1}} \lesssim \|W\|_{\tilde{H}^{-1}},
\end{align*}
where 
\begin{align*}
  \|\Phi\|_{\tilde{H}^{1}}^{2} = \|\Phi\|_{L^{2}}^{2}+ \|(\frac{\p_{y}}{k}-it)\Phi\|_{L^{2}}^{2}
\end{align*}
and $\tilde{H}^{-1}$ was constructed by duality.

Similarly, we show that the proof of Theorem \ref{thm:H32ellipticcontrol} reduces to estimating 
\begin{align*}
   \|\Phi\|_{H^{s}}^{2}+ \|(\frac{\p_{y}}{k}-it)\Phi\|_{H^{s}}^{2} 
+ \|\Phi^{(1)}\|_{H^{s}}^{2}+ \|(\frac{\p_{y}}{k}-it)\Phi^{(1)}\|_{H^{s}}^{2} .
\end{align*}

\begin{lem}
\label{lem:easy}
 Let $0<s<1/2$ and let $A,f,g,W$ be as in Theorem \ref{thm:H32ellipticcontrol}. Then 
 \begin{align*}
   &\quad \langle A \p_{y}W, if \Phi^{(1)} + if' \Phi \rangle_{H^{s}} \\
&\lesssim \left( \sum \frac{<n>^{2s}|(\p_{y}W)_{n}|^{2}}{<n-kt>^{2}}\right)^{1/2} (\|f' \Phi\|_{H^{s}} +\|f'' \Phi\|_{H^{s} } +  \|f' (\p_{y}-ikt) \Phi\|_{H^{s}} \\
& \quad+ \|f \Phi^{(1)}\|_{H^{s}} +\|f' \Phi^{(1)}\|_{H^{s} } +  \|f' (\p_{y}-ikt) \Phi^{(1)}\|_{H^{s}}).
 \end{align*}
\end{lem}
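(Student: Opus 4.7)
The strategy is to reduce the claim to a weighted Cauchy--Schwarz inequality on the Fourier side, with the splitting $1 = <n-kt>^{-1}\cdot<n-kt>$ producing the shifted denominator on the first factor and the shifted derivative on the second.

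First I would use the Fourier characterization of $H^{s}(\T)$ together with $A_{n}, B_{n} \sim 1$ to bound, up to a constant,
\begin{align*}
\bigl|\langle A\p_{y} W,\, if\Phi^{(1)} + if'\Phi\rangle_{H^{s}}\bigr| \lesssim \sum_{n} \bigl(1 + <n>^{2s}\bigr)\,|(\p_{y} W)_{n}|\,|(f\Phi^{(1)} + f'\Phi)_{n}|.
\end{align*}
Applying Cauchy--Schwarz with the split $1 = <n-kt>^{-1}<n-kt>$ then yields
\begin{align*}
\lesssim \Bigl(\sum_{n} \frac{<n>^{2s}|(\p_{y} W)_{n}|^{2}}{<n-kt>^{2}}\Bigr)^{1/2}\Bigl(\sum_{n} <n>^{2s}<n-kt>^{2}|(f\Phi^{(1)} + f'\Phi)_{n}|^{2}\Bigr)^{1/2},
\end{align*}
with the $L^{2}$ part of the $H^{s}$ inner product contributing a subdominant term of identical form.

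The first factor on the right is exactly the weight appearing in the lemma. For the second factor, I would observe that the Fourier symbol of $\p_{y} - ikt$ has magnitude comparable to $|n-kt|$, so the second factor is equivalent to
\begin{align*}
\|f\Phi^{(1)} + f'\Phi\|_{H^{s}} + \|(\p_{y} - ikt)(f\Phi^{(1)} + f'\Phi)\|_{H^{s}}.
\end{align*}
The Leibniz rule gives $(\p_{y} - ikt)(f\Phi^{(1)}) = f'\Phi^{(1)} + f(\p_{y} - ikt)\Phi^{(1)}$ and $(\p_{y} - ikt)(f'\Phi) = f''\Phi + f'(\p_{y} - ikt)\Phi$, which combined with the triangle inequality in $H^{s}(\T)$ produces the six norms on the right-hand side of the lemma. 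Proposition \ref{prop:LipHs} may be invoked to ensure that products with the Lipschitz functions $f, f', f''$ remain in $H^{s}(\T)$.

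I expect no substantial obstacle here. The proof is structurally a weighted Cauchy--Schwarz whose asymmetry is dictated by the scattering geometry: the shifted derivative $\p_{y} - ikt$ naturally appears when differentiating a streaming-frame profile, so it is exactly the right weight to place on the elliptic quantities $\Phi, \Phi^{(1)}$, while the compensating $<n-kt>^{-2}$ weight on the $\p_{y}W$ side will later be absorbed into the dissipative contribution $\langle \p_{y}W, \dot A \p_{y}W\rangle_{H^{s}}$ in the proof of Theorem \ref{thm:H32ellipticcontrol}. The only technicalities are passing cleanly between the $B_{n}$-weighted and the plain Fourier characterizations of $H^{s}(\T)$, and ensuring all six product-rule terms are enumerated.
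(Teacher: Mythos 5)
Your proposal follows essentially the same route as the paper's proof: expand the $H^{s}$ inner product on the Fourier side, apply a weighted Cauchy--Schwarz with the shifted factor $<n-kt>$, identify the resulting weight with the shifted derivative $\p_{y}-ikt$, and distribute by the Leibniz rule; the only cosmetic difference is that you split $1 = <n-kt>^{-1}\cdot<n-kt>$ in modulus where the paper multiplies and divides by the exact complex factor $1+i(n/k-t)$. The one step you elide, which the paper makes explicit, is that the identification $((\p_{y}-ikt)R)_{n} = (in-ikt)R_{n}$ for $R = if\Phi^{(1)} + if'\Phi$ relies on $R$ having vanishing Dirichlet data (since both $\Phi$ and $\Phi^{(1)}$ do), so that the boundary terms in the integration by parts drop out; on $[0,1]$ the operator $\p_{y}$ does not act as a pure Fourier multiplier unless this cancellation is observed.
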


\begin{proof}[Proof of Lemma \ref{lem:easy}]
  Denote
  \begin{align*}
    R:= if \Phi^{(1)} + if' \Phi .
  \end{align*}
  Then,
  \begin{align*}
    \langle A \p_{y}W , R \rangle _{H^{s}} = \sum_{n} a_{n}(\p_{y}W)_{n} <n>^{2s} \langle e^{iny}, R \rangle .
  \end{align*}
Multiplying by a factor 
\begin{align*}
1=\frac{1+i(n/k-t)}{1+i(n/k-t)},
\end{align*}
we estimate 
\begin{align*}
  &\quad \sum_{n} \left( A_{n}(\p_{y}W)_{n} \frac{<n>^{s}}{1+i(n/k-t)} \right) \left(<n>^{s}(1+i(n/k-t))\langle e^{iny}, R \rangle \right)\\
&\leq \left\|A_{n}(\p_{y}W)_{n} \frac{<n>^{s}}{1+i(n/k-t)}  \right\|_{l^{2}_{n}}
\left\| <n>^{s}(1+i(n/k-t))\langle e^{iny}, R \rangle \right\|_{l^{2}_{n}}.
\end{align*}
We, in particular, note that 
\begin{align*}
  \frac{1}{|1+i(n/k-t)|^{2}} \in L^{1}_{t}.
\end{align*}

Thus, it suffices to control 
\begin{align}
\label{eq:Rnkt}
  \sum_{n} <n>^{2s} | (1+i(n/k-t))\langle e^{iny}, R \rangle |^{2}.
\end{align}
As
\begin{align*}
 in e^{iny} = \p_{y} e^{iny},
\end{align*}
and as $R$ has zero boundary values, integrating by parts yields
\begin{align*}
  (1+i(n/k-kt))\langle e^{iny}, R \rangle = \langle e^{iny}, R \rangle + \langle e^{iny}, (\frac{\p_{y}}{k}-it) R \rangle .
\end{align*}
By the triangle inequality and Young's inequality, one thus obtains an estimate of \eqref{eq:Rnkt} by 
\begin{align*}
  \|R\|_{H^{s}}^{2} + \|(\frac{\p_{y}}{k}-it) R\|_{H^{s}}^{2}.
\end{align*}
Computing $(\frac{\p_{y}}{k}-it) R$ by the product rule and using the triangle inequality then concludes the proof. 
\end{proof}

By Proposition \ref{prop:LipHs} of Section \ref{sec:fract-sobol-space}, for $f,g$ sufficiently regular, it hence suffices to estimate 
\begin{align*}
  \|\Phi\|_{H^{s}} + \|(\frac{\p_{y}}{k}-it) \Phi\|_{H^{s}}
+ \|\Phi^{(1)}\|_{H^{s}} + \|(\frac{\p_{y}}{k}-it) \Phi^{(1)}\|_{H^{s}}.
\end{align*}
As the estimates for $\Phi$ and $\Phi^{(1)}$ are very similar, to simplify notation and as we will later on also derive such an estimate for $\Phi^{(2)}$, we in the following consider a general problem:

Let $\psi$ solve
\begin{align*}
\tag{ELL}
\label{eq:ELL}
  \begin{split}
  (-1+ (g(\frac{\p_{y}}{ik}-t))^{2})\psi &= R,  \\
  \psi|_{y=0,1}&=0, \\
  y &\in [0,1] \\
  C> g^{2}>c>0, g &\in W^{2,\infty}.
  \end{split}
\end{align*}
for some $R \in H^{s}, 0\leq s <1/2$.

In the following we show that, as in the case $s=0$, for $|k^{-1}|$  sufficiently small
\begin{align*}
  \|\psi\|_{H^{s}}^{2} + \|(\frac{\p_{y}}{ik}-t)\psi\|_{H^{s}}^{2} \lesssim \sum c_{n}(t) <n>^{2s}|R_{n}|^{2},
\end{align*}
for some family $c_{n}(t) \in L^{1}_{t}$ with $\|c_{n}(t)\|_{L^{1}_{t}} <C<\infty$  uniformly in $n$.

As in the case $s=0$, the heuristic idea is to consider the inner product (now in $H^{s}$) of the first equation in \eqref{eq:ELL} with $\psi$ and estimate:
\begin{align*}
\tag{lower}
\label{eq:lower}
  \|\psi\|_{H^{s}}^{2} + \|(\frac{\p_{y}}{ik}-t)\psi\|_{H^{s}}^{2} \lesssim & \Re \langle \psi, R \rangle_{H^{s}} - \text{errors} ,\\
\tag{upper}
\Re \langle \psi, R \rangle_{H^{s}} 
  \lesssim & \left(\|\psi\|_{H^{s}}^{2} + \|(\frac{\p_{y}}{ik}-t)\psi\|_{H^{s}}^{2}\right)^{1/2}
\left(\sum c_{n}(t) <n>^{2s}|R_{n}|^{2}\right)^{1/2}.
\end{align*}
Here, errors are terms that can either be absorbed in the left-hand-side or estimated by terms similar to the right-hand-side in (upper).

As we work in fractional Sobolev spaces, integration by parts and similar estimates involve many more boundary terms, commutators and other corrections.
Controlling all these terms in a suitable way, makes \eqref{eq:lower} technically much more challenging than in the integer Sobolev case.
The upper estimate, however, follows analogously, as is shown in the following lemma.

\begin{lem}
\label{lem:another}
  Let $\psi, R$ solve \eqref{eq:ELL}, then 
  \begin{align*}
 \Re \langle \psi, R \rangle_{H^{s}} 
 \lesssim  \left(\|\psi\|_{H^{s}}^{2} + \|(\frac{\p_{y}}{ik}-t)\psi\|_{H^{s}}^{2}\right)^{1/2}
\left(\sum c_{n}(t) <n>^{2s}|R_{n}|^{2}\right)^{1/2}, 
  \end{align*}
where 
\begin{align*}
  c_{n}(t) = \frac{1}{1+(\frac{n}{k}-t)^{2}} \in L^{1}_{t}.
\end{align*}
\end{lem}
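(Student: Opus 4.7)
My plan is to imitate the structure of Lemma \ref{lem:easy}, but with the roles of the two functions reversed: the factor carrying the integrable weight $c_n(t)$ is placed on the $R$ side, and the factor producing $\|\psi\|_{H^s}+\|(\p_y/k-it)\psi\|_{H^s}$ is placed on the $\psi$ side. The structural ingredient that makes this clean is that $\psi|_{y=0,1}=0$ is built into \eqref{eq:ELL}, so integration by parts against $e^{-iny}$ produces no boundary term and yields
\begin{align*}
\bigl((\tfrac{\p_y}{k}-it)\psi\bigr)_n = i(n/k-t)\,\psi_n.
\end{align*}

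Concretely, I would expand the inner product in the periodic basis $\{e^{iny}\}_{n\in\Z}$ via the Fourier characterization of $H^s(\T)$ from Section \ref{sec:fract-sobol-space}, then insert the trivial factor $1=\tfrac{1+i(n/k-t)}{1+i(n/k-t)}$ and apply Cauchy--Schwarz in $\ell^2_n$ to obtain
\begin{align*}
|\langle \psi, R\rangle_{H^s}|
\;\leq\;
\Bigl(\sum_n <n>^{2s}\,|1+i(n/k-t)|^2\,|\psi_n|^2\Bigr)^{1/2}
\Bigl(\sum_n \tfrac{<n>^{2s}}{1+(n/k-t)^2}\,|R_n|^2\Bigr)^{1/2}.
\end{align*}
The second factor is precisely $\bigl(\sum_n c_n(t)<n>^{2s}|R_n|^2\bigr)^{1/2}$ with $c_n(t)=(1+(n/k-t)^2)^{-1}$, and a direct computation gives $\int_{\R}c_n(t)\,dt=\pi$ uniformly in $n$, so $c_n\in L^1_t$ with $\|c_n\|_{L^1_t}$ bounded uniformly in $n$, as required. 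For the first factor, the identity above allows me to rewrite $(1+i(n/k-t))\psi_n=\psi_n+\bigl((\tfrac{\p_y}{k}-it)\psi\bigr)_n$, so Plancherel combined with the triangle inequality gives
\begin{align*}
\Bigl(\sum_n <n>^{2s}\,|1+i(n/k-t)|^2\,|\psi_n|^2\Bigr)^{1/2}
\lesssim
\Bigl(\|\psi\|_{H^s}^2+\|(\tfrac{\p_y}{k}-it)\psi\|_{H^s}^2\Bigr)^{1/2},
\end{align*}
and the claim follows after taking real parts, since $\Re z \leq |z|$.

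I do not expect a serious obstacle at this step. In contrast to Lemma \ref{lem:easy}, I do not need to first integrate by parts against a boundary-free test function, because the zero Dirichlet condition is already built into $\psi$ via \eqref{eq:ELL}. The only mild point to monitor is the equivalence between the Fourier weight $<n>^{2s}$ and the kernel characterization of $H^s(\T)$ from Section \ref{sec:fract-sobol-space}, which is absorbed into the implicit constants. This simplicity stands in sharp contrast to the companion lower bound \eqref{eq:lower}, whose proof must additionally handle the commutator $[(-\Delta)^{s/2},g^2]$ together with more delicate boundary corrections, and is where I would expect the real work of Theorem \ref{thm:H32ellipticcontrol} to lie.
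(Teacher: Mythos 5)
Your proposal is correct and matches the paper's own one-line proof, which reads: ``Following the same strategy as in Lemma~\ref{lem:easy}, we express $\langle \psi, R \rangle_{H^{s}}$ in a basis, multiply by a factor $\frac{1+i(n/k-t)}{1+i(n/k-t)}$, integrate by parts and employ Cauchy--Schwarz.'' You have simply made explicit the factor assignment (the $(1+i(n/k-t))$ weight on the $\psi$ side, the reciprocal producing $c_n(t)$ on the $R$ side) and the role of the zero Dirichlet condition in the integration by parts, both of which are exactly the intended implementation.
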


\begin{proof}[Proof of Lemma \ref{lem:another}]
Following the same strategy as in Lemma \ref{lem:easy}, we express $\langle \psi, R \rangle_{H^{s}}$ in a basis, multiply by a factor
\begin{align*}
  \frac{1+i(n/k-t)}{1+i(n/k-t)},
\end{align*}
 integrate by parts and employ Cauchy-Schwarz.
\end{proof}

In order to derive \eqref{eq:lower}, we first make use of our freedom in choosing the error term, by modifying the (shifted) elliptic operator.
\begin{align*}
 & (-1+ (g(\frac{\p_{y}}{ik}-t))^{2})\psi \\
=& - \psi + (\frac{\p_{y}}{ik}-t)g^{2}(\frac{\p_{y}}{ik}-t)\psi - \frac{g'}{ik}g(\frac{\p_{y}}{ik}-t)\psi.
\end{align*}
Up to boundary terms, the leading operator 
\begin{align*}
  -1 + (\frac{\p_{y}}{ik}-t)g^{2}(\frac{\p_{y}}{ik}-t)
\end{align*}
is hence symmetric and negative definite, which we use for a lower estimate in Lemma \ref{lem:reductionboundary} and in combination with Proposition \ref{prop:ComHs}.

\begin{lem}
\label{lem:commute}
  Let $\psi \in H^{s}([0,1])$ be a solution of \eqref{eq:ELL}. Then, 
  \begin{align*}
   \left| \left\langle \psi,\frac{g'}{ik}g\left(\frac{\p_{y}}{ik}-t\right)\psi\right\rangle_{H^{s}} \right |
\lesssim \frac{1}{|k|}| \|\psi\|_{H^{s}} \|(\frac{\p_{y}}{ik}-t)\psi \|_{H^{s}}^{2}.
  \end{align*}
  
  For $k$ sufficiently large, instead of \eqref{eq:lower}, it thus suffices to prove
  \begin{align*}
   \|\psi\|_{H^{s}}^{2} + \|(\frac{\p_{y}}{k}-it)\psi\|_{H^{s}}^{2}   \lesssim \langle \psi, - \psi + (\frac{\p_{y}}{ik}-t)g^{2}(\frac{\p_{y}}{ik}-t)\psi \rangle_{H^{s}} - \text{errors}.
  \end{align*}
\end{lem}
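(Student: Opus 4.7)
The proof splits into the two claims. For the inequality, I would apply Cauchy--Schwarz in the $H^s$ inner product and then bound $\|\tfrac{g'g}{ik}(\tfrac{\p_y}{ik}-t)\psi\|_{H^s}$ using Proposition~\ref{prop:LipHs}: since $g \in W^{2,\infty}(\T)$, both $g$ and $g'$ lie in $W^{1,\infty}(\T)$, so the multiplier $g'g$ is bounded on $H^s(\T)$ for $s<1/2$. This yields
\[
\left|\left\langle \psi, \tfrac{g'g}{ik}\bigl(\tfrac{\p_y}{ik}-t\bigr)\psi\right\rangle_{H^s}\right|
\lesssim \tfrac{1}{|k|}\|\psi\|_{H^s}\,\left\|\bigl(\tfrac{\p_y}{ik}-t\bigr)\psi\right\|_{H^s},
\]
and Young's inequality converts this product into the sum-of-squares form stated in the lemma (reading the final exponent in the displayed bound as a shorthand after Young's inequality).

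For the reduction, I would substitute the algebraic identity
\[
\left(g\bigl(\tfrac{\p_y}{ik}-t\bigr)\right)^2
= \bigl(\tfrac{\p_y}{ik}-t\bigr) g^2 \bigl(\tfrac{\p_y}{ik}-t\bigr) - \tfrac{g'g}{ik}\bigl(\tfrac{\p_y}{ik}-t\bigr),
\]
already recorded immediately above the lemma, into the equation \eqref{eq:ELL} and take $\langle \psi, \cdot\rangle_{H^s}$ of both sides. The first part of the lemma then bounds the contribution from the term $\tfrac{g'g}{ik}(\tfrac{\p_y}{ik}-t)\psi$ by $C|k|^{-1}\bigl(\|\psi\|_{H^s}^2 + \|(\tfrac{\p_y}{ik}-t)\psi\|_{H^s}^2\bigr)$, which for $|k|$ sufficiently large can be absorbed into the left-hand side of the desired bound \eqref{eq:lower}. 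Consequently, establishing \eqref{eq:lower} reduces to proving the corresponding lower bound for the symmetric operator $-1 + (\tfrac{\p_y}{ik}-t) g^2 (\tfrac{\p_y}{ik}-t)$, which is precisely the claim.

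The main technical point to watch is the applicability of Proposition~\ref{prop:LipHs}: it forces $s<1/2$ (as assumed) and relies on the Lipschitz regularity of $g$ and $g'$, which is guaranteed by $g\in W^{2,\infty}(\T)$. Crucially, a full commutator estimate (Proposition~\ref{prop:ComHs}) is \emph{not} needed here, since we only estimate the $H^s$-norm of an already-multiplied function rather than swapping the positions of the multiplier and a fractional derivative. The $1/|k|$ gain that drives the whole argument comes directly from the prefactor $1/(ik)$; it is what permits the absorption at the end and justifies requiring $|k|$ large in all subsequent estimates.
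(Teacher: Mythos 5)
Your proposal is correct and follows essentially the same route as the paper: Cauchy--Schwarz together with Proposition~\ref{prop:LipHs} applied to the Lipschitz multiplier $gg'\in W^{1,\infty}(\T)$ gives the first bound, and the algebraic expansion of $(g(\tfrac{\p_y}{ik}-t))^2$ plus absorption for $|k|$ large gives the reduction. You also correctly note that the stated right-hand side (with the stray $|$ and the final exponent $2$) is best read as the product $\tfrac{1}{|k|}\|\psi\|_{H^s}\|(\tfrac{\p_y}{ik}-t)\psi\|_{H^s}$, which is what the paper actually uses (via Young's inequality) in the absorption step.
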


\begin{proof}[Proof of Lemma \ref{lem:commute}]
  The first statement follows by Cauchy-Schwarz and applying Proposition \ref{prop:LipHs} of Section \ref{sec:fract-sobol-space} with $gg' \in W^{1,\infty}(\T)$.

For the second statement, we note that 
\begin{align*}
  c(\|\psi\|_{H^{s}}^{2} + \|(\frac{\p_{y}}{k}-it)\psi\|_{H^{s}}^{2}) &\leq \Re \langle \psi,R \rangle - \text{errors} \\
&= \Re \langle \psi, - \psi + (\frac{\p_{y}}{ik}-t)g^{2}(\frac{\p_{y}}{ik}-t)\psi \rangle_{H^{s}} - \text{errors} \\
&\quad + \Re \langle \psi,\frac{g'}{ik}g(\frac{\p_{y}}{ik}-t)\psi\rangle_{H^{s}}  \\
&\leq  \Re \langle \psi, - \psi + (\frac{\p_{y}}{ik}-t)g^{2}(\frac{\p_{y}}{ik}-t)\psi \rangle_{H^{s}} - \text{errors}  \\ &\quad + \frac{C}{|k|} ( \|\psi\|_{H^{s}}^{2} + \|(\frac{\p_{y}}{k}-it)\psi\|_{H^{s}}^{2}).
\end{align*}
Letting $|k|\gg 0$ be sufficiently large, $\frac{C}{k} \leq c/2$, which allows us to absorb the last term in the left-hand-side.
\end{proof}

In order to prove \eqref{eq:lower}, it thus remains to show that 
\begin{align*}
 -\Re \langle \psi, (\frac{\p_{y}}{ik}-t)g^{2}(\frac{\p_{y}}{ik}-t)\psi \rangle_{H^{s}}
\end{align*}
provides a control of 
\begin{align*}
  \|(\frac{\p_{y}}{ik}-t)\psi\|_{H^{s}}^{2},
\end{align*}
up to error terms.

While in the case $s=0$ this reduces to an integration by parts argument, for $s>0$ two additional challenges arise:
\begin{itemize}
\item Integrating by parts yields boundary terms. 
\item $\langle u,g^{2}u \rangle_{H^{s}} \neq \langle gu,gu \rangle_{H^{s}} \not \geq \min(g^{2})\|u\|_{H^{s}}^{2}$.
\end{itemize}
The second issue is addressed by Proposition \ref{prop:ComHs} in Section \ref{sec:fract-sobol-space} and the former by the following two lemmata.

\begin{lem}
\label{lem:reductionboundary}
  Let $\psi \in H^{s}([0,1])$ be a solution of \eqref{eq:ELL}. Then 
  \begin{align*}
    \left|\left\langle \psi,\left(\frac{\p_{y}}{ik}-t\right)g^{2}\left(\frac{\p_{y}}{ik}-t\right)\psi  \right\rangle_{H^{s}} +  \left\langle \left(\frac{\p_{y}}{ik}-t\right)\psi, g^{2}\left(\frac{\p_{y}}{ik}-t\right)\psi  \right\rangle_{H^{s}} \right| \\
\lesssim |k^{-1}| \left(\|\psi\|_{H^{s}}^{2} + \left\|\left(\frac{\p_{y}}{ik}-t\right)\psi\right\|_{H^{s}}^{2}\right)^{1/2} \left\|\frac{<n>^{s}}{<n/k-t>}\right\|_{l^{2}} \left|g^{2} \left(\frac{\p_y}{k}-it\right)\psi|_{y=0}^{1} \right|.
  \end{align*}
Furthermore, 
\begin{align*}
  \left\|\frac{<n>^{s}}{<n/k-t>}\right\|_{l^{2}} &\lesssim_{s} <kt>^{s}. \\
\end{align*}

\end{lem}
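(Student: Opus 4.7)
The plan is to exploit that on the torus $\T$ the operator $T := \frac{\p_{y}}{ik} - t$ acts as the real Fourier multiplier $\frac{n}{k}-t$, and to identify the combination on the left as the $H^{s}(\T)$ pairing of $\psi$ against the distributional jump of $g^{2}T\psi$ produced at the identification point $y = 0\sim 1$ when the function is extended from $[0,1]$.

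First I would extend $\psi$ from $[0,1]$ to $\T$ by zero. Because $\psi|_{y=0,1}=0$, the extension is continuous and belongs to $H^{s}(\T)$, and the bulk parts of the two inner products (where $T$ acts as a real Fourier multiplier) agree exactly. The function $g^{2}T\psi$, on the other hand, does not vanish at $y=0$ or $y=1$ in general, so when periodised it carries a jump of size $[g^{2}(\tfrac{\p_{y}}{k}-it)\psi]_{y=0}^{1}$ at $y=0\sim 1$, and its distributional derivative on $\T$ equals the bulk derivative on $(0,1)$ plus $[g^{2}(\tfrac{\p_{y}}{k}-it)\psi]_{y=0}^{1}\,\delta_{0\sim 1}$. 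After this bulk cancellation, the combination on the left reduces, up to the prefactor $\frac{1}{ik}$ coming from $\p_{y}/(ik)$ in $T$, to the $H^{s}(\T)$ pairing of $\psi$ against this delta mass.

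Next, since $\widehat{\delta_{0\sim 1}}(n)\equiv 1$, the remaining pairing becomes $\sum_{n}(1+|n|^{2s})\overline{\hat\psi_{n}}$ weighted by the jump. Inserting $1 = (1+i(n/k-t))/(1+i(n/k-t))$ and using the algebraic identity $(1+i(n/k-t))\hat\psi_{n} = \hat\psi_{n} + i\widehat{T\psi}_{n}$ splits the sum into a Cauchy--Schwarz pairing of $\langle n\rangle^{s}(\hat\psi_{n}+i\widehat{T\psi}_{n})$ against the weight $\langle n\rangle^{s}/(1+i(n/k-t))$, which by the triangle inequality produces the factor $(\|\psi\|_{H^{s}}^{2}+\|T\psi\|_{H^{s}}^{2})^{1/2}$ together with $\|\langle n\rangle^{s}/\langle n/k-t\rangle\|_{l^{2}_{n}}$. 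Combined with the prefactor $\frac{1}{ik}$, this yields the stated bound. For the auxiliary estimate $\|\langle n\rangle^{s}/\langle n/k-t\rangle\|_{l^{2}}\lesssim \langle kt\rangle^{s}$, I split $\langle n\rangle^{s}\leq \langle n-kt\rangle^{s} + \langle kt\rangle^{s}$; after using $|k|\geq 1$ to replace $\langle n/k-t\rangle$ by $\langle n-kt\rangle/|k|$, the first summand contributes a convergent sum $\sum_{m}\langle m\rangle^{2s-2}$, which is finite uniformly in $t$ exactly because $s<\tfrac{1}{2}$, while the second contributes $\langle kt\rangle^{2s}\sum_{n}\langle n/k-t\rangle^{-2}\lesssim \langle kt\rangle^{2s}$.

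The delicate point, and the main obstacle, is the first step: rigorously isolating the boundary-jump contribution in the $H^{s}(\T)$ inner product, given that $\psi$ vanishes on $\p[0,1]$ but $g^{2}T\psi$ does not, so that the bulk pieces of the two inner products cancel and only the delta-type pairing remains. The duality manoeuvre in the second step is then forced by this structure: the weight $\langle n\rangle^{s}/\langle n/k-t\rangle$ is precisely the cost of pairing a periodic Dirac mass against an $H^{s}(\T)$ function, and keeping it $l^{2}$ in $n$ is exactly what restricts the range to $s<\tfrac{1}{2}$.
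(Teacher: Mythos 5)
Your proposal is correct and follows essentially the same route as the paper's own proof: the paper also identifies the boundary term by expanding in a Fourier basis and integrating by parts (which is precisely the delta-mass picture you describe), then inserts the factor $\frac{1+i(n/k-t)}{1+i(n/k-t)}$, applies Cauchy--Schwarz, and proves the auxiliary $l^{2}$ estimate by the same splitting $<n>^{s}\lesssim <n-kt>^{s}+<kt>^{s}$ together with $s<\tfrac12$. One remark worth making: both you and the paper's own proof in fact treat the left-hand side as a \emph{difference} of the two inner products (only then do the bulk Fourier-multiplier parts cancel and leave just the jump term), so the plus sign in the displayed lemma statement appears to be a misprint.
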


\begin{proof}[Proof of Lemma \ref{lem:reductionboundary}]
Expanding both terms in a Fourier basis and integrating by parts, the difference is given by
  \begin{align*}
  \left.\sum_n <n>^{2s} \psi_{n} \frac{1}{k} g^{2} (\frac{\p_y}{k}-it)\psi\right|_{y=0}^{1}.
  \end{align*}
Taking absolute values inside the sum, multiplying by a factor 
\begin{align*}
  1= \frac{1+i(n/k-t)}{1+i(n/k-t)}
\end{align*}
and using Cauchy-Schwarz, the first estimate is proven.

For the second estimate, we note that 
\begin{align*}
  <n>^{s} \lesssim k^{s} <n/k -t>^{s} + <kt>^{s},
\end{align*}
and that
\begin{align*}
  <n/k -t>^{s-1} \in l^{2}_{n},
\end{align*}
provided $s<1/2$.

\end{proof}

\begin{lem}
\label{lem:boundaryestimates}
  Let $\psi, R$ solve \eqref{eq:ELL}, then the following estimates hold:
  \begin{align*}
\tag{a}
  \left|g^{2} \left(\frac{\p_y}{k}-it\right)\psi|_{y=0}^{1} \right| &\lesssim |k|^{-1}<t>^{-s} \left( \sum_{n} |R_{n}|^{2}c_{n}(t)<n>^{2s} \right)^{1/2}.
  \end{align*}

\begin{align*}
\tag{b}
  g^{2}\left(\frac{\p_{y}}{ik}-t\right) \psi|_{y=0} &= k\langle R, e^{ikty} u_{1} \rangle_{L^{2}}, \\
g^{2}\left(\frac{\p_{y}}{ik}-t\right) \psi|_{y=1} &= k\langle R, e^{ikt(y-1)} u_{2} \rangle_{L^{2}},
\end{align*}
\begin{align*}
\tag{c}
  |\langle R, e^{ikty} u_{1} \rangle_{L^{2}}| &\lesssim \sum_{n} |R_{n}|<\frac{n}{k}-t>^{-1}, \\
  |\langle R, e^{ikt(y-1)} u_{2} \rangle_{L^{2}}| &\lesssim \sum_{n} |R_{n}|<\frac{n}{k}-t>^{-1}, \\
|\langle R, e^{ikt(y-1)} u_{2} \rangle_{L^{2}} + \langle R, e^{ikty} u_{1} \rangle_{L^{2}} | 
&\lesssim |k^{-1}|\sum_{n} |R_{n}|<\frac{n}{k}-t>^{-2}.
\end{align*}

\end{lem}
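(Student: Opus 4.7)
The plan is to prove (b) and (c) directly by integration by parts, and then to deduce (a) from them via Cauchy--Schwarz.

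First I would establish (b) by adapting the integration-by-parts identity from Lemma \ref{lem:boundary32}. At every fixed $t$, the functions $e^{ikty}u_1(0,y)$ and $e^{ikt(y-1)}u_2(0,y)$ are homogeneous solutions of $(-1+(g(\p_y/ik-t))^2)u=0$, with Dirichlet data normalized so that each one isolates the contribution of exactly one endpoint. Testing the elliptic equation \eqref{eq:ELL} against these solutions and integrating by parts twice in $y$, the bulk contributions vanish by the homogeneous equation, and the boundary terms involving $\psi$ itself vanish by the Dirichlet condition $\psi|_{y=0,1}=0$. Only the boundary term involving $g^2(\p_y/ik-t)\psi$ survives, and sorting out the normalizations of $u_j$ at the endpoints yields (b).

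Next, for (c), I would expand $R=\sum_n R_n e^{iny}$ and reduce the claim to estimating the scalar oscillatory integrals $I_{n,j}:=\int_0^1 e^{i(n-kt)y}\overline{u_j(0,y)}\,dy$. A single integration by parts in $y$ produces a boundary contribution and a bulk integral each of order $1/(n-kt)=1/(k\langle n/k-t\rangle)$, which, summed against $|R_n|$, yields the first two estimates in (c). For the third, cancellation estimate, the key observation is that the leading $y=1$ boundary contribution of $I_{n,2}$ and the $y=0$ boundary contribution of $I_{n,1}$ cancel under the chosen normalizations of $u_1,u_2$: the phases $e^{\pm i(n-kt)}$ compensate, leaving a residual integral on which a second integration by parts can be performed. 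This second integration by parts produces the additional factor $\langle n/k-t\rangle^{-1}$, together with the retained $|k^{-1}|$ prefactor in the claimed bound.

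Finally, for (a), I would combine (b) with the third, cancellation estimate of (c) to control $g^2(\p_y/k-it)\psi\big|_{y=0}^{1}$ by $k$ times the sum (up to sign) of the two boundary inner products, and then apply Cauchy--Schwarz with the splitting
\begin{align*}
  |R_n|\,\langle n/k-t\rangle^{-2} \;=\; \bigl(|R_n|\,\langle n\rangle^{s}\,\langle n/k-t\rangle^{-1}\bigr)\cdot\bigl(\langle n\rangle^{-s}\,\langle n/k-t\rangle^{-1}\bigr).
\end{align*}
The first factor reconstructs exactly $\bigl(\sum |R_n|^2 c_n(t)\langle n\rangle^{2s}\bigr)^{1/2}$, while the second must be estimated by $|k|^{-1}\langle t\rangle^{-s}$ up to absolute constants. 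I would do this by splitting the sum $\sum_n \langle n\rangle^{-2s}\langle n/k-t\rangle^{-2}$ over the regimes $|n|\ll |kt|$, $|n|\sim|kt|$, and $|n|\gg|kt|$, using that $s<1/2$ to keep each piece summable. The main obstacle will be tracking the $k$-dependence carefully through the Cauchy--Schwarz step so that the $|k|^{-1}$ prefactor emerges; this relies on exploiting the full second-order gain from the cancellation in (c) and on Peetre-type inequalities relating $\langle n\rangle^s$ to $\langle n-kt\rangle^s\langle kt\rangle^s$.
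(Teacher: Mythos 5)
Your proposal follows essentially the same route as the paper: (b) by testing the elliptic equation against the homogeneous solutions $e^{ikty}u_1$, $e^{ikt(y-1)}u_2$ and killing the bulk and the $\psi$-boundary terms; (c) by one integration by parts for the first two bounds, and by observing that the combination $e^{ikt(y-1)}u_2 + e^{ikty}u_1$ has matching boundary values (hence no boundary contribution from the first integration by parts), allowing a second one for the cancellation bound; and (a) by Cauchy--Schwarz on the resulting sum. The only cosmetic deviations are the ordering (the paper first reduces (a) to (b),(c)) and the exact Cauchy--Schwarz factorization in (a): the paper splits $|R_n|\langle n/k-t\rangle^{-2}$ into three factors with $c_n(t)=\langle n/k-t\rangle^{-1-2\epsilon}$, an $l^2$ factor $\langle n/k-t\rangle^{-(1/2+\epsilon)}$, and an $l^\infty$ factor, rather than your two-factor split with $c_n(t)=\langle n/k-t\rangle^{-2}$; both work. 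Your flag about the $k$-prefactor is warranted — the paper's own chain ($|k^{-1}|$ from (c), times the $\sqrt{k}$ from $\|\langle n/k-t\rangle^{-(1/2+\epsilon)}\|_{l^2}$) also yields only $|k|^{-1/2}$ as written, and the paper handles the near-resonance $n\approx kt$ not via the naive $1/(n-kt)$ but via the observation that $u_j$ is built from $e^{\pm kU^{-1}(y)}$, so the effective denominator $\pm k(U^{-1})'+i(kt-n)$ never degenerates; you should incorporate that (or the trivial $\mathcal{O}(1)$ bound near resonance) to make the first two estimates of (c) genuinely read $\langle n/k-t\rangle^{-1}$ rather than $|n/k-t|^{-1}$.
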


\begin{proof}[Proof of Lemma \ref{lem:boundaryestimates}]
  
  We first show that that (b) and (c) imply (a).
  Thus, assume for the moment, that (c) holds. Then
  \begin{align*}
    \left|g^{2} (\frac{\p_y}{k}-it)\psi|_{y=0}^{1} \right| \lesssim & |k^{-1}|\sum_{n} |R_{n}|<\frac{n}{k}-t>^{-2} \\
= & |k^{-1}| \sum_{n} |R_{n}|\frac{<n>^{s}}{<n/k-t>^{1/2+\epsilon}} \frac{1}{<n/k-t>^{1/2+\epsilon}} \frac{1}{<n>^{s}<n/k-t>^{1-2\epsilon}} \\
\leq & |k^{-1}| \left( \sum_{n} |R_{n}|^{2}c_{n}(t)<n>^{2s} \right)^{1/2} \\
&\left\| \frac{1}{<n/k-t>^{1/2+\epsilon}}\right\|_{l^{2}} \left\|\frac{1}{<n>^{s}<n/k-t>^{1-2\epsilon}}\right\|_{l^{\infty}},
  \end{align*}
where
\begin{align*}
  c_{n}(t) = <n/k-t>^{-1-2\epsilon} \in L^{1}_{t}.
\end{align*}
We further estimate 
\begin{align*}
  \| \frac{1}{<n/k-t>^{1/2+\epsilon}}\|_{l^{2}} &\lesssim \sqrt{k}, \\
\|\frac{1}{<n>^{s}<n/k-t>^{1-2\epsilon}}\|_{l^{\infty}} &\leq <kt>^{-s} + <kt>^{-1+2\epsilon}.
\end{align*}
As $s<1/2<1$, for $\epsilon>0$ sufficiently small $1-2\epsilon \geq s$, which concludes the proof of (a).
\\

The estimates (b) have been proven previously in Lemma \ref{lem:boundary32} for the case of $\psi=\Phi$.
Let again $e^{ikty}u_{1}$, $e^{ikt(y-1)}u_{2}$ be the homogeneous solutions with boundary values zero and one.
Testing the equation and integrating by parts twice, yields two boundary terms.
In the case of $e^{ikty}u_{1}$, the first boundary term is given by 
\begin{align*}
 e^{ikty}u_{1}\frac{1}{ik} g^{2}(\frac{\p_{y}}{ik}-t) \psi |_{y=0}^{1} = -\frac{1}{ik} g^{2} (\frac{\p_{y}}{ik}-t) \psi |_{y=0},
\end{align*}
by the choice of the boundary values of $e^{ikty}u_{1}$.
The second boundary term 
\begin{align*}
 \psi \frac{1}{ik} g^{2}(\frac{\p_{y}}{ik}-t)e^{ikty}u_{1}  |_{y=0}^{1},
\end{align*}
vanishes as $\psi$ vanishes on the boundary.
The result for $e^{ikt(y-1)}u_{2}$ follows analogously, which concludes the proof of (b).
\\

It remains to prove (c).
For the first two estimates, it suffices to prove that 
\begin{align*}
  \langle e^{iny}, e^{ikty}u_{1} \rangle_{L^{2}} &\lesssim <n/k -t>^{-1}, \\
  \langle e^{iny}, e^{ikt(y-1)}u_{2} \rangle_{L^{2}} &\lesssim <n/k -t>^{-1}.
\end{align*}
A first, easy but non-optimal proof integrates $e^{i(kt-n)y}$ by parts, which yields a control by 
\begin{align*}
  \left|\frac{k}{kt-n} \right|.
\end{align*}
For an improved estimate we recall that $u_{j}$ is given by linear combinations of 
\begin{align*}
  e^{\pm k U^{-1}(y)},
\end{align*}
and that 
\begin{align*}
  e^{i(kt-n)y \pm k U^{-1}(y)} = \frac{1}{\pm k(U^{-1})'+ i(kt-n)} \p_{y}e^{i(kt-n)y \pm k U^{-1}(y)} .
\end{align*}

The improved final estimate of (c), follows by noting that $e^{ikt(y-1)}u_{2}+ e^{ikty}u_{1}$ has boundary values $1,1$ and is thus periodic.
A first integration by parts thus does not yield any boundary contribution and we may integrate by parts once more to obtain the quadratic decay. 

\end{proof}

Combining both lemmata, we thus have further simplified \eqref{eq:lower} to estimating 
\begin{align*}
  \left\langle \left(\frac{\p_{y}}{ik}-t\right)\psi, g^{2} \left(\frac{\p_{y}}{ik}-t\right)\psi \right\rangle_{H^{s}} .
\end{align*}
Employing Proposition \ref{prop:ComHs} of Section \ref{sec:fract-sobol-space}, as well as the $L^{2}$ stability result of \cite{Zill3}, Theorem \ref{thm:citedL2result}, we have thus proven the following proposition: 

\begin{prop}
\label{prop:A}
 Let $\psi, R$ solve \eqref{eq:ELL}, $0\leq s <1/2$ and $R \in H^{s}$.
Then 
 \begin{align*}
   \|\psi\|_{H^{s}}^{2} + \|(\frac{\p_{y}}{k}-it)\psi\|_{H^{s}} \lesssim \sum_{n} |R_{n}|^{2}c_{n}(t)<n>^{2s},
 \end{align*}
 where $c_{n}\in L^{1}_{t}$ with $\|c_{n}\|_{L^{1}_{t}}$ bounded uniformly in $n$.
\end{prop}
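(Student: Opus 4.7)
The plan is to test the equation \eqref{eq:ELL} against $\psi$ in the $H^{s}$ inner product, using the expansion
\[
(g(\tfrac{\p_{y}}{ik}-t))^{2}\psi = (\tfrac{\p_{y}}{ik}-t)g^{2}(\tfrac{\p_{y}}{ik}-t)\psi - \tfrac{g'}{ik}g(\tfrac{\p_{y}}{ik}-t)\psi
\]
and then to combine the already established lemmas of this subsection to deduce coercivity for $\|\psi\|_{H^{s}}^{2}+\|(\tfrac{\p_{y}}{ik}-t)\psi\|_{H^{s}}^{2}$. Writing $L:=\tfrac{\p_{y}}{ik}-t$ for brevity, taking $\Re\langle\psi,\cdot\rangle_{H^{s}}$ of \eqref{eq:ELL} and rearranging yields an identity relating $\|\psi\|_{H^{s}}^{2}$, $\Re\langle\psi,Lg^{2}L\psi\rangle_{H^{s}}$, the commutator piece $\Re\langle\psi,\tfrac{g'}{ik}gL\psi\rangle_{H^{s}}$, and the source term $\Re\langle\psi,R\rangle_{H^{s}}$. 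By Lemma \ref{lem:commute}, the commutator piece is bounded by $|k|^{-1}\|\psi\|_{H^{s}}\|L\psi\|_{H^{s}}$, which is absorbed into the left-hand side for $|k|$ sufficiently large.

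Next I would convert the principal contribution $\Re\langle\psi,Lg^{2}L\psi\rangle_{H^{s}}$ into $\Re\langle L\psi,g^{2}L\psi\rangle_{H^{s}}$ up to a boundary term via Lemma \ref{lem:reductionboundary}. The resulting boundary term carries a prefactor $|k|^{-1}\langle kt\rangle^{s}$ and, by parts (a)--(c) of Lemma \ref{lem:boundaryestimates} applied to $\psi$, is estimated by $\bigl(\|\psi\|_{H^{s}}^{2}+\|L\psi\|_{H^{s}}^{2}\bigr)^{1/2}\bigl(\sum_{n}c_{n}(t)\langle n\rangle^{2s}|R_{n}|^{2}\bigr)^{1/2}$; a Young inequality splits this into a fraction that can be absorbed on the left and the desired right-hand side. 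For the remaining quadratic form, Proposition \ref{prop:ComHs} applied to $u=L\psi$ gives
\[
\Re\langle L\psi,g^{2}L\psi\rangle_{H^{s}} \;\geq\; c\,\|L\psi\|_{H^{s}}^{2} - C_{s}\|g^{2}\|_{\dot C^{0,1}}\,\|L\psi\|_{L^{2}}^{2},
\]
and the resulting $\|L\psi\|_{L^{2}}^{2}$ loss is handled by the $s=0$ case of the proposition itself, which in turn follows from the elliptic estimates underlying the $L^{2}$ stability result of \cite{Zill3} (Theorem \ref{thm:citedL2result}) and yields $\|\psi\|_{L^{2}}^{2}+\|L\psi\|_{L^{2}}^{2}\lesssim \sum_{n}c_{n}(t)|R_{n}|^{2}\leq \sum_{n}c_{n}(t)\langle n\rangle^{2s}|R_{n}|^{2}$.

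Finally, the source term $\Re\langle\psi,R\rangle_{H^{s}}$ is controlled by Lemma \ref{lem:another}, and a Young inequality with a small parameter $\epsilon$ absorbs the factor $\bigl(\|\psi\|_{H^{s}}^{2}+\|L\psi\|_{H^{s}}^{2}\bigr)^{1/2}$ into the coercive left-hand side, at the cost of $C_{\epsilon}\sum_{n}c_{n}(t)\langle n\rangle^{2s}|R_{n}|^{2}$. Collecting the above and taking $|k|$ sufficiently large so that the commutator and smallness factors combine to yield a strictly positive constant on the left, one arrives at the claimed bound; the $L^{1}_{t}$ bound on the $c_{n}$ uniform in $n$ is inherited from Lemma \ref{lem:another} and from the boundary estimates of Lemma \ref{lem:boundaryestimates}. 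The main obstacle will be the bookkeeping of boundary and commutator errors: each correction must be shown to fit the template $\sum_{n}c_{n}(t)\langle n\rangle^{2s}|R_{n}|^{2}$ with an $L^{1}_{t}$ weight uniform in $n$, which is precisely the content that the preparatory lemmas were tailored to supply, but one has to be careful to exploit the smallness of $|k|^{-1}$ in the right places so that no term requires a different functional-analytic treatment.
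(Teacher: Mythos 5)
Your proposal is correct and follows essentially the same route the paper takes: the paper never gives a standalone proof of Proposition~\ref{prop:A}, but instead assembles it from Lemma~\ref{lem:another} (the upper bound on $\Re\langle\psi,R\rangle_{H^s}$), Lemma~\ref{lem:commute} (absorbing the first-order commutator for $|k|$ large), Lemma~\ref{lem:reductionboundary} together with Lemma~\ref{lem:boundaryestimates} (integration by parts and control of the resulting boundary trace), Proposition~\ref{prop:ComHs} (coercivity of $\langle L\psi,g^2L\psi\rangle_{H^s}$ modulo an $L^2$ loss), and the $L^2$ elliptic bound behind Theorem~\ref{thm:citedL2result} to close the $s=0$ layer — which is exactly your assembly. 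Your bookkeeping of where the $|k|^{-1}$ smallness is used and where Young's inequality absorbs the boundary and source contributions matches the intended argument.
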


Having derived this generic result for \eqref{eq:ELL}, it remains to apply it to the cases $\psi=\Phi$ and $\psi= \Phi^{(1)}$.
\begin{prop}
\label{prop:B}
  Let $0<s<1/2$, $W \in H^{s}$ and let $\Phi$ be a solution of 
  \begin{align*}
    (-k^{2}+ (g(\p_{y}-ikt))^{2}) \Phi &= W , \\
    \Phi|_{y=0,1}&= 0, \\
    y &\in [0,1] .
  \end{align*}
  Let further $g,g' \in W^{1,\infty}(\T)$ and $g^{2}>c>0$.
Then there exists a constant such that
\begin{align*}
\|\Phi\|_{H^{s}}^{2} + \|(\frac{\p_{y}}{k}-it)^{2}\Phi\|_{H^{s}}^{2} \lesssim  \sum_{n} |W_{n}|^{2} <n>^{2}c_{n}(t),
\end{align*}
for some $c_{n}(t) \in L^{1}_{t}$.
\end{prop}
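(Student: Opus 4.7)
The plan is to derive Proposition \ref{prop:B} as a direct application of the generic elliptic estimate of Proposition \ref{prop:A} to $\psi = \Phi$, followed by one further bootstrap from first-order control $\|(\frac{\p_y}{k}-it)\Phi\|_{H^s}$ to second-order control $\|(\frac{\p_y}{k}-it)^2\Phi\|_{H^s}$ via the equation itself.

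First I would bring the equation into the form \eqref{eq:ELL}. Since $\frac{\p_y}{ik} = -i\frac{\p_y}{k}$, one has $\frac{\p_y}{ik} - t = -i(\frac{\p_y}{k}-it)$, whence $(g(\frac{\p_y}{ik}-t))^2 = -(g(\frac{\p_y}{k}-it))^2$. Dividing $(-k^2 + (g(\p_y - ikt))^2)\Phi = W$ by $-k^2$ and rewriting accordingly, one obtains $(-1 + (g(\frac{\p_y}{ik}-t))^2)\Phi = -W/k^2$, which is exactly \eqref{eq:ELL} with $R = -W/k^2$. The hypotheses of Proposition \ref{prop:A} ($g,g' \in W^{1,\infty}(\T)$, $g^2 > c > 0$, and $|k|$ sufficiently large) are all part of the assumptions of Proposition \ref{prop:B}, so Proposition \ref{prop:A} yields
\begin{align*}
\|\Phi\|_{H^s}^2 + \left\|\left(\frac{\p_y}{k}-it\right)\Phi\right\|_{H^s}^2 \lesssim \sum_n |W_n|^2\, c_n(t)\, <n>^{2s},
\end{align*}
with $c_n(t) \in L^1_t$ uniformly in $n$.

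Second, to upgrade to the second-order quantity $\|(\frac{\p_y}{k}-it)^2\Phi\|_{H^s}^2$, I would solve the equation for the top-order term. Expanding
\begin{align*}
\left(g\left(\tfrac{\p_y}{k}-it\right)\right)^2 = g^2\left(\tfrac{\p_y}{k}-it\right)^2 + \tfrac{gg'}{k}\left(\tfrac{\p_y}{k}-it\right),
\end{align*}
the equation rearranges to
\begin{align*}
g^2\left(\tfrac{\p_y}{k}-it\right)^2\Phi = \tfrac{W}{k^2} + \Phi - \tfrac{gg'}{k}\left(\tfrac{\p_y}{k}-it\right)\Phi.
\end{align*}
Dividing by $g^2$ (permissible on $H^s$ since $1/g^2$ is Lipschitz under $g^2 > c > 0$ and $g' \in W^{1,\infty}$) and applying Proposition \ref{prop:LipHs} term by term, one obtains
\begin{align*}
\left\|\left(\tfrac{\p_y}{k}-it\right)^2\Phi\right\|_{H^s}^2 \lesssim \frac{1}{k^4}\|W\|_{H^s}^2 + \|\Phi\|_{H^s}^2 + \frac{1}{k^2}\left\|\left(\tfrac{\p_y}{k}-it\right)\Phi\right\|_{H^s}^2.
\end{align*}
Inserting the first-step bound and writing $\|W\|_{H^s}^2 = \sum_n |W_n|^2 <n>^{2s}$ then gives the claimed inequality.

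The main obstacle, though essentially routine once the $H^s$ framework of Section \ref{sec:fract-sobol-space} is in place, is the careful bookkeeping of the $H^s$ mapping properties of multiplication by $g^2$, division by $g^2$, and the commutator structure hidden in $(g(\tfrac{\p_y}{k}-it))^2 \ne g^2(\tfrac{\p_y}{k}-it)^2$: for all of these one must invoke Proposition \ref{prop:LipHs} applied to $g^2$, $gg'$, and $1/g^2$, each of which lies in $W^{1,\infty}(\T)$ under the hypothesis $g,g' \in W^{1,\infty}(\T)$ together with $g^2 > c > 0$. All the real work in the proposition has already been done at the level of Proposition \ref{prop:A}.
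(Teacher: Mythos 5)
Your first step is exactly the paper's entire proof: \eqref{eq:ELL} is obtained from the equation for $\Phi$ after dividing by $-k^{2}$ (up to a sign convention in $R$), and Proposition \ref{prop:A} applied with $\psi=\Phi$ and $R=W$ gives
\begin{align*}
  \|\Phi\|_{H^{s}}^{2} + \left\|\left(\tfrac{\p_{y}}{k}-it\right)\Phi\right\|_{H^{s}}^{2} \lesssim \sum_{n}|W_{n}|^{2}<n>^{2s}c_{n}(t), \qquad c_{n}\in L^{1}_{t}.
\end{align*}
This is the intended content of Proposition \ref{prop:B}; the exponent ``$2$'' on $(\tfrac{\p_{y}}{k}-it)$ in the statement, together with the ``$<n>^{2}$'' on the right, are misprints (compare the conclusion of Proposition \ref{prop:A}, and note that when Proposition \ref{prop:B} is invoked in Lemma \ref{lem:lastH32lemma} it is used precisely to control $\|(\tfrac{\p_{y}}{k}-it)\Phi\|_{H^{s}}^{2}$, the first power).

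Your second step, the attempt to bootstrap to $\|(\tfrac{\p_{y}}{k}-it)^{2}\Phi\|_{H^{s}}^{2}$, has a genuine gap and in fact shows why the literal statement cannot be correct. Your rearrangement of the equation is fine, and the $H^{s}$--multiplier bookkeeping via Proposition \ref{prop:LipHs} is legitimate, but the estimate you arrive at contains the term $\tfrac{1}{k^{4}}\|W\|_{H^{s}}^{2}=\tfrac{1}{k^{4}}\sum_{n}|W_{n}|^{2}<n>^{2s}$. This quantity is independent of $t$ and therefore cannot be written in the required form $\sum_{n}|W_{n}|^{2}<n>^{\alpha}c_{n}(t)$ with $c_{n}\in L^{1}_{t}$ for any fixed exponent $\alpha$: the factor $\tfrac{1}{k^{4}}$ helps with the size but not at all with integrability in time, and a nonzero constant function of $t$ is not in $L^{1}_{t}$. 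The same issue already appears in the constant-coefficient model $g\equiv 1$, $k=1$, where $(\p_{y}-it)^{2}\Phi=W+\Phi$ forces $\|(\p_{y}-it)^{2}\Phi\|_{H^{s}}$ to be bounded below in terms of $\|W\|_{H^{s}}$ uniformly in $t$. So the second-order quantity is simply not controllable by a time-integrable family, and you should not try to prove it; the correct reading of the proposition is the first-order one, for which your first paragraph already suffices.
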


\begin{proof}[Proof of Proposition \ref{prop:B}]
  Applying Proposition \ref{prop:A} with $\psi=\Phi$, $R=W$ yields the result.
\end{proof}

Considering the case $\psi= \Phi^{(1)}$, the upper estimate, Lemma \ref{lem:easy}, has to be slightly modified, as the second term in 
\begin{align*}
  R= \p_{y}W + \left[(\frac{\p_{y}}{k}-it)g^{2}(\frac{\p_{y}}{k}-it), \p_{y}\right]\Phi
\end{align*}
has to be treated separately.

\begin{lem}
\label{lem:lastH32lemma}
  Let $\Phi, W$ solve 
  \begin{align*}
    (-1+ (g(\frac{\p_{y}}{k}-it))^{2})\Phi &= W, \\
    \Phi|_{y=0,1}&=  0, \\
    y &\in [0,1].
  \end{align*}
  Then, 
  \begin{align*}
    \Re \langle \Phi^{(1)}, [(\frac{\p_{y}}{k}-it)g^{2}(\frac{\p_{y}}{k}-it), \p_{y}]\Phi  \rangle_{H^{s}} \lesssim 
\sum_{n}<n>^{2s}c_{n}(t) (|(\p_{y}W)_{n}|^{2}+ |W_{n}|^{2}).
  \end{align*}
\end{lem}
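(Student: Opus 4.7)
The plan is to simplify the commutator, shift one copy of $D := \frac{\p_{y}}{k}-it$ onto $\Phi^{(1)}$ via integration by parts, and then reduce to the elliptic estimate for $\Phi$ (Proposition \ref{prop:B}) combined with a small absorption into the master estimate for $\Phi^{(1)}$ coming from Proposition \ref{prop:A}.

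First, I would expand the commutator. Since $D$ and $\p_{y}$ commute (both act diagonally in the $y$-Fourier basis, with $t$ treated as a parameter), one has
\[ [D g^{2} D,\,\p_{y}] = D [g^{2},\p_{y}] D = -D (g^{2})' D, \]
with $(g^{2})' = 2 g g' \in W^{1,\infty}(\T)$ by the regularity hypothesis on $g$. Thus the term to be bounded is $\Re\langle \Phi^{(1)}, -D(g^{2})'D\Phi\rangle_{H^{s}}$. Next, I would note that the $H^{s}(\T)$ Fourier weight commutes with $D$ (both are diagonal in Fourier), while on $\T$ one has $\p_{y}^{\ast}=-\p_{y}$ and $(-it)^{\ast}=it$, so $D^{\ast}=-D$. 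Because $\Phi^{(1)}|_{y=0,1}=0$, the periodic extension is continuous and integration by parts produces no boundary contribution, yielding
\[ \Re\langle \Phi^{(1)}, -D(g^{2})'D\Phi\rangle_{H^{s}} = \Re\langle D\Phi^{(1)}, (g^{2})'D\Phi\rangle_{H^{s}}. \]

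Now Proposition \ref{prop:LipHs} applies to $(g^{2})'$ and gives $\|(g^{2})'D\Phi\|_{H^{s}}\lesssim \|D\Phi\|_{H^{s}}$, after which Cauchy--Schwarz and Young's inequality give, for any $\epsilon>0$,
\[ |\Re\langle D\Phi^{(1)}, (g^{2})'D\Phi\rangle_{H^{s}}| \leq \epsilon \|D\Phi^{(1)}\|_{H^{s}}^{2} + C_{\epsilon} \|D\Phi\|_{H^{s}}^{2}. \]
The second term is dominated by Proposition \ref{prop:B}, which yields $\|D\Phi\|_{H^{s}}^{2} \lesssim \sum_{n} |W_{n}|^{2} \langle n\rangle^{2s} c_{n}(t)$, contributing the $|W_{n}|^{2}$ term in the claimed bound. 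The first term, with $\epsilon$ chosen sufficiently small, is absorbed into the positive definite piece $\|D\Phi^{(1)}\|_{H^{s}}^{2}$ on the left-hand side of the master elliptic estimate for $\Phi^{(1)}$ into which this lemma is fed; upon closing that estimate, the absorbed contribution is in turn controlled by the driving data $\p_{y}W$, producing the $|(\p_{y}W)_{n}|^{2}$ summand.

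The only delicate step is justifying the integration by parts in the $H^{s}(\T)$ inner product. This relies on two facts: the commutation of $D$ with the Fourier weight $B_{n}\langle n\rangle^{2s}$, and the continuity of the periodic extension of $\Phi^{(1)}$ forced by its vanishing Dirichlet data. This is precisely the fractional analogue of the $L^{2}$ boundary manipulations already carried out in the proof of Proposition \ref{prop:B} and in Lemma \ref{lem:boundaryestimates}, so I do not expect any new obstacle beyond careful bookkeeping of the $H^{s}$-weight.
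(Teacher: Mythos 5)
Your treatment of the bulk term matches the paper: after rewriting the commutator as $\pm D(g^2)'D\Phi$ with $D=\frac{\p_y}{k}-it$, moving one $D$ onto $\Phi^{(1)}$, applying Proposition \ref{prop:LipHs} to $(g^2)'=2gg'$, and splitting via Young's inequality with $\epsilon\|D\Phi^{(1)}\|_{H^s}^2$ absorbed and $\|D\Phi\|_{H^s}^2$ controlled by Proposition \ref{prop:B}, you recover the paper's bulk estimate. The gap is the assertion that the integration by parts ``produces no boundary contribution.'' That is false for $s>0$, and the paper in fact devotes the bulk of the proof to the boundary term. Concretely, for a function $R$ on $[0,1]$ that is \emph{not} periodic, the Fourier coefficient of $DR$ picks up a correction:
\begin{align*}
(DR)_n = \Bigl(\tfrac{in}{k}-it\Bigr)R_n + \tfrac{1}{k}\bigl(R(1)-R(0)\bigr),
\end{align*}
so that
\begin{align*}
\langle \Phi^{(1)}, DR\rangle_{H^s} = -\langle D\Phi^{(1)}, R\rangle_{H^s} + \tfrac{1}{k}\bigl(R(1)-R(0)\bigr)\sum_n \langle n\rangle^{2s}\overline{\Phi^{(1)}_n}.
\end{align*}
The first factor $R(1)-R(0)$ with $R=\pm 2gg' D\Phi$ is the jump $2gg'(\frac{\p_y}{k}-it)\Phi|_{y=0}^1$, which is non-zero because $\p_y\Phi$ does not vanish at the boundary. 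The second factor $\sum_n\langle n\rangle^{2s}\overline{\Phi^{(1)}_n}$ is the trace at $y=0$ of the Fourier multiplier $\langle n\rangle^{2s}$ applied to $\Phi^{(1)}$; while $\Phi^{(1)}$ itself vanishes at the boundary (so the $L^2$ part, $s=0$, of the inner product indeed has no boundary contribution), the multiplier destroys the boundary vanishing, and this sum does not vanish for $s>0$. Continuity of the periodic extension of $\Phi^{(1)}$ does not help here: the boundary term is driven by the jump of $R$ weighted against the non-vanishing $H^{2s}$-trace of $\Phi^{(1)}$, not by a jump of $\Phi^{(1)}$.

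Handling this boundary term is precisely where the $|(\p_y W)_n|^2$ contribution on the right-hand side of the lemma comes from, and it is delicate. In the paper, the term (equation \eqref{eq:900}) is estimated following Lemmata \ref{lem:reductionboundary} and \ref{lem:boundaryestimates}: one multiplies and divides by $1+i(n/k-t)$ and uses Cauchy--Schwarz to produce the factor $\|D\Phi^{(1)}\|_{H^s}\,\|\langle n\rangle^s/\langle n/k-t\rangle\|_{l^2}$; then one must bound $|2gg' D\Phi|_{y=0}^1|$, which, unlike in the last part of Lemma \ref{lem:boundaryestimates}~(c), has no cancellation between $y=0$ and $y=1$ and must be treated contribution by contribution. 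Each is expressed via $\langle W, e^{ikty}u_j\rangle_{L^2}=\frac{1}{ikt}W|_{y=0,1}+\frac{1}{ikt}\langle e^{ikty},\p_y(Wu_j)\rangle$, split into an $\omega_0$-trace piece (handled with Young's inequality and the splitting $|kt|^{-1}=|kt|^{-\gamma}|kt|^{-(1-\gamma)}$) and a $\p_y W$ piece (giving the $|(\p_y W)_n|^2$ summand with an $L^1_t$ coefficient $c_n(t)\sim \frac{1}{|kt|}\langle n/k-t\rangle^{-2(1-\lambda)}$). This machinery is the content you would need to add; without it the proof is incomplete.
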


\begin{proof}[Proof of Lemma \ref{lem:lastH32lemma}]
  We compute 
  \begin{align*}
    \left[(\frac{\p_{y}}{k}-it)g^{2}(\frac{\p_{y}}{k}-it), \p_{y}\right]\Phi = 2 (\frac{\p_{y}}{k}-it)gg'(\frac{\p_{y}}{k}-it) \Phi.
  \end{align*}
  Integrating by parts, we thus obtain a bulk term 
  \begin{align*}
    \langle (\frac{\p_{y}}{k}-it) \Phi^{(1)}, 2gg' (\frac{\p_{y}}{k}-it) \Phi \rangle_{H^{s}},
  \end{align*}
  and, similar to Lemma \ref{lem:commute}, a boundary term 
  \begin{align}
\label{eq:900}
    \sum_{n} \Phi^{(1)}_{n}<n>^{2s} k^{-1} 2gg' (\frac{\p_{y}}{k}-it) \Phi .
  \end{align}
Using Proposition \ref{prop:LipHs} of Section \ref{sec:fract-sobol-space} and Young's inequality, the bulk term can be estimated by
  \begin{align*}
    \epsilon \|(\frac{\p_{y}}{k}-it) \Phi^{(1)}\|_{H^{s}}^{2} + \epsilon^{-1} C \|(\frac{\p_{y}}{k}-it) \Phi\|_{H^{s}}^{2}.
  \end{align*}
  Here, the second term can be estimated by Proposition \ref{prop:B}, while the first term can be absorbed in the left-hand-side of the estimate as in the proof of Lemma \ref{lem:commute}.

In order to estimate the boundary term, \eqref{eq:900}, we follow the same strategy as in the proof of Lemma \ref{lem:reductionboundary} and Lemma \ref{lem:boundaryestimates}.
We thus obtain an estimate by 
\begin{align*}
  \|(\frac{\p_{y}}{k}-it) \Phi^{(1)}\|_{H^{s}} \|\frac{<n>^{s}}{<n/k-t>}\|_{l^{2}} 
\left| 2gg'(\frac{\p_{y}}{k}-it) \Phi |_{y=0}^{1}  \right| . 
\end{align*}
It remains to estimate 
\begin{align*}
  \left| 2gg'(\frac{\p_{y}}{k}-it) \Phi |_{y=0}^{1}  \right| .
\end{align*}
Unlike in the last case of (c) in Lemma \ref{lem:boundaryestimates}, there is no additional cancellation of the contributions at $y=0$ and $y=1$. Hence, we estimate 
\begin{align*}
  |2gg'| \lesssim \|g\|_{W^{1,\infty}}^{2}
\end{align*}
and consider the contributions at $y=0$ and $y=1$ separately.
Using Lemma \ref{lem:boundaryestimates}, we express 
\begin{align*}
  (\frac{\p_{y}}{k}-it) \Phi |_{y=0,1}
\end{align*}
in terms of 
\begin{align*}
  \langle W, e^{ikty}u_{1} \rangle_{L^{2}} = \frac{1}{ikt}W|_{y=0} + \frac{1}{ikt}\langle e^{ikty} \p_{y}Wu_{1} \rangle_{L^{2}} .
\end{align*}
To estimate both terms, we follow the same strategy as in the proof of Theorem \ref{thm:H32boundary}.
The first term is controlled using Young's inequality, i.e.
\begin{align*}
  \|(\frac{\p_{y}}{k}-it) \Phi^{(1)}\|_{H^{s}} \|\frac{<n>^{s}}{<n/k-t>}\|_{l^{2}} \frac{W|_{y=0,1}}{ikt} \\
\lesssim \epsilon^{-1} |kt|^{-2\gamma} +  \epsilon \|(\frac{\p_{y}}{k}-it) \Phi^{(1)}\|_{H^{s}}^{2}\frac{<kt>^{2s}}{|kt|^{2(1-\gamma)}},
\end{align*}
where $\gamma>1/2$ is chosen such that $1-\gamma \geq s$.
The first term is integrable in time and the second can be absorbed in the left-hand-side.

It remains to estimate
\begin{align}
\label{eq:localmark}
  \|(\frac{\p_{y}}{k}-it) \Phi^{(1)}\|_{H^{s}} \|\frac{<n>^{s}}{<n/k-t>}\|_{l^{2}} \frac{1}{ikt}\langle e^{ikty} \p_{y}Wu_{1} \rangle_{L^{2}} .
\end{align}
For this purpose, we compute 
\begin{align*}
\langle e^{ikty}, \p_{y}Wu_{1} \rangle_{L^{2}} =  \langle e^{ikty},u_{1} \p_{y}W \rangle_{L^{2}} + \langle e^{ikty},W \p_{y}u_{1} \rangle_{L^{2}}.
\end{align*}
The second term can be integrated by parts once more to obtain another factor $\frac{1}{ikt}$ and is thus easily controlled.
For the first term we estimate 
\begin{align*}
  \langle e^{ikty},u_{1} \p_{y}W \rangle_{L^{2}} \lesssim \sum |(\p_{y}W)_{n}| \frac{<n>^{s}}{<n/k-t>^{1-\lambda}} \frac{1}{<n>^{s}<n/k-t>^{\lambda}},
\end{align*}
where $0<\lambda<1$ and $s+\lambda>1/2$.

The terms in \eqref{eq:localmark} can thus be estimated by 
\begin{align*}
& \|(\frac{\p_{y}}{k}-it) \Phi^{(1)}\|_{H^{s}} <kt>^{s} \frac{1}{|kt|} \left\| |(\p_{y}W)_{n}| \frac{<n>^{s}}{<n/k-t>^{1-\lambda}}\right\|_{l^{2}} \left\| \frac{1}{<n>^{s}<n/k-t>^{\lambda}}\right\|_{l^{2}} \\
\lesssim &\|(\frac{\p_{y}}{k}-it) \Phi^{(1)}\|_{H^{s}} \frac{1}{|kt|} \left\| |(\p_{y}W)_{n}| \frac{<n>^{s}}{<n/k-t>^{1-\lambda}}\right\|_{l^{2}}.
\end{align*}

Using Young's inequality, the first factor can be absorbed, while the second factor is of the desired form with 
\begin{align*}
  c_{n}(t) :=\frac{1}{|kt|} \frac{1}{<n/k-t>^{2(1-\lambda)}} \in L^{1}_{t}.
\end{align*}
\end{proof}
This concludes the stability proof in $H^{s}, s<3/2$.

As a consequence we now have sufficient control of regularity to obtain damping with integrable rates and scattering.

\begin{cor}[Scattering]
\label{cor:scat1}
  Let $0<s<1/2$ and let $W$ be a solution of the linearized Euler equations, \eqref{eq:Eulernochmal}, such that $\|\p_{y}W\|_{H^{s}}$ and  $\|W\|_{H^{1}}$ are uniformly bounded (e.g. satisfying Theorem \ref{thm:H32}).
 Then there exists $W^{\infty} \in H^{s}_{y}L^{2}_{x}$ such that, as $t \rightarrow \infty$,
  \begin{align*}
    \|V_{2}\|_{L^{2}} &= \mathcal{O}(t^{-(1+s)}), \\
 W &\xrightarrow{L^{2}} W_{\infty} , \\
     \|W(t) - W_{\infty}\|_{L^{2}} &= \mathcal{O}(t^{-s}). 
  \end{align*}
\end{cor}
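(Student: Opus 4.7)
The statement reduces to combining Theorem~\ref{thm:lin-zeng} with an elementary time-integration argument applied to the scattering equation $\dt W = \tfrac{if}{k}\Phi$. The corollary follows almost formally; the only non-trivial input is the sub-quadratic damping rate, which requires the just-barely-sub-$H^{3/2}$ regularity provided by the hypothesis.

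First, I would upgrade the hypothesis to uniform $H^{1+s}_y$ regularity: by the Fourier characterization of $H^{1+s}(\T)$ one has $\|W\|_{H^{1+s}_y}^{2} \sim \|W\|_{L^2_y}^2 + \|\p_y W\|_{H^s_y}^2$, so the assumed bounds $\|W(t)\|_{H^1}, \|\p_y W(t)\|_{H^s} \lesssim 1$ yield $\|W(t)\|_{H^{1+s}_y} \lesssim 1$ uniformly in $t$. Since $0<s<1/2$ places $1+s$ in the window $(1,2)$, the third alternative of Theorem~\ref{thm:lin-zeng} applies with exponent $1+s$ and yields
\[
\|v_2(t)\|_{L^2_{x,y}} = \mathcal{O}(t^{-(1+s)})\,\|W-\langle W\rangle_x\|_{H^{-1}_x H^{1}_y}.
\]
The change of variables $(x,y)\mapsto(x-tU(y),y)$ is measure-preserving on $L^2$, so $\|V_2(t)\|_{L^2} = \|v_2(t)\|_{L^2} = \mathcal{O}(t^{-(1+s)})$, giving the first assertion.

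Next, I would translate this into integrability of $\dt W$. Since $\dt W = U''(y)\,V_2$ (equivalently $\dt W = \tfrac{if}{k}\Phi$ at each fixed Fourier mode $k\neq 0$, as in Section~\ref{sec:linearized-2d-euler}) and $U''\in L^\infty$, Step~1 yields
\[
\|\dt W(t)\|_{L^2} \lesssim \|V_2(t)\|_{L^2} = \mathcal{O}(t^{-(1+s)}),
\]
which is integrable on $[1,\infty)$ as $1+s>1$. Therefore, for $1 \leq t_1 < t_2$,
\[
\|W(t_2) - W(t_1)\|_{L^2} \leq \int_{t_1}^{t_2} \|\dt W(\tau)\|_{L^2} d\tau \lesssim \int_{t_1}^{t_2} \tau^{-(1+s)} d\tau \xrightarrow[t_1,t_2\to\infty]{} 0,
\]
so $\{W(t)\}$ is $L^2$-Cauchy and converges to some $W^\infty \in L^2$. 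Sending $t_2\to\infty$ with $t_1=t$ yields the quantitative rate $\|W(t) - W^\infty\|_{L^2} \lesssim \tfrac{1}{s}\,t^{-s}$.

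Finally, the regularity $W^\infty \in H^s_y L^2_x$ follows from Banach--Alaoglu: the family $\{W(t)\}$ is bounded in $H^s_y L^2_x$ (indeed in $H^{1+s}_y L^2_x$), so along a subsequence it has a weak-$*$ limit in $H^s_y L^2_x$ which must coincide with the strong $L^2$-limit $W^\infty$. Lower semicontinuity of the norm then gives $\|W^\infty\|_{H^s_y L^2_x} \leq \liminf_{t\to\infty}\|W(t)\|_{H^s_y L^2_x}$. The only bookkeeping worth attention is in the first step, namely checking that the combined uniform bounds transfer correctly into an $H^{1+s}_y$ bound compatible with Theorem~\ref{thm:lin-zeng}; everything else is a single time integration and an abstract compactness argument, with no further obstacle.
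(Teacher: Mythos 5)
Your proof is correct and follows essentially the same route as the paper's, which is also just Duhamel plus the $\mathcal{O}(t^{-(1+s)})$ decay of $\|V_2\|_{L^2}$ from Theorem~\ref{thm:lin-zeng}; the paper is merely far terser about the intermediate steps. The only small slip is a typo where you wrote $\|W-\langle W\rangle_x\|_{H^{-1}_x H^1_y}$ on the right-hand side of the damping estimate -- the third alternative of Theorem~\ref{thm:lin-zeng} with exponent $1+s$ gives $\|W-\langle W\rangle_x\|_{H^{-1}_x H^{1+s}_y}$, which is exactly the quantity you control via the equivalence $\|W\|_{H^{1+s}_y}^2 \sim \|W\|_{L^2_y}^2 + \|\p_y W\|_{H^s_y}^2$; otherwise the bookkeeping, the $L^2$-isometry of the shear change of variables, the Cauchy-sequence argument, and the Banach--Alaoglu extraction for $W^\infty\in H^s_y L^2_x$ are all sound.
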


\begin{proof}[Proof of Corollary \ref{cor:scat1}]
  Applying Duhamel's formula, i.e. integrating the equation in time, $W(t)$ satisfies 
  \begin{align*}
    W(t)= \omega_{0} + \int^{t}_{0} f V_{2}(\tau) d\tau.
  \end{align*}
  Estimating and integrating,
  \begin{align*}
    \|f V_{2}(\tau)\|_{L^{2}} \leq \|f\|_{L^{\infty}} \|V_{2}\|_{L^{2}}=\mathcal{O}(t^{-(1+s)}),
  \end{align*}
  then yields the result.
\end{proof}

Approximating $\omega_{0}\in L^{2}$ by functions in $H^{s}, 1<s<3/2$, we obtain scattering in $L^{2}$.
\begin{cor}[$L^{2}$ scattering]
\label{cor:scat2}
  Let $f,g,k$ be as in Theorem \ref{thm:H32}. Then for any $\omega_{0} \in L^{2} $ there exists $W_{\infty} \in L^{2}$ such that 
  \begin{align*}
    W \xrightarrow{L^{2}} W_{\infty},
  \end{align*}
as $t \rightarrow \infty$.
\end{cor}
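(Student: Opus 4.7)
The plan is a standard density/approximation argument built on two ingredients already in hand: the $L^{2}$ stability of the linearized evolution (Theorem \ref{thm:citedL2result}) and the quantitative scattering for regular data (Corollary \ref{cor:scat1}). Since the linearized Euler equation is linear in $\omega_{0}$ and the solution map is a uniformly-in-time bounded operator on $L^{2}$, scattering of a dense subset will automatically propagate to the closure.

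First, I would fix $\omega_{0} \in L^{2}$ and, given any $\varepsilon>0$, select a smooth approximant $\tilde\omega_{0}$ (e.g. a trigonometric polynomial, or a compactly supported mollification of $\omega_{0}$) satisfying $\|\omega_{0}-\tilde\omega_{0}\|_{L^{2}}<\varepsilon$. Since such $\tilde\omega_{0}$ lies in every fractional Sobolev space, in particular it meets the regularity hypotheses of Theorem \ref{thm:H32} for some fixed $s \in (0,1/2)$, so the corresponding solution $\tilde W(t)$ satisfies $\|\p_{y}\tilde W\|_{H^{s}}+\|\tilde W\|_{H^{1}}\lesssim 1$ uniformly in time. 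Corollary \ref{cor:scat1} then provides an asymptotic profile $\tilde W_{\infty}\in L^{2}$ with $\tilde W(t)\xrightarrow{L^{2}}\tilde W_{\infty}$.

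Second, let $W(t)$ be the solution with datum $\omega_{0}$. By linearity, $W(t)-\tilde W(t)$ is the solution with datum $\omega_{0}-\tilde\omega_{0}$, so Theorem \ref{thm:citedL2result} yields
\begin{align*}
\|W(t)-\tilde W(t)\|_{L^{2}} \lesssim \|\omega_{0}-\tilde\omega_{0}\|_{L^{2}} < C\varepsilon,
\end{align*}
uniformly in $t\ge 0$. Choosing $T$ so large that $\|\tilde W(t_{1})-\tilde W(t_{2})\|_{L^{2}}<\varepsilon$ for all $t_{1},t_{2}>T$, the triangle inequality gives $\|W(t_{1})-W(t_{2})\|_{L^{2}}\le (2C+1)\varepsilon$, so $(W(t))_{t\ge 0}$ is Cauchy in $L^{2}$. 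By completeness there is a limit $W_{\infty}\in L^{2}$ with $W(t)\xrightarrow{L^{2}}W_{\infty}$, which is precisely the claim.

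No step here is genuinely hard; the only small thing to check is that the chosen approximant $\tilde\omega_{0}$ actually satisfies the hypotheses of Theorem \ref{thm:H32} on $[0,1]$ together with periodicity in $y$. This is the only mild technicality: one should argue that smooth periodic functions are dense in $L^{2}([0,1])$ (which is immediate) and that such functions lie in $H^{1}([0,1])$ with $\omega_{0},\p_{y}\omega_{0}\in H^{s}(\T)$. With that verification in place, the Cauchy argument above closes the proof.
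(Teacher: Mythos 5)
Your proposal is correct and is essentially the paper's own argument, spelled out more explicitly: approximate $\omega_{0}$ in $L^{2}$ by smooth periodic data, use Theorem~\ref{thm:H32} and Corollary~\ref{cor:scat1} to scatter the approximants, and use linearity plus the $L^{2}$ stability of Theorem~\ref{thm:citedL2result} to transfer the Cauchy property to the general datum. The paper phrases the final step as ``choosing an appropriate diagonal sequence,'' which is the same $3\varepsilon$ argument you carry out directly.
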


\begin{proof}[Proof of Corollary \ref{cor:scat2}]
  Let $ (\omega^{n}_{0})_{n \in \N} \in H^{s} $  be a sequence such that 
  \begin{align*}
\omega^{n}_{0} \xrightarrow{L^{2}} \omega_{0},
  \end{align*}
as $n \rightarrow \infty$.
  By Corollary \ref{cor:scat2}, for any $\omega_{0}^{n}$ there exists an asymptotic profile $W^{n}_{\infty} $.
  By the $L^{2} $ stability theorem of \cite{Zill3}, Theorem \ref{thm:citedL2result}, the convergence of $\omega^{n}_{0}$ also implies the convergence of $W^{n}(t)$ at any time $t$ and of $W^{n}_{\infty} $.
The result then follows by choosing an appropriate diagonal sequence in $t$ and $n$.
\end{proof}

\subsection{Stability in $H^{5/2-}$}
\label{sec:stability-h52-}

In the previous Section \ref{sec:stability-h32-}, we have seen that, under general perturbations, the critical Sobolev exponent in $y$ is given by $s=\frac{3}{2}$.
More precisely, for any $m \in \N_{0}$, we have shown stability  in the periodic fractional Sobolev spaces 
\begin{align*}
 H^{m}_{x}H^{s}_{y}(\T_{L}\times \T),s<\frac{3}{2}, 
\end{align*}
and that stability in 
\begin{align*}
H^{m}_{x}H^{s}_{y}(\T_{L}\times[0,1]),s>\frac{3}{2}, 
\end{align*}
can in general not hold, unless one restricts to initial perturbations $\omega_{0}$ with zero Dirichlet boundary data, $\omega_{0}|_{y=0,1}=0$.

Restricting to such perturbations, in \cite{Zill3} we established stability in $H^{m}_{x}H^{2}_{y}(\T_{L} \times [0,1])$, which is sufficient to prove linear inviscid damping with the optimal algebraic rates.
However, $H^{2}$ stability is not sufficient to establish consistency with the nonlinear equations, since control of the nonlinearity, 
\begin{align*}
  \nabla^{\bot}\Phi \cdot \nabla W,
\end{align*}
would require an $L^{\infty}$ control of $\nabla W$. As we work in two dimensions, in order to use a Sobolev embedding, we thus require control in $H^{s},s>2$.

As the main result of this section, we hence show that, for this restricted class of perturbations, $\omega_{0}$, the critical Sobolev exponent in $y$ is given by $s=\frac{5}{2}$.
More precisely, as shown in Corollary \ref{cor:H2boundaryquadratic}, for initial perturbations, $\omega_{0}$, with zero Dirichlet data, $\omega_{0}|_{y=0,1}=0$, generically $\p_{y}^{2}W$ asymptotically develops (logarithmic) singularities a the boundary. Hence, even for this restricted class of perturbations, stability in $H^{m}_{x}H^{s}_{y}(\T_{L}\times [0,1]), s>\frac{5}{2}$, can in general not hold.
As we discuss in Section \ref{cha:anoth-sect-cons}, this further implies instability of the nonlinear problem in the finite periodic channel in high Sobolev spaces and therefore, in particular, forbids nonlinear inviscid damping results in Gevrey regularity such as in the work of Bedrossian and Masmoudi, \cite{bedrossian2013inviscid}.

As a complementary result to the instability, Theorem \ref{thm:H52} establishes stability in the periodic fractional Sobolev spaces, $H^{m}_{x}H^{s}_{y}(\T_{L}\times\T),s<\frac{5}{2}$. This additional stability allows us to prove consistency with the nonlinear problem, also for the finite periodic channel. 
\\
 
We recall that the linearized Euler equations, \eqref{eq:Eulernochmal}, decouple with respect to $k$ and we may hence consider $k$ as a given parameter and consider the stability of 
\begin{align*}
  W(t)=W(t,k,\cdot) \in H^{s}([0,1]) \text{ or } H^{s}(\T).
\end{align*}

The following two lemmata provide a description of the evolution of derivatives of $\Phi$ on the boundary. Using these lemmata, in Proposition \ref{cor:H2boundaryquadratic} we show that, in general, stability in $H^{s}([0,1]), s>\frac{5}{2}$, can not hold. 
\begin{lem}
\label{lem:pWcontrol}
  Let $W$ be a solution of the linearized Euler equations, \eqref{eq:Eulernochmal}, and suppose that $\|W\|_{H^{2}([0,1])}$ is bounded uniformly in time.
  Suppose further that $\omega_{0}|_{y=0,1}\equiv 0$.
  Then there exist constants $c_{0}, c_{1} \in \C$ such that  
  \begin{align*}
    \p_{y}W|_{y=0} \rightarrow c_{0}, \\ 
    \p_{y}W|_{y=1} \rightarrow c_{1}, 
  \end{align*}
  as $t \rightarrow \infty$.
\end{lem}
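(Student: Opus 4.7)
The plan is to show that $\partial_y\Phi|_{y=0,1}$ is integrable in time and then integrate the boundary evolution equation.

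First, I would observe that the Dirichlet condition $\Phi|_{y=0,1}=0$ in \eqref{eq:Eulernochmal} together with the evolution equation $\partial_t W = \frac{if}{k}\Phi$ implies that $W|_{y=0,1}$ is conserved, so the assumption $\omega_0|_{y=0,1}\equiv 0$ propagates to $W(t)|_{y=0,1}\equiv 0$ for all $t$. I would then apply Lemma \ref{lem:boundary32} with $s=1$, which is permitted because the hypothesis $\|W\|_{H^2([0,1])}\leq C$ yields $\|\partial_y W\|_{H^1([0,1])}\leq C$. Combining the boundary representation
\[
\partial_y\Phi|_{y=0} = \frac{k}{g^2(0)}\langle W, u_1\rangle, \qquad \partial_y\Phi|_{y=1}=\frac{k}{g^2(1)}\langle W, u_2\rangle
\]
with the asymptotic expansion given in the lemma and the vanishing of $\omega_0|_{y=0,1}$, the leading $\frac{1}{ikt}$ contribution vanishes, so that
\[
\langle W, u_1\rangle = \mathcal{O}(t^{-2}), \qquad \langle W, u_2\rangle = \mathcal{O}(t^{-2}),
\]
uniformly in $t$.

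Next I would restrict the evolution of $\partial_y W$ in \eqref{eq:pyW} to the boundary, where $\Phi|_{y=0,1}=0$ kills the $\frac{if'}{k}\Phi$ term, leaving
\[
\partial_t \partial_y W|_{y=0,1} = \frac{if}{k}\,\partial_y\Phi|_{y=0,1}.
\]
By the previous step, the right-hand side is $\mathcal{O}(t^{-2})$ and in particular belongs to $L^1_t([1,\infty))$. Integrating in time,
\[
\partial_y W|_{y=0}(t) = \partial_y\omega_0|_{y=0} + \int_0^t \frac{if}{k}\partial_y\Phi|_{y=0}(\tau)\,d\tau,
\]
is a Cauchy net as $t\to\infty$ and converges to some constant $c_0\in\mathbb{C}$. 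The same argument at $y=1$ produces $c_1$.

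The only delicate point is obtaining the decay rate $\mathcal{O}(t^{-2})$ for $\langle W, u_j\rangle$: this is where the hypothesis $\omega_0|_{y=0,1}=0$ is crucial, as it removes the slowly decaying $t^{-1}$ boundary term in the expansion of Lemma \ref{lem:boundary32}. I expect the verification that all remaining bulk contributions are actually integrable (rather than just decaying) to be the main technical check; however, since the $H^2$ bound supplies $s=1$ in the lemma and thus an error of $\mathcal{O}(t^{-1-s})=\mathcal{O}(t^{-2})$, this is automatic.
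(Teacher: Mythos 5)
Your proposal is correct and takes essentially the same route as the paper: restrict \eqref{eq:pyW} to the boundary, note the $\Phi$ term drops out, and show $\p_{y}\Phi|_{y=0,1}$ decays integrably by invoking the asymptotic expansion of $\langle W, u_j\rangle$ from Lemma \ref{lem:boundary32}, where $\omega_{0}|_{y=0,1}=0$ kills the $\mathcal{O}(t^{-1})$ term and $\|W\|_{H^2}$ supplies the regularity hypothesis. The paper's proof also spells out a self-contained alternative that integrates $\langle W, e^{ikty}u_1\rangle$ by parts twice directly; this produces the boundary term $\frac{1}{k^2t^2}\p_{y}(Wu_1)|_{y=0}^{1} \sim t^{-2}\p_{y}W|_{y=0}$ and hence a Gronwall-type differential inequality $|\dt \p_{y}W|_{y=0,1}| \lesssim t^{-2}(|\p_{y}W|_{y=0,1}|+1)$ rather than an immediate uniform $\mathcal{O}(t^{-2})$ bound. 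That version avoids applying the duality step in Lemma \ref{lem:boundary32} at $s=1$, where $\p_{y}W$ is not in $H^{1}_{0}$ and the literal pairing $\|e^{ikty}u_1\|_{H^{-1}}\|\p_{y}W\|_{H^1}$ needs care; your shorter route is nevertheless fine because the trace $\p_{y}W|_{y=0,1}$ is controlled by $\|W\|_{H^2}$ via Sobolev embedding, so the boundary contribution is uniformly $\mathcal{O}(t^{-2})$ without passing through the differential inequality.
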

We remark that $c_{0}, c_{1}$ are in general non-trivial and not determined by $\p_{y}\omega_{0}|_{y=0,1}$.
In analogy to Corollary \ref{cor:not32}, in Corollary \ref{cor:H2boundaryquadratic} we show that non-trivial $c_{0},c_{1}$ asymptotically result in a (logarithmic) blow-up at the boundary and thus provide an upper limit on stability results.

\begin{proof}[Proof of Lemma \ref{lem:pWcontrol}]
  Restricting the evolution equation for $\p_{y}W$, \eqref{eq:pyW}, to the boundary, we obtain 
  \begin{align*}
    \dt \p_{y}W|_{y=0,1} = \frac{if}{k} \p_{y}\Phi|_{y=0,1},
  \end{align*}
  where we used that $\Phi|_{y=0,1}\equiv 0$.
  It therefore suffices to show that $\p_{y}\Phi|_{y=0,1}$ decays in $t$ at an integrable rate.
We recall that by Lemma \ref{lem:boundaryestimates}
  \begin{align*}
    \p_{y}\Phi|_{y=0} &= \frac{k}{g(0)} \langle W, e^{ikty}u_{1}(0,y) \rangle_{L^{2}}, \\
    \p_{y}\Phi|_{y=1} &= \frac{k}{g(1)} \langle W, e^{ikt(y-1)}u_{2}(0,y) \rangle_{L^{2}}.
  \end{align*}
  As $k \neq 0$ and as $g$ is bounded away from $0$, it suffices to consider the $L^{2}$ products.  
  Integrating by parts once, we obtain 
  \begin{align*}
    \langle W, e^{ikty}u_{1}(0,y) \rangle_{L^{2}} &= -\frac{1}{ikt} W|_{y=0} - \frac{1}{ikt} \langle e^{ikty}, \p_{y} (W u_{1}(0,y))  \rangle_{L^{2}} \\
&= - \frac{1}{ikt} \langle e^{ikty}, \p_{y} (W u_{1}(0,y))  \rangle_{L^{2}}.
  \end{align*}
  Recalling Lemma \ref{lem:boundary32}, a uniform control of $\|W\|_{H^{s}}+ \|\p_{y}W\|_{H^{s}}$ for some $s>0$ suffices to obtain an upper bound by $\mathcal{O}(t^{-1-s})$
  and thus deduce the result.
  
  Integrating by parts once more, we obtain 
  \begin{align*}
\langle W, e^{ikty}u_{1}(0,y) \rangle_{L^{2}}= \frac{1}{k^{2}t^{2}} e^{ikty} \p_{y} (W u_{1}(0,y))|_{y=0}^{1} - \frac{1}{k^{2}t^{2}} \langle e^{ikty}, \p_{y}^{2} (W u_{1}(0,y))  \rangle_{L^{2}}. 
  \end{align*}
  Again using the assumption that $W|_{y=0,1}\equiv 0$, the first term can be controlled by 
  \begin{align*}
    C_{k} t^{-2} |\p_{y}W|_{y=0,1}|,
  \end{align*}
  and the second term by 
  \begin{align*}
    C_{k}t^{-2} \|W\|_{H^{2}}^{2}. 
  \end{align*}
  Using the uniform control of $\|W\|_{H^{2}}$, we thus obtain the differential inequality
  \begin{align*}
    |\dt \p_{y}W|_{y=0,1}| \lesssim t^{-2} (|\p_{y}W|_{y=0,1}|+1) .
  \end{align*}
  Integrating this inequality then yields the result.
\end{proof}

Following a similar approach as in Section \ref{sec:stability-h32-}, we show that $\p_{y}^{2}W|_{y=0,1}$ in general grows unboundedly as $t \rightarrow \infty$.

\begin{lem}
  Let $W$ be a solution of the linearized Euler equations, \eqref{eq:Eulernochmal}, and suppose that, for some $s>0$, $\|W(t)\|_{H^{2}}$ and $\|\p_{y}^{2}W(t)\|_{H^{s}}$ are bounded uniformly in time.
  Then, as $t\rightarrow \infty$,
  \begin{align*}
    \p_{y}^{2}\Phi|_{y=0,1}= \frac{1}{ikt}\p_{y}W|_{y=0,1} + \mathcal{O}(t^{-1-s}).
  \end{align*}
\end{lem}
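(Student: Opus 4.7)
The plan is to obtain the desired expansion in two steps: first reduce the computation of $\p_y^2\Phi|_{y=0,1}$ to that of $\p_y\Phi|_{y=0,1}$ by evaluating the elliptic PDE pointwise at the boundary, and then refine the representation of $\p_y\Phi|_{y=0,1}$ provided by Lemma \ref{lem:boundary32} by carrying out one further integration by parts, which is legitimate under the assumed regularity.

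First, expanding $(-1+(g(\tfrac{\p_{y}}{k}-it))^{2})\Phi = W$ and using $\Phi|_{y=0,1}=0$, the zero-order and purely algebraic-in-$t$ terms vanish at the boundary, leaving the pointwise identity
\begin{align*}
  \frac{g^{2}}{k^{2}}\p_{y}^{2}\Phi + \frac{gg'}{k^{2}}\p_{y}\Phi - \frac{2itg^{2}}{k}\p_{y}\Phi \Big|_{y=0,1}= W|_{y=0,1}.
\end{align*}
Since $\dt W|_{y=0,1} = \frac{if}{k}\Phi|_{y=0,1}=0$, the hypothesis $\omega_{0}|_{y=0,1}=0$ propagates to $W(t)|_{y=0,1}\equiv 0$. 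Thus
\begin{align*}
  \p_{y}^{2}\Phi|_{y=0,1}= 2ikt\,\p_{y}\Phi|_{y=0,1} - \tfrac{g'}{g}\,\p_{y}\Phi|_{y=0,1},
\end{align*}
and it only remains to determine $\p_{y}\Phi|_{y=0,1}$ up to order $t^{-2-s}$, since the second term is already subleading.

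Second, I would revisit the proof of Lemma \ref{lem:boundary32} in the refined regime $\omega_{0}|_{y=0,1}=0$. The formula $\p_{y}\Phi|_{y=0}=\tfrac{k}{g^{2}(0)}\langle W, e^{ikty}u_{1}(0,\cdot)\rangle$ remains valid, and a first integration by parts using $e^{ikty}=\tfrac{1}{ikt}\p_{y}e^{ikty}$ now produces no boundary contribution: the $y=1$ endpoint is killed by $u_{1}(0,1)=0$, and the $y=0$ endpoint is killed by $W|_{y=0}=0$. This yields
\begin{align*}
\langle W, e^{ikty}u_{1}(0,\cdot)\rangle = -\tfrac{1}{ikt}\int_{0}^{1}e^{ikty}\bigl(\p_{y}W\cdot u_{1}(0,y)+W\cdot \p_{y}u_{1}(0,y)\bigr)\,dy.
\end{align*}
A second integration by parts then recovers a non-trivial boundary contribution at $y=0$ equal to $\tfrac{1}{(ikt)^{2}}u_{1}(0,0)\p_{y}W|_{y=0}$ (the $y=1$ endpoint again vanishes by $u_{1}(0,1)=W|_{y=1}=0$), while the remaining bulk integral is controlled by duality using $\|e^{ikty}\|_{H^{-s}}=\mathcal{O}(t^{-s})$ against the $H^{s}$-regular integrand built from $W$, $\p_{y}W$ and $\p_{y}^{2}W$; this is precisely where the hypothesis $\|\p_{y}^{2}W\|_{H^{s}}\leq C$ is used. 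Combined with the analogous computation at $y=1$, and substituting back into the identity of the first step, the $2ikt$ factor cancels one power of $(ikt)^{-2}$, leaving the leading term $\tfrac{1}{ikt}\p_{y}W|_{y=0,1}$ (after fixing the constants imposed by the normalization of $u_{1}, u_{2}$) plus an $\mathcal{O}(t^{-1-s})$ remainder.

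The main technical obstacle is bookkeeping the constants during the cancellation: the $2ikt$ factor multiplying a quantity of size $t^{-2}$ must produce exactly the coefficient claimed in the statement, which requires tracking the precise normalization of the homogeneous solutions $u_{j}$ used in Lemma \ref{lem:boundary32} and their boundary values. Beyond this, the only substantive estimate is the duality argument used to bound the iterated bulk term by $\mathcal{O}(t^{-2-s})$, which in turn, after multiplication by $2ikt$, yields the stated error $\mathcal{O}(t^{-1-s})$; the lower-order contribution $\tfrac{g'}{g}\p_{y}\Phi|_{y=0,1}$ is of order $t^{-2}$ and hence absorbed in the same remainder.
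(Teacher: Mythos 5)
Your proposal is correct and follows essentially the same route as the paper: evaluate the elliptic operator at the boundary (using $\Phi|_{y=0,1}=0$ and $W|_{y=0,1}=0$ to isolate a relation $\p_y^2\Phi|_{y=0,1}$ in terms of $\p_y\Phi|_{y=0,1}$), then feed in the representation of $\p_y\Phi|_{y=0,1}$ from Lemma \ref{lem:boundary32}, integrate by parts twice (the first boundary contribution vanishing because $W|_{y=0,1}=0$), and control the residual bulk integral by duality against $\|e^{ikty}u_{j}\|_{H^{-s}}=\mathcal{O}(t^{-s})$, which is where $\|\p_{y}^{2}W\|_{H^{s}}$ enters. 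One remark: your pointwise expansion at the boundary, which produces $2ikt\,\p_y\Phi|_{y=0,1}$ rather than $ikt\,\p_y\Phi|_{y=0,1}$, is the correct one — a direct computation of $(gD)^{2}\Phi$ with $D=\tfrac{\p_y}{k}-it$ at a point where $\Phi=0$ does give the cross-term twice — whereas the paper's displayed identity $g^{2}\p_{y}^{2}\Phi|_{y=0,1}=(-gg'+iktg^{2})\p_{y}\Phi|_{y=0,1}$ drops that factor. Your instinct to flag the constant bookkeeping is well placed: after combining the $2ikt$ from the boundary expansion with the $\tfrac{k}{g^{2}}$ prefactor from Lemma \ref{lem:boundary32} and the $\tfrac{1}{(ikt)^{2}}$ from the two integrations by parts, the leading coefficient is $\tfrac{2}{itg^{2}(0)}$ rather than $\tfrac{1}{ikt}$, so the normalization of $u_{1},u_{2}$ alone cannot reconcile the two — this appears to be a typo in the paper's stated constant, but the asymptotic structure $\text{const}\cdot t^{-1}\,\p_{y}W|_{y=0,1}+\mathcal{O}(t^{-1-s})$ is exactly what both your argument and the paper establish, and it is the only feature used downstream (Corollary \ref{cor:H2boundaryquadratic}).
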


\begin{proof}
  Following the same approach as in the proof of Lemma \ref{lem:boundary32}, we note that by \eqref{eq:Eulernochmal},
  \begin{align*} 
   (-1+(g(y)(\frac{\p_{y}}{k} -it))^{2}) \Phi &= W,
  \end{align*}
  and by the choice of zero Dirichlet boundary values of $\Phi$ and $W$,
  \begin{align*}
    g^{2}\p_{y}^{2}\Phi|_{y=0,1}=(-gg'+iktg^{2})\p_{y}\Phi|_{y=0,1}.
  \end{align*}
  Dividing by $g^{2}$ and using
    \begin{align*}
  \p_{y}\Phi|_{y=0}&= \frac{k}{g^{2}(0)} \langle W, e^{ikty}u_{1} \rangle \\
&=  \frac{k}{g^{2}(0)} \left( \frac{1}{ikt} \omega_{0}|_{y=0} +  \langle e^{ikty}, \p_{y} W u_{1} \rangle \right) , \\
  \p_{y}\Phi|_{y=1}&= \frac{k}{g^{2}(1)} \langle W, e^{ikt(y-1)}u_{2} \rangle \\
&=\frac{k}{g^{2}(1)} \left( \frac{1}{ikt} \omega_{0}|_{y=1} +  \langle e^{ikt(y-1)}, \p_{y} W u_{2} \rangle \right),
\end{align*}
from Lemma \ref{lem:boundary32}, it thus suffices to consider 
\begin{align*}
  \langle e^{ikty}, \p_{y} W u_{1} \rangle, \\
  \langle e^{ikt(y-1)}, \p_{y} W u_{2} \rangle .
\end{align*}
Integrating $e^{ikty}$ or $e^{ikt(y-1)}$ by parts and using boundary values of $u_{1},u_{2}$, yields the leading terms
\begin{align*}
  \frac{1}{ikt} \p_{y}W|_{y=0,1},
\end{align*}
while the remainder is given by 
\begin{align*}
  \frac{1}{ikt}\langle e^{ikty}, \p_{y}(\p_{y} W u_{1}) \rangle, \\ 
  \frac{1}{ikt}\langle e^{ikt(y-1)}, \p_{y}( \p_{y} W u_{2})\rangle,
\end{align*}
respectively.
By the product rule
\begin{align*}
  \p_{y}(\p_{y} W u_{j})= u_{j}\p_{y}^{2}W + \p_{y}W \p_{y}u_{j}.
\end{align*}
For the latter term integrating by parts once more yields a term controlled by 
\begin{align*}
  \mathcal{O}((kt)^{-2}) \|W\|_{H^{2}}.
\end{align*}
It thus suffices to consider only 
\begin{align*}
  \frac{1}{ikt}\langle e^{ikty}u_{1}, \p_{y}^{2} W  \rangle, \\
  \frac{1}{ikt}\langle e^{ikt(y-1)}u_{2}, \p_{y}^{2} W  \rangle.
\end{align*}
Expanding into a basis and using duality, the result then follows by estimating 
\begin{align*}
  \| e^{ikty}u_{1}\|_{H^{-s}} + \|e^{ikt(y-1)}u_{2}\|_{H^{-s}} =\mathcal{O}(t^{-s}).
\end{align*}
\end{proof}

\begin{cor}
\label{cor:H2boundaryquadratic}
  Let $\omega_{0}|_{y=0,1}\equiv 0$ and let $W$ be the solution of \eqref{eq:Eulernochmal}. Further suppose that the limits
  \begin{align*}
    \lim_{t\rightarrow \infty} f(y)\p_{y}W|_{y=0,1}
  \end{align*}
exist (e.g. by Lemma \ref{lem:pWcontrol}) and are non-trivial.
Then for any $s>5/2$,
\begin{align*}
  \sup_{t\geq 0} \|W\|_{H^{s}} = \infty .
\end{align*}
\end{cor}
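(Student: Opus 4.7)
My plan is to prove this by contradiction, running exactly the same blow-up argument used for Corollary \ref{cor:not32} but at one order higher, and then to combine it with the trace/Sobolev inequality provided by the fractional-Sobolev setup of Section \ref{sec:fract-sobol-space}. Suppose, to the contrary, that there exists $s>5/2$ with $\sup_{t\geq 0}\|W(t)\|_{H^{s}([0,1])}\leq M<\infty$. Then $\|W\|_{H^{2}}$ is bounded and, writing $s'=s-2>1/2$, also $\|\p_{y}^{2}W\|_{H^{s'}([0,1])}\leq M$ uniformly in time, which puts us in the hypotheses of the preceding lemma as well as of Lemma \ref{lem:boundary32}.

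Next, I differentiate the evolution equation $\dt W=\frac{if}{k}\Phi$ twice in $y$ and restrict to $y\in\{0,1\}$. Since $\Phi|_{y=0,1}=0$, the $\frac{if''}{k}\Phi$ term vanishes, leaving
\begin{align*}
\dt \p_{y}^{2}W|_{y=0,1} \;=\; \frac{if(y)}{k}\,\p_{y}^{2}\Phi|_{y=0,1} \;+\; \frac{2if'(y)}{k}\,\p_{y}\Phi|_{y=0,1}.
\end{align*}
For the last term, Lemma \ref{lem:boundary32} together with the assumption $\omega_{0}|_{y=0,1}\equiv0$ yields $\p_{y}\Phi|_{y=0,1}=\mathcal{O}(t^{-1-s'})$, which is integrable. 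For the first term, the preceding lemma (applied with regularity exponent $s'$) gives
\begin{align*}
\p_{y}^{2}\Phi|_{y=0,1} \;=\; \frac{1}{ikt}\,\p_{y}W|_{y=0,1} \;+\; \mathcal{O}(t^{-1-s'}).
\end{align*}
Substituting back and collecting error terms,
\begin{align*}
\dt\, \p_{y}^{2}W|_{y=0,1} \;=\; \frac{f(y)\,\p_{y}W|_{y=0,1}}{k^{2}\,t} \;+\; \mathcal{O}(t^{-1-s'}).
\end{align*}

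The hypothesis that $\lim_{t\to\infty}f(y)\p_{y}W|_{y=0,1}$ exists and is non-trivial now makes the leading term behave like a fixed non-zero constant divided by $t$. Integrating from any large time $T_{0}$ to $t$ and using that the error is integrable, I obtain
\begin{align*}
\bigl|\p_{y}^{2}W|_{y=0,1}(t)\bigr| \;\gtrsim\; \log t
\end{align*}
as $t\to\infty$, at either $y=0$ or $y=1$ (whichever boundary point gives the non-trivial limit). On the other hand, since $s-2>1/2$, the trace map from $H^{s-2}([0,1])$ into $C^{0}$ (from the fractional Sobolev proposition in Section \ref{sec:fract-sobol-space}) gives
\begin{align*}
\bigl|\p_{y}^{2}W|_{y=0,1}\bigr| \;\lesssim_{s}\; \|\p_{y}^{2}W\|_{H^{s-2}([0,1])} \;\lesssim\; \|W\|_{H^{s}([0,1])} \;\leq\; M,
\end{align*}
contradicting the logarithmic lower bound as $t\to\infty$. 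Hence $\sup_{t\geq0}\|W\|_{H^{s}}=\infty$, as claimed.

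The only subtle step is the bookkeeping of decay rates: one must be sure that \emph{both} the $\p_{y}\Phi|_{y=0,1}$ remainder (which is only integrable thanks to $\omega_{0}|_{y=0,1}\equiv0$) and the $\p_{y}^{2}\Phi|_{y=0,1}$ remainder from the preceding lemma are strictly faster than $1/t$, so that the logarithmic contribution coming from $f(y)\p_{y}W|_{y=0,1}/(k^{2}t)$ is not swamped. This is automatic here because $s'=s-2>1/2>0$, but it is the only place where the precise value of the threshold exponent $5/2$ enters: a smaller $s$ would not provide enough decay in Lemma \ref{lem:boundary32} to isolate the logarithmic leading term.
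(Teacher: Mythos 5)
Your proof is correct and follows essentially the same contradiction argument as the paper: differentiate the evolution equation twice, restrict to the boundary, invoke the preceding lemma to identify the $\frac{1}{ikt}\p_{y}W|_{y=0,1}$ leading term of $\p_{y}^{2}\Phi|_{y=0,1}$ with integrable remainder, integrate to get logarithmic growth of $\p_{y}^{2}W|_{y=0,1}$, and contradict the Sobolev embedding $\|\p_{y}^{2}W\|_{L^{\infty}}\lesssim\|W\|_{H^{s}}$ for $s>5/2$. Your bookkeeping with $s'=s-2$ is in fact slightly more careful than the paper's statement, which has a small typo (writes $\mathcal{O}(t^{-1-s})$ where $\mathcal{O}(t^{-(s-1)})$ is meant, and drops a factor of $2$ on the $f'\p_{y}\Phi$ term), but neither affects the argument since those are error terms.
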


\begin{proof}
  Suppose to the contrary that for some $s>5/2$, $\|W\|_{H^{s}}$ is bounded uniformly in time. 
  Then, by Lemma \ref{lem:pWcontrol},
  \begin{align*}
    \dt \p_{y}^{2}W|_{y=0,1} = \frac{if}{k}\p_{y}^{2}\Phi + \frac{if'}{k} \p_{y}\Phi |_{y=0,1} = \frac{if}{k^{2}t} \p_{y}W|_{y=0,1} + \mathcal{O}(t^{-1-s}).
  \end{align*}
  Integrating this equation, we thus obtain that 
  \begin{align*}
    \log(t) \lesssim |\p_{y}^{2}W(t)|_{y=0,1} | \leq \|\p_{y}^{2}W(t)\|_{L^{\infty}},
  \end{align*}
  as $t \rightarrow \infty$.
  On the other hand by the Sobolev embedding and the choice of $s>\frac{5}{2}$,
  \begin{align*}
     \|\p_{y}^{2}W(t)\|_{L^{\infty}} \lesssim \|W(t)\|_{H^{s}},
  \end{align*}
  which we supposed to be bounded uniformly in time. This hence yields a contradiction, which proves the desired result. 
\end{proof}

The main result of this section is given by the following Theorem \ref{thm:H52}, which proves that the above restriction is sharp in the sense that stability holds for $s<5/2$.
More precisely, as in Section \ref{sec:stability-h32-}, 
instead of $H^{s}([0,1])$, we consider \emph{periodic} spaces, i.e.
\begin{align*}
  W(t,k,\cdot) \in H^{s-1}(\T), \p_{y}W(t,k,\cdot) \in H^{s-1}(\T),
\end{align*}
which allows us to use both a Fourier characterization and a kernel characterization. 

\begin{thm}
\label{thm:H52}
Let $0<s<1/2$ and let $\omega_{0} \in H^{2}([0,1])$, with vanishing Dirichlet data, $\omega_{0}|_{y=0,1}=0$, and $\omega_{0},\p_{y}\omega_{0},\p_{y}^{2}\omega_{0} \in H^{s}(\T)$. 
Suppose further that $f,g \in W^{3,\infty}(\T)$, that there exists $c>0$ such that 
\begin{align*}
  0<c<g<c^{-1}<\infty,
\end{align*}
 and that 
  \begin{align*}
     \|f\|_{W^{3,\infty}(\T)} L 
  \end{align*}
is sufficiently small.
Then the solution, $W$, of the linearized Euler equations, \eqref{eq:Eulernochmal}, satisfies 
  \begin{align*}
    \|\p_{y}^{2}W(t)\|_{H^{s}(\T)} \lesssim \|\omega_{0}\|_{H^{s}}+ \|\p_{y}\omega_{0}\|_{H^{s}}+\|\p_{y}^{2}\omega_{0}\|_{H^{s}},
  \end{align*}
 uniformly in time.
\end{thm}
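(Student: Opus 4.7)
The plan is to extend the energy method of Theorem \ref{thm:H32} by one more derivative in $y$. I would track the evolution of
\begin{align*}
I(t) = \langle W, A W\rangle_{H^s} + \langle \p_y W, A \p_y W\rangle_{H^s} + \langle \p_y^2 W, A \p_y^2 W\rangle_{H^s},
\end{align*}
where $A$ is the same Fourier-multiplier weight as in Section \ref{sec:stability-h32-}, decreasing in $t$ and comparable to the identity. A crucial simplification here is that the hypothesis $\omega_0|_{y=0,1}=0$ propagates: since $\Phi|_{y=0,1}=0$ the evolution equation yields $W(t)|_{y=0,1}\equiv 0$ for all time, so one extra boundary integration by parts becomes available throughout the argument.

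Differentiating the linearized equations twice in $y$ gives
\begin{align*}
\dt \p_y^2 W = \frac{if}{k}\p_y^2\Phi + \frac{2if'}{k}\p_y\Phi + \frac{if''}{k}\Phi.
\end{align*}
Following the splitting strategy of Section \ref{sec:stability-h32-}, I would write $\p_y^2\Phi = \Phi^{(2)} + H^{(2)}$, where $\Phi^{(2)}$ solves the shifted elliptic problem with zero Dirichlet data and right-hand side
\begin{align*}
\p_y^2 W + \bigl[(-1+(g(\tfrac{\p_y}{k}-it))^2),\p_y^2\bigr]\Phi,
\end{align*}
and $H^{(2)}$ is the homogeneous solution matching the boundary values $\p_y^2\Phi|_{y=0,1}$. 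Monotonicity of $I(t)$ then reduces, as in Section \ref{sec:stability-h32-}, to an elliptic control of $\Phi^{(2)}$ together with a boundary control of the $H^{(2)}$ contribution. The elliptic part would be handled by applying Proposition \ref{prop:A} with $\psi=\Phi^{(2)}$, and absorbing the commutator terms using the $H^s$ bounds of $\Phi$ and $\Phi^{(1)}$ already established in Section \ref{sec:elliptic-control} (Proposition \ref{prop:B} and its $\Phi^{(1)}$ analogue). This feedback is where the strengthened regularity assumption $f,g\in W^{3,\infty}(\T)$ enters, via Proposition \ref{prop:LipHs} applied to $g'$ and $g''$.

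For the boundary part, the lemma preceding Corollary \ref{cor:H2boundaryquadratic} provides the asymptotic identity
\begin{align*}
\p_y^2\Phi|_{y=0,1} = \frac{1}{ikt}\p_y W|_{y=0,1} + \mathcal{O}(t^{-1-s}).
\end{align*}
Unlike $\omega_0|_{y=0,1}$ in the $H^{3/2-}$ case, the factor $\p_y W|_{y=0,1}$ does not vanish, but by Lemma \ref{lem:pWcontrol} it is uniformly bounded (and in fact convergent) under the a priori $H^2$ control. Thus $H^{(2)}$ plays exactly the role of the $\frac{1}{ikt}\omega_0|_{y=0,1}$ term in the proof of Theorem \ref{thm:H32boundary}, and its contribution would be controlled by the same Young-inequality splitting $\tfrac{1}{|kt|} = |kt|^{-\gamma}|kt|^{-(1-\gamma)}$ with $\gamma$ slightly above $1/2$, combined with a $\lambda\in(0,1)$ satisfying $s-\lambda<-1/2$, producing a family $c_n(t)\in L^1_t$ uniformly bounded in $n$.

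The main obstacle I foresee is the elliptic estimate for $\Phi^{(2)}$: the commutator source contains contributions at the level of $\p_y^2\Phi$ itself, which must be peeled off rather than estimated naively, and the integration-by-parts boundary contributions generated inside the application of Proposition \ref{prop:A} must also be accounted for. As in Lemma \ref{lem:lastH32lemma}, these boundary contributions are routed through Lemma \ref{lem:boundaryestimates} and a $\p_y^2$-analogue; here the vanishing $W|_{y=0,1}=0$ is precisely what permits the additional integration by parts needed to trade the non-integrable $t^{-1}$ behavior for a decomposition compatible with Young's inequality and $L^1_t$ integrability. Once these ingredients are assembled, smallness of $\|f\|_{W^{3,\infty}}L$ and largeness of $|k|$ give $\dot I(t)\leq 0$, and hence
\begin{align*}
\|\p_y^2 W(t)\|_{H^s(\T)}^2 \lesssim I(t) \leq I(0) \lesssim \|\omega_0\|_{H^s}^2 + \|\p_y\omega_0\|_{H^s}^2 + \|\p_y^2\omega_0\|_{H^s}^2.
\end{align*}
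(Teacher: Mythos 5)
Your proposal follows the paper's proof essentially step for step: the same energy functional $I(t)$ with the decreasing weight $A$; the same decomposition $\p_y^2\Phi = \Phi^{(2)} + H^{(2)}$ with the commutator source $[(g(\frac{\p_y}{k}-it))^2,\p_y^2]\Phi$; the same use of Proposition \ref{prop:A} for the elliptic part and of Lemma \ref{lem:pWcontrol} together with the asymptotic identity $\p_y^2\Phi|_{y=0,1} = \frac{1}{ikt}\p_y W|_{y=0,1} + \mathcal{O}(t^{-1-s})$ for the boundary part; and the same observation that $W|_{y=0,1}\equiv 0$ persists and buys the extra integration by parts. The obstacle you flag — peeling the $\p_y\Phi$-level terms out of the commutator and tracking the boundary contributions that arise inside the elliptic estimate — is exactly what the paper's Proposition \ref{prop:elliph52} and Lemmas \ref{lem:h52hilfslem1}, \ref{lem:h52hilfslem2} address, so your plan is correct and matches the paper's route.
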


\begin{rem}
  \label{rem:H52assumptionsperiod}
Similar to Theorem \ref{thm:H32}, the assumptions on $f$ and $g$ are chosen such that we can apply Proposition \ref{prop:LipHs} to the functions $f$, $g$ and their derivatives $f',f''$ and $g',g''$.
Furthermore, we require 
\begin{align*}
  g^{2}=U'(U^{-1}(\cdot))^{2}
\end{align*}
to be such that we can apply Proposition \ref{prop:ComHs}.

As discussed in Remark \ref{rem:periodicityassumptions}, these assumptions can probably be relaxed to requiring that 
\begin{align*}
  f,g \in W^{4,\infty}([0,1]),
\end{align*}
and that 
\begin{align*}
  |g^{2}(1)-g^{2}(0)| = |(U'(b))^{2}-(U'(a))^{2}|
\end{align*}
is sufficiently small compared to 
\begin{align*}
  \min(g^{2})= \min((U')^{2}) >0.
\end{align*}
\end{rem}

As in the previous section, we split the contributions in the evolution equation into boundary corrections and potentials with zero Dirichlet conditions.
Let thus $W$ be a solution of \eqref{eq:Eulernochmal}, then $\p_{y}^{2}W$ satisfies:
\begin{align}
\label{eq:H2fracnochmal}
\begin{split}
  \dt \p_{y}^{2}W &= \frac{if}{k}(\Phi^{(2)}+H^{(2)}) + \frac{2f'}{ik} (\Phi^{(1)}+H^{(1)}) + \frac{f''}{ik}\Phi ,\\
(-1+(g(\frac{\p_{y}}{k}-it))^{2})\Phi^{(2)} &= \p_{y}^{2}W + [(g(\frac{\p_{y}}{k}-it))^{2}, \p_{y}^{2}] \Phi ,\\
\Phi^{(2)}_{y=0,\pi}&=0 ,
\end{split}
\end{align}
and the \emph{homogeneous correction}, $H^{(2)}$, satisfies 
\begin{align*}
  (-1+(g(\frac{\p_{y}}{k}-it))^{2})H^{(2)} &= 0, \\
  H^{(2)}|_{y=0,\pi}&= \p_{y}^{2}\Phi|_{y=0,\pi}.
\end{align*}

Furthermore, as discussed in the beginning of Section \ref{sec:stability-h32-}, $\Phi^{(1)}$ and $H^{(1)}$ satisfy \eqref{eq:pyW}:
\begin{align}
  \label{eq:pyWnochma}
  \begin{split}
  \dt \p_{y}W &= \frac{if}{k} \p_{y}\Phi + \frac{if'}{k}\Phi, \\
  (-1+(g(\frac{\p_{y}}{k}-it))^{2})\Phi^{(1)} &= \p_{y}W + [(g(\p_{y}-it))^{2}, \p_{y}] \Phi, \\
  \Phi^{(1)}_{y=0,\pi} &= 0 , \\
  H^{(1)}&= \p_{y}\Phi -\Phi^{(1)}, \\
   (t,k,y) &\in  \R \times L(\Z\setminus \{0\}) \times [0,1],
  \end{split}
\end{align}

Considering a decreasing weight $A$ and computing 
\begin{align*}
  \dt ( \langle W, A W \rangle_{H^{s}}+\langle \p_{y} W, A \p_{y}W \rangle_{H^{s}}+\langle \p_{y}^{2}W, A \p_{y}^{2}W \rangle_{H^{s}})=:\dt I(t),
\end{align*}
we show that $I(t)$ is uniformly bounded.

As we have seen in the previous Section \ref{sec:stability-h32-}, the first two terms are non-positive under the conditions of the theorem.

It thus remains to control 
\begin{align*}
  \dt \langle \p_{y}^{2}W, A \p_{y}^{2}W \rangle_{H^{s}}.
\end{align*}

For this purpose, we have to estimate 
\begin{align*}
\tag{elliptic}
  &\langle \frac{if''}{k} \Phi, A \p_{y}^{2}W \rangle_{H^{s}} + \langle \frac{if'}{k} \Phi^{(1)}, A \p_{y}^{2}W \rangle_{H^{s}} +  \langle \frac{if'}{k} \Phi^{(2)}, A \p_{y}^{2}W \rangle_{H^{s}} \\
\tag{boundary}
+& \langle \frac{if'}{k} H^{(1)}, A \p_{y}^{2}W \rangle_{H^{s}} + \langle \frac{if}{k} H^{(2)}, A \p_{y}^{2}W \rangle_{H^{s}}
\end{align*}
in terms of 
\begin{align*}
  \frac{C}{|k|} | \langle W, \dot A W \rangle_{H^{s}}+\langle \p_{y} W, \dot A \p_{y}W \rangle_{H^{s}}+\langle \p_{y}^{2}W, \dot A \p_{y}^{2}W \rangle_{H^{s}}|.
\end{align*}
Requiring $|k|\gg 0$ to be sufficiently large and thus $\frac{C}{|k|}$ to be sufficiently small, then yields the result. 
As in Section \ref{sec:stability-h32-}, the control of the boundary and elliptic contributions is obtained in the following subsections.

\subsection{Boundary corrections}
\label{sec:boundary-corrections-1}
The following two theorems provide a control of the boundary contributions in the proof of Theorem \ref{thm:H52}.
Here, Theorem \ref{thm:h52bd1} controls contributions by $H^{(1)}$ and Theorem \ref{thm:h52bd2} controls contributions by $H^{(2)}$, respectively.
\begin{thm}
\label{thm:h52bd1}
  Let $0<s<1/2$ and let $W,f,g$ as in Theorem \ref{thm:H52}.
   Let further $A$ be a diagonal operator comparable to the identity, i.e.
  \begin{align*}
    A: e^{iny} \mapsto A_{n} e^{iny},
  \end{align*}
  with 
  \begin{align*}
    1 \lesssim A_{n} \lesssim 1,
  \end{align*}
  uniformly in $n$. 
 Then, 
  \begin{align*}
    | \langle A \p_{y}^{2}W, \frac{if'}{k} H^{(1)} \rangle_{H^{s}}| \lesssim \sum_{n} c_{n}(t) <n>^{2s} (|(\p_{y}^{2}W)_{n}|^{2}+|(\p_{y}W)_{n}|^{2}+|W_{n}|^{2}),
  \end{align*}
  where $c_{n} \in L^{1}_{t}$ and $\|c_{n}\|_{L^{1}_{t}}$ is bounded uniformly in $n$.
\end{thm}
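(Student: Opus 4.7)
\emph{Proof plan.} The approach parallels that of Theorem \ref{thm:H32boundary}, exploiting the vanishing Dirichlet assumption $\omega_{0}|_{y=0,1}=0$ to gain an extra factor of $t^{-1}$ that compensates for the additional $y$-derivative on $W$. Write
\begin{align*}
H^{(1)} = \p_{y}\Phi|_{y=0}\,e^{ikty}u_{1} + \p_{y}\Phi|_{y=1}\,e^{ikt(y-1)}u_{2},
\end{align*}
so the pairing splits into two boundary contributions of the form $\p_{y}\Phi|_{y=0,1}\cdot\langle A\p_{y}^{2}W, \tfrac{if'}{k}e^{ikty}u_{j}\rangle_{H^{s}}$. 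Since $\omega_{0}|_{y=0,1}=0$ and $\dt W|_{y=0,1}=\tfrac{if}{k}\Phi|_{y=0,1}=0$, also $W|_{y=0,1}=0$ for all times. Applying Lemma \ref{lem:boundary32} followed by an integration by parts in $e^{\pm ikty}$, whose boundary contribution vanishes by $W|_{y=0,1}=0$, yields
\begin{align*}
\p_{y}\Phi|_{y=0}=\frac{1}{itg^{2}(0)}\bigl(\langle e^{ikty}, u_{1}\p_{y}W\rangle+\langle e^{ikty}, \p_{y}u_{1}\cdot W\rangle\bigr),
\end{align*}
and analogously at $y=1$, so $\p_{y}\Phi|_{y=0,1}=O(t^{-1})$ as a functional of $\p_{y}W$ and $W$.

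Next I expand the $H^{s}$ pairing in the Fourier basis, estimate the inner pairings by $|\langle e^{iny},e^{ikty}u_{j}\rangle|\lesssim\langle n/k-t\rangle^{-1}$ as in the proof of Lemma \ref{lem:boundaryestimates}(c), and combine with the scalar $\p_{y}\Phi|_{y=0,1}$ computed above. The resulting dominant term has the shape
\begin{align*}
\frac{1}{|t|}\Bigl(\sum_{n}\frac{|(\p_{y}^{2}W)_{n}|\,\langle n\rangle^{2s}}{\langle n/k-t\rangle}\Bigr)\Bigl(\sum_{m}\frac{|(\p_{y}W)_{m}|+|W_{m}|}{\langle m/k-t\rangle}\Bigr).
\end{align*}
I then apply the Cauchy--Schwarz splitting already used in Theorem \ref{thm:H32boundary}: decompose $\langle n\rangle^{2s}\langle n/k-t\rangle^{-1} = \langle n\rangle^{s}\langle n/k-t\rangle^{-(1-\lambda)}\cdot\langle n\rangle^{s}\langle n/k-t\rangle^{-\lambda}$ with $\lambda\in(1/2+s,1)$, and use $\langle n\rangle^{s}\lesssim\langle n-kt\rangle^{s}+\langle kt\rangle^{s}$ together with $\|\langle n/k-t\rangle^{-\lambda}\|_{l^{2}}<\infty$, both of which are available precisely because $s<1/2$.

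For the lower-order traces of $\p_{y}W$ and $W$ produced by the second integration by parts, I follow the same prescription as in Theorem \ref{thm:H32boundary}: insert an additional split $|kt|^{-1}=|kt|^{-\gamma}\cdot|kt|^{-(1-\gamma)}$ with $\gamma\in(1/2,1/2+\varepsilon)$ and apply Young's inequality. Each resulting term then fits the claimed form $\sum_{n}c_{n}(t)\langle n\rangle^{2s}(|(\p_{y}^{2}W)_{n}|^{2}+|(\p_{y}W)_{n}|^{2}+|W_{n}|^{2})$ with weights
\begin{align*}
c_{n}(t)\sim\frac{1}{|t|\langle n/k-t\rangle^{2(1-\lambda)}}\quad\text{or}\quad\frac{\langle kt\rangle^{2s}}{|t|^{2(1-\gamma)}\langle n/k-t\rangle^{2(1-\lambda)}},
\end{align*}
each uniformly in $L^{1}_{t}$ by the same computation as at the end of the proof of Theorem \ref{thm:H32boundary}.

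The main obstacle is bookkeeping. Every integration by parts used to extract an extra $t^{-1}$ factor produces derivatives of $u_{1},u_{2}$ paired against $\p_{y}W$ or $W$, so the right-hand side must accommodate all three squared sums $|(\p_{y}^{2}W)_{n}|^{2}$, $|(\p_{y}W)_{n}|^{2}$ and $|W_{n}|^{2}$, and the parameters $\lambda,\gamma$ must simultaneously satisfy $1-\lambda,1-\gamma\geq s$ and $<1/2$. The constraint $s<1/2$ is precisely what makes such a choice possible, so no new estimate is needed beyond those developed for Theorem \ref{thm:H32boundary}; the technical heart is to weave the $t^{-1}$ extracted from the vanishing Dirichlet condition together with the shifted Fourier weights so that all three contributions acquire integrable, uniformly-in-$n$ bounded time coefficients.
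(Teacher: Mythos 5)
Your proposal is correct and does yield the claimed estimate, but it proceeds along a genuinely different route than the paper. The paper's proof integrates $e^{ikty}$ by parts \emph{twice} against $\langle W, e^{ikty}u_{1}\rangle_{L^{2}}$ (both boundary terms vanish since $W|_{y=0,1}\equiv 0$) and then invokes the $H^{2}$ stability theorem from \cite{Zill3} to conclude $|\p_{y}\Phi|_{y=0,1}| = \mathcal{O}(\langle kt\rangle^{-2})\|W\|_{H^{2}}$ with $\|W\|_{H^{2}}$ treated as a universal constant. This collapses the boundary trace to a scalar time-decay factor, after which a single Cauchy--Schwarz split suffices and the resulting weight $c_{n}(t)=\langle kt\rangle^{-2+s}\langle n/k-t\rangle^{-2(1-\gamma)}$ only ever loads onto $|(\p_{y}^{2}W)_{n}|^{2}$. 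You instead stop after \emph{one} integration by parts, keep $\p_{y}\Phi|_{y=0,1}$ as $t^{-1}$ times a frequency-resolved pairing against $\p_{y}W$ and $W$, and then run the full double Cauchy--Schwarz/Young machinery of Theorem \ref{thm:H32boundary} — exactly the shape $\frac{1}{|t|}\bigl(\sum_{n}\frac{\langle n\rangle^{2s}|(\p_{y}^{2}W)_{n}|}{\langle n/k-t\rangle}\bigr)\bigl(\sum_{m}\frac{|(\p_{y}W)_{m}|+|W_{m}|}{\langle m/k-t\rangle}\bigr)$. This is what naturally produces all three squared terms on the right-hand side of the statement (the paper's proof only uses the $\p_{y}^{2}W$ term and the others appear there for uniformity with the energy functional). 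Your route avoids an appeal to the external $H^{2}$ result, at the cost of carrying more terms through the estimate; both are valid. One small inconsistency: you describe a ``second integration by parts'' and a $|kt|^{-\gamma}\cdot|kt|^{-(1-\gamma)}$ split ``for the lower-order traces,'' but with $W|_{y=0,1}\equiv 0$ propagated, the first integration by parts leaves no scalar boundary trace — that was the device in Theorem \ref{thm:H32boundary} needed precisely because $\omega_{0}|_{y=0,1}\neq 0$ there — so this step is superfluous in your single-IBP plan (and if you instead do a second IBP as the paper does, you recover the paper's argument, for which you'd then want to quote $\|W\|_{H^{2}}$ bounded).
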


\begin{proof}[Proof of Theorem \ref{thm:h52bd1}]
 Combining the approach of Lemma \ref{lem:pWcontrol} and Theorem \ref{thm:H32boundary}, we expand 
  \begin{align*}
     H^{(1)}= \p_{y}\Phi|_{y=0} e^{ikty}u_{1} + \p_{y}\Phi|_{y=1}e^{ikty(y-1)}u_{2}.
  \end{align*}
  We may then estimate
  \begin{align*}
    |\langle A \p_{y}^{2}W, e^{ikty}u_{1} \rangle_{H^{s}}| \lesssim \sum_{n} <n>^{2s}|(\p_{y}^{2}W)_{n}| \frac{1}{|k|<n/k-t>}. 
  \end{align*}

  As by our assumptions $\omega_{0}|_{y=0,1}=0$,  
  \begin{align*}
    \p_{y}\Phi|_{y=0} = \frac{k}{g^{2}(0)}\langle W, e^{ikty}u_{1} \rangle_{L^{2}}
  \end{align*}
  has good decay in time. More precisely, as in Corollary \ref{cor:H2boundaryquadratic}, we integrate by parts twice to obtain control by 
  \begin{align*}
    \left|\p_{y}\Phi|_{y=0,1} \right|= \mathcal{O}(<kt>^{-2}) \|W\|_{H^{2}}.
  \end{align*}
  
Using the $H^{2}$ stability result of \cite{Zill3}, Theorem \ref{thm:citedH1H2result}, we may thus estimate
\begin{align*}
  |\langle A \p_{y}^{2}W, e^{ikty}u_{1} \rangle_{H^{s}}| \lesssim &
<kt>^{-2} \left\| \frac{<n>^{s}}{<n/k-t>^{(1-\gamma)}}(\p_{y}^{2}W)_{n}\right\|_{l^{2}_{n}} \left\| \frac{<n>^{s}}{<n/k-t>^{\gamma}} \right\|_{l^{2}}.
\end{align*}

Choosing $0<\gamma<1$ sufficiently close to $1$ such that $s-\gamma < -\frac{1}{2}$, then yields
  \begin{align*}
    \left\|\frac{<\eta>^{s}}{<n/k-t>^{\gamma}}\right\|_{l^{2}_{n}} = \mathcal{O}(<kt>^{s}).
  \end{align*}

The result thus follows with 
\begin{align*}
  c_{n}(t):= <kt>^{-2+s}<n/k-t>^{-2(1-\gamma)} \in L^{1}_{t}.
\end{align*}
\end{proof}

\begin{thm}
\label{thm:h52bd2}
  Let $0<s<1/2$ and let $A,W,f,g$ as in Theorem \ref{thm:h52bd1}. Then,
  \begin{align*}
    | \langle A \p_{y}^{2}W, \frac{if}{k} H^{(2)} \rangle_{H^{s}}| \lesssim \sum_{n} c_{n}(t) <n>^{2s} (|(\p_{y}^{2}W)_{n}|^{2}+|(\p_{y}W)_{n}|^{2}+|W_{n}|^{2}) ,
  \end{align*}
  where $c_{n} \in L^{1}_{t}$ and $\|c_{n}\|_{L^{1}_{t}}$ is bounded uniformly in $n$.
\end{thm}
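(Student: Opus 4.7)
The plan is to mirror the structure of Theorems \ref{thm:H32boundary} and \ref{thm:h52bd1}: decompose $H^{(2)}$ via its Dirichlet data at $\{y=0,1\}$, bound the resulting boundary multiplier separately from the bulk pairing, and then combine the two factors by a Fourier-based Cauchy--Schwarz estimate. Explicitly, as in the treatment of $H^{(1)}$, the homogeneous correction decomposes as
\begin{align*}
H^{(2)}= \p_y^2\Phi|_{y=0}\,e^{ikty}u_1(0,y)+\p_y^2\Phi|_{y=1}\,e^{ikt(y-1)}u_2(0,y),
\end{align*}
so by the triangle inequality it is enough to control one contribution, say the one at $y=0$.

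For the boundary multiplier I would invoke the lemma preceding Corollary \ref{cor:H2boundaryquadratic}, which, under the assumption $\omega_0|_{y=0,1}=0$, gives
\begin{align*}
\p_y^2\Phi|_{y=0,1}=\tfrac{1}{ikt}\,\p_yW|_{y=0,1}+\mathcal{O}(t^{-1-s}).
\end{align*}
By Lemma \ref{lem:pWcontrol}, together with the $H^2$-stability result Theorem \ref{thm:citedH1H2result} of \cite{Zill3}, the trace $\p_yW|_{y=0,1}$ is uniformly bounded in time, so $|\p_y^2\Phi|_{y=0,1}|\lesssim |t|^{-1}$, with a remainder of size $|t|^{-1-s}$. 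When this remainder is expanded via two integrations by parts against $e^{ikty}u_1$ (in the spirit of the proof of Lemma \ref{lem:pWcontrol}), it produces additional boundary contributions involving $\p_yW$ and $W$; these are precisely what introduce the $|(\p_yW)_n|^2$ and $|W_n|^2$ terms on the right-hand side of the claimed inequality.

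For the bulk factor, I would expand $A\p_y^2W$ in the Fourier basis $\{e^{iny}\}$ and use that $fu_1e^{ikty}$ has Fourier coefficients of size $\mathcal O(<n-kt>^{-1})$, as follows from the $W^{1,\infty}$-regularity of $fu_1$ (granted by the hypotheses on $f$ and $g$ and Proposition \ref{prop:LipHs}). This produces
\begin{align*}
\left|\left\langle A\p_y^2W,\tfrac{if}{k}e^{ikty}u_1\right\rangle_{H^s}\right|
\lesssim \frac{1}{|k|}\sum_{n}\frac{<n>^{2s}|(\p_y^2W)_n|}{<n-kt>}.
\end{align*}
A split-Cauchy--Schwarz with exponents $1-\lambda$ and $\lambda$, with $\lambda$ chosen so that $s+\tfrac{1}{2}<\lambda<1$ (which is possible because $s<\tfrac{1}{2}$), together with the bound $<n>^s\lesssim <n-kt>^s+<kt>^s$, yields
\begin{align*}
\lesssim \frac{<kt>^s}{|k|}\left(\sum_n \frac{<n>^{2s}|(\p_y^2W)_n|^2}{<n-kt>^{2(1-\lambda)}}\right)^{1/2}.
\end{align*}

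Multiplying by the boundary factor $|\p_y^2\Phi|_{y=0,1}|\lesssim |t|^{-1}$ and reading off the desired form as in the conclusion of the proof of Theorem \ref{thm:h52bd1} produces a weight of the shape
\begin{align*}
c_n(t)\sim \frac{<kt>^{2s-2}}{<n-kt>^{2(1-\lambda)}},
\end{align*}
which lies in $L^1_t$ uniformly in $n$ since $2-2s>1$. The main obstacle here is that $\p_y^2\Phi|_{y=0,1}$ only decays at rate $|t|^{-1}$, whereas in Theorem \ref{thm:h52bd1} the analogous factor $\p_y\Phi|_{y=0,1}$ enjoyed the much better $\mathcal O(<kt>^{-2})$ decay coming from the additional integration by parts in the formula $\p_y\Phi|_{y=0}=\tfrac{k}{g^2(0)}\langle W, e^{ikty}u_1\rangle$. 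The missing $|t|^{-s}$ must therefore be extracted from the Fourier bookkeeping, which is exactly why $\lambda$ is forced close to $1$ and why the threshold $s<\tfrac{1}{2}$ is essentially sharp at this step.
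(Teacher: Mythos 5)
Your skeleton is the right one---decompose $H^{(2)}$ by its Dirichlet traces, use Lemma \ref{lem:pWcontrol} together with the cited $H^2$-stability to bound $\p_yW|_{y=0,1}$, and mimic the Fourier bookkeeping of Theorems \ref{thm:H32boundary} and \ref{thm:h52bd1}---but there is a genuine gap in the way you collapse the boundary multiplier $\p_y^2\Phi|_{y=0,1}$ to a scalar of size $|t|^{-1}$ before pairing. The paper's proof, by declaring the estimate of $\langle A\p_y^2W, \frac{if}{k}e^{ikty}u_1\rangle_{H^s}\cdot\frac{1}{t}\langle\p_y^2W, e^{ikty}u_1\rangle_{L^2}$ to be ``identical up to a change of notation'' to Theorem \ref{thm:H32boundary}, keeps the bulk part of $\p_y^2\Phi|_{y=0,1}$, namely $\frac{1}{ikt}\langle e^{ikty}u_1,\p_y^2W\rangle_{L^2}$, as a Fourier sum and applies the $\lambda$-split Cauchy--Schwarz to \emph{both} factors of that product. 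The resulting cancellation
\begin{align*}
\left\|\frac{<n>^s}{<n-kt>^\lambda}\right\|_{l^2}\cdot\left\|\frac{1}{<n>^s<n-kt>^\lambda}\right\|_{l^2}\lesssim <kt>^s\cdot<kt>^{-s}=1
\end{align*}
yields $c_n(t)\lesssim\frac{1}{t<n-kt>^{2(1-\lambda)}}$, which is uniformly $L^1_t$ for \emph{every} $s<1/2$; only the genuinely scalar correction $\frac{1}{ikt}\p_yW|_{y=0,1}$ is then handled by Young's inequality, as for the $\omega_0|_{y=0,1}$-piece of Theorem \ref{thm:H32boundary}. By replacing the entire boundary multiplier with the scalar $|t|^{-1}$, you discard the $<kt>^{-s}$ half of this cancellation for the bulk piece as well.

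This loss is not cosmetic. Your proposed weight $c_n(t)\sim\frac{<kt>^{2s-2}}{<n-kt>^{2(1-\lambda)}}$ is obtained by formally squaring the factor $\frac{<kt>^s}{|k|t}\bigl(\sum_n\frac{<n>^{2s}|(\p_y^2W)_n|^2}{<n-kt>^{2(1-\lambda)}}\bigr)^{1/2}$, which is not a legitimate inequality---the quadratic form on the right can be arbitrarily small. Applying Young's inequality honestly, by splitting $|t|^{-1}=|t|^{-\gamma}|t|^{-(1-\gamma)}$, produces $d(t)=\frac{<kt>^{2s}}{|t|^{2\gamma}}$ together with $c_n(t)=\frac{1}{|t|^{2(1-\gamma)}<n-kt>^{2(1-\lambda)}}$, and the three resulting constraints $\gamma>s+\frac12$, $\lambda>s+\frac12$, $2(1-\gamma)+2(1-\lambda)>1$ are jointly compatible only when $s<\frac14$, short of the claimed range. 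Pushing $\lambda\to1$, as you suggest, only weakens the weight $<n-kt>^{-2(1-\lambda)}$ and manufactures no extra time decay; the missing $<kt>^{-s}$ lives in the boundary factor's own Fourier expansion, which your scalar bound has thrown away. A secondary remark: invoking the $\mathcal{O}(t^{-1-s})$ remainder from the unnamed lemma before Corollary \ref{cor:H2boundaryquadratic} is circular, since that lemma presupposes $\|\p_y^2W\|_{H^s}$ uniformly bounded, which is exactly what Theorem \ref{thm:H52} is in the process of establishing; a priori, from $\|W\|_{H^2}$ alone, one only has $\mathcal{O}(t^{-1})$.
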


\begin{proof}[Proof of Theorem \ref{thm:h52bd2}]
  Following the same approach as in Theorem \ref{thm:H32boundary}, the estimate of 
  \begin{align*}
    |\langle A \p_{y}^{2}W, \frac{if}{k}e^{ity}u_{1} \rangle_{H^{s}} \frac{1}{t} \langle \p_{y}^{2}W,e^{ity}u_{1}  \rangle_{L^{2}}|
  \end{align*}
  is identical up to a change of notation.

  The additional boundary correction in the current case is given by 
  \begin{align*}
    \frac{1}{ikt} \p_{y}W|_{y=0,1}.
  \end{align*}
  While $\p_{y}W|_{y=0,1}$ is not conserved, by Lemma \ref{lem:pWcontrol} it converges as $t \rightarrow \infty$ and is thus in particular bounded.
  This part of the estimate thus also concludes analogously to the proof of Theorem \ref{thm:H32boundary}.
\end{proof}

\subsection{Elliptic regularity}
\label{sec:elliptic-regularity-1}

This subsection's main result is given by the following theorem, which provides control of the elliptic contributions in the proof of Theorem \ref{thm:H52}.
\begin{thm}
  Let $0<s<1/2$ and let $A,f,g,W$ be a as in Theorem \ref{thm:h52bd1}. Then
  \begin{align*}
    & \quad | \langle A \p_{y}W, if \Phi^{(2)} + if' \Phi^{(1)} + if'' \Phi^{(1)} \rangle_{H^{s}}| \\ &\lesssim \sum_{n} c_{n}(t) <n>^{2s} (|(\p_{y}^{2}W)_{n}|^{2}+|(\p_{y}W)_{n}|^{2} + |W_{n}|^{2}),
  \end{align*}
  where $c_{n} \in L^{1}_{t}$ with $\|c_{n}\|_{L^{1}_{t}}$ bounded uniformly in $n$.  
\end{thm}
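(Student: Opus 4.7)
The plan mirrors the proof of Theorem \ref{thm:H32ellipticcontrol}, promoted by one derivative. The first step is to apply the argument of Lemma \ref{lem:easy} to each of the three inner products: expanding in the Fourier basis, multiplying by the unit factor $\frac{1+i(n/k-t)}{1+i(n/k-t)}$, pulling the factor $1/(1+i(n/k-t))$ into an $\ell^{2}$-norm against $A_{n}(\p_{y}^{2}W)_{n}\langle n\rangle^{s}$ (where the resulting weight is integrable in $t$ uniformly in $n$), and integrating by parts in the remaining factor to transfer $(\tfrac{\p_{y}}{k}-it)$ onto the test functions $\Phi,\Phi^{(1)},\Phi^{(2)}$. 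Since each of these vanishes on $\{y=0,1\}$, no boundary contributions are generated in this first reduction. The problem thereby reduces to the three weighted $H^{s}$ elliptic estimates of the form
\begin{align*}
\|\psi\|_{H^{s}}^{2}+\|(\tfrac{\p_{y}}{k}-it)\psi\|_{H^{s}}^{2} \lesssim \sum_{n}\langle n\rangle^{2s}c_{n}(t)|R_{n}|^{2}
\end{align*}
for $\psi\in\{\Phi,\Phi^{(1)},\Phi^{(2)}\}$ with right-hand sides $W$, $\p_{y}W+[(g(\tfrac{\p_{y}}{k}-it))^{2},\p_{y}]\Phi$, and $\p_{y}^{2}W+[(g(\tfrac{\p_{y}}{k}-it))^{2},\p_{y}^{2}]\Phi$, respectively.

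For $\psi=\Phi$ and $\psi=\Phi^{(1)}$, the estimates are precisely those already established by Proposition \ref{prop:B} and Lemma \ref{lem:lastH32lemma}. The genuinely new case is $\psi=\Phi^{(2)}$, where Proposition \ref{prop:A} reduces the matter to controlling the $H^{s}$-pairing $\langle\Phi^{(2)},[(g(\tfrac{\p_{y}}{k}-it))^{2},\p_{y}^{2}]\Phi\rangle_{H^{s}}$. A direct commutator computation rewrites this operator as $2(\tfrac{\p_{y}}{k}-it)\,gg'\,(\tfrac{\p_{y}}{k}-it)\p_{y}$ plus lower-order terms carrying $g',g''$. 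One integration by parts then produces a bulk contribution, which is handled by Proposition \ref{prop:LipHs} (applied to $gg'$ and $gg''$, using the assumption $f,g\in W^{3,\infty}(\T)$) together with Young's inequality, absorbing $\epsilon\|(\tfrac{\p_{y}}{k}-it)\Phi^{(2)}\|_{H^{s}}^{2}$ into the left-hand side and using the already-proven controls of $\|(\tfrac{\p_{y}}{k}-it)\Phi\|_{H^{s}}$ and $\|(\tfrac{\p_{y}}{k}-it)\Phi^{(1)}\|_{H^{s}}$.

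The main obstacle is the boundary piece generated by this integration by parts, analogous to \eqref{eq:900} in the proof of Lemma \ref{lem:lastH32lemma}. It reduces, after the standard manipulation of Lemma \ref{lem:reductionboundary}, to bounding $|2gg'(\tfrac{\p_{y}}{k}-it)\p_{y}\Phi|_{y=0}^{y=1}|$ multiplied by an $\ell^{2}$-factor of size $\langle kt\rangle^{s}$ and by $\|(\tfrac{\p_{y}}{k}-it)\Phi^{(2)}\|_{H^{s}}$. Here the vanishing Dirichlet hypothesis $\omega_{0}|_{y=0,1}=0$ is crucial: the boundary formula of Lemma \ref{lem:boundary32} combined with two integrations by parts in $y$ (as already exploited in the proof of Theorem \ref{thm:h52bd1}) yields the improved decay $\p_{y}\Phi|_{y=0,1}=\mathcal{O}(\langle kt\rangle^{-2})\|W\|_{H^{2}}$, and the analogous computation for $\p_{y}^{2}\Phi|_{y=0,1}$, combined with the uniform boundedness of $\p_{y}W|_{y=0,1}$ from Lemma \ref{lem:pWcontrol}, gives comparable decay for the relevant second-order boundary value.

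Finally, as in the proof of Lemma \ref{lem:lastH32lemma}, one splits the resulting $\ell^{2}$-products by choosing exponents $\lambda,\gamma\in(0,1)$ with $s+\lambda>1/2$ and $1-\gamma\geq s$, both possible since $s<1/2$. Young's inequality separates an absorbable piece of the form $\epsilon\|(\tfrac{\p_{y}}{k}-it)\Phi^{(2)}\|_{H^{s}}^{2}$ from a contribution of the form $\sum_{n}c_{n}(t)\langle n\rangle^{2s}(|(\p_{y}^{2}W)_{n}|^{2}+|(\p_{y}W)_{n}|^{2}+|W_{n}|^{2})$ with $\|c_{n}\|_{L^{1}_{t}}$ uniform in $n$, the time integrability coming from combining factors $\langle kt\rangle^{-2+s}$ with $\langle n/k-t\rangle^{-2(1-\lambda)}$. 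Requiring $|k|$ sufficiently large, as in Lemma \ref{lem:commute}, closes all constants absorbed by $\frac{C}{|k|}$ and yields the claimed bound.
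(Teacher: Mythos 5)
Your overall strategy matches the paper's: reduce via a Lemma \ref{lem:easy}-type argument to the weighted elliptic estimates of the form of Proposition \ref{prop:A}, reuse Propositions \ref{prop:B} and Lemma \ref{lem:lastH32lemma} for $\Phi$ and $\Phi^{(1)}$, and isolate the new work to $\Phi^{(2)}$ with right-hand side $\p_y^2 W + [(g(\tfrac{\p_y}{k}-it))^2,\p_y^2]\Phi$. The boundary analysis you sketch is also sound; in particular, the improved decay of the boundary value of $(\tfrac{\p_y}{k}-it)\p_y\Phi$ follows from the cancellation $(\tfrac{\p_y}{k}-it)\p_y\Phi|_{y=0,1} = -\tfrac{g'}{kg}\p_y\Phi|_{y=0,1}$ (using $\Phi|_{y=0,1}=W|_{y=0,1}=0$), together with the $\mathcal{O}(\langle kt\rangle^{-2})\|W\|_{H^2}$ decay of $\p_y\Phi|_{y=0,1}$ from the vanishing Dirichlet data.

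There is, however, a genuine gap in the bulk estimate. Your commutator $[(g(\tfrac{\p_y}{k}-it))^2,\p_y^2]\Phi$ produces the leading term $2(\tfrac{\p_y}{k}-it)gg'(\tfrac{\p_y}{k}-it)\p_y\Phi$, and after one integration by parts the bulk pairing is $\langle(\tfrac{\p_y}{k}-it)\Phi^{(2)},\,2gg'(\tfrac{\p_y}{k}-it)\p_y\Phi\rangle_{H^s}$. You then claim this is controlled by Young's inequality and ``the already-proven controls of $\|(\tfrac{\p_y}{k}-it)\Phi\|_{H^s}$ and $\|(\tfrac{\p_y}{k}-it)\Phi^{(1)}\|_{H^s}$.'' But $\p_y\Phi = \Phi^{(1)} + H^{(1)}$, so this bulk integrand also contains $(\tfrac{\p_y}{k}-it)H^{(1)}$, which is \emph{not} controlled by either of the quantities you cite; $(\tfrac{\p_y}{k}-it)\p_y\Phi$ is not $(\tfrac{\p_y}{k}-it)\Phi^{(1)}$, and cannot be reduced to $(\tfrac{\p_y}{k}-it)\Phi$ without incurring a growing factor $\sim kt$. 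The paper devotes Lemma \ref{lem:h52hilfslem2} precisely to this piece: it exploits the homogeneous structure $(-1+(g(\tfrac{\p_y}{k}-it))^2)H^{(1)}=0$ to commute derivatives, and then combines the growth $\|e^{ikty}u_1\|_{H^s}\lesssim\langle kt\rangle^s$ with the trace decay $|H^{(1)}|_{y=0,1}|=\mathcal{O}(t^{-2})\|W\|_{H^2}$ from the vanishing Dirichlet data to obtain an $\mathcal{O}(t^{s-2})$ bound — integrable precisely because $s<1$. You invoke both ingredients (the $\langle kt\rangle^{-2+s}$ decay and the homogeneous-solution structure) but only for the boundary terms of the integration by parts, not for this bulk contribution. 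To complete the argument you need to explicitly split $\p_y\Phi=\Phi^{(1)}+H^{(1)}$ in the commutator and treat the $H^{(1)}$ bulk term separately.
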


As in Section \ref{sec:elliptic-control}, Lemma \ref{lem:upperh52} serves to reduce the proof of Theorem \ref{thm:H52} to a fractional elliptic regularity problem. The desired elliptic estimate is then formulated in Proposition \ref{prop:elliph52}, whose prove is further broken down in Lemma \ref{lem:h52hilfslem1} and Lemma \ref{lem:h52hilfslem2}.

\begin{lem}
\label{lem:upperh52}
  Let $0<s<1/2$ and let $A,f,g,W$ as in Theorem \ref{thm:h52bd1}. Then 
  \begin{align*}
    | \langle A \p_{y}W, if \Phi^{(2)} + if' \Phi^{(1)} + if'' \Phi^{(1)} \rangle_{H^{s}}| 
\lesssim & \left( \sum_{n} c_{n}(t) <n>^{2s} |(\p_{y}^{2}W)_{n}|^{2} \right)^{1/2} \\
\big(& \|if\Phi^{(2)}\|_{H^{s}}^{2}+ \|(\frac{\p_{y}}{k}-t) if\Phi^{(2)}\|_{H^{s}}^{2} \\
+& \|if'\Phi^{(1)}\|_{H^{s}}^{2}+ \|(\frac{\p_{y}}{k}-t) if'\Phi^{(1)}\|_{H^{s}}^{2} \\
+&\|if''\Phi\|_{H^{s}}^{2}  + \|(\frac{\p_{y}}{k}-t) if''\Phi\|_{H^{s}}^{2}\big)^{1/2} .
  \end{align*}
\end{lem}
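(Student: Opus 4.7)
The plan is to follow verbatim the strategy used in Lemma \ref{lem:easy} of Section \ref{sec:elliptic-control}, upgrading every input by one derivative. The crucial structural fact we need is that each of $\Phi$, $\Phi^{(1)}$, and $\Phi^{(2)}$ vanishes at $y=0,1$, so that the combined right-hand side
\[
R := if\Phi^{(2)} + if'\Phi^{(1)} + if''\Phi
\]
has zero Dirichlet boundary values. (I am reading the second ``$\Phi^{(1)}$'' in the statement as $\Phi$, consistent with the term $\|if''\Phi\|_{H^{s}}^{2}$ appearing on the right-hand side and with the evolution equation \eqref{eq:H2fracnochmal}.)

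First I would expand the inner product in the Fourier basis on $\T$,
\[
\langle A\,\p_{y}^{2}W, R\rangle_{H^{s}} \;=\; \sum_{n} A_{n}\,\overline{(\p_{y}^{2}W)_{n}}\,\langle n\rangle^{2s}\,\langle e^{iny}, R\rangle_{L^{2}},
\]
and then multiply by the factor $\tfrac{1+i(n/k-t)}{1+i(n/k-t)}$, exactly as in Lemma \ref{lem:easy}. Applying Cauchy--Schwarz in $n$ splits this into
\[
\Bigl(\sum_{n} |A_{n}|^{2}\,|(\p_{y}^{2}W)_{n}|^{2}\,\langle n\rangle^{2s}\,c_{n}(t)\Bigr)^{1/2}
\cdot
\Bigl(\sum_{n}\langle n\rangle^{2s}\,|(1+i(n/k-t))\langle e^{iny},R\rangle_{L^{2}}|^{2}\Bigr)^{1/2},
\]
with $c_{n}(t):=|1+i(n/k-t)|^{-2}\in L^{1}_{t}$ uniformly in $n$. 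Since $A_{n}\sim 1$, the first factor is of the form asserted in the lemma.

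Next I would convert the second factor into Sobolev norms of $R$. Using $in\,e^{iny}=\p_{y}e^{iny}$ and integrating by parts, the vanishing of $R$ at $y=0,1$ removes all boundary contributions and gives
\[
(1+i(n/k-t))\,\langle e^{iny},R\rangle_{L^{2}} \;=\; \langle e^{iny}, R\rangle_{L^{2}} + \langle e^{iny},(\tfrac{\p_{y}}{k}-it)R\rangle_{L^{2}}.
\]
By Plancherel and the triangle inequality this yields the estimate
\[
\sum_{n}\langle n\rangle^{2s}\bigl|(1+i(n/k-t))\langle e^{iny},R\rangle\bigr|^{2}
\;\lesssim\; \|R\|_{H^{s}}^{2} + \bigl\|(\tfrac{\p_{y}}{k}-it)R\bigr\|_{H^{s}}^{2}.
\]
Expanding $R$ into its three pieces and using the triangle inequality on the three contributions produces the six norms in the statement of the lemma.

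There is no genuine obstacle here; the argument is a direct adaptation of Lemma \ref{lem:easy}. The only points that deserve care are (i) that $\Phi^{(2)}$, $\Phi^{(1)}$ and $\Phi$ all satisfy zero Dirichlet conditions, which is where the integration by parts loses its boundary term, and (ii) that the product rule applied to $(\p_{y}/k-it)R$ produces further factors of $f'$, $f''$ and $f'''$; the hypothesis $f\in W^{3,\infty}(\T)$ in Theorem \ref{thm:H52} is exactly what is needed for these terms to be absorbed by Proposition \ref{prop:LipHs} into norms of $\Phi^{(2)}$, $\Phi^{(1)}$ and $\Phi$ of the type appearing on the right-hand side. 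The real work is then pushed into Proposition \ref{prop:elliph52}, which must actually bound these six norms in terms of $\p_{y}^{2}W$, $\p_{y}W$ and $W$.
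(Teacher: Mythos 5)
Your proof is correct and follows the same strategy as the paper's (which consists solely of the remark that ``this result is proven in the same way as Lemma~\ref{lem:easy}''): expand in the Fourier basis, insert the factor $\frac{1+i(n/k-t)}{1+i(n/k-t)}$, apply Cauchy--Schwarz, then integrate by parts, using that $R=if\Phi^{(2)}+if'\Phi^{(1)}+if''\Phi$ vanishes at $y=0,1$. You also correctly identify the two typos in the statement ($\p_y W$ should be $\p_y^2 W$ on the left, and the second $\Phi^{(1)}$ should be $\Phi$); one small inaccuracy is your closing remark (ii) --- no product rule is needed in this lemma, since unlike Lemma~\ref{lem:easy} the right-hand side keeps the composite norms $\|(\tfrac{\p_y}{k}-it)\,if\Phi^{(2)}\|_{H^s}$ etc.\ undistributed, so the $W^{3,\infty}$ hypothesis on $f$ is only exercised later in Proposition~\ref{prop:elliph52}.
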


\begin{proof}
  This result is proven in the same way as Lemma \ref{lem:easy} in Section \ref{sec:elliptic-control}.
\end{proof}

The control of 
\begin{align*}
  \|if'\Phi^{(1)}\|_{H^{s}}^{2}+ \|(\frac{\p_{y}}{k}-t) if'\Phi^{(1)}\|_{H^{s}}^{2} 
+\|if''\Phi\|_{H^{s}}^{2}  + \|(\frac{\p_{y}}{k}-t) if''\Phi\|_{H^{s}}^{2} 
\end{align*}
by 
\begin{align*}
  \sum_{n} c_{n}(t) <n>^{2s} (|(\p_{y}W)_{n}|^{2} + |W_{n}|^{2})
\end{align*}
has already been obtained in the previous Section \ref{sec:stability-h32-}. It thus only remains to control 
\begin{align*}
  \|if\Phi^{(2)}\|_{H^{s}}^{2}+ \|(\frac{\p_{y}}{k}-t) if\Phi^{(2)}\|_{H^{s}}^{2},
\end{align*}
which is formulated as the following proposition.
\begin{prop}
\label{prop:elliph52}
  Let $f,g, \omega_{0},W$ be as in Theorem \ref{thm:H52}. Then, 
  \begin{align*}
    \|\Phi^{(2)}\|_{H^{s}}^{2}+ \|(\frac{\p_{y}}{k}-t) \Phi^{(2)}\|_{H^{s}}^{2}\lesssim \sum_{n} c_{n}(t) <n>^{2s} (|(\p_{y}^{2}W)_{n}|^{2}+|(\p_{y}W)_{n}|^{2} + |W_{n}|^{2}).
  \end{align*}

\end{prop}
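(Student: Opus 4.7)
The plan is to apply Proposition \ref{prop:A} with $\psi = \Phi^{(2)}$, whose governing equation \eqref{eq:H2fracnochmal} matches \eqref{eq:ELL} with forcing
\begin{align*}
R = \p_{y}^{2}W + \left[\left(g\left(\frac{\p_{y}}{k}-it\right)\right)^{2},\p_{y}^{2}\right]\Phi
\end{align*}
and vanishing Dirichlet data. Proposition \ref{prop:A} then yields directly
\begin{align*}
\|\Phi^{(2)}\|_{H^{s}}^{2} + \left\|\left(\frac{\p_{y}}{k}-it\right)\Phi^{(2)}\right\|_{H^{s}}^{2} \lesssim \sum_{n} \tilde c_{n}(t)<n>^{2s}|R_{n}|^{2},
\end{align*}
with $\tilde c_{n} \in L^{1}_{t}$ bounded uniformly in $n$. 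The contribution of $\p_{y}^{2}W$ is already of the desired shape, so the entire task reduces to estimating the commutator contribution to $|R_{n}|^{2}$ in terms of the Fourier coefficients of $W$, $\p_{y}W$ and $\p_{y}^{2}W$.

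To unwind the commutator I would set $D := \p_{y}/k - it$, use $[D,\p_{y}] = 0$, and expand
\begin{align*}
\left[(gD)^{2},\p_{y}^{2}\right] = \left[(gD)^{2},\p_{y}\right]\p_{y} + \p_{y}\left[(gD)^{2},\p_{y}\right],
\end{align*}
combining this with the first-order identity $[(gD)^{2},\p_{y}] = 2Dgg'D$ already recorded in Lemma \ref{lem:lastH32lemma}. This rewrites the commutator as a finite sum of terms of the shape $a(y)\,D^{j}\p_{y}^{l}\Phi$ with $j+l\le 2$ and smooth coefficients $a$ depending polynomially on $g,g',g''$ and on $k^{-1}$. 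Replacing every occurrence of $\p_{y}\Phi$ by $\Phi^{(1)}+H^{(1)}$, and using $\p_{y} = kD+ikt$ to absorb the remaining explicit $\p_{y}$'s into $D$-derivatives, isolates bulk contributions involving $\Phi, D\Phi, \Phi^{(1)}, D\Phi^{(1)}$ and homogeneous-correction contributions involving $H^{(1)}$ and $DH^{(1)}$.

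The bulk contributions are absorbed by the elliptic theory of Section \ref{sec:elliptic-control}: Proposition \ref{prop:B} bounds $\|\Phi\|_{H^{s}}$ and $\|D\Phi\|_{H^{s}}$ by the weighted $W$-norm, and applying Proposition \ref{prop:A} to $\psi = \Phi^{(1)}$ (whose forcing $\p_{y}W + [(gD)^{2},\p_{y}]\Phi$ is controlled by Proposition \ref{prop:B}) bounds $\|\Phi^{(1)}\|_{H^{s}}$ and $\|D\Phi^{(1)}\|_{H^{s}}$ in terms of the weighted $(\p_{y}W)_{n}$ and $W_{n}$. Multiplication by the Lipschitz coefficients $g, g', g''$ is handled by Proposition \ref{prop:LipHs} under the assumptions of Theorem \ref{thm:H52}. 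For the homogeneous correction $H^{(1)}$, the vanishing Dirichlet hypothesis $\omega_{0}|_{y=0,1} = 0$ is decisive: as in the proof of Theorem \ref{thm:h52bd1}, writing $\p_{y}\Phi|_{y=0,1} = (k/g^{2})\langle W, e^{\pm ikty}u_{j}\rangle$ and integrating by parts twice (the boundary term $W|_{y=0,1}$ now vanishing) produces the quadratic decay $|\p_{y}\Phi|_{y=0,1}| \lesssim <kt>^{-2}\|W\|_{H^{2}}$ via the $H^{2}$-stability of Theorem \ref{thm:citedH1H2result}, which is exactly what is needed to make the associated weight $c_{n}(t)$ integrable in time.

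The main obstacle is that the commutator expansion genuinely produces a second-order contribution $\p_{y}\Phi^{(1)}$, which is not directly controlled by Proposition \ref{prop:A}. The resolution is to invoke the elliptic equation itself: $(-1+(gD)^{2})\Phi = W$ gives $g^{2}D^{2}\Phi = W + \Phi - (gg'/k)D\Phi$, and the analogous identity for $\Phi^{(1)}$ expresses its $D^{2}$-part through $\p_{y}W$, $\Phi^{(1)}$, $D\Phi^{(1)}$ and a strictly lower-order commutator. Combined with $\p_{y} = kD + ikt$, this trades the offending $\p_{y}\Phi^{(1)}$ for quantities already controlled above. The delicate point is the bookkeeping: the factors of $kt$ thus introduced must be balanced against the time-weights produced by the integrations by parts, so that the final $c_{n}(t)$ stays in $L^{1}_{t}$ uniformly in $n$. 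This balance is precisely what forces the smallness of $\|f\|_{W^{3,\infty}}L$ and the large-$|k|$ regime of Theorem \ref{thm:H52}.
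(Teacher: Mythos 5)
Your scaffolding — invoke the generic elliptic theory of Section \ref{sec:elliptic-control} for $\psi=\Phi^{(2)}$ with forcing $R=\p_{y}^{2}W+[(gD)^{2},\p_{y}^{2}]\Phi$, split $\p_y\Phi=\Phi^{(1)}+H^{(1)}$, and use the vanishing Dirichlet data to get quadratic decay of $\p_y\Phi|_{y=0,1}$ — is exactly the paper's setup. But the central step you propose does not work, and it is at precisely the place where the paper takes a different route.

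You plan to apply Proposition \ref{prop:A} as a black box to get
\begin{align*}
\|\Phi^{(2)}\|_{H^{s}}^{2}+\|D\Phi^{(2)}\|_{H^{s}}^{2}\lesssim\sum_n c_n(t)<n>^{2s}|R_n|^2
\end{align*}
and then estimate $|R_n|^2$ directly. This fails for the commutator part of $R$. The leading terms there have the form $D\,2gg'\,D\Phi^{(1)}$, $D\,(g^{2})''\,D\Phi$, $D\,2gg'\,DH^{(1)}$, and the outermost $D$ produces a Fourier coefficient proportional to $(n/k-t)$. Since $c_n(t)\sim (1+(n/k-t)^2)^{-1}$, the product $c_n(t)|R_n|^2$ then behaves like $\frac{(n/k-t)^2}{1+(n/k-t)^2}\,|(\text{lower order})_n|^2$, which is bounded pointwise but not integrable in $t$ uniformly in $n$ — exactly the property the Lyapunov argument needs to absorb the right-hand side into $\langle W,\dot AW\rangle$. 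So the inequality you would obtain from Proposition \ref{prop:A} is true but useless: the weight family is no longer in $L^1_t$.

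The paper never estimates $|R_n|^2$ for this $R$. Instead it re-runs the \emph{upper} estimate (the analogue of Lemma \ref{lem:another}) on the pairing $\langle\Phi^{(2)},R\rangle_{H^s}$ and, for the commutator part, integrates the leading $D$ by parts \emph{onto} $\Phi^{(2)}$: this is Lemma \ref{lem:h52hilfslem1}, which yields a bulk term
\begin{align*}
\langle D\Phi^{(2)},\,2gg'D\Phi^{(1)}+(g^2)''D\Phi\rangle_{H^s}
\end{align*}
where $\|D\Phi^{(2)}\|_{H^s}$ is part of the quantity being estimated (and can be absorbed for $|k|^{-1}$ small), $\|D\Phi^{(1)}\|_{H^s},\|D\Phi\|_{H^s}$ are controlled by the $H^{3/2-}$ theory, and the residual boundary terms are finitely many traces handled via the $H^2$-stability. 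Lemma \ref{lem:h52hilfslem2} plays the analogous role for the $H^{(1)}$-part, first using the homogeneous equation to trade the $D^2$ against lower-order multiplication operators before estimating norms and traces. The integration by parts in the pairing is the decisive idea; your proposal does not contain it.

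Your suggested workaround — use $g^2D^2\Phi^{(1)}=\Phi^{(1)}+\p_yW+[\ldots]-\tfrac{gg'}{k}D\Phi^{(1)}$ and $\p_y=kD+ikt$ to reduce the offending terms — replaces the $(n/k-t)$ factor by an explicit factor of $kt$ (from $ikt\Phi^{(1)}$), which is equally fatal: $\|\Phi^{(1)}\|_{H^s}$ has no uniform $t^{-1}$ decay, and the smallness of $\|f\|_{W^{3,\infty}}L$ is an amplitude smallness, not a decay mechanism that could beat a linear growth in $t$. You acknowledge "the bookkeeping is delicate," but the bookkeeping is in fact the entire content of Lemmata \ref{lem:h52hilfslem1} and \ref{lem:h52hilfslem2}, and the route you sketch does not close.
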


\begin{proof}[Proof of Proposition \ref{prop:elliph52}] 
We recall that $\Phi^{(2)}$ satisfies \eqref{eq:H2fracnochmal}: 
\begin{align*}
  (1 + (g (\frac{\p_{y}}{k}-it))^{2} ) \Phi^{(2)} &=\p_{y}^{2}W + [(g (\frac{\p_{y}}{k}-it))^{2}, \p_{y}^{2}]\Phi, \\
  \Phi^{(2)}|_{y=0,1}= 0&.
\end{align*}
Using the generic results of Section \ref{sec:elliptic-control} with 
\begin{align*}
  \psi&= \Phi^{(2)}, \\
  R&= \p_{y}^{2}W + [(g (\frac{\p_{y}}{k}-it))^{2}, \p_{y}^{2}]\Phi,
\end{align*}
the result follows if we can obtain a good control of 
\begin{align*}
  \langle \psi, R \rangle_{H^{s}}
\end{align*}
for our specific choice of $R$.

We note that
\begin{align*}
  \langle \psi , \p_{y}^{2}W \rangle_{H^{s}} \lesssim (\|\psi\|_{H^{s}}^{2}+ \|(\frac{\p_{y}}{k}-it)\psi\|_{H^{s}}^{2})^{1/2}
 \left(\sum_{n} <n/k-t>^{-2} <n>^{2s} (|(\p_{y}^{2}W)_{n}|^{2} \right)^{1/2},
\end{align*}
is already of the desired form. 

It thus remains to consider the commutator:
\begin{align*}
  &[(g (\frac{\p_{y}}{k}-it))^{2}, \p_{y}^{2}]\Phi \\
&=: (\frac{\p_{y}}{k}-it)2gg'(\frac{\p_{y}}{k}-it) \Phi^{(1)} +(\frac{\p_{y}}{k}-it)(g^{2})''(\frac{\p_{y}}{k}-it)\Phi  \\
&\quad + (\frac{\p_{y}}{k}-it)2gg'(\frac{\p_{y}}{k}-it) H^{(1)} 
+ h (\frac{\p_{y}}{k}-it) H^{(1)} \\
&\quad + \text{Q},
\end{align*}
where $h$ can be computed in terms of the derivatives of $g$ and $Q$ is composed of terms involving only 
\begin{align*}
  \Phi, \Phi^{(1)}, (\frac{\p_{y}}{k}-it)\Phi, (\frac{\p_{y}}{k}-it)\Phi^{(1)}.
\end{align*}
Thus, 
\begin{align*}
  \langle \psi, Q \rangle_{H^{s}} \lesssim \|\psi\|_{H^{s}} (\|\Phi\|_{H^{s}}^{2} +\|(\frac{\p_{y}}{k}-it)\Phi\|_{H^{s}}^{2}
+\|\Phi^{(1)}\|_{H^{s}}^{2} +\|(\frac{\p_{y}}{k}-it)\Phi^{(1)}\|_{H^{s}}^{2} )^{1/2},
\end{align*}
which, by the $H^{3/2-}$ result, Theorem \ref{thm:H32}, can absorbed
\begin{align*}
  \langle W, \dot A W \rangle_{H^{s}} + \langle \p_{y}W, \dot A \p_{y}W \rangle_{H^{s}} \leq 0.
\end{align*}

The control of the remaining terms is obtained in the following two lemmata.
\end{proof}

\begin{lem}
\label{lem:h52hilfslem1}
  Let $g, \omega_{0},W$ be as in Theorem \ref{thm:H52}. Then, 
  \begin{align*}
    & \quad\langle \Phi^{(2)},(\frac{\p_{y}}{k}-it)2gg'(\frac{\p_{y}}{k}-it) \Phi^{(1)} +(\frac{\p_{y}}{k}-it)(g^{2})''(\frac{\p_{y}}{k}-it)\Phi \rangle_{H^{s}}  \\
&\lesssim  (\|\Phi^{(2)}\|_{H^{s}}^{2} + \|(\frac{\p_{y}}{k}-it)\Phi^{(2)}\|_{H^{s}}^{2})^{1/2} \\
& \quad \cdot
\left(\|\Phi^{(1)}\|_{H^{s}}^{2} + \|(\frac{\p_{y}}{k}-it)\Phi^{(1)}\|_{H^{s}}^{2}+\|\Phi\|_{H^{s}}^{2} + \|(\frac{\p_{y}}{k}-it)\Phi\|_{H^{s}}^{2}\right)^{1/2} \\
& \quad + \sum_{n} c_{n}(t) <n>^{2s} (|(\p_{y}^{2}W)_{n}|^{2}+|(\p_{y}W)_{n}|^{2} + |W_{n}|^{2}),
  \end{align*}
where $c_{n}\in L^{1}_{t}$ and $\|c_{n}\|_{L^{1}_{t}}$ is bounded uniformly in $n$.
\end{lem}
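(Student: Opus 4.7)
The plan is to integrate by parts in the $H^{s}$ inner product so as to move one of the factors $(\frac{\p_{y}}{k}-it)$ off of $\Phi^{(1)}$ (respectively $\Phi$) and onto $\Phi^{(2)}$, following exactly the scheme introduced in Lemma \ref{lem:reductionboundary} and Lemma \ref{lem:lastH32lemma}. Expanding the inner product in a Fourier basis and using $ine^{iny}=\p_{y}e^{iny}$, I obtain a bulk contribution plus a boundary contribution at $y=0,1$. The bulk contribution reads
\begin{align*}
\langle (\tfrac{\p_{y}}{k}-it)\Phi^{(2)}, 2gg'(\tfrac{\p_{y}}{k}-it)\Phi^{(1)}\rangle_{H^{s}}
+\langle (\tfrac{\p_{y}}{k}-it)\Phi^{(2)}, (g^{2})''(\tfrac{\p_{y}}{k}-it)\Phi\rangle_{H^{s}},
\end{align*}
which by Cauchy--Schwarz together with Proposition \ref{prop:LipHs} applied to $2gg',(g^{2})''\in W^{1,\infty}(\T)$ (valid since $g\in W^{3,\infty}(\T)$) is directly controlled by the first product on the right-hand-side of the lemma.

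The boundary contribution is a sum of terms of the shape
\begin{align*}
\sum_{n}\overline{\Phi^{(2)}_{n}}\,\langle n\rangle^{2s}\,\tfrac{1}{k}\,\bigl[2gg'(\tfrac{\p_{y}}{k}-it)\Phi^{(1)}\bigr]_{y=0}^{1}
\;+\;\text{analogous term with }(g^{2})'',\;\Phi.
\end{align*}
Next I would insert the factor $\frac{1+i(n/k-t)}{1+i(n/k-t)}$, apply Cauchy--Schwarz in $n$ as in Lemma \ref{lem:reductionboundary}, and use $\|\langle n\rangle^{s}/\langle n/k-t\rangle\|_{l^{2}}\lesssim \langle kt\rangle^{s}$. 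The $\Phi^{(2)}$-factor can then be absorbed via Young's inequality against $\|\Phi^{(2)}\|_{H^{s}}^{2}+\|(\frac{\p_{y}}{k}-it)\Phi^{(2)}\|_{H^{s}}^{2}$, which is estimated in turn by the previously controlled quantities in the outer energy estimate.

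The main obstacle, and really the only nontrivial step, is to obtain decay in $t$ for the boundary values $(\frac{\p_{y}}{k}-it)\Phi^{(1)}|_{y=0,1}$ and $(\frac{\p_{y}}{k}-it)\Phi|_{y=0,1}$ that beats the $\langle kt\rangle^{s}$ loss. Here I would use Lemma \ref{lem:boundaryestimates}(b) to rewrite these traces as $L^{2}$ inner products of the respective right-hand sides against the homogeneous solutions $e^{ikty}u_{1}$, $e^{ikt(y-1)}u_{2}$. For the $\Phi$-contribution, the right-hand side is $W$, and the crucial point is that $\omega_{0}|_{y=0,1}=0$ by hypothesis, so that the boundary terms vanishing in the first integration by parts allows a \emph{second} integration by parts, producing $\mathcal{O}(\langle kt\rangle^{-2})\|W\|_{H^{2}}$ exactly as in Corollary \ref{cor:H2boundaryquadratic} and Lemma \ref{lem:pWcontrol}. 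For the $\Phi^{(1)}$-contribution, the right-hand side is $\p_{y}W+[(g(\p_{y}-ikt))^{2},\p_{y}]\Phi$; the first summand is handled by the same double integration by parts (using $\omega_{0}|_{y=0,1}=0$ so that $W|_{y=0,1}$ decays by Lemma \ref{lem:pWcontrol}), while the commutator is expanded as in the proof of Lemma \ref{lem:lastH32lemma} and controlled using the already-established $H^{3/2-}$ stability of Theorem \ref{thm:H32}.

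Combining, the boundary contribution is bounded by a sum $\sum_{n}c_{n}(t)\langle n\rangle^{2s}(|(\p_{y}^{2}W)_{n}|^{2}+|(\p_{y}W)_{n}|^{2}+|W_{n}|^{2})$ with weight of the type $c_{n}(t)\sim\langle kt\rangle^{2s-2}\langle n/k-t\rangle^{-2(1-\gamma)}$ for $\gamma$ chosen as in the proof of Theorem \ref{thm:h52bd1}; for $s<1/2$ and $\gamma$ close enough to $1$, $c_{n}\in L^{1}_{t}$ with $L^{1}$-norm bounded uniformly in $n$. This yields the claimed estimate.
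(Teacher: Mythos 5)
Your proposal follows the paper's proof essentially step for step: integrate the outer $(\frac{\p_{y}}{k}-it)$ by parts in the $H^{s}$ inner product to split bulk from boundary, control the bulk via Cauchy--Schwarz and Proposition \ref{prop:LipHs} (with $2gg',(g^{2})''\in W^{1,\infty}(\T)$), insert the factor $\frac{1+i(n/k-t)}{1+i(n/k-t)}$ and use $\|\langle n\rangle^{s}/\langle n/k-t\rangle\|_{l^{2}}\lesssim\langle kt\rangle^{s}$ for the boundary sum, and beat the $\langle kt\rangle^{s}$ loss by extracting time decay of the traces via Lemma \ref{lem:boundaryestimates} and the vanishing Dirichlet hypothesis. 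The treatment of the $\Phi$-contribution (double integration by parts, $\mathcal{O}(\langle kt\rangle^{-2})\|W\|_{H^{2}}$ from the proof of Corollary \ref{cor:phiboundaryh2}) and of the commutator part of the $\Phi^{(1)}$-contribution (control by $\|\Phi\|_{L^{2}}+\|(\frac{\p_{y}}{k}-it)\Phi\|_{L^{2}}$, already bounded) matches the paper.

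One point needs to be tightened. You claim the $\p_{y}W$ summand in the $\Phi^{(1)}$-equation's right-hand side ``is handled by the same double integration by parts (using $\omega_{0}|_{y=0,1}=0$ so that $W|_{y=0,1}$ decays by Lemma \ref{lem:pWcontrol})''. Two things are off: (i) the trace arising after the first integration by parts of $\langle \p_{y}W, e^{ikty}u_{1}\rangle$ is $\p_{y}W|_{y=0}$, which does \emph{not} vanish, so a second boundary-term-free integration by parts is unavailable (and a second integration by parts on the bulk would require $\p_{y}^{3}W$, which is not controlled); (ii) Lemma \ref{lem:pWcontrol} gives convergence to a generically nonzero limit of $\p_{y}W|_{y=0,1}$, not decay, and it is about $\p_{y}W$, not $W$. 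The correct argument, and what the paper does, is a \emph{single} integration by parts yielding the factor $\frac{1}{ikt}$, the boundary piece $\frac{1}{ikt}\p_{y}W|_{y=0,1}$ being bounded by Lemma \ref{lem:pWcontrol} and the bulk piece producing $\p_{y}^{2}W$. This gives only $\mathcal{O}(1/t)$ rather than $\mathcal{O}(1/t^{2})$ for this contribution, but $\langle kt\rangle^{s}\cdot\mathcal{O}(1/t)=\mathcal{O}(\langle kt\rangle^{s-1})$ and $\langle kt\rangle^{2s-2}\in L^{1}_{t}$ for $s<1/2$, so your final claimed weight $c_{n}(t)\sim\langle kt\rangle^{2s-2}\langle n/k-t\rangle^{-2(1-\gamma)}$ is in fact consistent with the single integration by parts, and the conclusion of the lemma goes through unchanged.
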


\begin{proof}[Proof of Lemma \ref{lem:h52hilfslem1}]
  Integrating the leading $(\frac{\p_{y}}{k}-it)$ operators by parts, we obtain bulk terms 
  \begin{align*}
    \langle (\frac{\p_{y}}{k}-it)\Phi^{(2)},2gg'(\frac{\p_{y}}{k}-it) \Phi^{(1)} +(g^{2})''(\frac{\p_{y}}{k}-it)\Phi \rangle_{H^{s}},
  \end{align*}
  which can be controlled in the desired manner using Cauchy-Schwarz and Proposition \ref{prop:LipHs} of Section \ref{sec:fract-sobol-space}.

  It thus only remains to control the boundary contributions 
  \begin{align*}
    \sum_{n}\left.\Phi^{(2)}_{n}<n>^{2s} \big( 2gg'(\frac{\p_{y}}{k}-it) \Phi^{(1)} + (g^{2})''(\frac{\p_{y}}{k}-it)\Phi \big)\right|_{y=0}^{1}.
  \end{align*}
  Here we again estimate
  \begin{align*}
    \sum_{n}\Phi^{(2)}_{n}<n^{2s}> \lesssim (\|\Phi^{(2)}\|_{H^{s}}^{2} + \|(\frac{\p_{y}}{k}-it)\Phi^{(2)}\|_{H^{s}}^{2})^{1/2} \|\frac{<n>^{s}}{<n/k-t>}\|_{l^{2}_{n}},
  \end{align*}
  and
  \begin{align*}
    \|\frac{<n>^{s}}{<n/k-t>}\|_{l^{2}_{n}} \lesssim <kt>^{s} .
  \end{align*}

It remains to estimate 
\begin{align*}
  \big( 2gg'(\frac{\p_{y}}{k}-it) \Phi^{(1)} + (g^{2})''(\frac{\p_{y}}{k}-it)\Phi \big)|_{y=0}^{1},
\end{align*}
where we may drop the terms which involve $it$, since $\Phi$ and $\Phi^{(1)}$ satisfy zero Dirichlet boundary conditions.

A good control of $\p_{y}\Phi|_{y=0,1}$ in terms of $\|W\|_{H^{2}}$ has already been obtained in the proof of Corollary \ref{cor:phiboundaryh2}.

It thus remains to control $\p_{y}\Phi^{(1)}|_{y=0,1}$.
Following a similar approach as in Lemma \ref{lem:boundaryestimates}, $\p_{y}\Phi^{(1)}|_{y=0,1}$ can be computed by testing the right-hand-side of the equation against homogeneous solutions:
\begin{align*}
  \langle \p_{y}W + [(g(\frac{\p_{y}}{k}-it))^{2},\p_{y}]\Phi , e^{ikty}u_{1} \rangle_{L^{2}}, \\
  \langle \p_{y}W + [(g(\frac{\p_{y}}{k}-it))^{2},\p_{y}]\Phi , e^{ikt(y-1)}u_{2} \rangle_{L^{2}}.
\end{align*}
In the case of the commutator terms, using integration by parts and the control of 
\begin{align*}
  \|(\frac{\p_{y}}{k}-it)  e^{ikty}u_{1} \|_{L^{2}},
\end{align*}
we estimate by 
\begin{align*}
  \|\Phi\|_{L^{2}} + \|(\frac{\p_{y}}{k}-it)\Phi\|_{L^{2}},
\end{align*}
which is controlled.
In order to estimate the remaining terms involving $\p_{y}W$, we can either use the same approach as in Section \ref{sec:boundary-corrections} and control by 
\begin{align*}
  \sum_{n} c_{n}(t) <n>^{2s} |(\p_{y}W)_{n}|^{2},
\end{align*}
or integrate $e^{ikty}$ by parts to obtain an additional factor $\frac{1}{ikt}$ and estimate by 
\begin{align*}
  \frac{1}{t}\sum_{n} c_{n}(t) <n>^{2s} |(\p_{y}^{2}W)_{n}|^{2}.
\end{align*}
\end{proof}

\begin{lem}
\label{lem:h52hilfslem2}
  Let $g, \omega_{0},W$ be as in Theorem \ref{thm:H52} and let $h \in W^{1,\infty}(\T)$. Then, 
  \begin{align*}
  & \quad |\langle \Phi^{(2)}, (\frac{\p_{y}}{k}-it)2gg'(\frac{\p_{y}}{k}-it) H^{(1)} 
+ h (\frac{\p_{y}}{k}-it) H^{(1)} \rangle_{H^{s}} | \\
&\lesssim \frac{1}{|k|} \left(\| \Phi^{(2)}\|_{H^{s}} + \|(\frac{\p_{y}}{k}-it)\Phi^{(2)}\|_{H^{s}}\right) \frac{1}{t^{2-s}}\|W\|_{H^{2}} . 
  \end{align*}
\end{lem}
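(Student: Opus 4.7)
The plan is to exploit the vanishing Dirichlet condition $\omega_{0}|_{y=0,1}=0$ as the sole mechanism producing the critical $t^{-2}$ decay. Concretely, Lemma \ref{lem:boundaryestimates}(b) expresses $\p_{y}\Phi|_{y=0,1}$ as an $L^{2}$ pairing of $W$ with $e^{\pm ikt(y-j)}u_{j}$; integrating $e^{\pm ikty}$ by parts twice (exactly as in the proof of Corollary \ref{cor:H2boundaryquadratic}), using $W|_{y=0,1}=0$ and the uniform $H^{2}$-control of $W$ supplied by Theorem \ref{thm:citedH1H2result}, yields
\[
|\p_{y}\Phi|_{y=0,1}\lesssim \langle kt\rangle^{-2}\|W\|_{H^{2}}.
\]
Since $H^{(1)}=\p_{y}\Phi|_{y=0}\,e^{ikty}u_{1}(0,y)+\p_{y}\Phi|_{y=1}\,e^{ikt(y-1)}u_{2}(0,y)$, this quadratic decay is directly inherited.

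Next, because $u_{j}(0,\cdot)$ is a time-independent solution of $(-k^{2}+(g\p_{y})^{2})u_{j}=0$, a direct computation gives
\[
\left(\frac{\p_{y}}{k}-it\right)\bigl(e^{ikty}u_{j}(0,y)\bigr)=\frac{1}{k}\,e^{ikty}\p_{y}u_{j}(0,y),
\]
so $\bigl(\tfrac{\p_{y}}{k}-it\bigr)H^{(1)}$ is a linear combination of the terms $\tfrac{1}{k}e^{ikty}\p_{y}u_{j}$ with coefficients bounded by $\langle kt\rangle^{-2}\|W\|_{H^{2}}$. The standard bound $\|e^{ikty}v\|_{H^{s}}\lesssim \langle kt\rangle^{s}$ for smooth $v$, combined with Proposition \ref{prop:LipHs} applied to the Lipschitz multipliers $2gg'$ and $h$, then produces
\[
\left\|2gg'\bigl(\tfrac{\p_{y}}{k}-it\bigr)H^{(1)}\right\|_{H^{s}}+\left\|h\bigl(\tfrac{\p_{y}}{k}-it\bigr)H^{(1)}\right\|_{H^{s}}\lesssim \frac{1}{|k|}\cdot\frac{\langle kt\rangle^{s}}{(kt)^{2}}\|W\|_{H^{2}}\lesssim \frac{1}{|k|\,t^{2-s}}\|W\|_{H^{2}}.
\]

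For the $h$-term, a direct application of Cauchy--Schwarz in $H^{s}$ against $\Phi^{(2)}$ then yields the claimed bound. For the leading term, following Lemma \ref{lem:reductionboundary}, the approach is to expand in a Fourier basis and integrate the outer $\bigl(\tfrac{\p_{y}}{k}-it\bigr)$ by parts. This produces a bulk contribution
\[
-\left\langle \bigl(\tfrac{\p_{y}}{k}-it\bigr)\Phi^{(2)},\,2gg'\bigl(\tfrac{\p_{y}}{k}-it\bigr)H^{(1)}\right\rangle_{H^{s}},
\]
which is estimated by Cauchy--Schwarz and the previous display, together with a boundary contribution of the form $\tfrac{1}{k}\sum_{n}\langle n\rangle^{2s}\overline{\Phi^{(2)}_{n}}\cdot 2gg'\bigl(\tfrac{\p_{y}}{k}-it\bigr)H^{(1)}\bigr|_{y=0}^{1}$. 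The latter is handled by the multiplier trick $1=\tfrac{1+i(n/k-t)}{1+i(n/k-t)}$, Cauchy--Schwarz in $n$, the estimate $\|\langle n\rangle^{s}/\langle n/k-t\rangle\|_{\ell^{2}}\lesssim \langle kt\rangle^{s}$ from Lemma \ref{lem:reductionboundary}, and the pointwise boundary bound of size $|k|^{-1}\langle kt\rangle^{-2}\|W\|_{H^{2}}$ established in the first paragraph.

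The main obstacle is the bookkeeping of three multiplicative factors that must combine precisely: the gain $|k|^{-1}$ coming from commuting $\bigl(\tfrac{\p_{y}}{k}-it\bigr)$ past the homogeneous factor $u_{j}$, the quadratic decay $\langle kt\rangle^{-2}$ from the vanishing Dirichlet condition (the only place where $\omega_{0}|_{y=0,1}=0$ is genuinely used in this lemma), and the loss $\langle kt\rangle^{s}$ incurred when measuring $e^{ikty}$-modulated smooth functions in $H^{s}$. Without vanishing Dirichlet data only linear decay of $\p_{y}\Phi|_{y=0,1}$ would be available, yielding a barely non-integrable $|k|^{-1}t^{-1+s}$, which is consistent with the sharpness result expressed by Corollary \ref{cor:H2boundaryquadratic}.
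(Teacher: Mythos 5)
Your approach is correct and gives the claimed estimate, but it handles one key step by a genuinely different mechanism than the paper. The paper works abstractly: it invokes the homogeneous equation $(-1+(g(\tfrac{\p_y}{k}-it))^2)H^{(1)}=0$ to rewrite $(\tfrac{\p_y}{k}-it)\,2gg'\,(\tfrac{\p_y}{k}-it)H^{(1)}$ in the form $(\tfrac{\p_y}{k}-it)h_1H^{(1)}+h_2H^{(1)}$ (a reduction of the operator order), then integrates the single remaining $(\tfrac{\p_y}{k}-it)$ by parts and controls everything through $\|H^{(1)}\|_{H^s}$ and $|H^{(1)}|_{y=0,1}|$. You instead exploit the explicit modulated form $H^{(1)}=\p_y\Phi|_{y=0}\,e^{ikty}u_1(0,\cdot)+\p_y\Phi|_{y=1}\,e^{ikt(y-1)}u_2(0,\cdot)$ and compute directly that $(\tfrac{\p_y}{k}-it)\bigl(e^{ikty}u_j(0,y)\bigr)=\tfrac{1}{k}e^{ikty}\p_yu_j(0,y)$, making the cancellation of the modulation — and hence the source of a $k$-gain — visible in one line. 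Both routes then hinge on the same two ingredients: the quadratic-in-$t^{-1}$ boundary decay of $\p_y\Phi|_{y=0,1}$ (the only place $\omega_0|_{y=0,1}=0$ is used, as you correctly emphasize) and the $\langle kt\rangle^s$ growth of $\|e^{ikty}v\|_{H^s}$, followed by an integration by parts of the outer $(\tfrac{\p_y}{k}-it)$ with the multiplier trick on the boundary contribution. One caveat on the bookkeeping of $k$: your stated intermediate bound $|\p_y\Phi|_{y=0,1}|\lesssim\langle kt\rangle^{-2}\|W\|_{H^2}$ is slightly too strong; tracing through $\p_y\Phi|_{y=0}=\tfrac{k}{g^2(0)}\langle W,e^{ikty}u_1\rangle$ with two integrations by parts gives the bound $\mathcal{O}(|k|^{-1}t^{-2})\|W\|_{H^2}$, and likewise $\|e^{ikty}\p_yu_j\|_{H^s}$ carries $k$-dependence because $\p_yu_j$ is $\mathcal{O}(k)$ in size (for Couette, $u_1\approx e^{-ky}$). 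These two imprecisions compensate and the final $\tfrac{1}{|k|}\tfrac{1}{t^{2-s}}\|W\|_{H^2}$ comes out right, but a careful reader should track the $|k|^{-1}$ through the boundary decay rather than through the product $\langle kt\rangle^{-2}\cdot|k|^{-1}$ as you have it.
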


\begin{proof}[Proof of Lemma \ref{lem:h52hilfslem2}]
  Using the fact that $H^{(1)}$ solves 
  \begin{align*}
    (-1+ (g(\frac{\p_{y}}{k}-it))^{2}) H^{(1)}=0,
  \end{align*}
  as well as commuting some derivatives, one can express
  \begin{align*}
   \left(\frac{\p_{y}}{k}-it\right)2gg'\left(\frac{\p_{y}}{k}-it\right) H^{(1)}
  \end{align*}
  as 
  \begin{align*}
\left(\frac{\p_{y}}{k}-it\right) h_{1} H^{(1)} + h_{2} H^{(1)},
  \end{align*}
  for some functions $h_{1}, h_{2} \in W^{1,\infty}(\T)$.

  Integrating the $(\frac{\p_{y}}{k}-it)$ by parts and using Proposition \ref{prop:LipHs}, the bulk term is estimated by 
  \begin{align*}
    \left(\| \Phi^{(2)}\|_{H^{s}} + \|(\frac{\p_{y}}{k}-it)\Phi^{(2)}\|_{H^{s}}\right) \|H^{1}\|_{H^{s}},
  \end{align*}
  while the boundary term is estimated in similar way as in the proof of Proposition \ref{prop:ComHs}, by 
  \begin{align*}
    (\| \Phi^{(2)}\|_{H^{s}} + \|(\frac{\p_{y}}{k}-it)\Phi^{(2)}\|_{H^{s}})t^{s} |H^{1}|_{y=0,1}|.
  \end{align*}
  As shown in the proof of Corollary \ref{cor:H2boundaryquadratic}:
  \begin{align*}
    |H^{1}|_{y=0,1}|=\mathcal{O}(t^{-2})\|W\|_{H^{2}}.
  \end{align*}
  Furthermore, 
  \begin{align*}
    H^{(1)}= H^{(1)}|_{y=0}e^{ikty}u_{1} + H^{(1)}|_{y=1}e^{ikt(y-1)}u_{2},
  \end{align*}
  and
  \begin{align*}
    \|e^{ikty}u_{1}\|_{H^{s}}^{2} \lesssim \sum_{n} \frac{<n>^{2s}}{<n-kt>^{2}} \lesssim t^{2s}.
  \end{align*}
Thus,
\begin{align*}
  t^{s} |H^{(1)}|_{y=0,1}| + \|H^{(1)}\|_{H^{s}} \lesssim <t>^{s-2}\|W\|_{H^{2}},
\end{align*}
which concludes the proof.
\end{proof}

We remark that under the conditions of Theorem \ref{thm:H52}, by Theorem \ref{thm:citedH1H2result}, also stability in $H^{2}$ holds. Thus, $\|W\|_{H^{2}}$ can be considered as a given constant.
This then concludes the proof of Theorem \ref{thm:H52}.

Using these improved stability results, in the following Section \ref{cha:anoth-sect-cons}, we revisit the problem of consistency and further consider the implications of these sharp (in)stability results for the nonlinear dynamics. 

Before that, however, in the following section, we further study the formation of singularities at the boundary, the behavior of the homogeneous corrections close the boundary and implications for (in)stability in fractional Sobolev spaces $W^{1,p}([0,1])$.

\section{Boundary layers}
\label{sec:boundary-layers}

Thus far we have seen that $\p_{y}W$ and $\p_{y}^{2}W$, when restricted to the boundary, develop logarithmic singularities as $t \rightarrow \infty$, i.e.
\begin{align*}
  |\p_{y}W|_{y=0,1}| \gtrsim \log(t) .
\end{align*}

While such a point-wise estimate is sufficient to prove instability in $C^{0}$ and thus $H^{s}$ for $s>1/2$, it does not provide a description for $y$ close to the boundary, which would, for example, be useful for the study of $L^{p}$ spaces.

In the following, we therefore analyze the effect of the homogeneous correction on our solution and describe the asymptotic behavior close to the boundary.
Here, for simplicity, we discuss only the evolution of $\p_{y}W$, but all arguments can be adapted to study $\p_{y}^{2}W$ as well.  

Recall that $\p_{y}W$ evolves by \eqref{eq:pyWnochma}:
\begin{align*}
  \dt \p_{y}W = \frac{if}{k}H^{(1)} + \frac{if}{k} \Phi^{(1)} + \frac{if'}{k} \Phi.
\end{align*}
In view of the considerations on linearized Couette flow in Section \ref{sec:const-coeff-model} and as $\Phi^{(1)}$ and $\Phi$ vanish at the boundary and have a good structure, we in the following focus on the asymptotic behavior of 
\begin{align*}
  \frac{if(y)}{k} \int^{T} H^{(1)}(t,y) dt,
\end{align*}
as $T \rightarrow \infty$ and for $y$ close to the boundary.

\begin{lem}
\label{lem:asym1}
  Let $T>1$ and let $u_{1}, u_{2}$ be the solutions of
  \begin{align*}
    (-1+\left(g \frac{\p_{y}}{k}\right)^{2}) u =0,
  \end{align*}
  with boundary values
  \begin{align*}
    u_{1}(0)=u_{2}(1)=1, \\
    u_{1}(1)=u_{2}(0)=0.
  \end{align*}
  Then for any $y \in [0,1]$ 
  \begin{align*}
    \int_{1}^{T}H^{(1)}(t,y) dt = \int_{1}^{T} H^{(1)}(t,0)e^{ikty} dt \ u_{1}(y) + \int_{1}^{T} H^{(1)}(t,1)e^{ikt(y-1)} dt \ u_{2}(y).
  \end{align*}
\end{lem}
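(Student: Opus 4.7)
The plan is to observe that $H^{(1)}(t,\cdot)$ is, by definition in \eqref{eq:pyWnochma}, the unique solution of the homogeneous shifted ODE
\[
\left(-1+\left(g\left(\frac{\p_y}{k}-it\right)\right)^2\right)H^{(1)}=0
\]
with prescribed boundary data $H^{(1)}(t,0)$ and $H^{(1)}(t,1)$. To obtain the stated formula it suffices to exhibit a pointwise representation of $H^{(1)}(t,y)$ in a fixed basis of the two-dimensional solution space of this ODE, and then to integrate in $t$, pulling the $t$-independent factors $u_1(y),u_2(y)$ out of the integral.

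First, I would verify that the two candidate basis functions $e^{ikty}u_1(y)$ and $e^{ikt(y-1)}u_2(y)$ are solutions of the shifted homogeneous equation. This is a direct computation using that conjugation by $e^{ikty}$ (respectively $e^{ikt(y-1)}$) transforms the operator $(\p_y/k-it)$ into $\p_y/k$, so
\[
\left(g\left(\frac{\p_y}{k}-it\right)\right)^{2}\!\bigl(e^{ikty}u_1\bigr)=e^{ikty}\left(\frac{g\p_y}{k}\right)^{2}u_1,
\]
and the assumption $(-1+(g\p_y/k)^2)u_j=0$ gives the claim; the argument for $u_2$ is identical.

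Second, evaluating these two solutions at $y\in\{0,1\}$ and using the prescribed normalizations $u_1(0)=u_2(1)=1$, $u_1(1)=u_2(0)=0$, one finds that $e^{ikty}u_1(y)$ has boundary values $(1,0)$ and $e^{ikt(y-1)}u_2(y)$ has boundary values $(0,1)$ for every $t$. Hence they are linearly independent and span the solution space, so the unique solution with boundary values $H^{(1)}(t,0),H^{(1)}(t,1)$ admits the pointwise representation
\[
H^{(1)}(t,y)=H^{(1)}(t,0)\,e^{ikty}u_1(y)+H^{(1)}(t,1)\,e^{ikt(y-1)}u_2(y).
\]

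Finally, I would integrate this identity in $t$ over $[1,T]$. Since $u_1(y)$ and $u_2(y)$ are independent of $t$, they factor out of the $t$-integral, which yields the lemma. The only point that requires any care is justifying the exchange of integration with the (trivial) linear combination, i.e.\ measurability and local integrability of $t\mapsto H^{(1)}(t,0),H^{(1)}(t,1)$; this is immediate from the regularity of $W$ and the explicit formulas for the boundary values derived in Lemma \ref{lem:boundary32} and Lemma \ref{lem:boundaryestimates}, so no genuine obstacle arises and the argument is essentially a bookkeeping exercise built on the homogeneous-correction decomposition already used throughout Sections \ref{sec:stability-h32-}--\ref{sec:stability-h52-}.
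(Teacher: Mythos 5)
Your proof is correct and takes essentially the same approach as the paper: the paper simply recalls the pointwise representation $H^{(1)}(t,y)=H^{(1)}(t,0)e^{ikty}u_1(y)+H^{(1)}(t,1)e^{ikt(y-1)}u_2(y)$ (already established in Section \ref{sec:stability-h32-}) and integrates in $t$, whereas you re-derive that representation from scratch via the conjugation identity $(\tfrac{\p_y}{k}-it)(e^{ikty}u)=e^{ikty}\tfrac{\p_y}{k}u$ and a uniqueness/basis argument. The extra detail you supply is accurate but not a different route.
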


\begin{proof}[Proof of Lemma \ref{lem:asym1}]
  It has be shown in the previous sections that
  \begin{align*}
    H^{(1)}(t,y) = H^{(1)}(0,t)e^{ikty} u_{1}(y) + H^{(1)}(1,t)e^{ikt(y-1)} u_{2}(y).
  \end{align*}
  Integrating in time then yields the result.
\end{proof}
In Section \ref{sec:stability-h32-}, we have shown that under $H^{s}$ stability assumptions ,
\begin{align*}
  H^{(1)}(0,t) =\left. \frac{\omega_{0}}{g^{2}}\right|_{y=0} \frac{1}{t} + \mathcal{O}(t^{-1-s}),
\end{align*}
and therefore, for $y=0$,
\begin{align*}
  \int_{1}^{T} H^{(1)}(t,0)e^{ikty} dt |_{y=0} =\frac{\omega_{0}}{g^{2}}|_{y=0}  \int_{1}^{T} \frac{e^{ikty}}{t} dt|_{y=0} + \mathcal{O}(1) \gtrsim \log(T).
\end{align*}
The case $y>0$ is considered in the following lemma, where for convenience of notation we additionally assume that $k>0$.

\begin{lem}
\label{lem:asym2}
  Let $k>0$, then for any $y \geq \frac{1}{2k}$, 
  \begin{align*}
    \int_{1}^{T} \frac{e^{ikty}}{t}dt
  \end{align*}
is bounded uniformly in $T,k$ and $y$.

 For any $0<y<\frac{1}{2k}$,
   \begin{align*}
  \left|\int_{1}^{T} \frac{e^{ikty}}{t}dt\right| \lesssim \min(\log(T), - \log(ky))+ \mathcal{O}(1).
   \end{align*}
 Further restricting to $0<y<\frac{1}{2kT}$, also 
 \begin{align*}
   \Re \left( \int_{1}^{T} \frac{e^{ikty}}{t}dt \right) \gtrsim \log(T) + \mathcal{O}(1).
 \end{align*}
 Letting $T$ tend to infinity, the logarithmic singularity persists:
   \begin{align*}
  \left|\int_{1}^{\infty} \frac{e^{ikty}}{t}dt\right| \gtrsim  -\log(ky)+ \mathcal{O}(1),
   \end{align*}
for $0<y < \frac{1}{k}$.

\end{lem}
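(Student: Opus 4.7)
The plan is to reduce all four assertions to standard facts about truncated cosine and sine integrals via the substitution $s = kty$. Since $k>0$ and $y>0$, this gives
\begin{align*}
\int_{1}^{T}\frac{e^{ikty}}{t}\,dt = \int_{ky}^{kyT}\frac{e^{is}}{s}\,ds,
\end{align*}
and every subsequent estimate becomes a bound on $\int_{a}^{b} e^{is}/s\, ds$ with $a = ky$ and $b = kyT$ lying in various regimes. The single workhorse estimate is one integration by parts,
\begin{align*}
\int_{a}^{b}\frac{e^{is}}{s}\,ds = \left[\frac{e^{is}}{is}\right]_{a}^{b} + \int_{a}^{b}\frac{e^{is}}{is^{2}}\,ds,
\end{align*}
valid whenever $a>0$, which gives a bound of order $1/a$.

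I would then treat the four regimes as follows. For $y \geq 1/(2k)$ we have $a \geq 1/2$, and the integration-by-parts identity above immediately yields a bound uniform in $b$, proving the first claim. For $0 < y < 1/(2k)$ the lower endpoint is small, and I would split at $s=1$: on $[a,\min(1,b)]$ use the trivial estimate $|e^{is}/s| \leq 1/s$, which produces $\log(\min(1,b)/a) = \min(\log T,\, -\log(ky))$; on $[1,b]$, when nonempty, apply the integration-by-parts bound to get $O(1)$. For $0 < y < 1/(2kT)$ the whole interval $[a,b]$ sits in $(0,1/2]$, so $\cos(s) \geq \cos(1/2) > 0$ and
\begin{align*}
\Re \int_{a}^{b}\frac{e^{is}}{s}\,ds = \int_{a}^{b}\frac{\cos s}{s}\,ds \geq \cos(1/2)\,\log T,
\end{align*}
which gives the lower bound on the real part. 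For the $T \to \infty$ case with $0 < y < 1/k$, the tail $\int_{1}^{\infty} e^{is}/s\,ds$ converges (again by the integration-by-parts identity, letting $b\to\infty$), so it suffices to analyze $\int_{ky}^{1} e^{is}/s\,ds$; writing $\cos s / s = 1/s + (\cos s - 1)/s$ and noting that $(\cos s - 1)/s$ is bounded on $[0,1]$, I obtain $\Re\int_{ky}^{1} e^{is}/s\, ds = -\log(ky) + O(1)$, whence $|\int_{1}^{\infty} e^{ikty}/t\, dt| \geq -\log(ky) + O(1)$.

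The main obstacle is not analytic difficulty but rather bookkeeping: making sure the $O(1)$ constants are genuinely uniform in $T$, $k$, and $y$ across the case split, and that the third claim yields a strictly positive constant in front of $\log T$ (which rests on the strict inequality $\cos(1/2) > 0$ and the fact that, after the substitution, both endpoints stay in a region where $\cos$ has a definite sign). Once the regimes are fixed by the relative sizes of $ky$, $kyT$, and $1$, each individual estimate is a short calculation, so the proof should fit comfortably in one page.
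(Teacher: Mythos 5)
Your proposal is correct and follows essentially the same route as the paper: the substitution $\tau = kty$, a single integration by parts to bound the integral over $[\max(ky,1), kyT]$, and the positivity of $\cos\tau$ on $(0,1)$ (the paper uses $\cos(1)$, you use $\cos(1/2)$) to control the contribution near the origin. The only cosmetic difference is in the final $T\to\infty$ claim, where you split $\cos\tau/\tau = 1/\tau + (\cos\tau-1)/\tau$ to extract the logarithm directly, while the paper simply passes to the limit in its earlier two-sided estimate; both yield the same bound.
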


\begin{proof}[Proof of Lemma \ref{lem:asym2}]
  By a change of variables, $t \mapsto \tau=kyt$,
  \begin{align*}
    \int_{1}^{T} \frac{e^{ikty}}{t}dt = \int_{ky}^{kyT} \frac{e^{i\tau}}{\tau} d\tau .
  \end{align*}
  Let thus $\frac{1}{2}\leq x_{1}\leq x_{2}$ be arbitrary but fixed, then 
  \begin{align*}
    \int_{x_{1}}^{x_{2}} \frac{e^{i\tau}}{\tau} d\tau = \left.\frac{e^{i\tau}}{i\tau}\right|_{\tau=x_{1}}^{x_{2}} - \int_{x_{1}}^{x_{2}} \frac{e^{i\tau}}{i\tau^{2}} d\tau \lesssim \frac{1}{x_{1}} \leq 2.
  \end{align*}
  Letting $x_{1}=ky$ for $y \geq \frac{1}{2k}$, then proves the first result.
\\

  Let now $0<ky<\frac{1}{2}$. In the case that $kyT>1$, we can choose $x_{1}=1$ and $x_{2}=kyT$ in the above estimate and thus obtain 
  \begin{align*}
    \int_{1}^{kTy} \frac{e^{ikty}}{t}dt = \mathcal{O}(1).
  \end{align*}
It hence suffices to consider 
  \begin{align*}
    \int_{ky}^{\min(kyT,1)} \frac{e^{i\tau}}{\tau} d\tau .
  \end{align*}
  As $\tau \in (0,1)$, 
  \begin{align*}
    0< \cos(1) \leq \Re(e^{i\tau}) \leq 1
  \end{align*}
  does not yield cancellations. Thus, the integral is comparable to 
  \begin{align*}
    \int_{ky}^{\min(kyT,1)} \frac{1}{\tau} d\tau &= \log (\min(kyT,1)) - \log(ky)\\
&= \min(\log(kyT)- \log(ky), -\log(ky)) \\
&= \min(\log(T), -\log(ky)).
  \end{align*}
\\

Letting $T$ tend to infinity, 
\begin{align*}
  \lim_{T\rightarrow \infty}\min(\log(T), -\log(ky)) = -\log(ky),
\end{align*}
which proves the last result.
\end{proof}

We have thus shown that, as $T \rightarrow \infty$, for $y$ close to zero 
\begin{align*}
  \left| \int_{1}^{T}H^{(1)} dt \right| \gtrsim |\log(ky)| + \mathcal{O}(1).
\end{align*}
In particular, while the $L^{\infty}$ norm diverges, for any $1 \leq p <\infty$,
\begin{align*}
  \log(y) \in L^{p}([0,1]),
\end{align*}
and thus no blowup occurs in these spaces.
\\

In view of our stability results for fractional Sobolev spaces, a natural question concerns the behavior of (fractional) $y$ derivatives.
Here we consider 
\begin{align}
\label{eq:hsblowup}
 C_{s}(T,y):= \int_{1}^{T} t^{s} \frac{e^{ikty}}{t} dt,
\end{align}
for $s\in (0,1)$ as a simplified interpolated model between 
\begin{align*}
  \int_{1}^{T} \frac{e^{ikty}}{t} dt,
\end{align*}
and 
\begin{align}
\label{eq:blowups1}
  \frac{d}{dy} \int_{1}^{T} \frac{e^{ikty}}{t} dt = ik \int_{1}^{T} e^{ikty} dt = \frac{e^{ikTy}-e^{iky}}{y}.
\end{align}

We note that, letting $T$ tend to infinity in \eqref{eq:blowups1}, the singularity is of the form 
\begin{align*}
  \frac{1}{y},
\end{align*}
which is not in $L^{p}([0,1])$ for any $1\leq p \leq \infty$.
The intermediate cases $0<s<1$ are considered in the following lemma.

\begin{lem}
\label{lem:asym3}
  Let $0<s<1$ and let $C_{s}(T,y)$ be given by \eqref{eq:hsblowup}. Then 
  \begin{align*}
    C_{s}(T,0) = \frac{T^{s}-1}{s},
  \end{align*}
  and for $0<y<\frac{1}{2k}$,  
  \begin{align*}
    C_{s}(T,y) \lesssim \min(T^{s},(ky)^{-s}) + \mathcal{O}(1).
  \end{align*}
  For $0<y<\frac{1}{2kT}$, also 
  \begin{align*}
    \Re (C_{s}(T,y)) \gtrsim \frac{T^{s}-1}{s} + \mathcal{O}(1).
  \end{align*}
  Letting $T$ tend to infinity, there exists a constant $c \in \C$, which is in general non-trivial, such that 
  \begin{align*}
    C_{s}(\infty,y) = c(ky)^{-s} + \mathcal{O}(1).
  \end{align*}
\end{lem}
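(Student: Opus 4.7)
The plan is to reduce every part to estimates on the oscillatory integral $\int_a^b \tau^{s-1} e^{i\tau} d\tau$, via exactly the same rescaling $\tau = kty$ that was used in the proof of Lemma~\ref{lem:asym2}. The value at $y=0$ is immediate from $C_s(T,0) = \int_1^T t^{s-1}\,dt = (T^s-1)/s$. For $y > 0$, the substitution gives
\[
C_s(T,y) \;=\; (ky)^{-s} \int_{ky}^{kyT} \tau^{s-1} e^{i\tau}\,d\tau,
\]
so the remaining task is to analyze the inner integral on the two regimes $[ky, \min(1,kyT)]$ (no oscillatory cancellation available) and $[1, kyT]$ (where cancellation is available), exactly as in Lemma~\ref{lem:asym2}.

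For the upper bound in part~2, assume $0 < ky < 1/2$ and split at $\tau = 1$. On $[ky, \min(1,kyT)]$ the integrand is estimated trivially by $\tau^{s-1}$, giving a contribution of order $\min(1,kyT)^s / s$. On $[1, kyT]$ (when $kyT > 1$) a single integration by parts gives a uniform $\mathcal{O}(1)$ bound: the boundary terms are bounded by $\tau^{s-1} \leq 1$, and the remainder $\tau^{s-2}$ is absolutely integrable on $[1,\infty)$ since $s-2 < -1$. Multiplying by $(ky)^{-s}$ and comparing the sub-cases $kyT \leq 1$ (the integral lives on $[ky, kyT]$ and contributes $(T^s-1)/s$) and $kyT > 1$ (dominant piece of size $(ky)^{-s}$) produces exactly $\min(T^s, (ky)^{-s}) + \mathcal{O}(1)$. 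For the lower bound in part~3, the hypothesis $0 < y < 1/(2kT)$ forces $kty < 1/2$ for every $t \in [1,T]$, hence $\cos(kty) \geq \cos(1/2) > 0$ and
\[
\Re C_s(T,y) \;=\; \int_1^T t^{s-1}\cos(kty)\,dt \;\geq\; \cos(1/2)\cdot \frac{T^s-1}{s},
\]
which is the claimed lower bound.

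For part~4, the key ingredient is the Mellin-type identity $\int_0^\infty \tau^{s-1} e^{i\tau}\,d\tau = e^{i\pi s/2}\Gamma(s)$, valid for $0 < s < 1$. I would prove it by rotating the contour from the positive real axis onto the positive imaginary axis: the quarter-circle at infinity contributes nothing thanks to $|e^{iRe^{i\theta}}| = e^{-R\sin\theta}$, and the parametrization $\tau = iu$ on the imaginary axis reduces the integral to $i^s\,\Gamma(s)$. Writing $\int_{ky}^\infty = \int_0^\infty - \int_0^{ky}$ and expanding $e^{i\tau} = 1 + \mathcal{O}(\tau)$ on the short interval gives $\int_0^{ky}\tau^{s-1}e^{i\tau}\,d\tau = (ky)^s/s + \mathcal{O}((ky)^{s+1})$. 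Multiplying by $(ky)^{-s}$ yields
\[
C_s(\infty, y) \;=\; e^{i\pi s/2}\Gamma(s)\cdot (ky)^{-s} \;-\; \frac{1}{s} \;+\; \mathcal{O}(ky),
\]
so one may take $c = e^{i\pi s/2}\Gamma(s) \neq 0$ and absorb the rest into the $\mathcal{O}(1)$ remainder. The only genuinely technical point in the entire argument is the contour rotation in part~4, together with a uniform-in-$T$ and uniform-in-$y$ control of the implicit constants; both follow from standard dominated-convergence arguments once one integration by parts produces the integrable bound $\tau^{s-2}$ at infinity.
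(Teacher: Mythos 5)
Your proof is correct, and for parts (1)--(3) it follows the same route as the paper: the substitution $\tau = kyt$ reducing $C_s(T,y)$ to $(ky)^{-s}\int_{ky}^{kyT}\tau^{s-1}e^{i\tau}\,d\tau$, the split at $\tau=1$ with an absolute-value estimate on the non-oscillatory piece and a single integration by parts on $[1,\infty)$ yielding a uniform $\mathcal{O}(1)$ bound (using $s-2<-1$), and the observation that $kty<1$ throughout $[1,T]$ when $y<1/(2kT)$ prevents cancellation of $\cos(kty)$. The only divergence is in part (4), where your argument is genuinely stronger than what is in the paper. The paper defines $c$ abstractly as the improper integral $\int_0^\infty \tau^{s-1}e^{i\tau}\,d\tau$, shows it exists (by the integration-by-parts bound at infinity and the absolute-value bound near $0$), and then simply \emph{asserts} it is ``in general non-trivial.'' You instead evaluate the integral explicitly by rotating the contour onto the positive imaginary axis to obtain $c=e^{i\pi s/2}\Gamma(s)$, which is manifestly non-zero for $0<s<1$; as a by-product you also extract the next-order term $-1/s$ and an $\mathcal{O}(ky)$ remainder, sharper than the paper's $\mathcal{O}(1)$. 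The contour rotation (with the Jordan-type decay of the quarter-circle at radius $R$, controlled by $R^{s}\cdot R^{-1}\to 0$ since $s<1$, and the vanishing of the small arc at the origin, controlled by $\epsilon^{s}\to 0$) is the standard Mellin-transform computation of $\Gamma(s)$ along a ray and is a clean way to make the lemma's assertion of non-triviality fully rigorous.
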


\begin{proof}[Proof of Lemma \ref{lem:asym3}]
  For $y=0$, we compute
  \begin{align*}
    \int_{1}^{T} t^{s} \frac{1}{t} dt = \frac{t^{s}}{s}|_{s=1}^{T}= \frac{T^{s}-1}{s}.
  \end{align*}
  Controlling $e^{ikty}$ by its absolute value, this also provides an upper bound for all $y>0$.

  Considering $y>0$, we introduce a change of variables $t \mapsto kyt$
  \begin{align}
    \label{eq:ysblowup}
  \int_{1}^{T} t^{s} \frac{e^{ikty}}{t} dt = (ky)^{-s} \int_{ky}^{kyT} \frac{e^{i\tau}}{\tau^{1-s}} d\tau,
  \end{align}
  which suggests a boundary singularity of the form $\min((ky)^{-s}, T^{s})$.  
  We first estimate 
  \begin{align*}
    \int_{ky}^{kyT} \frac{e^{i\tau}}{\tau^{1-s}} d\tau
  \end{align*}
  from above.
  In the case $1\leq x_{1} \leq x_{2}$, we integrate $e^{i\tau}$ by parts and thus obtain an estimate by 
  \begin{align}
    \label{eq:LettingTtoinfty}
    \left| \int_{x_{1}}^{x_{2}} \frac{e^{i\tau}}{\tau^{1-s}} d\tau \right|\lesssim \frac{1}{x_{1}^{1-s}} \leq 1,
  \end{align}
  which is uniform in $k,y$ and $T$.
  For $x_{1}, x_{2} \leq 1$ it suffices to estimate by the absolute value:
  \begin{align}
    \label{eq:Lettingytozero}
 \left|\int_{x_{1}}^{x_{2}} \frac{1}{\tau^{1-s}} d\tau\right| \lesssim \frac{1}{s} x_{2}^{s} \leq \frac{1}{s}.    
  \end{align}
Hence, by equation \eqref{eq:ysblowup},
\begin{align*}
  \left| \int_{1}^{T} t^{s} \frac{e^{ikty}}{t} dt \right| \lesssim (ky)^{-s} (1+\frac{1}{s}).
\end{align*}
\\

If $ky$ is very small, i.e. $0<y<\frac{1}{kT}$, then again $e^{i\tau}$ does not oscillate and the real part of the integral in \eqref{eq:ysblowup} is comparable to 
  \begin{align*}
    \int_{ky}^{kyT} \frac{1}{\tau^{1-s}} d\tau = \frac{1}{s} (T^{s}-1)(ky)^{s}.
  \end{align*}
  More precisely, we estimate 
  \begin{align*}
    \cos(1)\leq \Re(e^{i\tau}) \leq 1.
  \end{align*}
  We thus obtain a lower bound of 
  \begin{align*}
    \Re (C_{s}(T,y))
  \end{align*}
  by 
  \begin{align*}
   \cos(1) (ky)^{-s}\frac{1}{s} (T^{s}-1)(ky)^{s} =\cos(1) \frac{1}{s} (T^{s}-1).
  \end{align*}
\\

We again consider \eqref{eq:ysblowup}: Then by \eqref{eq:LettingTtoinfty} the limit $T \rightarrow \infty$ exists as an improper integral.
We thus have to show that 
\begin{align*}
  \int_{ky}^{\infty}\frac{e^{i\tau}}{\tau^{1-s}} d\tau 
= c + \mathcal{O}(|ky|^{s})
\end{align*}
for some $c \in \C$, which is in general non-trivial.
By \eqref{eq:Lettingytozero},
\begin{align*}
  \lim_{y \downarrow 0} \int_{0}^{\infty}\frac{e^{i\tau}}{\tau^{1-s}} d\tau =:c,
\end{align*}
 exists.

Splitting and again using \eqref{eq:Lettingytozero},
\begin{align*}
  \int_{ky}^{\infty}\frac{e^{i\tau}}{\tau^{1-s}} d\tau 
= \int_{0}^{\infty}\frac{e^{i\tau}}{\tau^{1-s}} d\tau - \int_{0}^{ky} \frac{e^{i\tau}}{\tau^{1-s}} d\tau
= c+ \mathcal{O}(|ky|^{s}).
\end{align*}

Thus, by equation \eqref{eq:ysblowup}, 
\begin{align*}
 C(\infty,y)=(ky)^{-s} \int_{ky}^{\infty}\frac{e^{i\tau}}{\tau^{1-s}} d\tau = c(ky)^{-s} + \mathcal{O}(1).
\end{align*}
\end{proof}

Letting $T$ tend to infinity, we thus have to control a singularity of the form $y^{-s}$.
\begin{lem}
\label{lem:calc}
  Let $0<s<1$ and let $1 \leq p < \infty$, then 
  \begin{align*}
    y^{-s} \in L^{p}([0,1])
  \end{align*}
  if and only if $p< \frac{1}{s}$.
\end{lem}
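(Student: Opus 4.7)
The plan is to reduce the $L^{p}$ condition to a one-dimensional integrability criterion for a power function at the origin, which is a standard calculus fact. Concretely, by the definition of the $L^{p}$ norm,
\begin{equation*}
  \|y^{-s}\|_{L^{p}([0,1])}^{p} = \int_{0}^{1} y^{-sp}\, dy,
\end{equation*}
so the problem reduces to determining for which exponents $\alpha = -sp$ the integral $\int_{0}^{1} y^{\alpha}\, dy$ is finite. Note that $y^{-s}$ is continuous on $(0,1]$, so the only potential issue is the singularity at $y=0$; there is no issue at $y=1$ because the interval is bounded.

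Next I would recall (or verify by direct computation for $\alpha \neq -1$) that
\begin{equation*}
  \int_{\epsilon}^{1} y^{\alpha}\, dy = \frac{1 - \epsilon^{\alpha+1}}{\alpha+1},
\end{equation*}
which admits a finite limit as $\epsilon \downarrow 0$ exactly when $\alpha + 1 > 0$, and diverges to $+\infty$ otherwise (the borderline case $\alpha = -1$ yielding $-\log \epsilon \to \infty$). Setting $\alpha = -sp$, finiteness is equivalent to $-sp + 1 > 0$, i.e. $sp < 1$, i.e. $p < \frac{1}{s}$, using $s > 0$.

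This gives both directions simultaneously: if $p < 1/s$, then $\|y^{-s}\|_{L^{p}([0,1])}^{p}$ equals $(1-sp)^{-1}$, which is finite, while if $p \geq 1/s$, the integral diverges and hence $y^{-s} \notin L^{p}([0,1])$. There is no real obstacle here; the lemma is a direct application of the integrability of $y^{\alpha}$ near the origin, recorded for convenient reference in interpreting the singularity $|y|^{-s}$ from Lemma \ref{lem:asym3}.
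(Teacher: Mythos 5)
Your proof is correct and follows essentially the same route as the paper: write out $\|y^{-s}\|_{L^p}^p$ as $\int_0^1 y^{-sp}\,dy$ and observe that this power integral converges at the origin precisely when $1-sp>0$. The only difference is that you spell out the $\epsilon\downarrow 0$ limit and the borderline $\alpha=-1$ case, whereas the paper states the antiderivative evaluation directly; the content is identical.
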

\begin{proof}[Proof of Lemma \ref{lem:calc}]
  We explicitly compute
  \begin{align*}
   \|y^{-s}\|_{L^{p}}^{p}= \int_{0}^{1}y^{-sp}= \left. \frac{y^{1-sp}}{1-sp} \right|_{0}^{1}, 
  \end{align*}
  which is finite if and only if $1-sp>0$.
\end{proof}

The above result suggests that, for $1 \leq p < \infty$, 
\begin{align*}
  \sup_{T>1}\left\|\int_{1}^{T}H^{(1)}dt \right\|_{W^{s,p}} 
\end{align*}
is finite for $0<s<\frac{1}{p}$ and infinite for $\frac{1}{p}<s<1$.
For the case $p=2$, we have shown in Section \ref{sec:stability-h32-}, that indeed $s=\frac{1}{2}$ is critical in this sense.

\section{Consistency and implications for nonlinear inviscid damping}
\label{cha:anoth-sect-cons}

A natural question, following the results on linear inviscid damping, concerns the behavior of the full nonlinear dynamics.
In this section, we prove the following three results:
\begin{itemize}
\item Consistency: The linear dynamics are consistent, i.e. the nonlinearity, when evolved by the linear dynamics, is an integrable perturbation (in a less regular space). In the case of non-fractional Sobolev spaces and the infinite periodic channel, this has been addressed in \cite{Zill3}. 
\item Approximation: Supposing nonlinear inviscid damping holds in a space containing $H^{s}$, $s>5$, we show that the solution remains in an $H^{s-5}$ neighborhood of a linear solution (with $U(t,y)$ varying in time) uniformly in time. 
\item Instability: As a consequence, we show that, in a finite periodic channel, the stability result associated to nonlinear inviscid damping can generally not hold in high Sobolev spaces.
  Specifically we show that otherwise $\p_{y}W$ would in general develop a logarithmic singularity at the boundary, which yields a contradiction.
\end{itemize}

The last result in particular implies that a Gevrey regularity result such as in \cite{bedrossian2013inviscid} would have to be heavily modified in the setting of a finite channel.
\\

We first consider consistency, i.e. the evolution of the nonlinear term under the linear dynamics and shows that this would provide a uniformly controlled correction in Duhamel's formula.
For this purpose, we note that the nonlinearity 
\begin{align*}
 v \cdot \omega = \nabla^{\bot} \phi \cdot \nabla \omega, 
\end{align*}
after the change of variables $(x,y)\mapsto (x-tU(y),y)$ is given by 
\begin{align*}
  -(\p_{y}-tU'\p_{x})\Phi \p_{x}W + \p_{x}\Phi(\p_{y}-tU'\p_{x})W = -\p_{y}\Phi \p_{x}W + \p_{x}\Phi \p_{y}W = \nabla^{\bot}\Phi \cdot\nabla W.
\end{align*}
Here, with a slight abuse of notation $\Phi$ and $W$ do not incorporate the change of variables $y \mapsto z:=U^{-1}(y)$.

\begin{thm}[Consistency]
  \label{thm:EConsistency}
  Let $W$ be a solution of the linearized Euler equations, \eqref{eq:Eulernochmal}, in the finite periodic channel, $\T_{L}\times [0,1]$ with
  \begin{align*}
    \int \omega_{0}(x,y) dx \equiv 0,
  \end{align*}
  $f,g \in W^{3,\infty}$ and assume that for some $s\in (2,3)$
  \begin{align*}
    \|W(t)\|_{H^{s}}<C<\infty,
  \end{align*}
  is uniformly bounded (e.g. via Theorem \ref{thm:H52}).
  Then, 
  \begin{align*}
    \|v \cdot \nabla \omega\|_{L^{2}}= \mathcal{O}(t^{-(s-1)}).
  \end{align*}
  In particular, 
  \begin{align*}
    W(t) + \int^{t} \nabla^{\bot} \Phi(\tau) \nabla W(\tau) d\tau 
  \end{align*}
  remains in a bounded neighborhood of $W(t)$ and there exist asymptotic profiles $W_{\pm \infty, con} \in L^{2}$ such that
  \begin{align*}
    W(t) + \int^{t} \nabla^{\bot} \Phi(\tau) \nabla W(\tau) d\tau \xrightarrow{L^{2}} W_{\pm \infty, con},
  \end{align*}
  as $t \rightarrow \pm \infty$.
\end{thm}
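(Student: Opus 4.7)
The plan is to first establish the pointwise-in-time bound
\begin{align*}
\|v(t)\cdot\nabla\omega(t)\|_{L^{2}}=\mathcal{O}(t^{-(s-1)}),
\end{align*}
and then deduce the remaining statements from Minkowski's inequality together with the $L^{2}$-scattering already obtained in Corollary \ref{cor:scat2}. Since the change of variables $(x,y)\mapsto(x-tU(y),y)$ is an $L^{2}$ isometry that transforms $v\cdot\nabla\omega=\nabla^{\bot}\phi\cdot\nabla\omega$ into $\nabla^{\bot}\Phi\cdot\nabla W$, the whole argument takes place in the scattering coordinates.

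For the decay estimate, I would first use the two-dimensional Sobolev embedding $H^{s}(\T_{L}\times[0,1])\hookrightarrow W^{1,\infty}$, valid since $s>2$, to control $\|\nabla W\|_{L^{\infty}}\lesssim \|W\|_{H^{s}}\leq C$ uniformly in time. This reduces the estimate to
\begin{align*}
\|\nabla^{\bot}\Phi\cdot\nabla W\|_{L^{2}}\lesssim \|\p_{x}\Phi\|_{L^{2}}+\|\p_{y}\Phi\|_{L^{2}},
\end{align*}
so it suffices to show that both terms are $\mathcal{O}(t^{-(s-1)})$. The second step is to extract this decay from the time-dependent elliptic operator $(-1+(g(\p_{y}/k-it))^{2})\Phi=W$. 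In the Couette model, Fourier in $y$ gives $\hat\Phi(k,\eta)=-\hat W(k,\eta)/(k^{2}+(\eta-kt)^{2})$, and the $H^{s}$-weight suffices to control the resonant region $\eta\approx kt$:
\begin{align*}
\sup_{\eta}\frac{\eta^{2}}{(1+\eta^{2})^{s}(k^{2}+(\eta-kt)^{2})^{2}}\lesssim \frac{t^{-2(s-1)}}{|k|^{2s}},
\end{align*}
giving $\|\p_{y}\Phi\|_{L^{2}}\lesssim t^{-(s-1)}\|W\|_{H^{s}}$, while the analogous bound with an extra factor of $|k|$ in the numerator yields $\|\p_{x}\Phi\|_{L^{2}}\lesssim t^{-s}\|W\|_{H^{s}}$, which is even better. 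For the variable-coefficient case, the same estimates follow from the template of Section \ref{sec:elliptic-control}, testing against duals with the weight $(1+i(\eta/k-t))^{-1}$ as in Proposition \ref{prop:A} and absorbing the commutators with $g$ using the smallness assumption $\|f\|_{W^{3,\infty}}L\ll 1$ together with the $H^{s}$-stability of Theorem \ref{thm:H52}.

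Given the decay, Minkowski's inequality immediately yields
\begin{align*}
\left\|\int_{0}^{t}\nabla^{\bot}\Phi(\tau)\cdot\nabla W(\tau)\,d\tau\right\|_{L^{2}}\leq \int_{0}^{t}\|\nabla^{\bot}\Phi(\tau)\cdot\nabla W(\tau)\|_{L^{2}}\,d\tau,
\end{align*}
which is uniformly bounded in $t$ since the integrand is $\mathcal{O}(\tau^{-(s-1)})$ at infinity with $s-1>1$ and bounded on compact time intervals. The same rate verifies a Cauchy criterion in $L^{2}$ for the improper integral, so $\int_{0}^{\pm\infty}\nabla^{\bot}\Phi\cdot\nabla W\,d\tau$ exists in $L^{2}$; combined with $W(t)\xrightarrow{L^{2}} W_{\pm\infty}$ from Corollary \ref{cor:scat2}, this produces the asymptotic profile $W_{\pm\infty,\text{con}}:=W_{\pm\infty}+\int_{0}^{\pm\infty}\nabla^{\bot}\Phi\cdot\nabla W\,d\tau$.

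The main obstacle is the sharp rate $t^{-(s-1)}$ for $\|\p_{y}\Phi\|_{L^{2}}$. A naive triangle-inequality estimate $\|\p_{y}\Phi\|_{L^{2}}\leq \|v_{1}-U\|_{L^{2}}+t\|U'\|_{\infty}\|v_{2}\|_{L^{2}}$ only produces $\mathcal{O}(t^{-1})$ from the $H^{1}$-damping of Theorem \ref{thm:lin-zeng}, which is barely non-integrable and would not close the argument. One must instead exploit the cancellation between the two terms by working directly with the scattering-elliptic operator, using the higher regularity $W\in H^{s}$, $s\in(2,3)$, to saturate the resonant frequencies $\eta\approx kt$ as in the Fourier computation above.
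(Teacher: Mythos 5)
Your proposal is correct and follows essentially the same route as the paper's proof: pass to $\|\nabla^{\bot}\Phi\cdot\nabla W\|_{L^{2}}$ via the $L^{2}$-isometry of the shearing change of variables, factor out $\|\nabla W\|_{L^{\infty}}$ with the two-dimensional embedding $H^{s}\hookrightarrow W^{1,\infty}$ for $s>2$, reduce to decay of $\|\nabla^{\bot}\Phi\|_{L^{2}}$, then integrate via Minkowski and combine with the $L^{2}$ scattering of Corollaries \ref{cor:scat1} and \ref{cor:scat2} to produce the asymptotic profile. The one real difference is in the decay step: the paper dispatches it with a one-line citation of the damping result, Theorem \ref{thm:lin-zeng}, whereas you re-derive $\|\nabla^{\bot}\Phi\|_{L^{2}}=\mathcal{O}(t^{-(s-1)})\|W\|_{H^{s}}$ from the Fourier symbol of the shifted elliptic operator (explicitly for Couette, and by appealing to the machinery of Section \ref{sec:elliptic-control} for the variable-coefficient finite channel). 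Your remark that the crude triangle bound $\|\p_{y}\Phi\|_{L^{2}}\le\|v_{1}-U\|_{L^{2}}+t\|U'\|_{\infty}\|v_{2}\|_{L^{2}}$ only yields the borderline $\mathcal{O}(t^{-1})$ is exactly the subtlety that the terse citation glosses over, and your symbol computation makes the needed resonance-region cancellation explicit; this is a worthwhile addition. One small inaccuracy: the claimed rate $\|\p_{x}\Phi\|_{L^{2}}\lesssim t^{-s}\|W\|_{H^{s}}$ overshoots once $s>2$, since $\sup_{\eta}k^{2}(1+\eta^{2})^{-s}(k^{2}+(\eta-kt)^{2})^{-2}$ is then dominated by the low-frequency region $\eta=\mathcal{O}(1)$ rather than by $\eta\approx kt$, and the rate saturates at $t^{-2}$, consistent with the second bullet of Theorem \ref{thm:lin-zeng}. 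This is harmless, as $t^{-2}$ is both integrable and dominated by the $t^{-(s-1)}$ contribution from $\p_{y}\Phi$ for $s\in(2,3)$, so your final estimate and the conclusion stand unchanged.
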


\begin{proof}
  Since the change of variables $(x,y)\mapsto (x-tU(y),y)$ is an $L^{2}$ isometry, we obtain
  \begin{align*}
   \|v \cdot \nabla \omega\|_{L^{2}}= \|\nabla^{\bot} \Phi \nabla W\|_{L^{2}}.
  \end{align*}
  As $s>2$ (and we consider two spatial dimensions, $x,y$), we can use a Sobolev embedding to control
  \begin{align*}
    \|\nabla W \|_{L^{\infty}_{xy}(\Omega)} \lesssim \|W\|_{H^{s}}.
  \end{align*}
  It thus suffices to estimate 
  \begin{align*}
    \|\nabla^{\bot} \Phi\|_{L^{2}}.
  \end{align*}
  Taking the $\nabla^{\bot}$ into account and using the damping result, Theorem \ref{thm:lin-zeng}, we obtain
  \begin{align*}
    \|\nabla^{\bot} \Phi\|_{L^{2}} = \mathcal{O}(t^{-(s-1)}) \|W\|_{H^{s}}.
  \end{align*}
  As $s-1>1$, this decay is integrable, which together with the scattering results for $W(t)$, Corollaries \ref{cor:scat1} and \ref{cor:scat2}, concludes the proof. 
\end{proof}

We remark that this consistency result loses regularity and indeed controlling the loss of regularity due to the nonlinearity is one of the main challenges in the nonlinear problem (see \cite{bedrossian2013asymptotic}).

While the linear dynamics are thus consistent in the above sense, higher regularity and how well they approximate the nonlinear dynamics is not answered by the preceding theorem.
\\

In the following, we consider the converse problem, i.e. given a nonlinearly stable solution with inviscid damping, we estimate the effect of the nonlinearity.
For this purpose, we note that the 2D Euler equations 
\begin{align*}
  \dt \omega + v \cdot \nabla \omega &=0, \\
  v &= \nabla^{\bot} \phi , \\
  \Delta \phi &= \omega,
\end{align*}
on either the infinite or finite periodic channel possess a good structure with respect to $x$ averages.
Denote 
\begin{align*}
  \omega&= (\omega- \langle \omega \rangle_{x})+\langle \omega \rangle_{x}= \omega' + \langle \omega \rangle_{x}, \\
  \phi&=  (\phi- \langle \phi \rangle_{x})+\langle \phi \rangle_{x} = \phi' + \langle \phi \rangle_{x} .
\end{align*}
Then, 
\begin{align*}
  \dt \omega' -(\p_{y}\langle \phi \rangle_{x}) \p_{x}\omega' + (\p_{y}\langle \omega \rangle_{x}) \p_{x}\phi' &= (\nabla^{\bot}\phi' \cdot \nabla \omega')' , \\
  \dt \langle \omega \rangle_{x} &= \langle \nabla^{\bot}\phi' \cdot \nabla \omega' \rangle_{x}.
\end{align*}
In analogy to the linear setting, we denote 
\begin{align*}
  -\p_{y}\langle \phi \rangle_{x}=:U(t,y),
\end{align*}
and for the moment restrict our attention to the first equation, considering $U(t,y)$ as given.

In this formulation the Euler equations then read
\begin{align*}
  \dt \omega' + U(t,y) \p_{y} \omega' = (\p_{y}^{2}U(t,y)) \p_{x}\phi' + (\nabla^{\bot}\phi' \cdot \nabla \omega')'.
\end{align*}
Further introducing the volume-preserving change of variables 
\begin{align*}
  (x,y) \mapsto (x-\int_{0}^{t}U(\tau,y) d\tau, y)
\end{align*}
and defining $W,\Phi$ via these coordinates, the \emph{Euler equations in scattering formulation} are given by 
\begin{align}
  \label{eq:ES}
  \tag{ES}
  \dt W = (\p_{y}^{2}U(t,y)) \p_{x}\Phi + \nabla^{\bot}\Phi \cdot \nabla W.
\end{align}
Obtaining a good control of the regularity of $U(t,y)$ as well as appropriate decay is a very hard problem, in particular as the evolution of $U(t,y)$ and $W$ is coupled. In the following theorem, such control is therefore \emph{assumed}.

\begin{thm}[Approximation]
  \label{thm:EApproximation}
  Let $W(t,x,y)$ be a solution of \eqref{eq:ES} and suppose that, for some $s>2$, inviscid damping holds in $H^{s}$ with integrable rates, i.e. suppose that, for some $\epsilon>0$,
  \begin{align*}
    \|\nabla^{\bot} \Phi\|_{H^{s}}= \mathcal{O}(t^{-1-\epsilon})\|W\|_{H^{s+2+\epsilon}}.
  \end{align*}
  Suppose further that $\|W(t)\|_{H^{s+2+\epsilon}}$ is uniformly bounded.
  Then,   
  \begin{align*}
   \|\nabla^{\bot}\Phi \cdot \nabla W \|_{H^{s}} = \mathcal{O}(t^{-1-\epsilon}), 
  \end{align*}
and, in particular,  
\begin{align*}
  \int^{t}_{0}\|\nabla^{\bot}\Phi (\tau) \cdot \nabla W(\tau) \|_{H^{s}} d\tau 
\end{align*}
is bounded uniformly in $t$ and converges as $t \rightarrow \infty$.
\end{thm}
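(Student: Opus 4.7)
The estimate reduces to a product bound for $\nabla^{\bot}\Phi\cdot\nabla W$ in $H^{s}$ combined with the hypothesised damping rate, followed by an elementary integrability observation.

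First, I would use that in two space dimensions and for $s>1$, the Sobolev space $H^{s}(\T_{L}\times[0,1])$ embeds into $L^{\infty}$ and forms a Banach algebra. More precisely, a fractional Leibniz (Kato--Ponce) type estimate together with the embedding $H^{s}\hookrightarrow L^{\infty}$ yields
\begin{align*}
\|\nabla^{\bot}\Phi\cdot\nabla W\|_{H^{s}}&\lesssim \|\nabla^{\bot}\Phi\|_{H^{s}}\|\nabla W\|_{L^{\infty}}+\|\nabla^{\bot}\Phi\|_{L^{\infty}}\|\nabla W\|_{H^{s}}\\
&\lesssim \|\nabla^{\bot}\Phi\|_{H^{s}}\|\nabla W\|_{H^{s}},
\end{align*}
using that $s>2>1$ gives ample room for the embedding. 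I would then estimate each factor separately: the second is controlled by $\|\nabla W\|_{H^{s}}\lesssim\|W\|_{H^{s+1}}\leq\|W\|_{H^{s+2+\epsilon}}\leq C$, which is uniform in time by the hypothesis; the first factor is exactly where the assumed inviscid damping enters, giving
\begin{align*}
\|\nabla^{\bot}\Phi\|_{H^{s}}=\mathcal{O}(t^{-1-\epsilon})\|W\|_{H^{s+2+\epsilon}}=\mathcal{O}(t^{-1-\epsilon}).
\end{align*}
Multiplying these bounds yields the claimed pointwise-in-time estimate $\|\nabla^{\bot}\Phi\cdot\nabla W\|_{H^{s}}=\mathcal{O}(t^{-1-\epsilon})$.

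For the integrability statement, since $1+\epsilon>1$ the function $\tau\mapsto\tau^{-1-\epsilon}$ is integrable on $[1,\infty)$, while the contribution on $[0,1]$ is finite by the (trivial) upper bound $\|\nabla^{\bot}\Phi\cdot\nabla W\|_{H^{s}}\leq C'$ coming again from the algebra property and the uniform $H^{s+2+\epsilon}$ bound on $W$ (which controls $\nabla^{\bot}\Phi$ elliptically for any finite $t$). Hence $t\mapsto\int_{0}^{t}\|\nabla^{\bot}\Phi(\tau)\cdot\nabla W(\tau)\|_{H^{s}}\,d\tau$ is monotone and bounded, so it converges as $t\to\infty$.

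The main technical point I expect is the algebra/product estimate itself in the finite channel $\T_{L}\times[0,1]$. While the $H^{s}$ algebra property is classical on $\R^{d}$ or $\T^{d}$, here $y$ runs in a bounded interval with (in the case of $\Phi$) Dirichlet boundary values, and one has to either use an extension operator that respects fractional regularity or prove the estimate directly via paraproducts in the boundary-adapted fractional Sobolev spaces from Section \ref{sec:fract-sobol-space}. The regularity gap built into the hypothesis ($s+2+\epsilon$ versus $s$) affords significant slack, so any constant loss or slight regularity loss in the product estimate is harmless, but the estimate has to be formulated carefully with the boundary conditions to ensure no spurious blow-up at $y=0,1$.
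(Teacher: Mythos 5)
Your proposal is correct and follows the same route as the paper: both estimates rely on the algebra property of $H^{s}$ for $s>2$ in two dimensions (the paper writes $\|\nabla^{\bot}\Phi\cdot\nabla W\|_{H^{s}}\leq\|\nabla^{\bot}\Phi\|_{H^{s}}\|\nabla W\|_{H^{s}}$ directly), then insert the hypothesised damping rate and the uniform $H^{s+2+\epsilon}$ bound to conclude. Your closing caveat about justifying the product estimate on the bounded domain $\T_{L}\times[0,1]$ is a sensible observation that the paper simply glosses over, but it does not change the substance of the argument.
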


\begin{proof}
  As $s>2$, $H^{s}$ forms an algebra and 
  \begin{align*}
    \|\nabla^{\bot}\Phi \cdot \nabla W \|_{H^{s}} \leq \|\nabla^{\bot}\Phi\|_{H^{s}}\|\nabla W \|_{H^{s}}= \mathcal{O}(t^{-1-\epsilon}),
  \end{align*}
  which proves the result.
\end{proof}

\begin{rem}
  \begin{itemize}
  \item Theorem \ref{thm:lin-zeng} can be extended to provide sufficient conditions for inviscid damping with integrable rates to hold, again assuming sufficient regularity.
    The core problem of inviscid damping is thus again the control of the regularity of $W(t)$.
  \item If $\|W\|_{H^{s+2+\epsilon}}<\delta$ is small, then
    \begin{align*}
      \int^{t}_{0}\|\nabla^{\bot}\Phi (\tau) \cdot \nabla W(\tau) \|_{H^{s}} d\tau = \mathcal{O}(\delta^{2})
    \end{align*}
    is quadratically small. 
    The linearization thus remains valid, but only in a less regular space. For this reason we call this theorem an ``approximation'' result. 
  \item Even if $\|W\|_{H^{s+2+\epsilon}}$ is not small, the nonlinearity yields a bounded contribution. Hence, if
    \begin{align*}
      \|\int^{t}_{0} (\p_{y}^{2}U(\tau,y))\p_{x}\Phi(\tau) d\tau\|_{H^{s}}
    \end{align*}
    grows unboundedly as $t \rightarrow \infty$ , i.e. the linear part is unstable, then, as shown in the following theorem, the nonlinear dynamics can not be stable.
  \end{itemize}
\end{rem}

\begin{thm}[Instability]
  \label{thm:EInstability}
Let $W$ be a solution of \eqref{eq:ES}, $\p_{y}^{2}U(t,y) \in W^{1,\infty}_{y,t}$ and suppose that 
\begin{align*}
  \p_{y}^{2}U (t,y)|_{y=0}> c>0
\end{align*}
for all $t>0$.
Suppose further that for some $k$ 
\begin{align*}
  |\mathcal{F}_{x}(\p_{y}(\nabla^{\bot}\Phi \cdot \nabla W))(t,k,0)| = \mathcal{O}(t^{-1-\epsilon}),
\end{align*}
and 
\begin{align*}
  \mathcal{F}_{x}W(t,k,y)|_{y=0} >c >0,
\end{align*}
for all time.

Then,
\begin{align*}
  | (\mathcal{F}_{x}\p_{y}W)(t,k,0) | \gtrsim \log(t),
\end{align*}
as $t \rightarrow \infty$.

In particular, for any $s>2$, $\|W(t)\|_{H^{s}_{x,y}}$ then can not be bounded uniformly in time.  
\end{thm}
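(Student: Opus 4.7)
The plan is to differentiate equation \eqref{eq:ES} in $y$, take the $x$-Fourier transform at the distinguished frequency $k$, and restrict to $y=0$. Since $\Phi|_{y=0}=0$, the term $(\p_y^3 U)\p_x\Phi|_{y=0}$ drops out, leaving
\begin{align*}
\dt (\mathcal{F}_x \p_y W)(t,k,0) = ik\,\p_y^2 U(t,0)\,(\mathcal{F}_x \p_y \Phi)(t,k,0) + (\mathcal{F}_x \p_y(\nabla^{\bot}\Phi\cdot\nabla W))(t,k,0).
\end{align*}
By the second part of assumption \eqref{eq:impliedbyinvisciddamping}, the nonlinear contribution is $\mathcal{O}(t^{-1-\epsilon})$ and thus integrable in time. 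Everything then hinges on extracting the leading asymptotic of $(\mathcal{F}_x \p_y \Phi)(t,k,0)$, which is the nonlinear analogue of Lemma \ref{lem:boundary32}.

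For that key step, I would follow the derivation of Lemma \ref{lem:boundary32} in the nonlinear scattering coordinates. The equation for $\Phi$ in these coordinates is
$(\p_x^2 + (\p_y - T(t,y)\p_x)^2)\Phi = W$ with $T(t,y)=\int_0^t \p_y U(\tau,y)\,d\tau$ and $\Phi|_{y=0,1}=0$. Testing the $k$-th Fourier mode against the homogeneous solution which equals $1$ at $y=0$ and vanishes at $y=1$ (the time-dependent analogue of $e^{ikty}u_{1}$) and integrating by parts twice --- once in $y$ to extract the boundary value, once more against the oscillation $e^{ikT(t,y)\,y}$ to gain $1/T(t,0)$ --- yields the representation
\begin{align*}
(\mathcal{F}_x \p_y \Phi)(t,k,0) = \frac{1}{ikT(t,0)\,g^2(t,0)}\,(\mathcal{F}_x W)(t,k,0) + R(t,k),
\end{align*}
where $g(t,y)=\p_y U(t,y)$. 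Under strict monotonicity $0<c<\p_y U<c^{-1}$ asymptotically and uniform control of $\|W\|_{H^{6}}$, one has $T(t,0)\sim t\,\p_y U(\infty,0)$ and the remainder satisfies $R(t,k)=o(t^{-1})$ by duality, exactly as in the proof of Lemma \ref{lem:boundary32}. Combined with $\p_y^2 U(t,0)>c>0$ and $\Re(\mathcal{F}_x W)(t,k,0)>c>0$, and noting that $\Re(ik\cdot (ikt)^{-1})=1/t$, we get
\begin{align*}
\Re\,\dt(\mathcal{F}_x \p_y W)(t,k,0) \;\geq\; \frac{c'}{t}\;-\;\mathcal{O}(t^{-1-\epsilon}).
\end{align*}
Integrating from a large time $t_0$ gives $\Re(\mathcal{F}_x \p_y W)(t,k,0)\gtrsim \log t$, hence $|(\mathcal{F}_x \p_y W)(t,k,0)|\gtrsim \log t$. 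The final consequence is then immediate: if $\sup_t\|W(t)\|_{H^s}<\infty$ for some $s>2$, the Sobolev embedding $H^s\hookrightarrow C^1$ in two spatial dimensions forces $\p_y W$ to be uniformly bounded in $L^{\infty}$, hence the pointwise trace $|(\mathcal{F}_x \p_y W)(t,k,0)|$ is bounded, contradicting the logarithmic lower bound.

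The principal obstacle is the rigorous derivation of the $1/t$ asymptotic for $\p_y\Phi|_{y=0}$ in the nonlinear scattering coordinates. Unlike the linear situation of Lemma \ref{lem:boundary32}, both the phase $T(t,y)$ and the coefficient $g(t,y)$ depend on time through the coupling with $U(t,y)$, and $W|_{y=0}$ is no longer conserved; one must verify that the homogeneous test functions and the successive integrations by parts behave as in the constant-profile case, producing genuine error terms of order $o(t^{-1})$. This is precisely where the quantitative regularity input ($\|W\|_{H^{6}}\lesssim 1$) and the strict monotonicity of the asymptotic shear flow are used, as foreshadowed by the discussion following \eqref{eq:impliedbyinvisciddamping}.
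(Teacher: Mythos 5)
Your proposal follows the same route as the paper: differentiate \eqref{eq:ES} in $y$, restrict to $y=0$ where $\p_{x}\Phi$ vanishes, represent $\p_{y}\Phi|_{y=0}$ by testing the shifted elliptic equation against a homogeneous solution $u(t,y)$ carrying the oscillatory phase $e^{ik\int_0^t(U(\tau,y)-U(\tau,0))\,d\tau}$, integrate by parts to extract the leading $\sim 1/(k\,T(t,0))\cdot\mathcal{F}_{x}W|_{y=0}$ term, and integrate in time to produce the logarithm. The only minor discrepancy is the extraneous $g^{2}(t,0)$ factor in your asymptotic for $\p_{y}\Phi|_{y=0}$ — that factor originates from the $z=U^{-1}(y)$ change of variables used in the linear Lemma \ref{lem:boundary32}, which the nonlinear scattering coordinates do not perform — but since $\p_{y}U(t,0)$ is bounded above and below, this is harmless and the conclusion is unaffected.
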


\begin{proof}
Differentiating \eqref{eq:ES} with respect to $y$, we obtain that $\p_{y}W$ satisfies
\begin{align*}
  \dt \p_{y} W = \p_{y} \left(\p_{y}^{2}U(t,y) \p_{x} \Phi\right) + \p_{y} ( \nabla^{\bot}\Phi \cdot \nabla W).
\end{align*}
Restricting to $y=0$ and using that $\p_{x}\Phi$ vanishes on the boundary, as it is assumed to be impermeable, we consider the $k$ Fourier mode.
Then,
\begin{align}
\label{eq:nonlinboundblow}
  \dt \mathcal{F}_{x}\p_{y}W (t,k,0)= \p_{y}^{2}U(t,0) ik (\mathcal{F}_{x} \p_{y}\Phi)(t,k,0) + \mathcal{O}(t^{-1-\epsilon}).
\end{align}
Similar to the previous sections, $\mathcal{F}_{x}\Phi$ solves a shifted elliptic equation:
\begin{align*}
  \left(-k^{2}+ \left(\p_{y}-itk\int^{t}\p_{y}U(\tau,y) d\tau \right)^{2}\right) \mathcal{F}_{x}\Phi = \mathcal{F}_{x}W .
\end{align*}
A homogeneous solution $u$ of this equation is then of the form 
\begin{align*}
 u(t,y)= \exp \left(\int^{t}(U(\tau,y)-U(\tau,0)) d\tau \right) u(0,y).
\end{align*}
By the same argument as in Lemma \ref{lem:boundaryestimates}, $\mathcal{F}_{x}\Phi(t,0)$ can hence be computed in terms of 
\begin{align}
\label{eq:nonlinboundaryblowup}
  \langle \mathcal{F}_{x}W, u(t,y) \rangle_{L^{2}},
\end{align}
where we assumed that 
\begin{align*}
  u(0,0)=1, u(0,1)=0.
\end{align*}
Integrating 
\begin{align*}
  u(t,y)=u(0,y)\frac{1}{\int^{t}\p_{y}U(\tau,y) d\tau} \p_{y}\exp \left(\int^{t}(U(\tau,y)-U(\tau,0)) d\tau \right)
\end{align*}
by parts in \eqref{eq:nonlinboundaryblowup}, then yields a leading order term of the form 
\begin{align*}
\left|\frac{1}{\int^{t}\p_{y}U(\tau,y) d\tau} W|_{y=0} \right| \gtrsim \frac{c}{t}.
\end{align*}
Integrating \eqref{eq:nonlinboundblow} in time thus yields a logarithmic singularity and hence the result.
\end{proof}

We remark that, using a Sobolev embedding, the decay of  
\begin{align*}
   |\mathcal{F}_{x}(\p_{y}(\nabla^{\bot}\Phi \cdot \nabla W))(t,k,0)|
\end{align*}
is a consequence of inviscid damping in a high Sobolev space.
More precisely, supposing that
\begin{align*}
  0<c<\p_{y}U(t,y)<c^{-1}<\infty,
\end{align*}
for $t \geq T$, Theorem \ref{thm:lin-zeng} yields
\begin{align*}
  \|\p_{y}(\nabla^{\bot}\Phi \cdot \nabla W)\|_{L^{\infty}}\leq \|\p_{y}(\nabla^{\bot}\Phi \cdot \nabla W)\|_{H^{2+\epsilon}} &\leq C(\|U'\|_{W^{7,\infty}}) \|W(t)\|_{H^{4+\epsilon}}\|\Phi(t)\|_{H^{4+\epsilon}} \\ &\leq \mathcal{O}(t^{-2+\epsilon})\|W(t)\|_{H^{6}}^{2}.
\end{align*}
Furthermore, restricting \eqref{eq:ES} to the boundary,
\begin{align*}
  \mathcal{F}_{x}W(T,k,0)= \mathcal{F}_{x}\omega_{0}(t,k,0) + \int^{T}_{0} \mathcal{F}_{x} \left(\nabla^{\bot}\Phi \cdot \nabla W \right) (t,k,0) dt.
\end{align*}
If one thus assumes $(\mathcal{F}_{x} \nabla^{\bot}\Phi \cdot \nabla W) (t,k,0)$ to decay with an integrable rate, then $\mathcal{F}_{x}\omega_{0}(t,k,0)$ converges to an in general non-zero limit as $t \rightarrow \infty$.
Considering sufficiently large times $T$,  
\begin{align*}
  \mathcal{F}_{x}W(T,k,0)
\end{align*}
is hence in general bounded away from zero.

The theorem therefore implies that, in the generic case, solutions of \eqref{eq:ES} in a finite periodic channel can not remain bounded in high Sobolev regularity.
In contrast, for the setting of an infinite channel Bedrossian and Masmoudi, \cite{bedrossian2013asymptotic}, establish nonlinear inviscid damping for sufficiently small, highly regular perturbations to Couette flow.
More precisely they require smallness in Gevrey regularity, i.e.
\begin{align*}
  \|\omega_{0}\|_{\mathcal{G}^{\lambda_{0}}}^{2}:= \sum_{k}\int |\tilde{\omega}_{0}(k,\eta)|^{2}e^{2\lambda_{0}|(k,\eta)|^{s}} d\eta \leq \epsilon^{2},
\end{align*}
for some $\frac{1}{2}<s<1$.
The reason for this choice of regularity is given by an analysis of a possible nonlinear frequency cascade, which is estimated to, in the worst case, amount to a loss of Gevrey 2 regularity, i.e. $s=\frac{1}{2}$. 
There are experimental observations, \cite{yu2005fluid}, of some echoes, but it is not clear whether the worst case estimate of the cascade is actually attained. Indeed, the only known lower bound on the required regularity is given by the work of Lin and Zeng, \cite{Lin-Zeng}, who show the existence of non-trivial stationary structures in arbitrarily small $H^{s},s<\frac{3}{2},$ neighbourhoods and that such structures do not exist for $H^{s},s>\frac{3}{2}$.

\bibliographystyle{alpha} \bibliography{citations}

\end{document}